\documentclass[11pt]{article}

\usepackage{amsmath,amssymb,amsthm}
\usepackage{booktabs}
\usepackage{placeins}
\usepackage{enumitem}
\usepackage{geometry}
\usepackage[hidelinks]{hyperref}
\usepackage{microtype}
\numberwithin{equation}{section}
\newtheorem{theorem}{Theorem}[section]
\newtheorem{lemma}[theorem]{Lemma}
\newtheorem{proposition}[theorem]{Proposition}
\newtheorem{corollary}[theorem]{Corollary}
\theoremstyle{definition}
\newtheorem{definition}[theorem]{Definition}
\newtheorem{example}[theorem]{Example}
\theoremstyle{remark}
\newtheorem{remark}[theorem]{Remark}

\newcommand{\Z}{\mathbb{Z}}
\newcommand{\Q}{\mathbb{Q}}
\newcommand{\R}{\mathbb{R}}
\newcommand{\C}{\mathbb{C}}
\newcommand{\F}{\mathbb{F}}
\newcommand{\Qp}{\mathbb{Q}_p}
\newcommand{\Zp}{\mathbb{Z}_p}
\newcommand{\Li}{\operatorname{Li}}
\newcommand{\li}{\operatorname{li}}
\newcommand{\Res}{\operatorname{Res}}
\newcommand{\ord}{\operatorname{ord}}
\newcommand{\vol}{\operatorname{vol}}
\newcommand{\Hom}{\operatorname{Hom}}
\newcommand{\CycPres}{P_{\mathrm{cyc}}}
\newcommand{\CycPresM}[1]{P_{\mathrm{cyc},#1}}
\newcommand{\Ami}{\mathcal A}
\newcommand{\LA}{\mathrm{LA}}
\newcommand{\Dla}{\mathcal D^{\mathrm{la}}}
\newcommand{\qpoch}[2]{(#1;#2)_\infty}
\newcommand{\mob}{\mu_{\mathrm{Mob}}}
\newcommand{\teich}{\omega}
\newcommand{\ang}[1]{\langle #1\rangle}
\newcommand{\one}{1}
\newcommand{\starx}{\mathbin{\star_{\!\times}}}
\newcommand{\widebar}[1]{\overline{#1}}
\newcommand{\Iw}{\Lambda_{\mathcal O}}
\newcommand{\IwK}{\Lambda_K}
\newcommand{\IwPs}{\mathfrak I}
\newcommand{\Zhalf}{\mathcal Z_{1/2}}
\newcommand{\Zclass}{\mathcal Z_p}
\newcommand{\Regdep}{D_p^{[p]}}
\newcommand{\bd}{\mathrm{bd}}
\newcommand{\valp}{v_p}
\newcommand{\Disc}{\mathbb D}
\newcommand{\Regord}{\operatorname{Reg}^{\mathrm{ord}}}

\title{Bloch-regulator Principal Parts of Cyclotomic Iwasawa Pseudomeasures}
\author{HONGHUAI FANG AND ZEKUN CHEN}
\date{}
\hypersetup{
  pdfauthor={Honghuai Fang and Zekun Chen},
  pdftitle={Bloch-regulator principal parts of cyclotomic Iwasawa pseudomeasures},
  pdfsubject={Bloch-regulator principal parts, local K3 realization, and cyclotomic refinements},
  pdfkeywords={Habiro ring, Bloch group, Mellin transform, Iwasawa algebra, p-adic L-function, Coleman regulator}
}

\makeatletter
\renewcommand{\footnoterule}{%
  \kern-.4\p@
  \hrule\@width 5pc
  \kern11\p@
  \kern-\footnotesep
}
\newcommand{\titlepagebibliographicfootnotes}{%
  \insert\footins{%
    \normalfont\footnotesize
    \interlinepenalty\interfootnotelinepenalty
    \splittopskip\footnotesep
    \splitmaxdepth\dp\strutbox
    \floatingpenalty\@MM
    \hsize\columnwidth
    \@parboxrestore
    \parindent1.5em
    \indent
    {\itshape \textup{2020} Mathematics Subject Classification.}\enspace
    Primary 11S80, 11S70; Secondary 11R23, 19F27, 11M06.\par
    \indent
    {\itshape Key words and phrases.}\enspace
    Bloch group, Coleman regulator, Iwasawa pseudomeasure,
    \(p\)-adic \(L\)-function, Habiro ring, cyclotomic presentation.\par
  }%
}
\makeatother

\begin{document}
\maketitle
\titlepagebibliographicfootnotes

\begin{abstract}
Let \(p\) be an odd prime and let \(K/\Qp\) be a finite unramified
extension.  From a finite presentation by roots of unity of order prime to
\(p\), we construct a localized Iwasawa pseudomeasure on
\(\Zp^\times\).  Although the pseudomeasure depends on the chosen
presentation, its image modulo bounded measures depends only on the
associated Bloch class: it is the Frobenius-depleted Coleman regulator of
that class multiplied by a universal half-shifted zeta principal part.
Consequently, every nonexceptional weight component is bounded, while the
exceptional component has at most a simple pole with explicitly determined
residue.  For \(p>3\) and \(\Zp\)-valued coefficients, vanishing of the
principal part is equivalent to vanishing of the corresponding class in
\(K_3(K;\Zp)\).

Cyclotomic refinements preserve the Bloch class and act on the associated
pseudomeasures by explicit Iwasawa multipliers.  Normalized finite linear
combinations of refinements interpolate arbitrary finite jets of the
bounded weight-space data, subject only to the normalization at the
exceptional point.  We also establish a half-shifted complex Mellin
factorization, compare the construction with the GSWZ germ family, and
derive, at simple degree-one places above primes \(p>3\), a
finite-polylogarithm criterion for the local \(K_3\)-class of the knot
\(5_2\).
\end{abstract}

\begingroup
\setlength{\parskip}{0pt}
\tableofcontents
\endgroup
\clearpage

\section{Introduction and main results}\label{sec:intro}

\subsection{The cyclotomic Iwasawa pseudomeasure}

The Bloch group provides a common framework for weight-two algebraic
\(K\)-theory, dilogarithmic regulators, and ideal hyperbolic geometry.  Its
five-term relation is the algebraic form of the basic functional equation
of the dilogarithm, while Suslin's exact sequence relates the Bloch group of
a field to its indecomposable \(K_3\) \cite{Suslin}.  Complex and \(p\)-adic
regulators attach dilogarithmic invariants to Bloch classes
\cite{Coleman,BesserDeJeuSyntomic}; the same five-term calculus governs the
shape parameters of ideal tetrahedra \cite{Purcell}.

The constructions studied here begin before passage to the Bloch group.
Their input is a finite cyclotomic presentation
\[
 \xi=\sum_z a_z[z]_{\mathrm{cyc}},
 \qquad
 \eta_\xi=\operatorname{cl}(\xi),
\]
supported on roots of unity.  The presentation \(\xi\) retains the chosen
roots and coefficients, whereas \(\eta_\xi\) is its coefficient-extended
Bloch class.  Pochhammer germs and their associated distributions are
attached to \(\xi\); the Coleman regulator factors through
\(\eta_\xi\).  This distinction is also present in the Habiro-ring
construction of Garoufalidis--Scholze--Wheeler--Zagier (GSWZ), where a
root-of-unity presentation determines local logarithmic germs whose
singular terms are controlled by regulators \cite{GSWZ}.

Fix an odd prime \(p\), and let \(K/\Qp\) be a finite unramified extension
containing the support and coefficients of \(\xi\).  Put
\[
 G=\Zp^\times,
 \qquad
 \IwK(G)=\mathcal O_K[[G]][1/p].
\]
For a support root \(z\ne1\) of order prime to \(p\), let
\[
 \mu_z(a+p^m\Zp)=\frac{z^a}{1-z^{p^m}},
 \qquad
 \nu_z^\times=x^{-1}\one_G\mu_z,
 \qquad
 \nu_\xi^\times=\sum_z a_z\nu_z^\times.
\]
For every odd integer \(c>1\), prime to \(p\), set
\(r_c=\delta_1-c\delta_c\), and let \(\Sigma\) be the multiplicative set
generated by the \(r_c\).  The half-shift measure
\(\lambda_{1/2,c}^\times\) constructed in
Subsection~\ref{subsec:half-regularizer} defines
\begin{equation}\label{eq:intro-Iwasawa-pseudomeasure}
 \IwPs_{\xi,p}
 :=r_c^{-1}\starx
   \bigl(\lambda_{1/2,c}^\times\starx\nu_\xi^\times\bigr)
 \in\Sigma^{-1}\IwK(G).
\end{equation}
The cross relation proved in Theorem~\ref{thm:cross-relation} shows that
this element is independent of \(c\).  Thus
\(\IwPs_{\xi,p}\) is a well-defined cyclotomic Iwasawa pseudomeasure
attached to the chosen presentation.  It is linear in \(\xi\).

The same regularization applied without the presentation-dependent factor
defines the half-shift zeta pseudomeasure
\[
 \Zhalf=r_c^{-1}\starx\lambda_{1/2,c}^\times.
\]
Associativity of multiplicative convolution gives the canonical
factorization
\begin{equation}\label{eq:intro-canonical-factorization}
 \IwPs_{\xi,p}=\Zhalf\starx\nu_\xi^\times.
\end{equation}
Here \(\nu_\xi^\times\) is a bounded measure containing the dependence on
the chosen presentation, whereas \(\Zhalf\) is universal and contains all
possible singularities.  We study the singularities of
\(\IwPs_{\xi,p}\), the dependence of its principal part on the Bloch class
\(\eta_\xi\), and the variation of its bounded components under
cyclotomic refinement.

The analytic origin of the construction is the fixed-center half-shifted
Pochhammer expansion.  For a prime-to-\(p\) root of unity \(z\ne1\), its
coefficientwise \(p\)-adic Laurent specialization has the form
\[
 \frac{\Li_{2,p}(z)}{Y}
 +\sum_{n\ge0}
   \frac{B_{n+1}(1/2)}{(n+1)!}\Li_{1-n,p}(z)Y^n.
\]
The polar coefficient is Coleman's dilogarithm, while the regular
coefficients are lower polylogarithmic values attached to the chosen
symbol.  Coleman--Koblitz measures realize these values as moments, and
the Amice transform converts the corresponding power series into locally
analytic distributions.  Restriction from \(\Zp\) to \(\Zp^\times\)
removes the \(p\)-divisible part of the moments and produces, at the
exceptional negative weight, the Frobenius-depleted value
\[
 D_p(z)-p^{-2}D_p(z^p).
\]
The half-shift kernel is not itself bounded.  The power-difference
regularization indexed by \(c\) cancels its pole and corresponds on
\(G\) to convolution by \(r_c\); localization then restores the singular
object in the form \eqref{eq:intro-Iwasawa-pseudomeasure}.  Thus the
passage from the Pochhammer germ to \(\IwPs_{\xi,p}\) separates a
regulator-controlled singular term from presentation-dependent bounded
data.

A root-of-unity symbol need not define an integral Bloch element, since its
boundary may be torsion.  We therefore work with flat coefficient rings in
which the support orders are invertible.  Locally, the support orders are
assumed prime to \(p\); after finite unramified extension the roots become
Teichm\"uller units, and the presentation determines a well-defined
coefficient-extended Bloch class.  The precise admissibility convention is
given in Section~\ref{sec:presentations}.

\subsection[Principal parts and local K3]{Principal parts and local \(K_3\)}

Let
\[
 \mathrm{PP}_K(G)=\Sigma^{-1}\IwK(G)/\IwK(G),
\]
and write \(\operatorname{pp}\) for the quotient map.  This quotient
forgets bounded measures and retains exactly the singular contribution of
a pseudomeasure.  If \(\phi\) is the Frobenius lift on \(K\), define
\[
 \Phi_{\mathrm{cyc}}(a[z])=a[z^p],
 \qquad
 D_{p,K}^{[p]}(\eta)
 =D_{p,K}(\eta)-p^{-2}D_{p,K}(\Phi_{\mathrm{cyc}}\eta).
\]
The local \(K\)-theory convention is fixed in
Subsection~\ref{subsec:local-K-convention}.

The decomposition
\(G=\mu_{p-1}\times(1+p\Zp)\) gives \(p-1\) weight components.  The
universal denominator is a unit on all but one of these components; on the
exceptional component it has a single simple zero.  The main theorem
identifies the resulting principal part with the depleted Coleman
regulator of the Bloch class.

\begin{theorem}[Principal-part theorem]
\label{thm:intro-main-principal-part}
Let \(\xi\) be a finite cyclotomic presentation supported on
prime-to-\(p\) roots in \(K\), and put
\(\eta_\xi=\operatorname{cl}_{K,K}(\xi)\in B(K)_K\).  Then:
\begin{enumerate}[label=(\roman*)]
\item Every component with \(j\ne-1\pmod{p-1}\) is a bounded measure.
The exceptional component has at most a simple pole at \(s=-1\), and
\[
 \Res_{s=-1}\mathcal M_{\IwPs_{\xi,p},-1}(s)
 =(1-p^{-1})D_{p,K}^{[p]}(\eta_\xi).
\]
The pseudomeasure \(\IwPs_{\xi,p}\) is bounded if and only if
\(D_{p,K}^{[p]}(\eta_\xi)=0\).

\item Its complete principal part is
\begin{equation}\label{eq:intro-main-principal-part}
 \operatorname{pp}(\IwPs_{\xi,p})
 =D_{p,K}^{[p]}(\eta_\xi)\operatorname{pp}(\Zhalf).
\end{equation}
In particular, two presentations of the same coefficient-extended Bloch
class give pseudomeasures whose difference is bounded.
\end{enumerate}
\end{theorem}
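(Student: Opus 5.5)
The plan is to work entirely through the canonical factorization \eqref{eq:intro-canonical-factorization}, $\IwPs_{\xi,p}=\Zhalf\starx\nu_\xi^\times$, and to decompose along the weight characters of $G=\mu_{p-1}\times(1+p\Zp)$. On the $j$-th component the Mellin transform of a multiplicative convolution is the product of Mellin transforms. Hence
\[
 \mathcal M_{\IwPs_{\xi,p},j}(s)=\mathcal M_{\Zhalf,j}(s)\,\mathcal M_{\nu_\xi^\times,j}(s).
\]
The factor $\mathcal M_{\nu_\xi^\times,j}$ is the Mellin transform of a bounded measure, so it is an Iwasawa function. All singular behaviour therefore comes from $\mathcal M_{\Zhalf,j}=\mathcal M_{\lambda^\times_{1/2,c},j}/\mathcal M_{r_c,j}$.

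First I show that the denominator $\mathcal M_{r_c,j}(s)=1-c\,\omega^j(c)\langle c\rangle^{s}$ is a unit on $1+p\Zp$-weight space unless $j\equiv-1 \pmod{p-1}$. In that case I choose $c$ to be a topological generator, which is legitimate because of the independence of $c$ from Theorem~\ref{thm:cross-relation}. The denominator then has exactly one simple zero, at $s=-1$, and $\mathcal M_{r_c}'(-1)$ is computed explicitly from $\log_p\langle c\rangle$. This proves the boundedness of every nonexceptional component and the "at most a simple pole" claim. The residue is
\[
 \Res_{s=-1}=\frac{\mathcal M_{\lambda^\times_{1/2,c},-1}(-1)\,\mathcal M_{\nu_\xi^\times,-1}(-1)}{\mathcal M_{r_c,-1}'(-1)}.
\]

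Next I evaluate $\mathcal M_{\nu_\xi^\times,-1}(-1)=\int_G x^{-1}\cdot x^{-1}\,d\mu_\xi$, the exceptional negative moment of the Coleman--Koblitz measure restricted to $\Zp^\times$. Using $\mu_z(a+p^m\Zp)=z^a/(1-z^{p^m})$, the integral over $\Zp$ is $\Li_{2,p}(z)$. This is the polar coefficient of the Pochhammer expansion, taken via the Amice transform. The part over $p\Zp$ is the same measure for $z^p$ pushed forward by $x\mapsto px$, which contributes $p^{-2}\Li_{2,p}(z^p)$. Summing with the coefficients $a_z$ and adding the logarithmic correction terms $D_p$ gives $D_{p,K}(\eta_\xi)-p^{-2}D_{p,K}(\Phi_{\mathrm{cyc}}\eta_\xi)$. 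This is the step I expect to be the main obstacle. The moment is a priori an expression in $\Li_{2,p}$ and logarithms of individual roots, and I must check that it factors through $\operatorname{cl}_{K,K}$. My first attempt is to use that the $z$ are Teichmüller units, so $\log_p z=0$ and $\Li_{2,p}(z)=D_p(z)$ directly. I then invoke the earlier result that Coleman's $D_{p,K}$ descends to the coefficient-extended Bloch group via the five-term relation, with $\Phi_{\mathrm{cyc}}$ compatible with Frobenius. The remaining ratio $\mathcal M_{\lambda,-1}(-1)/\mathcal M'_{r_c,-1}(-1)$ is universal. I compute it from the construction of $\lambda^\times_{1/2,c}$, where the $B_1(1/2)=0$ normalization and the $(1-p^{-1})$ Euler factor from restricting to units should appear. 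This gives $\Res_{s=-1}\mathcal M_{\Zhalf,-1}=(1-p^{-1})$ and hence the residue formula in (i). Boundedness holds exactly when the residue vanishes, because a component with vanishing residue is regular at the only zero of the denominator.

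For (ii), I write
\[
 \IwPs_{\xi,p}-D_{p,K}^{[p]}(\eta_\xi)\Zhalf=\Zhalf\starx\bigl(\nu_\xi^\times-D_{p,K}^{[p]}(\eta_\xi)\delta_1\bigr).
\]
By the computation above, the bounded measure in parentheses has vanishing $(-1)$-component Mellin transform at $s=-1$. It therefore lies in the ideal generated by $\mathcal M_{r_c}$ on the exceptional component, while the other components are already bounded. The product is then bounded, so its image in $\mathrm{PP}_K(G)$ is zero. The final claim follows because $D^{[p]}_{p,K}(\eta_\xi)$ depends only on $\eta_\xi$.
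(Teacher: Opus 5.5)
Your architecture matches the paper's. You use the factorization $\IwPs_{\xi,p}=\Zhalf\starx\nu_\xi^\times$ and a regularizer whose class generates $(\Z/p^2\Z)^\times$; this choice is already needed on the nonexceptional branches, whose constant terms reduce to $1-\teich(c)^{j+1}$. You then use the simple exceptional zero at $s=-1$ and the value $\mathcal M_{\nu_\xi^\times,-1}(-1)=\Regdep(\xi)$. For (ii) you divide $\nu_\xi^\times-\Regdep(\xi)\delta_1$ on the exceptional branch, which is exactly the paper's $B_\xi(X)-B_\xi(X_0)$.

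\textbf{The negative moment fails as written.} The function $x^{-2}$ is not continuous on $\Zp$ or on $p\Zp$, so neither $\int_{\Zp}x^{-2}\,d\mu_z$ nor the $p\Zp$-piece exists. The splitting $\int_{\Zp^\times}=\int_{\Zp}-\int_{p\Zp}$ that proves Proposition~\ref{prop:nu-unit-positive-moments} is therefore unavailable. Also, $\Li_{2,p}(z)$ enters the Pochhammer expansion by a coefficientwise definition, not as a moment of $\mu_z$ read off from its Amice transform. The identity $\int_{\Zp^\times}x^{-2}\,d\mu_z=\Li_{2,p}(z)-p^{-2}\Li_{2,p}(z^p)$ for $|z|_p=|1-z|_p=1$ is the Coleman--Koblitz theorem. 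It has to be imported, as the paper does via Besser in Theorem~\ref{thm:koblitz-negative-moments}, and it is the genuine analytic input here. By contrast, the Bloch-class descent you single out as the main obstacle is immediate. Since $\log_p z=0$, one has $\Li_{2,p}(z)=D_p(z)$, and $\Phi_{\mathrm{cyc}}=\phi_B\otimes\mathrm{id}$ sends $a[z]$ to $a[z^p]$, so $\Regdep(\xi)=D^{[p]}_{p,K}(\eta_\xi)$ directly from the definitions.

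\textbf{The half-zeta residue is asserted, not derived.} You claim $\Res_{s=-1}Z_{1/2,-1}(s)=1-p^{-1}$. Since $\mathcal M_{r_c,-1}(s)=1-\ang{c}^{s+1}$, this is equivalent to $\int_G x^{-1}\,d\lambda_{1/2,c}=-(1-p^{-1})\log_p c$. Your remarks about $B_1(1/2)=0$ and the Euler factor do not compute this. The paper obtains it from the moment comparison $\Zhalf=(\delta_1-\delta_{2^{-1}})\starx\Zclass$ (Theorem~\ref{thm:half-zeta-identification}). At the exceptional point the factor $\delta_1-\delta_{2^{-1}}$ contributes $1-2=-1$. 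This cancels the sign from $u=-s$ in the classical residue $\Res_{u=1}\zeta_p(u)=1-p^{-1}$. A direct Riemann-sum evaluation of the $x^{-1}$-moment would also work, but it has to be carried out.

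\textbf{Regularity alone does not give boundedness in (i).} ``Regular at the only zero of the denominator'' yields a bounded measure only because of two facts. First, $I_{r_{c_0},-1}(X)$ is an integral unit times $X-X_0$ with $X_0\in p\mathcal O$ (Lemma~\ref{lem:exceptional-integral-unit-factor}). Second, a series in $K[[X]]_{\bd}$ vanishing at $X_0$ is divisible by $X-X_0$ with bounded quotient (Lemma~\ref{lem:bounded-denominator-division}). Analyticity on the disc alone would not suffice. These same two facts are what make your ideal-membership claim in (ii) correct. The boundedness criterion in (i) then follows from (ii) together with $\operatorname{pp}(\Zhalf)\ne0$.
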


Equivalently, there is a bounded measure \(B_\xi\in\IwK(G)\) such that
\begin{equation}\label{eq:intro-bounded-remainder}
 \IwPs_{\xi,p}
 =D_{p,K}^{[p]}(\eta_\xi)\Zhalf+B_\xi.
\end{equation}
Thus the universal factor determines the location and order of the
possible singularity, the depleted Coleman regulator determines its
coefficient, and the chosen presentation affects only the bounded
remainder.  Theorem~\ref{thm:half-zeta-identification} further identifies
\[
 \Zhalf=(\delta_1-\delta_{2^{-1}})\starx\Zclass,
\]
where \(\Zclass\) is the classical Iwasawa zeta pseudomeasure.  Hence the
unique possible pole of \(\IwPs_{\xi,p}\) is inherited from the classical
Iwasawa zeta factor.  The principal-part formula is stronger than the
residue identity: it determines the complete class modulo bounded
measures, not merely the first Laurent coefficient.

For integral coefficients, this principal part realizes the corresponding
local \(K_3\)-class.

\begin{corollary}[Local \(K_3\)-realization]
\label{cor:intro-local-K3-realization}
Suppose \(p>3\), let \(\kappa_{K,p}\) be the comparison map of
Subsection~\ref{subsec:local-K-convention}, and assume that \(\xi\) has
coefficients in \(\Zp\).  Then
\[
 \operatorname{pp}(\IwPs_{\xi,p})=0
 \quad\Longleftrightarrow\quad
 \kappa_{K,p}(\eta_\xi)=0
 \quad\text{in }K_3(K;\Zp).
\]
Moreover,
\[
 K_3(K;\Zp)
 \overset{\sim}{\longrightarrow}
 \mathcal O_K\operatorname{pp}(\Zhalf),
 \qquad
 x\longmapsto
 (1-p^{-2}\phi)D_p(x)\operatorname{pp}(\Zhalf),
\]
is a \(\Zp\)-linear isomorphism.
\end{corollary}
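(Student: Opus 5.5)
The plan is to deduce the corollary from Theorem~\ref{thm:intro-main-principal-part}(ii) together with the structure of \(K_3\) of an unramified local field and the comparison map \(\kappa_{K,p}\) of Subsection~\ref{subsec:local-K-convention}. By \eqref{eq:intro-main-principal-part} we have \(\operatorname{pp}(\IwPs_{\xi,p})=D_{p,K}^{[p]}(\eta_\xi)\operatorname{pp}(\Zhalf)\). So the first step is to show that \(\operatorname{pp}(\Zhalf)\) generates a free rank-one \(K\)-submodule of \(\mathrm{PP}_K(G)\), in the sense that \(a\operatorname{pp}(\Zhalf)=0\) forces \(a=0\) for \(a\in K\). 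This follows from part (i) applied to \(\Zhalf\) itself, or directly from the identification \(\Zhalf=(\delta_1-\delta_{2^{-1}})\starx\Zclass\). The exceptional Mellin component has residue \((1-p^{-1})\ne0\) times the coefficient, because the factor \(\delta_1-\delta_{2^{-1}}\) evaluated at the exceptional weight \(s=-1\) gives \(1-2\ne0\). Since a bounded measure has no pole, \(a\operatorname{pp}(\Zhalf)\in\IwK(G)\) implies \(a=0\). The equivalence therefore reduces to the statement that \(D_{p,K}^{[p]}(\eta_\xi)=0\) if and only if \(\kappa_{K,p}(\eta_\xi)=0\).

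Next I unwind the depletion. Since the support roots are Teichmüller units, \(\Phi_{\mathrm{cyc}}\) on cyclotomic symbols agrees with the Frobenius \(\phi\) acting on the Bloch class. Coleman's dilogarithm is Galois-equivariant, so \(D_{p,K}(\Phi_{\mathrm{cyc}}\eta)=\phi(D_{p,K}(\eta))\). This gives \(D_{p,K}^{[p]}(\eta_\xi)=(1-p^{-2}\phi)D_{p,K}(\eta_\xi)\), and I will verify that it is compatible with the \(\mathcal O_K\)-coefficient convention. The operator \(1-p^{-2}\phi\) is \(\Qp\)-linear and injective on \(K\): if \(\phi(x)=p^2x\), then \(v_p(x)=v_p(x)+2\), so \(x=0\). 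Hence the vanishing of \(D_{p,K}^{[p]}\) is equivalent to the vanishing of \(D_{p,K}(\eta_\xi)\).

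The main step, and the one I expect to be the obstacle, is the regulator input. For \(p>3\) and \(K/\Qp\) unramified, the \(p\)-adic regulator \(D_p\circ\kappa_{K,p}^{-1}\), or equivalently the syntomic or Bloch--Kato regulator, is injective on \(K_3(K;\Zp)\). Its image is a full \(\mathcal O_K\)-lattice, namely \(\mathcal O_K\) up to a unit normalization once the depletion factor is included. I would invoke the known computation \(K_3^{\mathrm{ind}}(K;\Zp)\cong\mathcal O_K\), which is torsion-free for \(p>3\) since \(\mu_{p}\not\subset K\) and there is no \(p\)-torsion contributed by \(\mu_{p^\infty}(K)\otimes\mu_{p^\infty}(K)\). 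I would combine this with the comparison of Besser--de Jeu identifying the regulator with Coleman's \(D_p\) on Bloch classes. The hypothesis of \(\Zp\)-coefficients is what ensures \(\eta_\xi\) lands in the integral lattice, so that \(\kappa_{K,p}(\eta_\xi)\) makes sense.

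Finally, the isomorphism statement follows. The map \(x\mapsto(1-p^{-2}\phi)D_p(x)\operatorname{pp}(\Zhalf)\) is \(\Zp\)-linear. It is injective by the injectivity of the regulator, the injectivity of \(1-p^{-2}\phi\), and the faithfulness of \(\operatorname{pp}(\Zhalf)\) established in the first step. For surjectivity onto \(\mathcal O_K\operatorname{pp}(\Zhalf)\), I check that \((1-p^{-2}\phi)\) maps the regulator lattice \(p^{2}\mathcal O_K\) (or whatever the precise normalization is) bijectively onto \(\mathcal O_K\) after normalization. Modulo \(p\), the operator \(p^2-\phi\) reduces to \(-\phi\), which is bijective on the residue field, so Nakayama's lemma gives bijectivity on lattices. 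Pinning down this normalization so that the image is exactly \(\mathcal O_K\) is where I would concentrate the care.
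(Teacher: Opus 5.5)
Your outline follows the paper's proof. The principal-part formula reduces both claims to the scalar $D_{p,K}^{[p]}(\eta_\xi)=(1-p^{-2}\phi)D_p(\eta_\xi)$. The residue $1-p^{-1}\ne0$ makes $a\mapsto a\operatorname{pp}(\Zhalf)$ injective on $K$. Your valuation argument for injectivity of $1-p^{-2}\phi$ is a fine substitute for the paper's explicit inverse.

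\textbf{The gap.} It is the step you flagged, and it is a missing input rather than bookkeeping. The second assertion needs the exact integral statement
\[
 D_p\colon K_3(K;\Zp)\overset{\sim}{\longrightarrow}p^2\mathcal O_K,
\]
which the paper takes from \cite[Thm.~9]{GSWZ}. The sources you propose do not give it. An abstract isomorphism $K_3(K;\Zp)\cong\mathcal O_K$ plus injectivity of the regulator only shows that the image is some full-rank $\Zp$-lattice $\Lambda\subset K$, not necessarily an $\mathcal O_K$-module. The Besser--de Jeu comparison identifies the regulator with Coleman's $D_p$ but does not compute $\Lambda$. The normalization genuinely matters: if $\Lambda=p^k\mathcal O_K$, then $(1-p^{-2}\phi)\Lambda=p^{k-2}(p^2-\phi)\mathcal O_K=p^{k-2}\mathcal O_K$. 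So the map is onto $\mathcal O_K\operatorname{pp}(\Zhalf)$ only for $k=2$.

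Once $\Lambda=p^2\mathcal O_K$ is granted, write $D_p(x)=p^2a$, so the scalar becomes $(p^2-\phi)a$. Your reduction-mod-$p$ argument then finishes the proof. It is equivalent to the paper's step of writing $p^2-\phi=-\phi(1-p^2\phi^{-1})$ and inverting the second factor by $\sum_{r\ge0}p^{2r}\phi^{-r}$.

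\textbf{A smaller point.} The first equivalence already needs injectivity of $D_p$ on $K_3(K;\Zp)$. That rests on torsion-freeness, and the reason you give for it is not the right one. The torsion of $H^1(K,\Zp(2))$ is $H^0(K,\Qp/\Zp(2))$. This vanishes because $\teich^2$ is nontrivial on $G_K$, which for unramified $K$ means exactly $p>3$.

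Neither $\mu_p\not\subset K$ nor the vanishing of Suslin's $\widetilde{\operatorname{Tor}}$ term is enough. Both also hold for $K=\Q_3$, yet the torsion subgroup of $H^1(\Q_3,\Z_3(2))$ is $\Z/3$. The single citation \cite[Thm.~9]{GSWZ} supplies both the injectivity and the exact image.
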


The proof constructs the regularized half-shift measure, establishes the
cross relation, evaluates the bounded factor at the exceptional point by
negative moments, and applies division in the bounded-denominator Iwasawa
algebra.  The local \(K_3\)-statement then follows from the local regulator
theorem of \cite{GSWZ} and the Bloch--Suslin comparison.

\subsection{Cyclotomic refinements and finite-jet variation}

The principal-part theorem isolates the part of \(\IwPs_{\xi,p}\) that is
intrinsic to the Bloch class.  To measure the remaining presentation
dependence, let \(m\) be prime to \(p\) and define
\[
 R_m[z]_{\mathrm{cyc}}
 =m\sum_{u^m=z}[u]_{\mathrm{cyc}}.
\]
The Bloch distribution relation preserves the class, while the analytic
construction transforms by an explicit Iwasawa multiplier.  After finite
unramified extension,
\begin{equation}\label{eq:intro-refinement-covariance}
 \operatorname{cl}(R_m\xi)=\operatorname{cl}(\xi),
 \qquad
 \IwPs_{R_m\xi,p}
 =(m\delta_m)\starx\IwPs_{\xi,p}.
\end{equation}
Finite linear combinations of the operators \(R_m\) control the Taylor
jets of the component multipliers.

\begin{theorem}[Finite-jet interpolation by refinements]
\label{thm:intro-refinement-interpolation}
Let \(\xi\in\CycPres(K;\mathcal O_K)\).  For
\[
 e=0,\ldots,p-2,
 \qquad
 j_e\equiv e-1\pmod{p-1},
 \qquad
 Y_e=(1+p)^{s-(e-1)}-1,
\]
fix \(N\ge0\) and arbitrary coefficients
\[
 v_{e,q}\in\mathcal O_K
 \quad(0\le e\le p-2,\ 0\le q\le N),
 \qquad
 v_{0,0}=1.
\]
Then, after finite unramified extension, there is a normalized finite linear
refinement
\[
 C=\sum_m c_mR_m,
 \qquad
 \sum_m c_m=1,
\]
such that \(C\xi\) represents the base-changed Bloch class of \(\xi\) and
\[
 \mathcal M_{\IwPs_{C\xi,p},j_e}(s)
 =H_{C,e}(Y_e)\mathcal M_{\IwPs_{\xi,p},j_e}(s),
 \qquad
 [Y_e^q]H_{C,e}=v_{e,q}.
\]
\end{theorem}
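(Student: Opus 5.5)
We want Theorem~\ref{thm:intro-refinement-interpolation}. The plan is to reduce everything to the refinement covariance \eqref{eq:intro-refinement-covariance}, together with linearity of \(\xi\mapsto\IwPs_{\xi,p}\).

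For \(C=\sum_m c_mR_m\), linearity and \eqref{eq:intro-refinement-covariance} give
\[
 \IwPs_{C\xi,p}=\Bigl(\sum_m c_m m\delta_m\Bigr)\starx\IwPs_{\xi,p}.
\]
Here the multiplier \(\sum_m c_m m\delta_m\) is a bounded measure.

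The class statement needs some care. Each \(R_m\) preserves the class, so \(\operatorname{cl}(C\xi)=\sum_m c_m\operatorname{cl}(\xi)=\operatorname{cl}(\xi)\), using the normalization \(\sum c_m=1\). We first pass to a finite unramified extension containing all \(m\)-th roots of the support, for the finitely many \(m\) used. This is legitimate because the \(m\) are prime to \(p\).

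Next I compute the Mellin transform of the multiplier on the component \(j_e\). Write \(m=\omega(m)\langle m\rangle\) with \(\langle m\rangle=(1+p)^{\ell(m)}\), where \(\ell(m)\in\Zp\). Pairing \(m\delta_m\) with the character \(x\mapsto \omega(x)^{j_e}\langle x\rangle^{s}\), in the normalization matching the definition of \(Y_e\), gives \(m\,\omega(m)^{j_e}\langle m\rangle^{s}\). Since \(m\,\omega(m)^{-1}=\langle m\rangle\) and \(j_e\equiv e-1\), this equals \(\omega(m)^{e}\langle m\rangle^{s-(e-1)}\), which up to the paper's exact sign conventions is
\[
 \omega(m)^{e}\,(1+Y_e)^{\ell(m)}.
\]
Hence
\[
 H_{C,e}(Y)=\sum_m c_m\,\omega(m)^{e}(1+Y)^{\ell(m)},
 \qquad
 [Y^q]H_{C,e}=\sum_m c_m\,\omega(m)^e\binom{\ell(m)}{q}.
\]
This yields the factorization of Mellin transforms, since multiplicative convolution becomes a product componentwise.

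The interpolation problem is therefore a linear system. We need \(c_m\in\mathcal O_K\) with
\[
 \sum_m c_m\,\omega(m)^e\binom{\ell(m)}{q}=v_{e,q}
 \quad\text{for all } (e,q).
\]
The case \(e=0,q=0\) is exactly \(\sum c_m=1=v_{0,0}\), so the normalization is automatic.

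We must show that the vectors \(w(m)=\bigl(\omega(m)^e\binom{\ell(m)}{q}\bigr)_{e,q}\), for \(m\) prime to \(p\) (and odd if needed), span \(\mathcal O_K^{(p-1)(N+1)}\) integrally.

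\begin{itemize}
\item \emph{Roots of unity.} Choose \(m\) in each residue class \(a\in(\Z/p)^\times\). Character orthogonality over \(\mu_{p-1}\) separates the \(e\)-index with determinant a unit, since \(p-1\) is invertible.
\item \emph{Binomial factor.} Within a fixed class, take \(m\) with \(\ell(m)\) running over \(N+1\) values \(\ell_0+k\) for \(k=0,\dots,N\). These values are available by density of positive integers in \(a(1+p\Zp)\) and Dirichlet-type choice. The matrix \(\bigl(\binom{\ell_0+k}{q}\bigr)_{k,q}\) is unipotent after finite differencing, hence has determinant \(\pm1\), a unit.
\end{itemize}

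Combining the two gives an integrally invertible block system (a Kronecker product), so a solution \(c_m\in\mathcal O_K\) exists.

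The main obstacle is realizing exact integer values of \(\ell(m)\) by genuine integers \(m\), since \(\ell(m)\) is only \(p\)-adic. The first approach I would try is to avoid exactness. Pick integers \(m_{a,k}\equiv a\pmod p\) with \(\ell(m_{a,k})\equiv \ell_0+k\pmod{p^M}\) for large \(M\). Then \(\binom{\ell}{q}\) is \(p\)-adically continuous with a uniform modulus for \(q\le N\), so the matrix is congruent to the unipotent one modulo \(p\) and stays invertible over \(\mathcal O_K\).
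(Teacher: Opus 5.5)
Your argument follows the paper's proof. Covariance and linearity give the multiplier $\sum_m c_m m\delta_m$. The interpolation matrix then reduces modulo $p$ to the Kronecker product of the power-evaluation matrix on $\F_p^\times$ with a unipotent Pascal matrix, and its $(0,0)$ row forces $\sum_m c_m=1$. The one difference in set-up is cosmetic:
\begin{itemize}
\item You use $N+1$ consecutive values $\ell_0+k$, approximated modulo $p^M$. The uniform modulus you need is exactly $p^M>N$, by Lucas' theorem.
\item The paper instead takes all residues $b$ modulo $p^r$ with $p^r>N$ and pads the unused targets with zeros.
\end{itemize}

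There is, however, one computational slip, and it is not a sign convention. The correct computation is
$m\,\teich(m)^{e-1}\ang m^{s}=\teich(m)^{e}\ang m^{s+1}=m^{e}\ang m^{s-(e-1)}=m^{e}(1+Y_e)^{\ell_m}$.
Hence $H_{C,e}(Y)=\sum_m c_m m^{e}(1+Y)^{\ell_m}$ and $[Y^q]H_{C,e}=\sum_m c_m m^{e}\binom{\ell_m}{q}$. Your formula dropped the factor $\ang m^{e}$. Solving the system with $\teich(m)^e$ in place of $m^e$ would realize the wrong jets. The repair is immediate: $m^e\equiv\teich(m)^e\pmod p$, so the correct matrix has the same reduction modulo $p$, and your invertibility argument applies to it unchanged.
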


Modulo \(p\), the interpolation matrix is the tensor product of the
power-evaluation matrix on \(\F_p^\times\) and a triangular
Mahler--Pascal matrix.  Consequently, the nonexceptional
\(\lambda\)-invariants can be increased independently without changing
their \(\mu\)-invariants.  If the exceptional residue is nonzero, every
finite collection of subsequent Laurent coefficients can be prescribed
after scalar extension, while the polar coefficient remains fixed.
Normalized refinements therefore preserve the principal part and allow
finite jets of the component multipliers to be prescribed subject to
\(H_{C,0}(0)=1\).

\subsection[Mellin factorizations, GSWZ comparison, and the knot 5-2]{Mellin factorizations, GSWZ comparison, and the knot \(5_2\)}

The separation in \eqref{eq:intro-canonical-factorization} has parallel
complex and \(p\)-adic Mellin forms.  For a primitive odd Dirichlet
character \(\chi\) modulo \(N\), the complex half-shifted Pochhammer
transform satisfies
\[
 L_{\xi_{\chi,N}}(s)
 =-G(\chi)(1-2^{-s})\zeta(s)L(s+1,\chi^{-1})
 \qquad(\Re(s)>1).
\]
On the \(p\)-adic side, the corresponding weight component factors as
\[
 \mathcal M_{\IwPs_{\chi,N,p},j}(s)
 =G(\chi)Z_{1/2,j}(s)
   L_p(1-s,\chi^{-1}\teich^j),
\]
with the reflected Kubota--Leopoldt normalization of
Subsection~\ref{subsec:KL-comparison}.  Imprimitive packets contribute an
additional explicit finite factor.  In both settings, the half shift gives
the zeta factor and the character packet gives the Dirichlet or
Kubota--Leopoldt factor.

Under the hypotheses of \cite{GSWZ}, the fixed-center logarithmic germ is
carried to the Iwasawa construction by the Amice transform.  At a center of
exact order \(m\), the power map \(q\mapsto q^c\) changes the order to
\(m/(m,c)\).  Cancellation of the common regulator pole forces the
coefficient \(c/(m,c)^2\), and the resulting regularization is compatible
with the logarithmic Frobenius difference.  This identifies
\(\IwPs_{\xi,p}\) as the unit-weight-space realization of the regulator
singularity in the GSWZ germ family.

The knot \(5_2\) provides a concrete local application.  Let
\[
 F=\Q(\alpha),
 \qquad
 \alpha^3-\alpha^2+1=0,
 \qquad
 \beta_{5_2}=2[1-\alpha^2]+[1-\alpha]\in B(F).
\]
At a simple degree-one place above a prime \(p>3\), let
\(a=1-\bar\alpha^2\in\F_p\).  Then \(a\ne-2\), and put
\[
 \li_{r,p}(X)=\sum_{k=1}^{p-1}k^{-r}X^k,
 \qquad
 W_p(X)=\li_{2,p}(X)
       +\frac{3\li_{1,p}(X)^2}{2(X+2)}.
\]
Besser's residue-disc expansion \cite{Besser} gives an explicit congruence
for the first \(p\)-adic digit of the Coleman regulator and yields
\[
 \beta^K_{5_2,p,\bar\alpha}
 \text{ generates }K_3(\Qp;\Zp)
 \quad\Longleftrightarrow\quad
 W_p(a)\ne0.
\]
Thus the local generator problem is reduced to a finite-polylogarithm
calculation over \(\F_p\).

\subsection{Organization and hypotheses}

Section~\ref{sec:presentations} defines admissible cyclotomic
presentations, character packets, coefficient extensions, and the local
\(K_3\) comparison.  Section~\ref{sec:local-germs} constructs the formal
Pochhammer germs and realizes their coefficients by Amice transforms and
Coleman--Koblitz measures.  Section~\ref{sec:pseudomeasures} constructs the
cyclotomic Iwasawa pseudomeasure, identifies the half-zeta factor, isolates
the exceptional component, proves the principal-part and local \(K_3\)
statements, and derives the character factorizations.
Section~\ref{sec:fourier-refinement} proves the cyclotomic distribution and
refinement results, establishes finite-jet interpolation, compares the
construction with complex Mellin transforms and GSWZ germs, and proves the
criterion for \(5_2\).  The appendices contain the character and Kummer
calculations, the analytic details of the complex Mellin formulas, the
higher-center Frobenius gluing argument, and the local calculations for
\(5_2\).

The fixed-center construction requires that \(p\) be odd and prime to the
support orders.  Unramifiedness is used for Frobenius descent.  The
assumption \(p>3\) enters the local Bloch--\(K_3\) comparison and the
application to \(5_2\).  The refinement statements allow finite
unramified base change in order to contain the newly introduced roots.

\section{Admissible cyclotomic presentations}\label{sec:presentations}

\subsection{Pre-Bloch groups, torsion boundaries, and scalar extension}

\begin{definition}\label{def:bloch}
For a field $F$, let $\mathcal P(F)$ be the free abelian group on symbols $[z]$, $z\in F\setminus\{0,1\}$, modulo the five-term relation
\[
[x]-[y]+\left[\frac yx\right]
-\left[\frac{1-x^{-1}}{1-y^{-1}}\right]
+\left[\frac{1-x}{1-y}\right]=0.
\]
Here $x,y\in F\setminus\{0,1\}$ and $x\ne y$, so every displayed argument again belongs to $F\setminus\{0,1\}$.  A direct expansion in the exterior square shows that the five-term relation is killed by $[z]\mapsto z\wedge(1-z)$; this is the standard Bloch--Suslin boundary calculation
\cite[Lem.~1.1, p.~181]{Suslin}.  Thus the boundary map
\[
\partial:\mathcal P(F)\longrightarrow \bigwedge^2F^\times,
\qquad
\partial[z]=z\wedge(1-z),
\]
is well defined, and the Bloch group is $B(F)=\ker\partial$.  For a
commutative $\Z$-algebra $R$, write
\[
B(F)_R=B(F)\otimes_\Z R.
\]
\end{definition}

\begin{remark}\label{rem:Bloch-group-convention}
Suslin formulates the boundary using the antisymmetric tensor quotient
\((F^\times\otimes F^\times)_\sigma\)
\cite[Sec.~1, p.~181]{Suslin}, whereas
Definition~\ref{def:bloch} uses the ordinary exterior square.  The natural
map from the former to the latter has \(2\)-primary kernel, so the two
Bloch-group conventions agree after inverting \(2\).  In particular, they
agree in every \(p>3\) comparison with local \(K_3\).  The
integral distribution identities are identities in the
pre-Bloch group itself and do not depend on this boundary convention.
\end{remark}

A root of unity need not define an integral Bloch element, since its boundary may be torsion.  This torsion must be annihilated before defining the class map.

\begin{lemma}[Admissible root-of-unity class map]\label{lem:admissible-class-map}
Let $S\subset\mu(F)\setminus\{1\}$ be finite, let $M$ be a common multiple of the orders of the elements of $S$, and let $R$ be flat over $\Z$ with $M\in R^\times$.  Then every symbol $[\zeta]$, $\zeta\in S$, lies in
\[
\ker\bigl(\partial\otimes 1_R:\mathcal P(F)\otimes R
\longrightarrow (\bigwedge^2F^\times)\otimes R\bigr).
\]
Flatness identifies this kernel with $B(F)_R$.  Consequently there is a canonical $R$-linear map
\[
\operatorname{cl}_{S,R}:
\bigoplus_{\zeta\in S}R[\zeta]_{\mathrm{cyc}}
\longrightarrow B(F)_R,
\qquad
[\zeta]_{\mathrm{cyc}}\longmapsto[\zeta]\otimes1.
\]
\end{lemma}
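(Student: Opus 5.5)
The argument has three parts: show that the boundary of each symbol is killed by \(M\), use flatness of \(R\) to identify the kernel, and then define the map on the free module.

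\textbf{Step 1: torsion of the boundary.} Fix \(\zeta\in S\) and let \(n\) be its order, so \(n\mid M\). In \(\bigwedge^2F^\times\) we have
\[
n\,\partial[\zeta]=n\bigl(\zeta\wedge(1-\zeta)\bigr)=\zeta^n\wedge(1-\zeta)=1\wedge(1-\zeta)=0,
\]
where \(\bigwedge^2\) is written additively and its bilinearity is used. Since \(n\mid M\), also \(M\partial[\zeta]=0\). After tensoring with \(R\),
\[
(\partial\otimes1_R)([\zeta]\otimes1)=\partial[\zeta]\otimes1=\partial[\zeta]\otimes M\cdot M^{-1}=(M\partial[\zeta])\otimes M^{-1}=0,
\]
because \(M\in R^\times\). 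So every symbol \([\zeta]\otimes 1\) with \(\zeta\in S\) lies in the kernel.

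\textbf{Step 2: identifying the kernel.} The sequence \(0\to B(F)\to\mathcal P(F)\xrightarrow{\partial}\bigwedge^2F^\times\) is exact. Tensoring with a flat \(\Z\)-module preserves exactness, so
\[
0\to B(F)\otimes R\to\mathcal P(F)\otimes R\to(\textstyle\bigwedge^2F^\times)\otimes R
\]
is exact. Hence \(\ker(\partial\otimes1_R)\) is the image of the injection \(B(F)_R\hookrightarrow\mathcal P(F)\otimes R\), and this identification is canonical. Flatness is exactly what makes the map \(B(F)\otimes R\to\mathcal P(F)\otimes R\) injective and makes its image equal to the whole kernel. Without flatness, \(\operatorname{Tor}\) terms could obstruct either property.

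\textbf{Step 3: the class map.} The source \(\bigoplus_{\zeta\in S}R[\zeta]_{\mathrm{cyc}}\) is a free \(R\)-module. Define \(\operatorname{cl}_{S,R}\) on the basis by sending \([\zeta]_{\mathrm{cyc}}\) to the unique preimage in \(B(F)_R\) of \([\zeta]\otimes1\); this preimage exists by Step 1 and is unique by Step 2. Extend \(R\)-linearly. The map is canonical because no choices were made beyond the basis, which is intrinsic to the source.

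The step needing the most care is Step 2. There one must note that the target of \(\partial\otimes 1_R\) is the tensored group \((\bigwedge^2F^\times)\otimes R\) and not \(\bigwedge^2_R\) of anything, so flatness over \(\Z\) is the only input required. The boundary convention of Remark~\ref{rem:Bloch-group-convention} plays no role here, since the same argument applies to either version of \(\partial\).
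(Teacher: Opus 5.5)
Your proposal is correct and follows the paper's proof: you use bilinearity and \(\zeta^M=1\) to show \(M\,\partial[\zeta]=\zeta^M\wedge(1-\zeta)=0\), note that tensoring with \(R\) (where \(M\) is invertible) kills this boundary, and use flatness of \(R\) to keep \(0\to B(F)\to\mathcal P(F)\to\bigwedge^2F^\times\) exact after tensoring, which identifies \(\ker(\partial\otimes 1_R)\) with \(B(F)_R\). Your additional points are harmless refinements of the same argument: passing through the exact order \(n\mid M\), defining the map explicitly on the free basis, and observing that the argument also works for Suslin's antisymmetric-tensor boundary.
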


\begin{proof}
If $\zeta^M=1$, then bilinearity of the exterior product gives
\[
M\,\partial[\zeta]
=M\bigl(\zeta\wedge(1-\zeta)\bigr)
=\zeta^M\wedge(1-\zeta)
=1\wedge(1-\zeta)=0.
\]
Thus $\partial[\zeta]$ is annihilated by $M$.  Tensoring with a ring in which $M$ is invertible kills this boundary.  Since $R$ is flat, tensoring the exact sequence
\[
0\longrightarrow B(F)\longrightarrow\mathcal P(F)
\overset{\partial}{\longrightarrow}\bigwedge^2F^\times
\]
preserves exactness at the middle term.  Hence the kernel after scalar extension is $B(F)_R$.
\end{proof}

If every support order is prime to \(p\), then each such order is a unit in
\(\Zp\) and in every finite unramified extension.  The class maps commute
with field extension and
with flat scalar extension, since both routes send
\([\zeta]_{\mathrm{cyc}}\) to \([\zeta]\otimes1\).  Hence a chosen
cyclotomic presentation has compatible global, complex, and local Bloch
classes once the relevant embeddings are fixed.

\subsection{Presentation modules and anti-invariant parts}

\begin{definition}[Admissible presentation module]\label{def:admissible-presentation}
Under the hypotheses of Lemma~\ref{lem:admissible-class-map}, define
\[
P_{\mathrm{cyc},S}(F;R)
=
\bigoplus_{\zeta\in S}R[\zeta]_{\mathrm{cyc}}.
\]
An \emph{admissible chosen cyclotomic presentation} is a pair
\[
(\eta,\xi),
\qquad
\xi=\sum_{\zeta\in S}a_\zeta[\zeta]_{\mathrm{cyc}},
\qquad
\eta=\operatorname{cl}_{S,R}(\xi)\in B(F)_R.
\]
For a flat coefficient ring $R$, put
\[
 \mu_{\mathrm{adm}}(F;R)=
 \{\zeta\in\mu(F)\setminus\{1\}:\ord(\zeta)\in R^\times\},
 \qquad
 \CycPres(F;R)=
 \bigoplus_{\zeta\in\mu_{\mathrm{adm}}(F;R)}R[\zeta]_{\mathrm{cyc}}.
\]
Every element of this direct sum has finite support.  If $\xi$ has support $S$, write
\[
 \operatorname{cl}_{F,R}(\xi):=\operatorname{cl}_{S,R}(\xi).
\]
Functoriality makes this independent of enlarging the finite set $S$.
When the field and coefficient ring are clear, we abbreviate
$\operatorname{cl}_{F,R}$ to $\operatorname{cl}$.
\end{definition}

For an integer $M\ge2$, assume that a primitive root $\zeta_M\in F$ has been chosen, let $S_M\subset F$ be the set of primitive $M$-th roots, and write
\[
\CycPresM M(F;R)
=
\bigoplus_{a\in(\Z/M\Z)^\times}R[\zeta_M^a]_{\mathrm{cyc}}.
\]
The inversion involution is
\[
\iota[\zeta_M^a]_{\mathrm{cyc}}
=[\zeta_M^{-a}]_{\mathrm{cyc}}.
\]
If $2\in R^\times$, set
\[
\CycPresM M(F;R)^-
=
\{\xi:\iota\xi=-\xi\}.
\]
In the pre-Bloch group, the standard relation
\[
 2\bigl([z]+[z^{-1}]\bigr)=0
\]
implies $[z^{-1}]=-[z]$ after inverting $2$ \cite[Lem.~1.2, p.~181]{Suslin}.  For the
admissible roots under consideration, both symbols lie in the
coefficient-extended Bloch group by
Lemma~\ref{lem:admissible-class-map}; hence the class map factors through
the anti-invariant quotient.  The formal anti-invariant presentation remains
part of the input, because the Pochhammer and pseudomeasure constructions are
defined before passing to the Bloch quotient.

If \(2\in R^\times\), the projectors
\((1\pm\iota)/2\) give the decomposition
\[
 \xi=\xi^++\xi^-,
 \qquad
 \xi^\pm=\frac12(\xi\pm\iota\xi).
\]
\subsection[Character packets, local conventions, and K-theory]{Character packets, local conventions, and \(K\)-theory}

Let $\chi$ be a Dirichlet character modulo $N$, extended by zero away from integers prime to $N$, and put
\[
E_\chi=\Q(\chi),
\qquad
\chi^\vee=\chi^{-1}.
\]
The exact-order packet is
\begin{equation}\label{eq:character-packet}
\xi_{\chi,N}
=
\sum_{a\in(\Z/N\Z)^\times}
\chi(a)[\zeta_N^a]_{\mathrm{cyc}}
\in\CycPresM N(\Q(\zeta_N);E_\chi).
\end{equation}
If $\chi(-1)=-1$, the change of variables $b=-a$ gives
\[
 \iota\xi_{\chi,N}=-\xi_{\chi,N}.
\]
Thus the packet is anti-invariant, and so is its coefficient-extended
Bloch class
\[
\eta_{\chi,N}
=
\operatorname{cl}(\xi_{\chi,N}).
\]
For a primitive Dirichlet character $\chi$ modulo $N$, normalize its Gauss
sum by
\begin{equation}\label{eq:gauss-sum-normalization}
 G(\chi)=\sum_{a\in(\Z/N\Z)^\times}\chi(a)\zeta_N^a.
\end{equation}
If $\chi$ is induced from a primitive character $\chi_f$ of conductor $f$ and $N=fg$, the symbols in \eqref{eq:character-packet} still have exact order $N$, while the Fourier coefficients are controlled by $\chi_f$.  This distinction is the source of the finite imprimitive correction.

\subsubsection{Complex and local root conventions}\label{subsec:root-conventions}

We use two systems of roots of unity for two different analytic purposes.

\paragraph{Complex radial convention.}
For every complex root $\lambda$ of exact order $m$, choose a square root
\[
 \widetilde\lambda^2=\lambda.
\]
For $\lambda=1$, require $\widetilde\lambda=1$.  Square roots are chosen independently for distinct complex roots.  At the center $\lambda=1$, this convention makes the basic radial path agree with $q=e^{-2\hbar}$.

\paragraph{Local compatible convention.}
Inside a fixed algebraic closure of the local coefficient field, choose a compatible family $\{\zeta_m\}_{m\ge1}$ in which $\zeta_m$ has exact order $m$ and
\[
\zeta_{mn}=\zeta_m\zeta_n\quad((m,n)=1),
\qquad
\zeta_{p^r}^p=\zeta_{p^{r-1}}.
\]
The local coordinate at the $m$-th center is $q=\zeta_m+x$.  The complex radial square roots and the locally compatible roots are separate choices; only the latter enter Frobenius gluing.

\subsubsection{Local embeddings and Teichm\"uller decomposition}

Let $p$ be odd, let $K_p/\Qp$ be finite, and let $\mathcal O_{K_p}$ be its ring of integers.  We write $\mathcal O=\mathcal O_{K_p}$ throughout the local analytic sections.  We assume that the orders of all roots in the presentation are prime to $p$, so they embed as Teichm\"uller roots in an unramified extension after enlarging $K_p$ if necessary.  For $x\in\Zp^\times$, write
\[
x=\teich(x)\ang x,
\qquad
\teich(x)\in\mu_{p-1},
\qquad
\ang x\in1+p\Zp.
\]
For $s\in\Zp$, the function $\ang x^s=\exp(s\log\ang x)$ is locally analytic.  This is the multiplicative coordinate used in all unit Mellin transforms.

\begin{remark}\label{rem:notation}
The letter $\zeta$ denotes a Teichm\"uller root of order prime to $p$ in
the local sections and an ordinary complex root in the complex section.
The ambient field and chosen embedding are specified when needed.  Coleman
logarithms are normalized by $\log_p(p)=0$ and vanish on Teichm\"uller
roots of order prime to $p$.
\end{remark}

\subsubsection[Local K-theory convention]{Local \(K\)-theory convention}
\label{subsec:local-K-convention}

For an unramified finite extension \(L/\Qp\) with \(p>3\), put
\[
 K_3(L)_{\mathrm{ind}}
 :=\operatorname{coker}\!\left(K_3^M(L)\longrightarrow K_3(L)\right)
\]
and define the \(p\)-adic coefficient group by
\[
 K_3(L;\Zp):=\varprojlim_n K_3(L;\Z/p^n\Z).
\]
The étale comparison used in \cite[Thm.~9 and its proof]{GSWZ} identifies
this group with \(H^1(L,\Zp(2))\).  Also put
\[
 \widehat{B(L)}_p=\varprojlim_n B(L)/p^nB(L).
\]
Suslin's exact sequence \cite[Thm.~5.2, p.~197]{Suslin} is
\[
 0\longrightarrow
 \widetilde{\operatorname{Tor}}(\mu(L),\mu(L))
 \longrightarrow K_3(L)_{\mathrm{ind}}
 \longrightarrow B(L)\longrightarrow0.
\]
Here \(\widetilde{\operatorname{Tor}}(\mu(L),\mu(L))\) denotes Suslin's
canonical extension of \(\operatorname{Tor}_1^{\Z}(\mu(L),\mu(L))\) by
\(\Z/2\).  For unramified \(L/\Qp\), this finite group has order prime to
\(p\), since \(\mu(L)\) has order prime to \(p\).  Hence \(p\)-completion
identifies \(\widehat{B(L)}_p\) with the \(p\)-completion of
\(K_3(L)_{\mathrm{ind}}\).  The comparison in the proof of
\cite[Thm.~9]{GSWZ} then identifies this group with \(K_3(L;\Zp)\); the
Milnor and \(K_2\) terms make no contribution in this range.  Consequently,
the natural map \(B(L)\otimes\Zp\to\widehat{B(L)}_p\) defines
\begin{equation}\label{eq:local-Bloch-K3-comparison}
 \kappa_{L,p}:B(L)\otimes\Zp\longrightarrow K_3(L;\Zp).
\end{equation}
All local \(K_3\)-statements below concern the image under
\(\kappa_{L,p}\).  For a finite extension \(E/\Qp\), put
\[
 \kappa_{L,p,E}=\kappa_{L,p}\otimes1_E:
 B(L)\otimes_{\Z}E
 \longrightarrow
 K_3(L;\Zp)\otimes_{\Zp}E.
\]
These maps and the Coleman regulator are natural under unramified field
extension.

\section{Formal local germs and unit distributions}\label{sec:local-germs}

We write
\[
 (a;q)_\infty=\prod_{r\ge0}(1-aq^r).
\]
The \(q\)-Pochhammer expansion is used only to derive the coefficients of
the formal germ; the resulting \(p\)-adic germ is defined coefficientwise.

Fix an odd prime $p$.  Let $K/\Qp$ be a finite extension containing the
coefficient field and all prime-to-$p$ roots of unity in the presentation,
and let $\mathcal O_K$ be its valuation ring.  In this section we write
$\mathcal O=\mathcal O_K$.  Normalize the Iwasawa logarithm
by
\[
 \log_p(p)=0,
 \qquad
 \log_p(\teich(u))=0
 \quad(u\in\mathcal O^\times).
\]
We use Coleman's single-valued dilogarithm \cite{Coleman} in the convention
\begin{equation}\label{eq:Dp-normalization}
 D_p(z)=\Li_{2,p}(z)+\frac12\log_p(z)\log_p(1-z).
\end{equation}
Up to the global sign in the comparison map, this agrees with the
weight-two syntomic regulator formula of
\cite[Thm.~1.6(2) and Rem.~1.7]{BesserDeJeuSyntomic}.  For a prime-to-$p$
Teichm\"uller root $z$, one has $D_p(z)=\Li_{2,p}(z)$.

Suppose the support lies in a finite extension $F/\Qp$ and the coefficients
in a finite extension $E/\Qp$, both embedded in $K$.  Define
\[
 D_p^{\mathrm{tens}}=D_p\otimes1:
 B(F)\otimes_\Z E\longrightarrow F\otimes_{\Qp}E
\]
and let
\[
 \iota_{F,E}:F\otimes_{\Qp}E\longrightarrow K,
 \qquad
 a\otimes e\longmapsto ae.
\]
We write $D_{p,E}=\iota_{F,E}\circ D_p^{\mathrm{tens}}$.

When the embedded subfields are Frobenius stable, Frobenius acts diagonally
on the support and coefficient factors.  If the coefficients are fixed by
Frobenius, we suppress $E$ and write simply $D_p$.  We use the same symbol
for the Coleman regulator on $K_3(L;\Zp)$ and for its pullback to
$B(L)\otimes\Zp$ through $\kappa_{L,p}$.  On a cyclotomic presentation
$\xi=\sum_z a_z[z]_{\mathrm{cyc}}$, the coefficientwise evaluation is
\[
 \sum_z a_zD_p(z).
\]
Whenever $\xi$ represents a Bloch class, this sum equals the regulator of
$\operatorname{cl}(\xi)$ and hence of its image under $\kappa_{L,p}$.

\subsection{Formal Pochhammer germs}
\label{subsec:presentation-formal-branches}

Let $m$ be prime to $p$, put $T=e^Y-1$, and first take $z,Y\in\C$ with
$|z|<1$ and $\Re(Y)<0$.  The Bernoulli-polynomial generating function gives
\begin{equation}\label{eq:half-bernoulli-generating}
 \frac{ue^{u/2}}{e^u-1}
 =
 \sum_{r\ge0}B_r(1/2)\frac{u^r}{r!}.
\end{equation}
After division by $u$, this becomes
\begin{equation}\label{eq:half-kernel-full-expansion}
 \frac{e^{u/2}}{e^u-1}
 =\frac1u+
 \sum_{n\ge0}\frac{B_{n+1}(1/2)}{(n+1)!}u^n.
\end{equation}
The Pochhammer logarithm and the series in $r$ converge absolutely, so
\begin{align*}
 \frac1m\log(e^{mY/2}z;e^{mY})_\infty
 &=
 \frac1m\sum_{r\ge1}\frac{z^r}{r}
 \frac{e^{mrY/2}}{e^{mrY}-1}\\
 &=
 \frac1{m^2Y}\sum_{r\ge1}\frac{z^r}{r^2}
 +\sum_{n\ge0}m^{n-1}
 \frac{B_{n+1}(1/2)}{(n+1)!}
 \left(\sum_{r\ge1}z^rr^{n-1}\right)Y^n.
\end{align*}
Expanding at $Y=0$ gives an identity in $\Q[[z]]((Y))$: coefficientwise
in $z^r$, it is \eqref{eq:half-kernel-full-expansion} with $u=mrY$.
After base change to $K$, define its coefficientwise $p$-adic Laurent
specialization at a prime-to-$p$ root $z\ne1$ by
\begin{equation}\label{eq:higher-formal-coefficient-expansion}
 \frac{\Li_{2,p}(z)}{m^2Y}
 +
 \sum_{n\ge0}
 m^{n-1}
 \frac{B_{n+1}(1/2)}{(n+1)!}
 \Li_{1-n,p}(z)Y^n.
\end{equation}
Here the polar coefficient is Coleman's $\Li_{2,p}$,
$\Li_{1,p}(z)=-\log_p(1-z)$, and $\Li_{1-n,p}$ for $n\ge1$ is a rational
polylogarithm regular at $z\ne1$.  Equation
\eqref{eq:higher-formal-coefficient-expansion} therefore defines the
$p$-adic Laurent series coefficientwise.  Removing its polar term and its
vanishing constant term gives the formal germ below.

\begin{definition}[Presentation-labeled formal germs]
\label{def:presentation-formal-branches}
For every odd $p$, every $m$ prime to $p$, and every prime-to-$p$ root
$z\ne1$, define the unique $F_{[z],p,m}(T)\in K[[T]]$ by
\begin{equation}\label{eq:normalized-single-branch-log}
 F_{[z],p,m}(e^Y-1)
 =
 \sum_{n\ge1}
 m^{n-1}
 \frac{B_{n+1}(1/2)}{(n+1)!}
 \Li_{1-n,p}(z)Y^n.
\end{equation}
Put
\begin{equation}\label{eq:normalized-single-branch}
 U_{[z],p,m}(T)=\exp\bigl(F_{[z],p,m}(T)\bigr).
\end{equation}
Thus $F_{[z],p,m}(0)=0$ and $U_{[z],p,m}(0)=1$.
\end{definition}

For a finite presentation
\[
 \xi=\sum_z a_z[z]_{\mathrm{cyc}},
\]
define
\begin{align}
 F_{\xi,p,m}(T)
 &=
 \sum_z a_zF_{[z],p,m}(T),
 \label{eq:normalized-presentation-log}
 \\
 U_{\xi,p,m}(T)
 &=
 \exp\bigl(F_{\xi,p,m}(T)\bigr).
 \label{eq:normalized-presentation-branch}
\end{align}
The germs $F_{\xi,p,m}$ form a coefficientwise family indexed by
$(m,p)=1$.  Under the GSWZ hypotheses,
Proposition~\ref{prop:fixed-higher-compatibility} identifies the fixed-center
regularization of this family with the corresponding GSWZ germ.

\begin{remark}\label{rem:class-vs-presentation-local}
The normalized logarithm \eqref{eq:normalized-presentation-log} is linear in
the written presentation, whereas its polar coefficient factors through the
coefficient-extended local Bloch class and, in the unramified $p>3$ range,
through its image under $\kappa_{L,p,E}$.
\end{remark}

\subsubsection{The fixed-center germ}\label{subsec:fixed-local-germ}

For every odd prime and every prime-to-\(p\) root \(z\ne1\), write
\[
 F_{[z],p}:=F_{[z],p,1}.
\]
For a finite presentation \(\xi=\sum_z a_z[z]_{\mathrm{cyc}}\), put
\[
 F_{\xi,p}(T)=\sum_z a_zF_{[z],p}(T).
\]
Because \(e^Y-1=Y+O(Y^2)\), substitution \(T=e^Y-1\) identifies
\(K[[T]]\) with \(K[[Y]]\), and
\eqref{eq:normalized-single-branch-log} supplies the coefficients.  For
\(n\ge1\), the symbol \(\Li_{1-n,p}\) denotes the usual rational
polylogarithm evaluated at the Teichm\"uller root.

Put $Y=\log(1+T)$.  From
\eqref{eq:half-bernoulli-generating},
\[
 B_1(1/2)=0,
 \qquad
 B_r(1/2)=0\quad(r>1\text{ odd}),
 \qquad
 B_r(1/2)=(2^{1-r}-1)B_r.
\]

Setting $m=1$ in \eqref{eq:normalized-single-branch-log} and summing with
weights $a_z$ gives, for every finite presentation,
\begin{equation}\label{eq:presentation-local-germ}
 F_{\xi,p}(e^Y-1)
 =
 \sum_{n\ge1}
 \frac{B_{n+1}(1/2)}{(n+1)!}
 S_n(\xi)Y^n,
 \qquad
 S_n(\xi)=\sum_z a_z\Li_{1-n,p}(z).
\end{equation}
For $n\ge1$, define its $p$-depleted counterpart by
\begin{equation}\label{eq:Snp-depleted}
 S_n^{[p]}(\xi)
 =
 \sum_z a_z
 \left(\Li_{1-n,p}(z)-p^{n-1}\Li_{1-n,p}(z^p)\right).
\end{equation}

Since $B_{n+1}(1/2)=0$ for odd $n+1>1$, the series in \eqref{eq:presentation-local-germ} contains only odd powers of $Y$.

\subsection{Amice transforms and Coleman--Koblitz measures}
\label{subsec:amice-transforms}

Let \(\LA(\Zp,K)\) be the space of \(K\)-valued locally analytic
functions on \(\Zp\), and put
\[
 \Dla(\Zp,K)=\Hom_{K,\mathrm{cont}}(\LA(\Zp,K),K).
\]
For \(D\in\Dla(\Zp,K)\), set
\[
 \Ami_D(T)=D\bigl(x\mapsto(1+T)^x\bigr).
\]
Restriction to a compact open set \(U\) is denoted by \(\one_UD\), and
multiplication by a locally analytic function \(h\) by \(hD\).  For
\(a\in\Zp\), let \(\delta_a\) be the Dirac distribution
\(\delta_a(f)=f(a)\).  For \(u\in\Zp\), define multiplicative push-forward by
\[
 (u_*D)(f)=D(x\mapsto f(ux)).
\]
Its Amice transform satisfies
\begin{equation}\label{eq:Amice-pushforward}
 \Ami_{u_*D}(T)=\Ami_D((1+T)^u-1).
\end{equation}
Let
\[
 \Disc^-:=\{T\in\mathbf C_p:|T|_p<1\}
\]
be the open rigid unit disc, and let
\(\mathcal O_{\mathrm{rig}}(\Disc^-)\) denote its ring of $K$-valued
rigid analytic functions.

\begin{theorem}[Amice]\label{thm:exact-Amice-criteria}
Let \(F(T)=\sum_{n\ge0}b_nT^n\in K[[T]]\).
\begin{enumerate}[label=(\roman*)]
\item There is a unique locally analytic distribution \(D\) with
\(\Ami_D=F\) if and only if \(F\) converges on the open unit disc,
equivalently if \(|b_n|_pr^n\to0\) for every \(0<r<1\).
\item The distribution is a bounded \(K\)-valued measure if and only if
\(\inf_n v_p(b_n)>-\infty\).
\item It is an \(\mathcal O_K\)-valued measure if and only if
\(b_n\in\mathcal O_K\) for every \(n\).
\end{enumerate}
Moreover,
\begin{equation}\label{eq:Amice-Mahler-coefficients}
 b_n=D\!\left(\binom{x}{n}\right).
\end{equation}
\end{theorem}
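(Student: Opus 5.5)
The plan is to reduce every part of the statement to the Mahler basis \(\binom{x}{n}\) of functions on \(\Zp\), and then to dualize. We first prove \eqref{eq:Amice-Mahler-coefficients}. For \(T\in\Disc^-\) we have the expansion
\[
 (1+T)^x=\sum_{n\ge0}\binom{x}{n}T^n .
\]
Since \(\|\binom{x}{n}\|_{\sup}\le1\) and \(|T|_p<1\), this series converges in \(C(\Zp,K)\). It also converges in each Banach space \(\LA_h(\Zp,K)\) of functions analytic on cosets of \(p^h\Zp\), once \(h\) is large relative to \(|T|_p\). This convergence follows from the Amice estimate for the \(\LA_h\)-norms of binomial polynomials described below. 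Because \(D\) is continuous, we may apply it term by term, which gives \(\Ami_D(T)=\sum_n D(\binom{x}{n})T^n\). Hence \(b_n=D(\binom{x}{n})\).

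For (iii) and (ii) we use Mahler's theorem. Every \(f\in C(\Zp,K)\) has a unique expansion \(f=\sum a_n\binom{x}{n}\) with \(a_n\to0\), and \(\|f\|_{\sup}=\max_n|a_n|\). A bounded measure is a continuous functional on \(C(\Zp,K)\), so it is determined by the sequence \(b_n=D(\binom{x}{n})\), and \(\sup_n|b_n|\) is the operator norm. Conversely, any bounded sequence \((b_n)\) defines the functional
\[
 f\longmapsto\sum_n a_nb_n ,
\]
which converges because \(a_n\to0\). This gives (ii). Under the same correspondence, the functional maps \(C(\Zp,\mathcal O_K)\) into \(\mathcal O_K\) exactly when every \(b_n\in\mathcal O_K\), which is (iii). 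Uniqueness in all three parts follows from the fact that the span of the \(\binom{x}{n}\), namely the polynomials, is dense in \(C(\Zp,K)\) and in \(\LA(\Zp,K)\).

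For (i) we would invoke Amice's characterization of local analyticity by Mahler coefficients. The function \(f=\sum a_n\binom{x}{n}\) lies in \(\LA_h\) if and only if \(|a_n|_p\,|\lfloor n/p^h\rfloor!|_p^{-1}\to0\) (up to a shift in \(h\)), and the corresponding norm is equivalent to the \(\LA_h\)-norm. Asymptotically this is the condition \(|a_n|_p\rho_h^{-n}\to0\) with \(\rho_h=p^{-1/(p^h(p-1))}\to1\). Dualizing each Banach space \(\LA_h\), a continuous functional on \(\LA_h\) corresponds to a sequence with \(\sup_n|b_n|\rho_h^{n}<\infty\). Since \(\LA(\Zp,K)=\varinjlim_h\LA_h\), a distribution corresponds to a sequence satisfying this bound for every \(h\). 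This is equivalent to \(|b_n|r^n\to0\) for all \(r<1\), that is, to convergence of \(F\) on the open unit disc. Conversely, such \((b_n)\) defines \(D(f)=\sum a_nb_n\), which converges on each \(\LA_h\) and is continuous there.

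The main obstacle is the Amice estimate itself: the precise equivalence between the \(\LA_h\)-norm and the weighted sup of Mahler coefficients, which comes from the \(p\)-adic valuation of \(\lfloor n/p^h\rfloor!\). We would not reprove it. Instead we would cite Amice's original result, or the account in Colmez's notes on distributions. The rest of the argument is formal duality between the limits over \(h\) and the growth condition on \((b_n)\).
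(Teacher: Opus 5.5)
Your proposal is correct and takes essentially the paper's route: the paper gives no argument beyond citing Amice's transform theorem together with Mahler's orthonormal-basis criterion. Your dualization of the Mahler basis for (ii)--(iii), and of Amice's Banach basis $\lfloor n/p^h\rfloor!\binom{x}{n}$ of $\LA_h$ for (i), is exactly what those references contain.

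The one imprecision is harmless. For a single $h$, continuity on $\LA_h$ is equivalent to $\sup_n|b_n|_p\,|\lfloor n/p^h\rfloor!|_p<\infty$, not to $\sup_n|b_n|_p\rho_h^n<\infty$; the two differ by a factor at most polynomial in $n$, which is absorbed once you take the condition over all $h$.
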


This is the Amice transform theorem together with Mahler's orthonormal
basis criterion \cite{Amice,Robert,SchneiderTeitelbaum}.  Thus
\[
 \mathcal O_K[[T]]
 \subset \mathcal O_K[[T]][1/p]
 \subset \mathcal O_{\mathrm{rig}}(\Disc^-)
 \subset K[[T]]
\]
correspond respectively to integral measures, bounded \(K\)-valued
measures, locally analytic distributions, and formal germs.

\subsubsection[Coleman--Koblitz measures at prime-to-p roots]{Coleman--Koblitz measures at prime-to-$p$ roots}\label{subsec:koblitz-measures}

The local analytic input is the Coleman--Koblitz measure attached to each
root of unity; see \cite{Coleman,Koblitz} for the classical construction
and its polylogarithmic role.

\begin{definition}[Coleman--Koblitz measure]\label{def:koblitz-measure}
Let $z\ne1$ be a root of unity of order prime to $p$.  Define a distribution on compact open balls by
\begin{equation}\label{eq:koblitz-cylinder}
 \mu_z(a+p^m\Zp)
 =
 \frac{z^a}{1-z^{p^m}},
 \qquad
 0\le a<p^m.
\end{equation}
\end{definition}

\begin{theorem}\label{thm:koblitz-measure}
The values \eqref{eq:koblitz-cylinder} are compatible under refinement and define an $\mathcal O$-valued bounded measure on $\Zp$.  Its Amice transform is
\begin{equation}\label{eq:koblitz-amice}
 \Ami_{\mu_z}(T)
 =
 \frac1{1-z(1+T)}.
\end{equation}
Moreover,
\begin{equation}\label{eq:koblitz-pclass}
 \one_{p\Zp}\mu_z
 =
 p_*\mu_{z^p},
\end{equation}
where $p_*$ denotes push-forward under $x\mapsto px$.
\end{theorem}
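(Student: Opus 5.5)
The plan is to verify the three assertions of Theorem~\ref{thm:koblitz-measure} directly from the cylinder formula \eqref{eq:koblitz-cylinder}. Throughout we use that \(z\) has order \(d\) prime to \(p\), so \(z^{p^m}\ne1\) is again a root of unity of order \(d\). Since \(d>1\) is prime to \(p\), the difference \(1-z^{p^m}\) is a \(p\)-adic unit, because distinct prime-to-\(p\) roots of unity remain distinct modulo the maximal ideal. Hence every value \(z^a/(1-z^{p^m})\) lies in \(\mathcal O\).

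\textbf{Distribution property and integrality.} We must show
\[
 \mu_z(a+p^m\Zp)=\sum_{b=0}^{p-1}\mu_z(a+bp^m+p^{m+1}\Zp).
\]
The right side equals
\[
 \frac{z^a}{1-z^{p^{m+1}}}\sum_{b=0}^{p-1}(z^{p^m})^b
 =\frac{z^a}{1-z^{p^{m+1}}}\cdot\frac{1-z^{p^{m+1}}}{1-z^{p^m}},
\]
which is the left side. We also check that the formula does not depend on the representative \(a\) modulo \(p^m\), as long as one works consistently with \(0\le a<p^m\); the compatibility above is exactly what is needed. Finitely additive \(\mathcal O\)-valued functions on compact opens are bounded measures, so \(\mu_z\) is an \(\mathcal O\)-valued measure.

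\textbf{Amice transform.} This step is the main point. The plan is to compute \(\Ami_{\mu_z}(T)=\int(1+T)^x\,d\mu_z\) as a limit of Riemann sums at level \(m\):
\[
 \sum_{a=0}^{p^m-1}\frac{z^a(1+T)^a}{1-z^{p^m}}
 =\frac{1-z^{p^m}(1+T)^{p^m}}{(1-z^{p^m})\,(1-z(1+T))}.
\]
Write \(\zeta_m=z^{p^m}\). Since \((1+T)^{p^m}\to1\) \(T\)-adically in \(\mathcal O[[T]]\), the numerator becomes \(1-\zeta_m(1+T)^{p^m}=(1-\zeta_m)+\zeta_m\bigl(1-(1+T)^{p^m}\bigr)\). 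After dividing by the unit \(1-\zeta_m\), the correction term tends to \(0\) coefficientwise. Therefore the Riemann sums converge to \(1/(1-z(1+T))\) in \(\mathcal O[[T]]\).

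To justify passing to the limit, the cleaner route is to pair \(\mu_z\) with the Mahler basis. By \eqref{eq:Amice-Mahler-coefficients}, the coefficient \(b_n\) is the integral of \(\binom{x}{n}\). This function is uniformly continuous, so its Riemann sums converge to the integral. Comparing the coefficient of \(T^n\) in the two sides gives \eqref{eq:koblitz-amice}. The expansion \(1/(1-z(1+T))=\sum_n z^nT^n/(1-z)^{n+1}\) confirms integrality, in agreement with Theorem~\ref{thm:exact-Amice-criteria}(iii).

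\textbf{Restriction to \(p\Zp\).} We compare the two measures on cylinders \(pa+p^{m+1}\Zp\subset p\Zp\). On the left,
\[
 \mu_z(pa+p^{m+1}\Zp)=\frac{z^{pa}}{1-z^{p^{m+1}}}.
\]
On the right,
\[
 (p_*\mu_{z^p})(pa+p^{m+1}\Zp)=\mu_{z^p}(a+p^m\Zp)=\frac{(z^p)^a}{1-(z^p)^{p^m}}.
\]
These agree. Both measures vanish on compact opens disjoint from \(p\Zp\), and such cylinders generate the compact opens of \(p\Zp\), so the two measures coincide.

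The only delicate point in the whole argument is the limit interchange in the Amice step, and the Mahler-coefficient formulation settles it.
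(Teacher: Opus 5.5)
Your proposal is correct and follows the paper's proof essentially step for step (geometric-sum refinement check, closed-form Riemann sums for the Amice transform, cylinder comparison on $p\Zp$). The one difference is that you identify the limit coefficientwise via the Mahler coefficients $b_n=\int\binom{x}{n}\,d\mu_z$, where the paper uses the uniform estimate $\bigl|\frac{1-(zq)^{p^m}}{1-z^{p^m}}-1\bigr|_p=|1-q^{p^m}|_p$ on closed subdiscs. One slip: $(1+T)^{p^m}\to1$ holds coefficientwise (i.e.\ $(p,T)$-adically) but not $T$-adically, since its linear coefficient is $p^m\neq0$; your argument only uses the coefficientwise statement, so nothing breaks.
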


\begin{proof}
Refinement of the ball $a+p^m\Zp$ gives
\begin{align*}
 \sum_{b=0}^{p-1}
 \mu_z(a+bp^m+p^{m+1}\Zp)
 &=
 \frac{z^a}{1-z^{p^{m+1}}}
 \sum_{b=0}^{p-1}z^{bp^m}\\
 &=
 \frac{z^a}{1-z^{p^m}}.
\end{align*}
Since the order of $z$ is prime to $p$, $1-z^{p^m}$ is a unit; hence the values are uniformly bounded and integral.

For $q=1+T$ with $|T|_p<1$, the Riemann sums are
\[
 \frac1{1-z^{p^m}}
 \sum_{a=0}^{p^m-1}(zq)^a
 =
 \frac{1-(zq)^{p^m}}
 {(1-zq)(1-z^{p^m})}.
\]
Although $z^{p^m}$ may be periodic, the exact estimate
\[
 \left|
 \frac{1-(zq)^{p^m}}{1-z^{p^m}}-1
 \right|_p
 =
 |1-q^{p^m}|_p
 \longrightarrow0
\]
holds uniformly for $|T|_p\le r<1$; here $z^{p^m}$ and
$1-z^{p^m}$ are units.  Thus the quotient tends uniformly on closed
subdiscs to $(1-zq)^{-1}$, proving \eqref{eq:koblitz-amice}.  Finally, on
a ball contained in $p\Zp$,
\[
 \mu_z(pa+p^{m+1}\Zp)
 =
 \frac{z^{pa}}{1-z^{p^{m+1}}}
 =
 \mu_{z^p}(a+p^m\Zp),
\]
which is \eqref{eq:koblitz-pclass}.  On a ball disjoint from $p\Zp$,
both sides of that restriction identity are zero.
\end{proof}

\subsection{Logarithmic distributions and character packets}\label{subsec:dirichlet-distribution}

For the same root $z$, define
\begin{equation}\label{eq:C-z-T}
 C_z(T)
 =
 \log\frac{1-z}{1-z(1+T)}.
\end{equation}
\begin{lemma}\label{lem:Cz-coefficients}
Put
\[
 u_z=\frac{z}{1-z}\in\mathcal O^\times.
\]
Then
\begin{equation}\label{eq:Cz-coefficient-expansion}
 C_z(T)
 =-\log(1-u_zT)
 =\sum_{n\ge1}\frac{u_z^n}{n}T^n.
\end{equation}
The series is rigid analytic on $\Disc^-$ and therefore is the Amice transform of a locally analytic distribution.  It is not a bounded measure on all of $\Zp$ in general: because $u_z$ is a unit, the coefficient valuations are
\[
 \valp\!\left(\frac{u_z^n}{n}\right)=-\valp(n),
\]
which are unbounded below along $n=p^r$.
\end{lemma}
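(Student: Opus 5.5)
The plan is a direct computation with the logarithm series, followed by an appeal to the Amice criteria of Theorem~\ref{thm:exact-Amice-criteria}.

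First I will check that \(u_z\) is a unit. Since the order of \(z\) is prime to \(p\) and \(z\ne1\), the element \(1-z\) is a unit in \(\mathcal O\): its reduction is \(1-\bar z\), and \(\bar z\ne1\) because reduction is injective on prime-to-\(p\) roots of unity. Also \(z\) itself is a unit, so \(u_z=z/(1-z)\in\mathcal O^\times\).

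Next comes the identity. Write
\[
 1-z(1+T)=(1-z)-zT=(1-z)(1-u_zT),
\]
so that
\[
 \frac{1-z}{1-z(1+T)}=(1-u_zT)^{-1}.
\]
Taking the formal logarithm in \(K[[T]]\), whose argument has constant term \(1\), gives \(C_z(T)=-\log(1-u_zT)=\sum_{n\ge1}u_z^nT^n/n\).

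For the rigid-analytic claim, \(|u_z^n/n|_p=p^{\valp(n)}\le n\). Hence \(|b_n|_pr^n\le nr^n\to0\) for every \(0<r<1\), and Theorem~\ref{thm:exact-Amice-criteria}(i) yields a unique locally analytic distribution with Amice transform \(C_z\).

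Finally, for unboundedness, \(\valp(u_z^n/n)=n\valp(u_z)-\valp(n)=-\valp(n)\). Along \(n=p^r\) this equals \(-r\to-\infty\), so \(\inf_n\valp(b_n)=-\infty\). By Theorem~\ref{thm:exact-Amice-criteria}(ii), the distribution is therefore not a bounded measure.

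There is no real obstacle here. The only point needing care is the unit property of \(1-z\), which is what makes the valuation exactly \(-\valp(n)\) rather than merely bounded above by it.
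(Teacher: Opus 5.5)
Your proposal is correct and follows the paper's proof essentially step for step: the same factorization $1-z(1+T)=(1-z)(1-u_zT)$, the same growth bound coming from $p^{\valp(n)}\le n$ (the paper writes it as $-\valp(n)+\lambda n\to+\infty$), and the same appeal to parts (i) and (ii) of Theorem~\ref{thm:exact-Amice-criteria}. Your explicit check that $1-z$ is a unit fills in a detail the paper leaves implicit.
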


\begin{proof}
The algebraic identity
\[
 \frac{1-z(1+T)}{1-z}=1-u_zT
\]
gives the logarithmic expansion.  For any $\lambda>0$,
\[
 -\valp(n)+\lambda n\longrightarrow+\infty,
\]
since $\valp(n)\le\log_p n$.  Theorem~\ref{thm:exact-Amice-criteria}(i) gives the distribution.  The failure of a uniform lower bound proves the final assertion by part (ii) of that theorem.
\end{proof}

\begin{definition}[Logarithmic Dirichlet distribution]\label{def:nu-z}
Let $\nu_z\in\Dla(\Zp,K)$ be the unique distribution with
\[
 \Ami_{\nu_z}(T)=C_z(T).
\]
For a finite presentation $\xi=\sum_z a_z[z]_{\mathrm{cyc}}$, set
\[
 \nu_\xi=\sum_z a_z\nu_z.
\]
\end{definition}

Let
\[
 \Theta=(1+T)\frac{d}{dT}.
\]
Multiplication of a distribution by the coordinate $x$ corresponds to applying $\Theta$ to its Amice transform.

\begin{theorem}\label{thm:nu-unit-measure}
One has
\begin{equation}\label{eq:xnu-mu}
 x\nu_z=\mu_z-\delta_0.
\end{equation}
Consequently,
\begin{equation}\label{eq:nu-unit-explicit}
 \nu_z^\times
 :=\one_{\Zp^\times}\nu_z
 =
 x^{-1}\mu_z^\times,
 \qquad
 \mu_z^\times:=\one_{\Zp^\times}\mu_z.
\end{equation}
In particular, $\nu_z^\times$ is an $\mathcal O$-valued bounded measure.  Hence $\nu_\xi^\times$ is a bounded $K$-valued measure for every finite presentation, and it is integral when all coefficients $a_z$ lie in $\mathcal O$.
\end{theorem}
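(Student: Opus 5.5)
The identity \(x\nu_z=\mu_z-\delta_0\) will be checked on Amice transforms, since by Theorem~\ref{thm:exact-Amice-criteria}(i) a locally analytic distribution is determined by its transform. Multiplication by \(x\) corresponds to \(\Theta=(1+T)\frac{d}{dT}\), so I will compute \(\Theta C_z(T)\) directly from \eqref{eq:C-z-T}. Since \(C_z(T)=\log(1-z)-\log(1-z(1+T))\),
\[
 \Theta C_z(T)=(1+T)\frac{z}{1-z(1+T)}=\frac{z(1+T)}{1-z(1+T)}=\frac1{1-z(1+T)}-1 .
\]
By \eqref{eq:koblitz-amice} the first term is \(\Ami_{\mu_z}\), and \(1=\Ami_{\delta_0}\) because \(\delta_0((1+T)^x)=1\). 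Hence \(\Ami_{x\nu_z}=\Ami_{\mu_z-\delta_0}\), and uniqueness gives \eqref{eq:xnu-mu}.

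Next I restrict both sides to \(\Zp^\times\). Restriction to a compact open set commutes with multiplication by the locally analytic function \(x\), and \(\one_{\Zp^\times}\delta_0=0\), so \(x\,\one_{\Zp^\times}\nu_z=\mu_z^\times\). On \(\Zp^\times\), the function \(x^{-1}\) is locally analytic and inverts \(x\). Multiplying by \(x^{-1}\) (meaning \(f\mapsto D(x^{-1}f\,\one_{\Zp^\times})\)) therefore yields \(\nu_z^\times=x^{-1}\mu_z^\times\), which is \eqref{eq:nu-unit-explicit}.

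For boundedness, Theorem~\ref{thm:koblitz-measure} shows that \(\mu_z\) is \(\mathcal O\)-valued on all compact opens, and so \(\mu_z^\times\) is too. The function \(x^{-1}\) on \(\Zp^\times\) is continuous with values in \(\Zp^\times\subset\mathcal O\). An \(\mathcal O\)-valued measure integrates \(\mathcal O\)-valued continuous functions to \(\mathcal O\), so \(x^{-1}\mu_z^\times\) is again an \(\mathcal O\)-valued measure. Concretely, its value on a ball \(a+p^m\Zp\) with \(p\nmid a\) is \(\int x^{-1}\,d\mu_z\), which is a limit of Riemann sums lying in \(\mathcal O\).

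Linearity then gives the statements about \(\nu_\xi^\times=\sum_z a_z\nu_z^\times\): it is bounded for any coefficients in \(K\), and integral when all \(a_z\in\mathcal O\).

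The only delicate point is that \(\nu_z\) is merely a distribution and is not bounded on \(\Zp\), by Lemma~\ref{lem:Cz-coefficients}. Boundedness of the restriction is therefore not inherited from \(\nu_z\) itself. It has to come from the identity with \(\mu_z\) and the invertibility of \(x\) on the units, which is why the argument goes through \eqref{eq:xnu-mu} first.
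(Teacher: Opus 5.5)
Your proposal is correct and follows the paper's proof essentially step for step. You compute $\Theta C_z(T)=\frac{1}{1-z(1+T)}-1$, identify it as the Amice transform of $\mu_z-\delta_0$, restrict to $\Zp^\times$ to kill $\delta_0$, and invert $x$ using that $x^{-1}$ is an integral continuous function on the units; your closing remark that boundedness cannot be inherited from $\nu_z$ itself (Lemma~\ref{lem:Cz-coefficients}) is accurate.
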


\begin{proof}
Differentiate \eqref{eq:C-z-T}:
\[
 \Theta C_z(T)
 =
 \frac{z(1+T)}{1-z(1+T)}
 =
 \frac1{1-z(1+T)}-1.
\]
By Theorem~\ref{thm:koblitz-measure}, the right side is the Amice transform of $\mu_z-\delta_0$, which proves \eqref{eq:xnu-mu}.  Restriction to $\Zp^\times$ kills $\delta_0$, and multiplication by $x$ is invertible there.  Since $x^{-1}$ is an integral continuous function on $\Zp^\times$, \eqref{eq:nu-unit-explicit} is a bounded integral measure.
\end{proof}

\begin{proposition}\label{prop:nu-unit-positive-moments}
For every integer $n\ge1$,
\begin{equation}\label{eq:nu-unit-positive-moments}
 \int_{\Zp^\times}x^n\,d\nu_z(x)
 =
 \Li_{1-n,p}(z)
 -p^{n-1}\Li_{1-n,p}(z^p).
\end{equation}
For a presentation $\xi$, the corresponding unit moment is $S_n^{[p]}(\xi)$.
\end{proposition}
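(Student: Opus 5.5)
The plan is to reduce the unit moments of \(\nu_z\) to moments of the Coleman--Koblitz measure \(\mu_z\), using Theorem~\ref{thm:nu-unit-measure}, and then compute those by differentiating the Amice transform \eqref{eq:koblitz-amice}.

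First, restriction to \(\Zp^\times\) commutes with multiplication by locally analytic functions. So \eqref{eq:nu-unit-explicit} gives, for \(n\ge1\),
\[
 \int_{\Zp^\times}x^n\,d\nu_z
 =\int_{\Zp^\times}x^{n-1}\,d\mu_z
 =\int_{\Zp}x^{n-1}\,d\mu_z-\int_{p\Zp}x^{n-1}\,d\mu_z .
\]
By \eqref{eq:koblitz-pclass}, \(\one_{p\Zp}\mu_z=p_*\mu_{z^p}\). Hence the second integral equals \(p^{n-1}\int_{\Zp}x^{n-1}\,d\mu_{z^p}\). It therefore suffices to compute the full moments \(\int_{\Zp}x^{k}\,d\mu_w\) for \(k\ge0\), where \(w\) is any prime-to-\(p\) root with \(w\ne1\). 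Note that \(z^p\ne1\) also has order prime to \(p\), so Theorem~\ref{thm:koblitz-measure} applies to it as well.

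For the full moments, I use \(\Theta^k(1+T)^x=x^k(1+T)^x\). Evaluating at \(T=0\) gives
\[
 \int_{\Zp}x^k\,d\mu_w=\bigl(\Theta^k\Ami_{\mu_w}\bigr)(0).
\]
Since \(\Ami_{\mu_w}(T)=g(w(1+T))\) with \(g(X)=1/(1-X)\), the operator \(\Theta=(1+T)\,d/dT\) acts as \(X\,d/dX\) in the variable \(X=w(1+T)\). Thus the moment equals \(\bigl((X\,d/dX)^k g\bigr)(w)\).

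Now \(g=1+X/(1-X)=1+\Li_0(X)\), and \((X\,d/dX)\Li_{s}=\Li_{s-1}\) holds as an identity of rational functions. This gives
\[
 \int_{\Zp}x^k\,d\mu_w=\Li_{-k}(w)+[k=0].
\]
Here \(\Li_{-k}\) is the rational polylogarithm evaluated at the Teichm\"uller root \(w\), which is precisely the meaning of \(\Li_{-k,p}(w)\) fixed in Subsection~\ref{subsec:fixed-local-germ}.

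Substituting with \(k=n-1\) gives
\[
 \int_{\Zp^\times}x^n\,d\nu_z
 =\Li_{1-n,p}(z)+[n=1]-p^{n-1}\bigl(\Li_{1-n,p}(z^p)+[n=1]\bigr).
\]
When \(n=1\), the two constants \(1\) cancel because \(p^{0}=1\). In every case the result is \eqref{eq:nu-unit-positive-moments}. The statement for a presentation \(\xi\) then follows by linearity in \(\xi\), comparing with \eqref{eq:Snp-depleted}.

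The main point needing care is the bookkeeping of the constant term \([k=0]\) in the \(n=1\) case. Here \(1/(1-z)\) is not \(\Li_0(z)\), and the cancellation happens only after subtracting the \(p\Zp\)-part. A secondary point is justifying that the moment of \(x^k\) equals \((\Theta^k\Ami)(0)\) for a bounded measure, and that the chain rule converts \(\Theta\) into \(X\,d/dX\). Both follow directly from the definition of the Amice transform and the fact that \(\Ami_{\mu_w}\) is a rational function in \(w(1+T)\) that is regular near \(T=0\), since \(1-w\) is a unit.
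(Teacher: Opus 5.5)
Your proposal is correct and follows essentially the same route as the paper's proof. You rewrite the unit moment as $\int_{\Zp^\times}x^{n-1}\,d\mu_z$, subtract the $p\Zp$-part using $\one_{p\Zp}\mu_z=p_*\mu_{z^p}$, and get the full moments by differentiating $\Ami_{\mu_z}$. The only difference is cosmetic: you carry the $n=1$ constant through the indicator $[k=0]$, whereas the paper treats $n=1$ separately. Both reduce to the same cancellation $\frac1{1-z}-\frac1{1-z^p}=\Li_{0,p}(z)-\Li_{0,p}(z^p)$.
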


\begin{proof}
By \eqref{eq:nu-unit-explicit}, the left side is
\[
 \int_{\Zp^\times}x^{n-1}\,d\mu_z(x).
\]
For $n>1$, differentiation of \eqref{eq:koblitz-amice} gives the full moment $\Li_{1-n,p}(z)$, while \eqref{eq:koblitz-pclass} gives the $p\Zp$ contribution $p^{n-1}\Li_{1-n,p}(z^p)$.  When $n=1$, the corresponding full and zero-class masses are respectively $1/(1-z)$ and $1/(1-z^p)$; their difference is
\[
 \frac1{1-z}-\frac1{1-z^p}
 =
 \Li_{0,p}(z)-\Li_{0,p}(z^p).
\]
Thus the same formula holds for every $n\ge1$.
\end{proof}

\begin{example}\label{ex:koblitz-minus-one}
Because $p$ is odd, $(-1)^{p^m}=-1$ for every $m\ge0$.
Definition~\ref{def:koblitz-measure} therefore becomes the completely
explicit rule
\[
 \mu_{-1}(a+p^m\Zp)=\frac{(-1)^a}{2},
 \qquad
 \Ami_{\mu_{-1}}(T)=\frac1{2+T}.
\]
The logarithmic distribution has
\[
 C_{-1}(T)
 =\log\frac2{2+T}
 =-\log\left(1+\frac T2\right)
 =\sum_{n\ge1}\frac{(-1)^n}{n2^n}T^n.
\]
Its coefficient valuations are unbounded below along $n=p^r$, so it is
not a bounded measure on all of $\Zp$.  Nevertheless
Theorem~\ref{thm:nu-unit-measure} gives the bounded unit restriction
\[
 \nu_{-1}^{\times}=x^{-1}\one_{\Zp^\times}\mu_{-1}.
\]
Since $(-1)^p=-1$, its positive moments reduce to
\[
 \int_{\Zp^\times}x^n\,d\nu_{-1}(x)
 =(1-p^{n-1})\Li_{1-n,p}(-1).
\]
In particular, the first unit moment is zero.
\end{example}

\subsubsection{Character packets and the Dirichlet Euler factor}\label{subsec:local-character-packets}

Let $\chi$ be primitive and odd modulo $N$, with $p\nmid N$.  Retain
$\xi_{\chi,N}$ from \eqref{eq:character-packet}, and put
\[
 \nu_{\chi,N}=\nu_{\xi_{\chi,N}}.
\]

\begin{proposition}\label{prop:primitive-character-moments}
For $n\ge1$,
\begin{align}
 \int_{\Zp}x^n\,d\nu_{\chi,N}(x)
 &=
 G(\chi)L(1-n,\chi^{-1}),
 \label{eq:nu-primitive-full}
 \\
 \int_{\Zp^\times}x^n\,d\nu_{\chi,N}(x)
 &=
 G(\chi)
 \bigl(1-\chi^{-1}(p)p^{n-1}\bigr)
 L(1-n,\chi^{-1}).
 \label{eq:nu-primitive-unit}
\end{align}
\end{proposition}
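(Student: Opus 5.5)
We compute the full moments of \(\nu_{\chi,N}\) first and then deplete by restriction, exactly as in Proposition~\ref{prop:nu-unit-positive-moments}. By \eqref{eq:xnu-mu}, for \(n\ge1\) we have
\[
 \int_{\Zp}x^n\,d\nu_z=\int_{\Zp}x^{n-1}\,d\mu_z .
\]
Differentiating \eqref{eq:koblitz-amice} \(n-1\) times with \(\Theta\) at \(T=0\) shows that this equals the rational function \(\Li_{1-n}(z)=\bigl(z\,\tfrac{d}{dz}\bigr)^{n-1}\tfrac{1}{1-z}\) evaluated at \(z\). For \(n=1\) the value is \(1/(1-z)\), matching the convention in the proof of Proposition~\ref{prop:nu-unit-positive-moments}. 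Summing over the packet, the left side of \eqref{eq:nu-primitive-full} is
\[
 \sum_{a\in(\Z/N\Z)^\times}\chi(a)\Li_{1-n}(\zeta_N^a).
\]
This is an algebraic identity among elements of \(\Q(\zeta_N,\chi)\) embedded in \(K\), so we may verify it in characteristic zero (over \(\C\)) and then transport it.

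The key step is the classical finite Fourier identity
\[
 \sum_a\chi(a)\Li_{1-n}(\zeta_N^a)=G(\chi)L(1-n,\chi^{-1}).
\]
We prove it by first treating \(\Re(s)>1\). Write
\[
 \sum_a\chi(a)\Li_s(\zeta_N^a)=\sum_{k\ge1}k^{-s}\sum_a\chi(a)\zeta_N^{ak}.
\]
Primitivity gives the Gauss sum identity \(\sum_a\chi(a)\zeta_N^{ak}=\chi^{-1}(k)G(\chi)\), valid for all \(k\), including \(\gcd(k,N)>1\), where both sides vanish. This yields \(G(\chi)L(s,\chi^{-1})\). Both sides continue analytically in \(s\): for \(z\ne1\), \(\Li_s(z)\) is entire in \(s\) and, at \(s=1-n\), specializes to the rational function above. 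Evaluating at \(s=1-n\) gives \eqref{eq:nu-primitive-full}. An alternative purely algebraic route expresses \(\Li_{1-n}(\zeta^a)\) through Bernoulli polynomials via the generating function \(\sum_a\chi(a)\zeta^a e^{t}/(1-\zeta^ae^t)\), but the analytic continuation route is shorter and only uses standard facts. I expect this to be the only substantive step. The one point needing care is the vanishing at imprimitive \(k\), which is exactly where primitivity of \(\chi\) is used.

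For \eqref{eq:nu-primitive-unit}, we apply Proposition~\ref{prop:nu-unit-positive-moments} termwise. The unit moment equals the full moment minus
\[
 p^{n-1}\sum_a\chi(a)\Li_{1-n}(\zeta_N^{ap}).
\]
Since \(p\nmid N\), the substitution \(b=ap\) permutes \((\Z/N\Z)^\times\) and gives \(\chi(a)=\chi(p)^{-1}\chi(b)\). The subtracted sum is therefore
\[
 \chi^{-1}(p)\,p^{n-1}\,G(\chi)L(1-n,\chi^{-1})
\]
by the first part, which gives the Euler factor \(\bigl(1-\chi^{-1}(p)p^{n-1}\bigr)\). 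Throughout, the moments are understood with \(\Li_{1-n,p}\) the rational polylogarithm at Teichm\"uller roots, consistent with Subsection~\ref{subsec:fixed-local-germ}.
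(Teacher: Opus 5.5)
Your argument reaches the correct formulas, but the case $n=1$ of the single-root computation is wrong as written. From $x\nu_z=\mu_z-\delta_0$ one gets $\int_{\Zp}x^n\,d\nu_z=\int_{\Zp}x^{n-1}\,d\mu_z-\delta_0(x^{n-1})$. At $n=1$ the correction $\delta_0(1)=1$ does not vanish, so $\int_{\Zp}x\,d\nu_z=\frac{1}{1-z}-1=\frac{z}{1-z}$, not $\frac{1}{1-z}$. The value $1/(1-z)$ in the proof of the unit-moment proposition is the total mass of $\mu_z$. It enters there only through a difference on $\Zp^\times$, where $\delta_0$ has already been removed.

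Likewise, $(z\,d/dz)^{n-1}\frac{1}{1-z}$ agrees with $\Li_{1-n}(z)$ only for $n\ge2$. At $n=1$ it equals $1+\Li_0(z)$, whereas the continuation of $\Li_s(z)$ to $s=0$ gives $\Li_0(z)=\frac{z}{1-z}$. So your claim that the continuation specializes to your rational function also fails at $n=1$. Both discrepancies are the constant $1$ per root, and after weighting by $\chi(a)$ they contribute $\sum_a\chi(a)=0$ because $\chi$ is nontrivial. Hence the packet identity you assert is true, but the proof must do one of two things. Either retain the $\delta_0$ term, as the paper does, which makes $\int_{\Zp}x^n\,d\nu_z=\Li_{1-n}(z)$ valid for every $n\ge1$; or invoke $\sum_a\chi(a)=0$ explicitly. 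The unit-moment step is unaffected.

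Apart from this, your proof of the finite Fourier identity takes a different route from the paper's. The paper continues in the power-series variable. The Gauss identity gives $\sum_a\chi(a)\Li_{1-n}(t\zeta_N^a)=G(\chi)\sum_{r\ge1}\chi^{-1}(r)r^{n-1}t^r$ as rational functions of $t$ over $\Q(\zeta_N,\chi)$. Nontriviality of $\chi$ makes the right side regular at $t=1$, and one sets $t=1$, using the classical fact that this Abel value is $L(1-n,\chi^{-1})$.

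You continue in $s$ instead, so the identification with $L(1-n,\chi^{-1})$ is built into the definition. The cost is that you need the entire continuation of $\Li_s(\zeta)$ for $|\zeta|=1$, $\zeta\ne1$, together with its values at $s=1-n$. You also need a transport from $\C$ to $K$. That transport is legitimate: the identity lives in $\Q(\zeta_N,\chi)$, with $L(1-n,\chi^{-1})=-B_{n,\chi^{-1}}/n$, and is compatible with every embedding.

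The classical inputs are of comparable depth. The paper's version stays algebraic and works directly in $K$. It also handles $n=1$ automatically, since the coefficientwise expansion produces $\Li_0(tz)=tz/(1-tz)$. The Euler-factor step via $b=ap$ is the same in both.
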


\begin{proof}
First derive the finite Fourier identity.  For $|t|_p<1$, expand the
rational polylogarithms and apply the primitive Gauss identity
coefficientwise:
\[
 \sum_a\chi(a)\Li_{1-n,p}(t\zeta_N^a)
 =
 G(\chi)\sum_{r\ge1}\chi^{-1}(r)r^{n-1}t^r.
\]
For $n\ge1$, both sides extend as rational functions of $t$.  The right
side is regular at $t=1$: before applying
$(t\,d/dt)^{n-1}$ it is
\[
 \frac{\sum_{a=1}^{N}\chi^{-1}(a)t^a}{1-t^N},
\]
and the zero of the numerator at $t=1$ cancels the denominator because
$\chi$ is nontrivial.  Its value after continuation is
$L(1-n,\chi^{-1})$.  Setting $t=1$ therefore gives the finite Fourier
identity
\[
 \sum_a\chi(a)\Li_{1-n,p}(\zeta_N^a)
 =
 G(\chi)L(1-n,\chi^{-1}).
\]
Next compute the full moment of a single $\nu_z$.  The identity $x\nu_z=\mu_z-\delta_0$ gives, for $n\ge1$,
\begin{equation}\label{eq:nu-full-positive-moment-proof}
 \int_{\Zp}x^n\,d\nu_z(x)
 =
 \int_{\Zp}x^{n-1}\,d\mu_z(x)-\delta_0(x^{n-1}).
\end{equation}
If $n>1$, the last term is zero, and differentiation of
$\Ami_{\mu_z}(T)=(1-z(1+T))^{-1}$ gives
\[
 \int_{\Zp}x^{n-1}\,d\mu_z(x)=\Li_{1-n,p}(z).
\]
If $n=1$, the correction at the origin is essential:
\[
 \int_{\Zp}1\,d\mu_z-1
 =\frac1{1-z}-1
 =\frac z{1-z}
 =\Li_{0,p}(z).
\]
Thus \eqref{eq:nu-full-positive-moment-proof} equals
$\Li_{1-n,p}(z)$ for every $n\ge1$.  Summing with coefficients $\chi(a)$
and applying the finite Fourier identity proves
\eqref{eq:nu-primitive-full}.
Finally, multiplication by $p$ permutes the units modulo $N$, and the
substitution $b=ap$ gives
\[
 \sum_a\chi(a)\Li_{1-n,p}(\zeta_N^{ap})
 =
 \chi^{-1}(p)G(\chi)L(1-n,\chi^{-1}).
\]
Insert this and the first Fourier sum into
\eqref{eq:nu-unit-positive-moments}; this proves
\eqref{eq:nu-primitive-unit}.  The argument is restricted to $n\ge1$,
where $L(1-n,\chi^{-1})$ is an algebraic special value; in a
parity-mismatched case it is the trivial zero.  The weight-zero value of the bounded measure is defined by rigid analytic
continuation and is not identified with the complex number
$L(1,\chi^{-1})$.
\end{proof}

Let \(\chi_N\) be an odd character induced from a primitive odd
character \(\chi_f\) of conductor \(f>1\), write \(N=fg\), and assume
\(p\nmid N\).  Retain \(\xi_{\chi_N,N}\) from
\eqref{eq:character-packet}, and put
\[
 \nu_{\chi_N,N}:=\nu_{\xi_{\chi_N,N}}.
\]
For an integer \(d\), define the finite imprimitive Fourier correction by
\begin{equation}\label{eq:general-imprimitive-correction}
 A_{\chi_f,g}(d)
 =
 \sum_{\substack{u\mid g,\ u\mid d\\(g/u,f)=1}}
 u\,\mob(g/u)\chi_f(g/u)\chi_f^{-1}(d/u).
\end{equation}
For \(n\ge0\), define the corresponding Iwasawa moment factor by
\begin{equation}\label{eq:C-imprimitive-n}
 C^{\mathrm{Iw}}_{\chi_f,g}(n)
 =
 \sum_{\substack{u\mid g\\(g/u,f)=1}}
 \mob(g/u)\chi_f(g/u)u^n.
\end{equation}
Finally, put
\begin{equation}\label{eq:nu-imprimitive}
 \nu_{\chi_f,g}
 =
 \sum_{\substack{u\mid g\\(g/u,f)=1}}
 \mob(g/u)\chi_f(g/u)\,u_*\nu_{\chi_f,f}.
\end{equation}
Every divisor \(u\mid g\) is a \(p\)-adic unit, and no coprimality
assumption on \(f\) and \(g\) is imposed.

\begin{proposition}\label{prop:nu-imprimitive}
In $\Dla(\Zp,K)$ one has
\begin{equation}\label{eq:nu-imprimitive-exact}
 \nu_{\chi_N,N}=\nu_{\chi_f,g}.
\end{equation}
After restriction to units,
\begin{equation}\label{eq:nu-imprimitive-unit-exact}
 \nu_{\chi_N,N}^{\times}=\nu_{\chi_f,g}^{\times}.
\end{equation}
Consequently, for every $n\ge1$,
\begin{equation}\label{eq:nu-imprimitive-moment}
 \int_{\Zp}x^n\,d\nu_{\chi_N,N}(x)
 =
 G(\chi_f)C^{\mathrm{Iw}}_{\chi_f,g}(n)L(1-n,\chi_f^{-1}).
\end{equation}
\end{proposition}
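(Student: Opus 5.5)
The natural route is to compare Amice transforms, since by Theorem~\ref{thm:exact-Amice-criteria} a locally analytic distribution on \(\Zp\) is determined by its Amice transform. The transform of \(\nu_{\chi_N,N}\) is
\[
 \sum_{a\in(\Z/N\Z)^\times}\chi_N(a)\,C_{\zeta_N^a}(T).
\]
By \eqref{eq:Amice-pushforward}, the transform of \(u_*\nu_{\chi_f,f}\) is
\[
 \sum_{b\in(\Z/f\Z)^\times}\chi_f(b)\,C_{\zeta_f^b}\bigl((1+T)^u-1\bigr).
\]
Here \(C_z((1+T)^u-1)=\log\bigl((1-z)/(1-z(1+T)^u)\bigr)\), and it is legitimate for each \(u\mid g\) because \(u\) is a \(p\)-adic unit. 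It is cleanest to work with \(\Theta\)-derivatives, which remove the normalization constants \(\log(1-z)\). By the proof of Theorem~\ref{thm:nu-unit-measure}, \(\Theta C_z(T)=z(1+T)/(1-z(1+T))\). Applying \(\Theta\) to the push-forward gives \(u\,z q^u/(1-zq^u)\) with \(q=1+T\). Both sides therefore become finite linear combinations of functions \(\sum_{r\ge1} z^r q^r\) (the \(u\)-push-forward contributing \(u\sum_r z^r q^{ur}\)), expanded for \(|q|\) small formally, or as rational functions in \(q\).

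The key step is an identity of rational functions in \(q\), which I would check coefficientwise in \(q^r\). The left side has \(q^r\)-coefficient equal to the Ramanujan-type sum
\[
 c_{\chi_N}(r)=\sum_{a\in(\Z/N\Z)^\times}\chi_N(a)\zeta_N^{ar}.
\]
The right side has \(q^{r}\)-coefficient
\[
 \sum_{u\mid g,\ u\mid r,\ (g/u,f)=1}\mob(g/u)\chi_f(g/u)\,u\,G(\chi_f)\chi_f^{-1}(r/u),
\]
using the primitive Gauss identity \(\sum_b\chi_f(b)\zeta_f^{bk}=\chi_f^{-1}(k)G(\chi_f)\), valid for all \(k\) including those not prime to \(f\). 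This is exactly \(G(\chi_f)A_{\chi_f,g}(r)\), so the needed identity is the classical evaluation of Gauss sums of imprimitive characters at arbitrary frequency, \(c_{\chi_N}(r)=G(\chi_f)A_{\chi_f,g}(r)\). I would prove it by writing \(\chi_N=\chi_f\cdot\one_{(\cdot,g)=1}\) and expanding the coprimality indicator by Möbius inversion over \(d\mid(a,g)\). Each resulting inner sum runs over all residues \(a\) modulo \(N\), and orthogonality collapses it to a condition \(N/d\mid\ldots\) together with a primitive Gauss sum modulo \(f\). One then reindexes with \(u=g/d\). The condition \((g/u,f)=1\) arises because \(\chi_f(d)=0\) otherwise; no coprimality of \(f,g\) is used. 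I expect this bookkeeping to be the main obstacle: handling divisors \(d\) sharing primes with \(f\), and matching the factor \(u\), are exactly where the imprimitive correction is subtle. The argument also needs the appendix-style Kummer/character calculation, which I would reproduce here.

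Equality of the \(\Theta\)-images gives \(x\nu_{\chi_N,N}=x\nu_{\chi_f,g}\). Since \(\Theta F\) determines \(F\) up to a constant, the transforms agree up to their value at \(T=0\). Both sides vanish at \(T=0\): every \(C_z(0)=0\), and the push-forwards preserve this. This proves \eqref{eq:nu-imprimitive-exact}. Restricting both sides to \(\Zp^\times\) gives \eqref{eq:nu-imprimitive-unit-exact}.

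For \eqref{eq:nu-imprimitive-moment}, integrate \(x^n\) against \eqref{eq:nu-imprimitive}. The defining identity \((u_*D)(f)=D(f(ux))\) yields a factor \(u^n\) for each term. Each remaining integral is \(\int_{\Zp}x^n\,d\nu_{\chi_f,f}=G(\chi_f)L(1-n,\chi_f^{-1})\) by \eqref{eq:nu-primitive-full} of Proposition~\ref{prop:primitive-character-moments}. Summing over \(u\) with the weights \(\mob(g/u)\chi_f(g/u)\) produces exactly \(C^{\mathrm{Iw}}_{\chi_f,g}(n)\).
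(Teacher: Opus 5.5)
Your proposal is correct and follows essentially the same route as the paper. You compare the $\Theta$-derivatives of the two Amice transforms as rational functions of $q=1+T$ via their expansions at $q=0$, and identify the $q^r$-coefficients through the imprimitive Gauss-sum identity $\sum_a\chi_N(a)\zeta_N^{ar}=G(\chi_f)A_{\chi_f,g}(r)$, which is the paper's Lemma~\ref{lem:ramanujan}, proved there by the same M\"obius expansion of the condition $(a,g)=1$ that you describe. You then fix the constant by vanishing at $T=0$ and read off the moments from the factor $u^n$ contributed by $u_*$.

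The only implicit input, which the paper also leaves tacit, is the root compatibility $\zeta_N^{g}=\zeta_f$. If instead $\zeta_N^{g}=\zeta_f^{k}$, the Gauss sum produced by your collapse is $\chi_f^{-1}(k)G(\chi_f)$, so this normalization must be fixed for the identity to hold as stated.
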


\begin{proof}
Write $q=1+T$ and let $\Theta_q=q\,d/dq$.  Since every nontrivial character has zero total sum,
\begin{align}
 \Theta_q\Ami_{\nu_{\chi_N,N}}(T)
 &=\sum_{a\in(\Z/N\Z)^\times}
   \frac{\chi_N(a)}{1-\zeta_N^a q}.
 \label{eq:imprimitive-amice-derivative-left}
\end{align}
This derivative is a rational function of $q$ regular near $q=1$.  Expanding it at $q=0$ and applying Lemma~\ref{lem:ramanujan} gives
\begin{equation}\label{eq:imprimitive-amice-left}
 \Theta_q\Ami_{\nu_{\chi_N,N}}(T)
 =G(\chi_f)\sum_{r\ge1}A_{\chi_f,g}(r)q^r.
\end{equation}

By \eqref{eq:Amice-pushforward},
\[
 \Ami_{u_*\nu}(T)=\Ami_\nu(q^u-1).
\]
After applying $\Theta_q$, the primitive Gauss identity gives the rational-function expansion
\[
 \Theta_q\Ami_{u_*\nu_{\chi_f,f}}(T)
 =uG(\chi_f)\sum_{t\ge1}\chi_f^{-1}(t)q^{ut}.
\]
Therefore the coefficient of $q^r$ in
$\Theta_q\Ami_{\nu_{\chi_f,g}}(T)$ is
\[
 G(\chi_f)
 \sum_{\substack{u\mid g,\ u\mid r\\(g/u,f)=1}}
 u\,\mob(g/u)\chi_f(g/u)\chi_f^{-1}(r/u)
 =G(\chi_f)A_{\chi_f,g}(r).
\]
The derivatives therefore agree as rational functions.  Both Amice transforms vanish at $T=0$, so injectivity of the Amice transform gives \eqref{eq:nu-imprimitive-exact}.  Since every $u$ is a unit, restriction to $\Zp^\times$ commutes with $u_*$ and gives \eqref{eq:nu-imprimitive-unit-exact}.  Taking the $n$-th moment and applying Proposition~\ref{prop:primitive-character-moments} yields \eqref{eq:nu-imprimitive-moment}.
\end{proof}

\section{Cyclotomic Iwasawa pseudomeasures and local factorization}\label{sec:pseudomeasures}

\subsection{The bounded-denominator Iwasawa algebra and its weight components}\label{subsec:iwasawa-algebra}

Write
\[
 G=\Zp^\times=\Delta\times\Gamma,
 \qquad
 \Delta=\mu_{p-1},
 \qquad
 \Gamma=1+p\Zp.
\]
The integral Iwasawa algebra
\begin{equation}\label{eq:integral-Iwasawa-definition}
 \Iw(G)=\mathcal O[[G]]
\end{equation}
is the convolution algebra of bounded \(\mathcal O\)-valued measures on
\(G\).

\begin{definition}[Bounded-denominator Iwasawa algebra]\label{def:bounded-denominator-Iwasawa}
Its bounded-denominator extension is
\begin{equation}\label{eq:bounded-denominator-Iwasawa}
 \IwK(G)=\Iw(G)[1/p],
\end{equation}
and, in one variable,
\begin{equation}\label{eq:bounded-denominator-series}
 K[[X]]_{\bd}
 =\mathcal O[[X]][1/p]
 =\left\{\sum_{n\ge0}a_nX^n:\inf_n\valp(a_n)>-\infty\right\}.
\end{equation}
\end{definition}
Thus one power of \(p\) clears all denominators; this is stronger than
membership in \(K[[X]]\).

For \(u\in G\), let \(\delta_u\in\Iw(G)\) be the Dirac measure defined
above.  For every odd integer \(c>1\) prime to \(p\), put
\begin{equation}\label{eq:r-c}
 r_c=\delta_1-c\delta_c\in\Iw(G).
\end{equation}
Let \(\Sigma\) be the multiplicative set generated by the elements
\(r_c\).  The elements of \(\Sigma^{-1}\IwK(G)\) are called unit
pseudomeasures.

Choose a topological generator \(\gamma\) of \(\Gamma\).  After extending
scalars to contain the values of \(\teich\), the idempotents
\begin{equation}\label{eq:Delta-idempotents}
 e_j=\frac1{p-1}\sum_{\delta\in\Delta}
 \teich(\delta)^{-j}\delta_\delta,
 \qquad j\in\Z/(p-1)\Z,
\end{equation}
give the branch decomposition
\begin{equation}\label{eq:iwasawa-branch-decomposition}
 \IwK(G)\overset{\sim}{\longrightarrow}
 \prod_{j\in\Z/(p-1)\Z}K[[X]]_{\bd},
 \qquad
 \rho\longmapsto(I_{\rho,j}(X))_j.
\end{equation}
Put
\[
 \ell_\gamma(x)=\frac{\log_p\ang x}{\log_p\gamma}\in\Zp.
\]
For \(r_c\), the \(j\)-th Iwasawa-coordinate factor is
\begin{equation}\label{eq:r-c-Iwasawa-coordinate}
 I_{r_c,j}(X)
 =1-c\teich(c)^j(1+X)^{\ell_\gamma(c)}.
\end{equation}
Its weight-coordinate form is
\begin{align}
 r_{c,j}(s)
 &:=I_{r_c,j}(\gamma^s-1)\notag\\
 &=1-c\teich(c)^j\ang c^s
 =1-\teich(c)^{j+1}\ang c^{s+1}.
 \label{eq:r-c-symbol}
\end{align}
Every \(I_{r_c,j}\) is a nonzero power series, so every \(r_c\) is a
non-zero-divisor in the product of branch domains.

The Mellin branch of \(\rho\in\IwK(G)\) is
\begin{equation}\label{eq:branch-Mellin-Iwasawa-coordinate}
 \mathcal M_{\rho,j}(s)
 =I_{\rho,j}(\gamma^s-1)
 =\int_G\teich(x)^j\ang x^s\,d\rho(x).
\end{equation}
Arithmetic integers in a fixed residue class modulo \(p-1\) are dense in
\(\Zp\), so these values determine the branch.

\begin{definition}[Weight components]
\label{def:pseudomeasure-Mellin-branch}
For a locally analytic distribution \(D\) on \(G\), define
\begin{equation}\label{eq:distribution-Iwasawa-branch}
 I_{D,j}(X)
 =D\!\left(x\mapsto
 \teich(x)^j(1+X)^{\ell_\gamma(x)}\right).
\end{equation}
If \(D\) is bounded, then \(I_{D,j}\in K[[X]]_{\bd}\).  For
\(Q=A/r\in\Sigma^{-1}\IwK(G)\), define
\[
 I_{Q,j}(X)=\frac{I_{A,j}(X)}{I_{r,j}(X)}
\]
in the fraction field of the branch domain, and set
\begin{equation}\label{eq:pseudomeasure-Mellin-definition}
 \mathcal M_{Q,j}(s)=I_{Q,j}(\gamma^s-1)
\end{equation}
meromorphically.  This is independent of the representation of \(Q\): if
\(A/r=A'/r'\), then \(Ar'=A'r\), and the branch decomposition gives
\(I_{A,j}I_{r',j}=I_{A',j}I_{r,j}\).
\end{definition}

\begin{remark}\label{rem:bounded-denominator-not-all-series}
A quotient may define an element of \(K[[X]]\) while having unbounded
coefficient denominators.  Every removability and divisibility argument
below is therefore carried out in \(K[[X]]_{\bd}\), not merely in
\(K[[X]]\).
\end{remark}

\subsection{Half-shift regularization and multiplicative convolution}\label{subsec:half-regularizer}

Define
\begin{equation}\label{eq:Lambda-c}
 \Lambda_{1/2,c}(T)
 =
 \frac{(1+T)^{1/2}}{T}
 -c\frac{(1+T)^{c/2}}{(1+T)^c-1}.
\end{equation}
The fact that this series is integral, rather than merely rigid analytic, is essential for the measure construction.

\begin{theorem}[Integral half-shift measure]\label{thm:integral-half-measure}
For every odd $c>1$ with $p\nmid c$,
\begin{equation}\label{eq:Lambda-integral}
 \Lambda_{1/2,c}(T)\in\mathcal O[[T]].
\end{equation}
It is the Amice transform of an $\mathcal O$-valued bounded measure $\lambda_{1/2,c}$ on $\Zp$.  Its moments are
\begin{equation}\label{eq:lambda-total-moments}
 \int_{\Zp}x^n\,d\lambda_{1/2,c}(x)
 =
 (1-c^{n+1})\frac{B_{n+1}(1/2)}{n+1},
 \qquad n\ge0.
\end{equation}
\end{theorem}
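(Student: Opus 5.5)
Write \(q=1+T\) and note \(q^{1/2}=\sum_k\binom{1/2}{k}T^k\in\Zp[[T]]\) since \(p\) is odd. The same holds for \(q^{c/2}\), since \(c/2\in\Zp\). The plan is to rewrite \(\Lambda_{1/2,c}\) as a difference of two functions, each with a simple pole at \(T=0\) whose residues cancel, and then show that the cancellation leaves an integral power series.

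\textbf{Step 1: integrality.} I would use the factorization
\[
 (1+T)^c-1=T\,P_c(T),
 \qquad
 P_c(T)=\sum_{k=0}^{c-1}(1+T)^k\in\Z[T].
\]
Here \(P_c(0)=c\in\Zp^\times\), so \(P_c\) is a unit in \(\Zp[[T]]\). Then
\[
 \Lambda_{1/2,c}(T)=\frac{q^{1/2}P_c(T)-c\,q^{c/2}}{T\,P_c(T)}.
\]
The numerator lies in \(\Zp[[T]]\) and vanishes at \(T=0\), because its constant term is \(c-c=0\). Hence the numerator is \(T\) times an element of \(\Zp[[T]]\). Dividing by the unit \(P_c\) then shows \(\Lambda_{1/2,c}\in\Zp[[T]]\subset\mathcal O[[T]]\).

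\textbf{Step 2: the measure.} Theorem~\ref{thm:exact-Amice-criteria}(iii) then produces a unique \(\mathcal O\)-valued bounded measure \(\lambda_{1/2,c}\) whose Amice transform is \(\Lambda_{1/2,c}\).

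\textbf{Step 3: moments.} Since multiplication by \(x\) corresponds to \(\Theta=(1+T)\,d/dT\), substituting \(T=e^u-1\) turns \(\Theta\) into \(d/du\). Therefore
\[
 \int_{\Zp}x^n\,d\lambda_{1/2,c}=n!\,[u^n]\,\Lambda_{1/2,c}(e^u-1).
\]
The substituted function is
\[
 \Lambda_{1/2,c}(e^u-1)=\frac{e^{u/2}}{e^u-1}-c\,\frac{e^{cu/2}}{e^{cu}-1}.
\]
This identity holds in \(\Q_p[[u]]\) after the poles cancel; it is justified because both sides are compositions of power series with \(e^u-1\in u\Q[[u]]\). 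Apply \eqref{eq:half-kernel-full-expansion} to the first term, and to the second term with \(u\) replaced by \(cu\). The polar parts \(1/u\) and \(c\cdot 1/(cu)\) cancel. The coefficient of \(u^n\) is then
\[
 \frac{B_{n+1}(1/2)}{(n+1)!}\bigl(1-c\cdot c^{n}\bigr).
\]
Multiplying by \(n!\) gives \((1-c^{n+1})B_{n+1}(1/2)/(n+1)\), as claimed.

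\textbf{Main obstacle.} The delicate point is Step 1. One must handle the half-integer powers \(q^{1/2}\) and \(q^{c/2}\) \(p\)-integrally, which requires \(p\) odd. One also needs \(P_c(0)=c\) to be a unit, which requires \(p\nmid c\). The hypothesis that \(c\) is odd is not needed for integrality; it only matters for the later convolution conventions.
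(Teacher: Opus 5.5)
Your proof is correct and follows the paper's route. You cancel the simple zero of the numerator against the pole and note that the remaining denominator is a unit. The paper does this in the variable $Z=(1+T)^{1/2}$ via $Z^{2c}-1=(Z-1)S_c(Z)(Z^c+1)$, while you do it directly in $T$ via $(1+T)^c-1=T\,P_c(T)$. The moments are then read off from the Bernoulli expansions at $u$ and $cu$ after the polar terms cancel, exactly as in the paper. Your remark that the oddness of $c$ plays no role in the integrality step is also accurate.
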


\begin{proof}
Put $Z=(1+T)^{1/2}\in\mathcal O[[T]]$.  Since $c$ is odd,
\[
 \Lambda_{1/2,c}
 =
 \frac{Z}{(Z-1)(Z+1)}
 -
 \frac{cZ^c}{(Z-1)S_c(Z)(Z^c+1)},
\]
where
\[
 S_c(Z)=1+Z+\cdots+Z^{c-1}.
\]
Thus
\begin{equation}\label{eq:Lambda-Z-rational}
 \Lambda_{1/2,c}
 =
 \frac{Z S_c(Z)(Z^c+1)-cZ^c(Z+1)}
 {(Z-1)(Z+1)S_c(Z)(Z^c+1)}.
\end{equation}
The numerator vanishes at $Z=1$, hence is divisible by $Z-1$ in $\Z[Z]$.  After cancellation, every denominator factor has unit value at $Z=1$: namely $2$, $c$, and $2$.  Hence \eqref{eq:Lambda-integral} holds.

For the moments, start from the Bernoulli generating function
\[
 \frac{Ye^{xY}}{e^Y-1}
 =
 \sum_{m\ge0}B_m(x)\frac{Y^m}{m!}.
\]
After division by $Y$ and removal of its polar term, this becomes
\begin{equation}\label{eq:Bernoulli-regular-part}
 \frac{e^{xY}}{e^Y-1}-\frac1Y
 =
 \sum_{n\ge0}
 \frac{B_{n+1}(x)}{n+1}\frac{Y^n}{n!}.
\end{equation}
Set $T=e^Y-1$.  Applying \eqref{eq:Bernoulli-regular-part} first to
$x=1/2$ and $Y$, and then to $x=1/2$ and $cY$, gives
\begin{align*}
 \Lambda_{1/2,c}(e^Y-1)
 &=
 \left(\frac{e^{Y/2}}{e^Y-1}-\frac1Y\right)
 -c\left(\frac{e^{cY/2}}{e^{cY}-1}-\frac1{cY}\right)\\
 &=
 \sum_{n\ge0}
 (1-c^{n+1})\frac{B_{n+1}(1/2)}{n+1}
 \frac{Y^n}{n!}.
\end{align*}
Thus the polar terms cancel before coefficients are compared.
Comparison with the exponential moments of the Amice measure gives \eqref{eq:lambda-total-moments}.
\end{proof}

\begin{proposition}\label{prop:lambda-zero-class}
The restriction of $\lambda_{1/2,c}$ to $p\Zp$ is the push-forward $p_*\lambda_{1/2,c}$.  Equivalently,
\begin{equation}\label{eq:lambda-zero-amice}
 \frac1p\sum_{\varepsilon\in\mu_p}
 \Lambda_{1/2,c}\bigl(\varepsilon(1+T)-1\bigr)
 =
 \Lambda_{1/2,c}\bigl((1+T)^p-1\bigr).
\end{equation}
Consequently,
\begin{equation}\label{eq:lambda-unit-moments}
 \int_Gx^n\,d\lambda_{1/2,c}(x)
 =
 (1-p^n)(1-c^{n+1})
 \frac{B_{n+1}(1/2)}{n+1}.
\end{equation}
\end{proposition}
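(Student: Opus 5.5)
The plan is to verify the Amice-transform identity \eqref{eq:lambda-zero-amice} directly from the closed form \eqref{eq:Lambda-c}, and then read off the unit moments by subtraction.

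\textbf{Step 1: the restriction formula.} For any measure $D$ on $\Zp$, orthogonality of characters of $\mu_p$ gives
\[
 \Ami_{\one_{p\Zp}D}(T)=\frac1p\sum_{\varepsilon\in\mu_p}\Ami_D\bigl(\varepsilon(1+T)-1\bigr).
\]
Indeed, $(\varepsilon(1+T))^x=\varepsilon^x(1+T)^x$, and $\frac1p\sum_\varepsilon\varepsilon^x=\one_{p\Zp}(x)$. By \eqref{eq:Amice-pushforward} with $u=p$, we also have $\Ami_{p_*D}(T)=\Ami_D((1+T)^p-1)$. Injectivity of the Amice transform (Theorem~\ref{thm:exact-Amice-criteria}) therefore reduces the claim $\one_{p\Zp}\lambda_{1/2,c}=p_*\lambda_{1/2,c}$ to \eqref{eq:lambda-zero-amice}. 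We may adjoin $\mu_p$ to $K$ temporarily; the identity descends.

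\textbf{Step 2: the main obstacle, the half-power substitution.} The delicate point is the meaning of $(1+T)^{1/2}$ after the substitution $T\mapsto\varepsilon(1+T)-1$. The binomial series $(1+T')^{1/2}$ converges at $T'=\varepsilon q-1$, where $q=1+T$, because $|\varepsilon-1|<1$. Its value is the unique square root of $\varepsilon q$ congruent to $1$ modulo the maximal ideal, namely $\eta w$. Here $w=q^{1/2}$ and $\eta=\varepsilon^{(p+1)/2}\in\mu_p$, so $\eta^2=\varepsilon$. I would justify this by noting that both sides are square roots of $\varepsilon q$ that are $\equiv1$ on the disc, and that they coincide at one point.

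The map $\varepsilon\mapsto\eta$ is a bijection of $\mu_p$, since $p$ is odd. Hence the first term of $\Lambda_{1/2,c}$ averages to
\[
 \frac1p\sum_{\eta\in\mu_p}\frac{\eta w}{\eta^2w^2-1}
 =\frac1{2p}\sum_{\eta}\Bigl(\frac1{\eta w-1}+\frac1{\eta w+1}\Bigr).
\]
We then use $\sum_\eta(1-\eta w)^{-1}=p/(1-w^p)$ and $\sum_\eta(1+\eta w)^{-1}=p/(1+w^p)$, the latter because $(-\eta)^p=-\eta^p$ for odd $p$. This yields $w^p/(w^{2p}-1)=(1+T)^{p/2}/((1+T)^p-1)$, which is exactly the first term of $\Lambda_{1/2,c}((1+T)^p-1)$.

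The second term is handled identically with $w$ replaced by $w^c$. The substitution gives $(\eta w)^c=\eta^c w^c$, and $\eta\mapsto\eta^c$ permutes $\mu_p$ since $p\nmid c$; the scalar factor $c$ passes through. Since these are identities of rational functions in $w$, they hold as identities of convergent series on the open disc.

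\textbf{Step 3: unit moments.} From Step 1,
\[
 \int_{p\Zp}x^n\,d\lambda_{1/2,c}=\int_{\Zp}(px)^n\,d\lambda_{1/2,c}=p^n\int_{\Zp}x^n\,d\lambda_{1/2,c}.
\]
Subtracting this from the full moment \eqref{eq:lambda-total-moments} gives \eqref{eq:lambda-unit-moments}.
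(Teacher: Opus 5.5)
Your proof is correct and follows the paper's argument. It uses the same square-root choice $\eta=\varepsilon^{(p+1)/2}$ (the inverse of squaring on $\mu_p$) and the same partial-fraction averaging of both summands, using that $\eta\mapsto\eta^c$ permutes $\mu_p$; it then concludes by injectivity of the Amice transform and subtraction of the $p\Zp$-moments. Your Step~1 and the branch discussion in Step~2 simply make explicit two facts the paper uses without comment: the formula for the Amice transform of $\one_{p\Zp}D$, and the value of $(1+T')^{1/2}$ at $T'=\varepsilon q-1$.
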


\begin{proof}
We may extend scalars to a finite field containing $\mu_p$: the natural map
on bounded measures is injective, so an identity proved after this extension
descends to the original coefficient field.
Write $q=1+T$ and choose the unique square root on $\mu_p$ given by the inverse of the squaring map.  If $t=q^{1/2}$, then
\[
 \frac1p\sum_{\varepsilon\in\mu_p}
 \frac{(\varepsilon q)^{1/2}}{\varepsilon q-1}
 =
 \frac1p\sum_{\eta\in\mu_p}
 \frac{\eta t}{\eta^2t^2-1}.
\]
Using
\[
 \frac{x}{x^2-1}
 =\frac12\left(\frac1{x-1}+\frac1{x+1}\right)
\]
and the finite partial-fraction identities
\[
 \frac1p\sum_{\eta\in\mu_p}\frac1{\eta t-1}
 =\frac1{t^p-1},
 \qquad
 \frac1p\sum_{\eta\in\mu_p}\frac1{\eta t+1}
 =\frac1{1+t^p},
\]
one obtains
\[
 \frac{q^{p/2}}{q^p-1}.
\]
For the second summand of $\Lambda_{1/2,c}$, the chosen square root gives
\[
 (\varepsilon q)^{c/2}=(\eta t)^c,
 \qquad
 (\varepsilon q)^c-1=(\eta t)^{2c}-1.
\]
Because $(c,p)=1$, the map $\eta\mapsto\eta^c$ permutes $\mu_p$; repeating the two partial-fraction sums with $t$ replaced by $t^c$ yields
\[
 \frac1p\sum_{\varepsilon\in\mu_p}
 \frac{(\varepsilon q)^{c/2}}{(\varepsilon q)^c-1}
 =\frac{q^{cp/2}}{q^{cp}-1}.
\]
Consequently the average of the complete regularizer is
\[
 \frac{q^{p/2}}{q^p-1}
 -c\frac{q^{cp/2}}{q^{cp}-1}
 =\Lambda_{1/2,c}(q^p-1),
\]
which is \eqref{eq:lambda-zero-amice}.  The left side is the Amice transform of the restriction to $p\Zp$, whereas the right side is the Amice transform of $p_*\lambda_{1/2,c}$; injectivity gives the measure identity.  Finally,
\[
 \int_{p\Zp}x^n\,d\lambda_{1/2,c}(x)
 =\int_{\Zp}(px)^n\,d\lambda_{1/2,c}(x)
 =p^n\int_{\Zp}x^n\,d\lambda_{1/2,c}(x).
\]
Subtract this from the total moment \eqref{eq:lambda-total-moments} to obtain \eqref{eq:lambda-unit-moments}.
\end{proof}

Write
\[
 \lambda_{1/2,c}^{\times}
 =\one_G\lambda_{1/2,c}\in\Iw(G).
\]

\subsubsection{Multiplicative convolution and the regularization identity}\label{subsec:regularized-convolution}

For distributions $D_1,D_2$ for which product push-forward is defined,
set
\begin{equation}\label{eq:multiplicative-convolution}
 (D_1\starx D_2)(f)
 =
 (D_1\otimes D_2)\bigl((x,y)\mapsto f(xy)\bigr).
\end{equation}
This is well defined when one factor is bounded and the other locally
analytic.  Its monomial moments multiply:
\begin{equation}\label{eq:convolution-moments}
 \int z^n\,d(D_1\starx D_2)(z)
 =
 \left(\int x^n\,dD_1(x)\right)
 \left(\int y^n\,dD_2(y)\right).
\end{equation}
Moreover,
\begin{equation}\label{eq:unit-convolution-restriction}
 \one_G(D_1\starx D_2)
 =
 (\one_GD_1)\starx(\one_GD_2).
\end{equation}
Consequently the two $p$-depletion factors in a character packet arise
separately from the half-shift and logarithmic measures.

Define the formal regularization operator on a series $F(T)$ by
\begin{equation}\label{eq:formal-Rc}
 (\operatorname{Reg}^{\mathrm{form}}_cF)(T)
 =
 F(T)-cF\bigl((1+T)^c-1\bigr).
\end{equation}
The operator on the left is defined coefficientwise on every formal series.
By \eqref{eq:Amice-pushforward}, whenever a distribution with Amice
transform $F$ exists, $\operatorname{Reg}^{\mathrm{form}}_cF$ is the
Amice-coordinate form of
convolution by $r_c=\delta_1-c\delta_c$.

For a finite presentation \(\xi\), define the regularized unit
measure
\begin{equation}\label{eq:regularized-unit-measure}
 N_{\xi,c}
 :=
 \lambda_{1/2,c}^{\times}\starx\nu_\xi^{\times}
 \in\IwK(G).
\end{equation}
If the coefficients of \(\xi\) are integral, then \(N_{\xi,c}\in\Iw(G)\).

\begin{theorem}[Regularized convolution]\label{thm:regularized-convolution}
For every finite cyclotomic presentation $\xi$,
\begin{equation}\label{eq:formal-regularized-convolution}
 \operatorname{Reg}^{\mathrm{form}}_cF_{\xi,p}
 =
 \Ami_{\lambda_{1/2,c}\starx\nu_\xi}.
\end{equation}
The right side is an actual locally analytic distribution on $\Zp$, and
its restriction to $G$ is $N_{\xi,c}$.
\end{theorem}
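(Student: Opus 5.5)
The identity will be checked coefficient by coefficient after the substitution \(T=e^Y-1\). Since \(e^Y-1=Y+O(Y^2)\), this substitution identifies \(K[[T]]\) with \(K[[Y]]\), so two formal series in \(T\) agree exactly when their expansions in \(Y\) agree.

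For any locally analytic distribution \(D\) on \(\Zp\) I will use the exponential moment expansion
\[
 \Ami_D(e^Y-1)=D\bigl(x\mapsto e^{xY}\bigr)=\sum_{n\ge0}\Bigl(\int_{\Zp}x^n\,dD\Bigr)\frac{Y^n}{n!}.
\]
This is the same comparison used at the end of the proof of Theorem~\ref{thm:integral-half-measure}. The interchange of \(D\) and the sum is justified because, for \(|Y|_p\) small, the series \(e^{xY}\) converges in \(\LA(\Zp,K)\) uniformly in \(x\), and \(D\) is continuous.

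First, existence. The measure \(\lambda_{1/2,c}\) is bounded by Theorem~\ref{thm:integral-half-measure}. The distribution \(\nu_\xi\) is locally analytic by Lemma~\ref{lem:Cz-coefficients} and Definition~\ref{def:nu-z}. Hence \(\lambda_{1/2,c}\starx\nu_\xi\) is a well-defined locally analytic distribution, by the remark following \eqref{eq:multiplicative-convolution}.

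Second, its moments. By \eqref{eq:convolution-moments}, the \(n\)-th moment of the convolution is the product of the \(n\)-th moments of the two factors.
\begin{itemize}
\item The moment of \(\lambda_{1/2,c}\) is \((1-c^{n+1})B_{n+1}(1/2)/(n+1)\), by \eqref{eq:lambda-total-moments}.
\item For \(n\ge1\), the full moment of \(\nu_z\) is \(\Li_{1-n,p}(z)\), by the computation \eqref{eq:nu-full-positive-moment-proof} in the proof of Proposition~\ref{prop:primitive-character-moments}, including its \(n=1\) correction at the origin. That computation uses only \(x\nu_z=\mu_z-\delta_0\), so it applies to every prime-to-\(p\) root \(z\ne1\). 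Summing with the coefficients \(a_z\) gives \(S_n(\xi)\).
\item For \(n=0\), the moment of \(\nu_z\) is \(C_z(0)=0\).
\end{itemize}
Therefore the coefficient of \(Y^n\) in \(\Ami_{\lambda_{1/2,c}\starx\nu_\xi}(e^Y-1)\) is
\[
 (1-c^{n+1})\frac{B_{n+1}(1/2)}{(n+1)!}S_n(\xi)\quad(n\ge1),
\]
and the coefficient of \(Y^0\) is \(0\).

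Third, the left side. By \eqref{eq:formal-Rc}, \(\operatorname{Reg}^{\mathrm{form}}_cF_{\xi,p}\) evaluated at \(e^Y-1\) equals \(F_{\xi,p}(e^Y-1)-cF_{\xi,p}(e^{cY}-1)\). Using \eqref{eq:presentation-local-germ}, the coefficient of \(Y^n\) for \(n\ge1\) is
\[
 (1-c\cdot c^n)\frac{B_{n+1}(1/2)}{(n+1)!}S_n(\xi),
\]
and the constant term is \(0\). This matches the previous step term by term, which proves \eqref{eq:formal-regularized-convolution}.

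Finally, restricting to \(G\) and applying \eqref{eq:unit-convolution-restriction} gives
\[
 \one_G(\lambda_{1/2,c}\starx\nu_\xi)=\lambda_{1/2,c}^\times\starx\nu_\xi^\times=N_{\xi,c}.
\]

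The main obstacle is analytic bookkeeping rather than algebra. One must justify the exponential moment expansion and the moment factorization \eqref{eq:convolution-moments} when one factor is only locally analytic, not bounded. I would handle this by first verifying the expansion on the bounded measures \(\mu_z\) and \(\lambda_{1/2,c}\). I would then use \(x\nu_z=\mu_z-\delta_0\), from \eqref{eq:xnu-mu}, to transfer the statement to \(\nu_z\). Throughout, all identities are read in \(K[[Y]]\), and injectivity of the Amice transform (Theorem~\ref{thm:exact-Amice-criteria}) recovers the distribution from its series.
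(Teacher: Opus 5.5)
Your proposal is correct and follows essentially the same route as the paper: substitute $T=e^Y-1$, read each coefficient as the product of the $n$-th moments of $\lambda_{1/2,c}$ and $\nu_\xi$ via \eqref{eq:convolution-moments}, check that both masses vanish, invert the substitution, and restrict to $G$ using \eqref{eq:unit-convolution-restriction}. The analytic issue you flag at the end is lighter than you expect. The identity $[Y^n]\Ami_D(e^Y-1)=D(x^n)/n!$ already follows from the finite identity $\sum_{k\le n}\binom{x}{k}[Y^n](e^Y-1)^k=x^n/n!$, and the moment factorization is immediate from the definition of $D_1\otimes D_2$ on product functions, so no transfer through $x\nu_z=\mu_z-\delta_0$ is needed.
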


\begin{proof}
Substitute $T=e^Y-1$.  The left side becomes
\begin{align*}
 &F_{\xi,p}(e^Y-1)-cF_{\xi,p}(e^{cY}-1)\\
 &\quad=
 \sum_{n\ge1}
 (1-c^{n+1})\frac{B_{n+1}(1/2)}{n+1}
 S_n(\xi)\frac{Y^n}{n!}.
\end{align*}
By Theorem~\ref{thm:integral-half-measure}, the first factor in each
coefficient is the $n$-th moment of $\lambda_{1/2,c}$.  By
Definition~\ref{def:nu-z}, $S_n(\xi)$ is the $n$-th moment of $\nu_\xi$.
Equation \eqref{eq:convolution-moments} gives the same exponential series
for the Amice transform of the convolution.  The mass is zero on both
sides.  Formal substitution $T=e^Y-1$ is invertible, proving
\eqref{eq:formal-regularized-convolution}.

The whole-space convolution is locally analytic because one factor is
bounded and the other is locally analytic.  Equation
\eqref{eq:unit-convolution-restriction},
Theorem~\ref{thm:integral-half-measure}, and
Theorem~\ref{thm:nu-unit-measure} show that the unit restriction is
bounded, with the asserted integrality.
\end{proof}

\begin{corollary}\label{cor:regularized-unit-moments}
For $n\ge1$,
\begin{equation}\label{eq:regularized-unit-moments}
 \int_Gx^n\,dN_{\xi,c}(x)
 =
 (1-p^n)(1-c^{n+1})
 \frac{B_{n+1}(1/2)}{n+1}
 S_n^{[p]}(\xi).
\end{equation}
\end{corollary}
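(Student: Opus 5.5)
The plan is to compute the unit moments of \(N_{\xi,c}=\lambda_{1/2,c}^{\times}\starx\nu_\xi^{\times}\) directly from the multiplicativity of moments under multiplicative convolution, \eqref{eq:convolution-moments}. This formula applies because both factors are bounded measures supported on \(G\). The half-shift factor is bounded and integral by Theorem~\ref{thm:integral-half-measure}. The logarithmic factor \(\nu_\xi^\times\) is bounded by Theorem~\ref{thm:nu-unit-measure}. Hence for \(n\ge1\),
\[
 \int_G x^n\,dN_{\xi,c}(x)
 =\left(\int_G x^n\,d\lambda_{1/2,c}(x)\right)
 \left(\int_G y^n\,d\nu_\xi^\times(y)\right).
\]

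Each factor is then evaluated by an earlier result. The first factor is \((1-p^n)(1-c^{n+1})B_{n+1}(1/2)/(n+1)\), by \eqref{eq:lambda-unit-moments} in Proposition~\ref{prop:lambda-zero-class}; that proposition uses the zero-class identity \(\one_{p\Zp}\lambda_{1/2,c}=p_*\lambda_{1/2,c}\) to subtract the \(p\Zp\) contribution. The second factor is \(S_n^{[p]}(\xi)\). To get this, apply Proposition~\ref{prop:nu-unit-positive-moments} to each symbol \([z]\) and sum with coefficients \(a_z\), using linearity of \(\nu_\xi^\times=\sum_z a_z\nu_z^\times\) and the definition \eqref{eq:Snp-depleted}. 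Multiplying the two factors gives \eqref{eq:regularized-unit-moments}.

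As a cross-check, the same moments can be obtained another way. Take the \(n\)-th moment of the whole-space convolution \(\lambda_{1/2,c}\starx\nu_\xi\) from Theorem~\ref{thm:regularized-convolution}. Then use \eqref{eq:unit-convolution-restriction} to see that restricting to \(G\) commutes with convolution.

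The only step needing care is justifying that \eqref{eq:convolution-moments} holds for measures restricted to \(G\). This comes from Fubini for the product of two bounded measures applied to the function \((x,y)\mapsto(xy)^n\), which is just the defining formula \eqref{eq:multiplicative-convolution} with \(f(z)=z^n\). A second minor point is the case \(n=1\) of the \(\nu\)-moment, where the \(\delta_0\) correction matters. Proposition~\ref{prop:nu-unit-positive-moments} already covers that case, and the restriction to \(G\) removes \(\delta_0\) in any event.
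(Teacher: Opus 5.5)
Your proposal is correct and is essentially the paper's own argument: the paper also multiplies the unit moments \eqref{eq:lambda-unit-moments} of \(\lambda_{1/2,c}^{\times}\) by the unit moments \(S_n^{[p]}(\xi)\) of \(\nu_\xi^{\times}\) (Proposition~\ref{prop:nu-unit-positive-moments}), using the multiplicativity \eqref{eq:convolution-moments}. Your remark that both factors are bounded measures on \(G\), so that Fubini applies to \((x,y)\mapsto(xy)^n\), is exactly the justification behind the paper's one-line proof.
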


\begin{proof}
Multiply the two unit moment formulas \eqref{eq:lambda-unit-moments} and \eqref{eq:Snp-depleted} under multiplicative convolution.
\end{proof}

\subsection{The cyclotomic Iwasawa pseudomeasure and its exceptional branch}\label{subsec:cyclotomic-Iwasawa-pseudomeasure}

\begin{lemma}\label{lem:unit-moment-rigidity}
Two bounded \(K\)-valued measures on \(G\) are equal if all of their
monomial moments are equal.
\end{lemma}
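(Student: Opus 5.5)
By linearity it suffices to show that a bounded \(K\)-valued measure \(\rho\) on \(G\) whose monomial moments \(\int_G x^n\,d\rho(x)\) all vanish for \(n\ge0\) (or for \(n\ge1\), whichever index range the later applications supply) is zero. The strategy is to pass to weight components through the branch decomposition \eqref{eq:iwasawa-branch-decomposition} and show that every branch \(I_{\rho,j}(X)\in K[[X]]_{\bd}\) vanishes. Since that decomposition is an isomorphism, this forces \(\rho=0\).

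First I will rewrite the moments as values of the Mellin branches. Write \(x=\teich(x)\ang x\). Then for an integer \(n\) we have \(x^n=\teich(x)^n\ang x^n\), so \eqref{eq:branch-Mellin-Iwasawa-coordinate} gives
\[
 \int_G x^n\,d\rho(x)=\mathcal M_{\rho,j}(n),\qquad j\equiv n\pmod{p-1}.
\]
Hence for each fixed class \(j\in\Z/(p-1)\Z\), the function \(s\mapsto I_{\rho,j}(\gamma^s-1)\) vanishes at every integer \(s\equiv j\pmod{p-1}\) in the allowed range. Even if only \(n\ge1\) is allowed, this is still an infinite set of integers, dense in \(\Zp\).

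Next I will turn these zeros into vanishing of the power series. Since \(I_{\rho,j}\in\mathcal O[[X]][1/p]\), after scaling by a power of \(p\) it lies in \(\mathcal O[[X]]\). The points \(X_n=\gamma^n-1\) satisfy \(|X_n|_p\le|p|_p<1\) and are pairwise distinct because \(\gamma\) has infinite order. They accumulate, in fact inside the closed disc of radius \(|p|_p\), since integers \(n\equiv j\pmod{p-1}\) with \(n\to n_0\) \(p\)-adically give \(X_n\to\gamma^{n_0}-1\). A nonzero element of \(\mathcal O[[X]]\) has only finitely many zeros in any closed disc \(|X|_p\le r<1\), by the Weierstrass preparation theorem or by Strassmann's theorem after rescaling. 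Therefore \(I_{\rho,j}=0\) for every \(j\), and so \(\rho=0\).

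The main obstacle is the bookkeeping with the idempotents \(e_j\): the branch decomposition needs scalars containing \(\mu_{p-1}\), which is harmless because \(\teich\) takes values in \(\Zp\). I also need to confirm that the moment identity holds for the defining formula of \(I_{\rho,j}\), which uses \(\ell_\gamma\), namely that \((1+X)^{\ell_\gamma(x)}\) at \(X=\gamma^s-1\) equals \(\ang x^s\). The key point is the finiteness-of-zeros argument, and that is exactly where boundedness is used, in line with Remark~\ref{rem:bounded-denominator-not-all-series}. For an unbounded distribution the same argument would still work via rigid analytic functions on the open disc, but boundedness makes the Strassmann argument immediate.
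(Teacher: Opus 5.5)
Your argument is correct, but it does not follow the paper's route. The paper argues functional-analytically. The Stirling identities give a triangular change of basis between $x^n$ and $\binom{x}{n}$. Mahler's theorem makes polynomials dense in $C(\Zp,K)$, and extension by zero from the clopen set $G$ makes their restrictions dense in $C(G,K)$. A bounded measure is a continuous functional, so agreement on monomials propagates to all continuous functions.

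You instead use the branch isomorphism \eqref{eq:iwasawa-branch-decomposition}. You read $\int_G x^n\,d\rho$ as $I_{\rho,j}(\gamma^n-1)$ with $j\equiv n\pmod{p-1}$, and kill each branch by Weierstrass preparation. In effect this proves the paper's remark after \eqref{eq:branch-Mellin-Iwasawa-coordinate} that arithmetic integers in one residue class determine the branch.

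The paper's proof needs less machinery: only Mahler's theorem and continuity, with no Iwasawa isomorphism. It also avoids evaluating $\rho$ termwise on $\sum_k\binom{\ell_\gamma(x)}{k}X_n^k$. That step is valid, but you should state its justification: uniform convergence of the series in $x$ together with boundedness of $\rho$.

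In exchange, your route gives a sharper statement. Any infinite set of exponents in each class modulo $p-1$ suffices, so the moments with $n\ge1$ alone already determine the measure. The paper, as written, uses all $n\ge0$, which is why the cross relation checks the masses separately; its argument would also adapt, since multiplication by $x$ is an isometry of $C(G,K)$. As you note, your zero-counting argument also extends to locally analytic distributions.
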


\begin{proof}
The Stirling identities
\[
 x^n=\sum_{r=0}^nS(n,r)r!\binom xr,
 \qquad
 \binom xn=\frac1{n!}\sum_{r=0}^ns(n,r)x^r
\]
hold, where \(S(n,r)\) and \(s(n,r)\) are the Stirling numbers of the
second and first kind, respectively.  These formulas give triangular changes of
basis between monomials and the binomial
polynomials.  By Mahler's theorem \cite{Robert}, polynomials are dense in
\(C(\Zp,K)\).  Since \(G\) is clopen in \(\Zp\), every continuous function
on \(G\) extends by zero to \(\Zp\); hence restrictions of polynomials are
dense in \(C(G,K)\).  Equality of all monomial moments therefore gives
equality on a dense subspace of \(C(G,K)\), and boundedness extends the
equality to every continuous function.
\end{proof}

\begin{theorem}[Cross relation]\label{thm:cross-relation}
For any two admissible auxiliary integers $c,d$,
\begin{equation}\label{eq:cross-relation}
 r_d\starx N_{\xi,c}
 =
 r_c\starx N_{\xi,d}
 \qquad\text{in }\IwK(G).
\end{equation}
\end{theorem}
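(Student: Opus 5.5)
Both sides of \eqref{eq:cross-relation} are bounded measures on \(G\): \(N_{\xi,c}\) and \(N_{\xi,d}\) lie in \(\IwK(G)\) by Theorem~\ref{thm:regularized-convolution}, and \(r_c,r_d\in\Iw(G)\). By Lemma~\ref{lem:unit-moment-rigidity}, it therefore suffices to check that all monomial moments \(\int_G x^n\) agree for \(n\ge0\).

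Moments multiply under \(\starx\) by \eqref{eq:convolution-moments}. The Dirac measures give \(\int_G x^n\,dr_c=1-c\cdot c^n=1-c^{n+1}\), since \(c\in G\) because \(p\nmid c\). For \(n\ge1\), Corollary~\ref{cor:regularized-unit-moments} then gives
\[
 \int_G x^n\,d(r_d\starx N_{\xi,c})
 =(1-d^{n+1})(1-p^n)(1-c^{n+1})\frac{B_{n+1}(1/2)}{n+1}S_n^{[p]}(\xi).
\]
This expression is symmetric in \(c\) and \(d\), so it equals the \(n\)-th moment of \(r_c\starx N_{\xi,d}\).

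The only step needing care is the moment \(n=0\), because the Corollary is stated only for \(n\ge1\). Here I would argue directly. By \eqref{eq:lambda-unit-moments}, \(\int_G d\lambda^\times_{1/2,c}=(1-p^0)(\cdots)=0\), and the same holds for \(d\). Hence \(N_{\xi,c}\) and \(N_{\xi,d}\) both have total mass zero, being products of masses one of which vanishes. So both sides of the cross relation have zero mass.

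An alternative to the whole moment argument is to work in Amice coordinates. One would use associativity and commutativity of \(\starx\) and the formal identity
\[
 \operatorname{Reg}_d\Lambda_{1/2,c}=\operatorname{Reg}_c\Lambda_{1/2,d},
\]
which holds because both equal \(\frac{Z}{T}-c\,(\cdot)_c-d\,(\cdot)_d+cd\,(\cdot)_{cd}\). I would keep the moment argument as the main proof, since the rigidity lemma makes it immediate.
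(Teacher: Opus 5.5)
Your proposal is correct and follows essentially the same route as the paper's proof. Both compute the $n$-th moments for $n\ge1$ via Corollary~\ref{cor:regularized-unit-moments} and multiplicativity of moments under $\starx$, note that both sides have zero mass (your justification through the factor $1-p^0=0$ in \eqref{eq:lambda-unit-moments} supplies a detail the paper only asserts), and conclude with Lemma~\ref{lem:unit-moment-rigidity}.
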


\begin{proof}
The $n$-th moment of convolution by
$r_d=\delta_1-d\delta_d$ is multiplied by $1-d^{n+1}$.  Thus
Corollary~\ref{cor:regularized-unit-moments} shows that both sides have the
moment
\[
 (1-d^{n+1})(1-c^{n+1})(1-p^n)
 \frac{B_{n+1}(1/2)}{n+1}S_n^{[p]}(\xi)
\]
for every $n\ge1$.  Both masses are zero, so the moments also agree at
$n=0$.  Lemma~\ref{lem:unit-moment-rigidity} now gives the equality of
bounded measures.
\end{proof}

\begin{definition}[Cyclotomic Iwasawa pseudomeasure]\label{def:cyclotomic-Iwasawa-pseudomeasure}
Define
\begin{equation}\label{eq:cyclotomic-Iwasawa-pseudomeasure}
 \IwPs_{\xi,p}
 =
 r_c^{-1}\starx N_{\xi,c}
 \in \Sigma^{-1}\IwK(G).
\end{equation}
Theorem~\ref{thm:cross-relation} shows that this element is independent of $c$.
\end{definition}

For a primitive packet and for a packet induced to modulus \(N\), write
\[
 \IwPs_{\chi,N,p}:=\IwPs_{\xi_{\chi,N},p},
 \qquad
 \IwPs_{\chi_N,N,p}:=\IwPs_{\xi_{\chi_N,N},p}.
\]
\begin{definition}[Half-shift zeta pseudomeasure]\label{def:half-zeta-pseudomeasure}
Define
\begin{equation}\label{eq:half-zeta-pseudomeasure}
 \Zhalf
 =
 r_c^{-1}\starx\lambda_{1/2,c}^{\times}
 \in \Sigma^{-1}\IwK(G).
\end{equation}
This element is independent of $c$.  Indeed, for two admissible
regularizers $c,c'$ and every $n\ge0$, the $n$-th moments of both
$r_{c'}\starx\lambda_{1/2,c}^{\times}$ and
$r_c\starx\lambda_{1/2,c'}^{\times}$ equal
\[
 (1-c'^{\,n+1})(1-c^{n+1})(1-p^n)
 \frac{B_{n+1}(1/2)}{n+1}.
\]
Both convolutions are bounded measures on $G$, so
Lemma~\ref{lem:unit-moment-rigidity} makes them equal.  Cross multiplication
in the localization then identifies the two quotients.
\end{definition}

\begin{theorem}[Canonical factorization]\label{thm:Iwasawa-pseudomeasure-factorization}
For every finite cyclotomic presentation,
\begin{equation}\label{eq:Iwasawa-pseudomeasure-factorization}
 \IwPs_{\xi,p}
 =
 \Zhalf\starx\nu_\xi^{\times}.
\end{equation}
This is an equality in $\Sigma^{-1}\IwK(G)$ and is linear in $\xi$.
\end{theorem}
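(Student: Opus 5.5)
The plan is to reduce the identity to associativity and commutativity of multiplicative convolution on $G$, together with the compatibility of convolution with localization at $\Sigma$. By Definition~\ref{def:cyclotomic-Iwasawa-pseudomeasure} and \eqref{eq:regularized-unit-measure}, we have
\[
 \IwPs_{\xi,p}=r_c^{-1}\starx\bigl(\lambda_{1/2,c}^{\times}\starx\nu_\xi^{\times}\bigr).
\]
By Definition~\ref{def:half-zeta-pseudomeasure}, we have $\Zhalf=r_c^{-1}\starx\lambda_{1/2,c}^{\times}$, computed with the same $c$; this is allowed because both objects are independent of $c$. Convolution on the commutative profinite group $G$ makes $\IwK(G)$ a commutative ring. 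Each $r_c$ is a non-zero-divisor, since every branch $I_{r_c,j}$ is a nonzero power series in the domain $K[[X]]_{\bd}$. Hence $\Sigma^{-1}\IwK(G)$ is a commutative ring containing $\IwK(G)$. In this ring, $r_c^{-1}\starx(A\starx B)=(r_c^{-1}\starx A)\starx B$ holds formally for $A=\lambda_{1/2,c}^\times$ and $B=\nu_\xi^\times$. The only thing to check is that $\nu_\xi^\times$ is an element of $\IwK(G)$, and Theorem~\ref{thm:nu-unit-measure} gives exactly that: it is a bounded $K$-valued measure on $G$.

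I would carry this out in three steps. First, check that $\starx$ on bounded measures on $G$ is the group-algebra product of $\mathcal O[[G]]$ extended by $1/p$. Concretely, \eqref{eq:multiplicative-convolution} restricted to $G\times G$ is push-forward along the group multiplication, so the product is associative and commutative. Second, extend the product to $\Sigma^{-1}\IwK(G)$ by the usual fraction rule, and recall that the branch Mellin transforms of Definition~\ref{def:pseudomeasure-Mellin-branch} are multiplicative. Third, conclude $\IwPs_{\xi,p}=\Zhalf\starx\nu_\xi^\times$.

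As an independent check, one can compare both sides after multiplying by $r_c$. Both sides then become bounded measures, and their moments agree by Corollary~\ref{cor:regularized-unit-moments}, \eqref{eq:lambda-unit-moments}, Proposition~\ref{prop:nu-unit-positive-moments} and \eqref{eq:convolution-moments}. Lemma~\ref{lem:unit-moment-rigidity} then gives equality, and because $r_c$ is a non-zero-divisor the equality passes back to the localization.

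Linearity in $\xi$ follows because $\xi\mapsto\nu_\xi^\times=\sum_z a_z\nu_z^\times$ is linear and convolution by the fixed element $\Zhalf$ is linear.

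The main obstacle is bookkeeping rather than substance. The convolution \eqref{eq:multiplicative-convolution} was introduced on $\Zp$ for pairs consisting of one bounded and one locally analytic distribution. We need to know that on $G$ it coincides with the Iwasawa-algebra product, so that \eqref{eq:unit-convolution-restriction} identifies $N_{\xi,c}$ with the product in $\IwK(G)$. I would settle this by testing both products on the functions $\teich(x)^j\ang x^s$. Each product factorizes branchwise: $I_{A\starx B,j}=I_{A,j}I_{B,j}$. The branch decomposition \eqref{eq:iwasawa-branch-decomposition} is injective, so the two products agree.
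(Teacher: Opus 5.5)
Your proposal is correct and takes the paper's approach: the paper's proof is the one-line associativity identity $r_c^{-1}\starx N_{\xi,c}=(r_c^{-1}\starx\lambda_{1/2,c}^{\times})\starx\nu_\xi^{\times}$ in the commutative localized ring, with the same $c$ used for both $\IwPs_{\xi,p}$ and $\Zhalf$. Your additional checks — identifying $\starx$ on $G$ with the Iwasawa-algebra product, and the moment cross-check after multiplying by $r_c$ — are sound but go beyond what the paper's proof spells out.
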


\begin{proof}
By \eqref{eq:regularized-unit-measure},
\[
r_c^{-1}\starx N_{\xi,c}
 =
 \bigl(r_c^{-1}\starx\lambda_{1/2,c}^{\times}\bigr)
 \starx\nu_\xi^{\times}.
\]
The left-hand side is $\IwPs_{\xi,p}$, while the parenthesized factor on
the right-hand side is $\Zhalf$.
\end{proof}

The pseudomeasure $\IwPs_{\xi,p}$ realizes the fixed formal logarithmic
germ on the unit group: Theorem~\ref{thm:regularized-convolution}
identifies its bounded numerator with the restriction to $G$ of the
regularized formal germ.

\subsubsection{The unique exceptional weight}\label{subsec:unique-exceptional-weight}

Choose an odd positive integer $c_0>1$ whose residue class generates $(\Z/p^2\Z)^\times$.  Such a representative exists because adding the odd number $p^2$ changes parity without changing the residue class.  Then $\teich(c_0)$ generates $\Delta$ and $\ang{c_0}$ topologically generates $\Gamma$.

To prove independence of the auxiliary integer, we localize by all
\(r_c\).  For branchwise analysis, the cross relation gives the
representative
\begin{equation}\label{eq:one-regularizer-representative}
 \IwPs_{\xi,p}
 =
 r_{c_0}^{-1}\starx N_{\xi,c_0}.
\end{equation}
Thus \(\IwPs_{\xi,p}\) already lies in the image of
\(\IwK(G)[r_{c_0}^{-1}]\to \Sigma^{-1}\IwK(G)\), and it suffices to analyze
the divisor of \(r_{c_0}\).

\begin{theorem}\label{thm:unique-exceptional-branch}
For $j\ne-1\pmod{p-1}$, the element $r_{c_0}$ is a unit on the $j$-th Iwasawa branch.  On the branch $j=-1$,
\begin{equation}\label{eq:exceptional-denominator}
 r_{c_0,-1}(s)
 =
 1-\ang{c_0}^{s+1}
\end{equation}
has exactly one zero on $s\in\Zp$, namely $s=-1$, and the zero is simple.
Therefore the branches with $j\ne-1$ are bounded measures, while the
branch $j=-1$ can have only a simple pole, at the point $s=-1$.
\end{theorem}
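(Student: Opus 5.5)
The plan is to work branch by branch with the explicit Iwasawa-coordinate factor \eqref{eq:r-c-Iwasawa-coordinate}. For $c=c_0$ this factor is $I_{r_{c_0},j}(X)=1-c_0\teich(c_0)^j(1+X)^{\ell_\gamma(c_0)}$. The basic principle is that a power series in $\mathcal O[[X]]$ is a unit of $K[[X]]_{\bd}$ exactly when its constant term is nonzero, i.e. a unit of $\mathcal O$ up to a power of $p$ that is inverted. So we first compute the constant term $I_{r_{c_0},j}(0)=1-c_0\teich(c_0)^j$ and decide when it vanishes.

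Write $c_0=\teich(c_0)\ang{c_0}$. Then the constant term is $1-\teich(c_0)^{j+1}\ang{c_0}$. Reducing modulo $p$, $\ang{c_0}\equiv1$, so the constant term is congruent to $1-\teich(c_0)^{j+1}$. Since $\teich(c_0)$ generates $\Delta\cong\mu_{p-1}$, distinct $(p-1)$-th roots of unity stay distinct modulo $p$. Hence $1-\teich(c_0)^{j+1}$ is a $p$-adic unit whenever $j+1\not\equiv0\pmod{p-1}$. On those branches $I_{r_{c_0},j}$ is invertible in $\mathcal O[[X]]$, so $I_{r_{c_0},j}^{-1}I_{N_{\xi,c_0},j}\in K[[X]]_{\bd}$. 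By the representative \eqref{eq:one-regularizer-representative}, the $j$-th component of $\IwPs_{\xi,p}$ is then a bounded measure.

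On the branch $j=-1$, \eqref{eq:r-c-symbol} gives $r_{c_0,-1}(s)=1-\ang{c_0}^{s+1}=1-\exp\bigl((s+1)\log_p\ang{c_0}\bigr)$. Because $c_0$ generates $(\Z/p^2\Z)^\times$, $\ang{c_0}$ is a topological generator of $\Gamma$, so $v_p(\log_p\ang{c_0})=1$. To locate the zeros, pass to $X$: take $\gamma=\ang{c_0}$, so $\ell_\gamma(c_0)=1$ and the branch factor becomes $1-\gamma^{-1}\cdot\gamma(1+X)$ up to the normalization, i.e. $-X$ times a unit, written in the variable $\gamma^{s+1}-1$. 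More carefully, $1-\gamma^{s+1}=-(\gamma^{s+1}-1)$. By the Weierstrass preparation theorem applied to $I_{r_{c_0},-1}(X)=1-\gamma^{-1}\ang{c_0}(1+X)$ (here $c_0\teich(c_0)^{-1}=\ang{c_0}$), this factor is linear in $X$ with a unit leading coefficient. So it has exactly one zero, the one corresponding to $s=-1$, and that zero is simple. If $\gamma$ is not $\ang{c_0}$, the factor is $1-(1+X)^{\ell}\gamma^{\ell}$ with $\ell=\ell_\gamma(c_0)\in\Zp^\times$. This is a distinguished polynomial of degree $1$ times a unit, since its linear coefficient $-\ell\gamma^{\ell}$ is a unit, and the same conclusion follows.

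Finally, dividing the bounded numerator $I_{N_{\xi,c_0},-1}$ by a series with a single simple zero leaves a meromorphic function with at most a simple pole at $s=-1$. The main point needing care is that in the first step invertibility is established in $K[[X]]_{\bd}$ rather than only in $K[[X]]$ (Remark~\ref{rem:bounded-denominator-not-all-series}). This follows because inverting a series with unit constant term in $\mathcal O[[X]]$ stays in $\mathcal O[[X]]$. The remaining verification is that the $p$-adic valuation of the linear coefficient is zero, which the choice of $c_0$ guarantees.
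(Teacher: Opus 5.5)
Your argument is correct in substance. On the branches $j\ne-1$ it is the paper's proof: the constant term $1-\teich(c_0)^{j+1}\ang{c_0}$ reduces to $1-\teich(c_0)^{j+1}\ne0$, so $I_{r_{c_0},j}\in\mathcal O[[X]]^\times$.

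On the exceptional branch the two routes differ. The paper stays in the weight variable: $\ang{c_0}^{s+1}=1$ forces $s=-1$ because $\Gamma$ is torsion-free, and the derivative there is $-\log_p\ang{c_0}\ne0$. You instead run a Weierstrass degree-one argument in the Iwasawa coordinate. With $\ell=\ell_\gamma(c_0)$, the series $1-\gamma^{\ell}(1+X)^{\ell}$ has constant term in $p\Zp$ and unit linear coefficient $-\ell\gamma^{\ell}$. This is exactly the factorization $I_{r_{c_0},-1}(X)=U_{c_0}(X)(X-X_0)$ that the paper proves later in Lemma~\ref{lem:exceptional-integral-unit-factor}.

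Your route gives more than this theorem needs, namely the integral unit cofactor used afterwards for division in $K[[X]]_{\bd}$. The paper's route is shorter. To state your conclusion in the variable $s$, you still need two facts: $s\mapsto\gamma^s-1$ is injective on $\Zp$, and $dX/ds=\gamma^{s}\log_p\gamma$ is nonzero at $s=-1$. Both follow at once from $\log_p\gamma\ne0$, which your observation $v_p(\log_p\ang{c_0})=1$ already gives.

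Two statements in your write-up should be corrected.

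First, your ``basic principle'' is false. A nonzero constant term does not make $f\in\mathcal O[[X]]$ a unit of $K[[X]]_{\bd}$. The series $p-X$ is a counterexample, and so is the exceptional factor itself, whose constant term $1-\ang{c_0}$ is nonzero. The correct criterion is that the constant term has minimal valuation among all coefficients, i.e.\ the Weierstrass $\lambda$-invariant is zero. You only apply the principle when the constant term is a unit of $\mathcal O$, where the conclusion is right, so the proof survives. As stated, though, the principle contradicts your own $j=-1$ analysis.

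Second, $1-\gamma^{-1}\ang{c_0}(1+X)$ is the exceptional factor in the shifted variable $\gamma^{s+1}-1$. It is not the factor in the coordinate $X=\gamma^s-1$ of \eqref{eq:r-c-Iwasawa-coordinate}, which is what the label $I_{r_{c_0},-1}(X)$ refers to. In that coordinate the factor is $1-\ang{c_0}(1+X)^{\ell_\gamma(c_0)}$. For $\gamma=\ang{c_0}$ this equals $-\gamma(X-X_0)$ with $X_0=\gamma^{-1}-1$. Read in the paper's coordinate, your formula would put the zero at $s=0$. Your general-$\gamma$ formula $1-\gamma^{\ell}(1+X)^{\ell}$ uses the correct coordinate, so keep to that one.
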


\begin{proof}
If $j\ne-1$, then reduction of \eqref{eq:r-c-symbol} modulo the maximal ideal gives
\[
 1-\teich(c_0)^{j+1}\ne0,
\]
so the branch power series is a unit.  If $j=-1$, a zero of \eqref{eq:exceptional-denominator} satisfies $\ang{c_0}^{s+1}=1$.  A topological generator of the torsion-free group $\Gamma$ has no nontrivial $\Zp$-power equal to $1$, hence $s=-1$.  Its derivative there is
\[
 -\log_p\ang{c_0}\ne0,
\]
so the zero is simple.  The assertion for $\IwPs_{\xi,p}$ follows from Definition~\ref{def:cyclotomic-Iwasawa-pseudomeasure}, since its numerator is a bounded measure.
\end{proof}

\begin{example}
\label{ex:exceptional-branch-p-five}
Take $p=5$ and $c_0=3$, whose class generates
$(\Z/25\Z)^\times$.  The exceptional index is
$j=3\equiv-1\pmod4$, and its symbol is
\[
 r_{3,3}(s)=1-\ang3^{\,s+1},
\]
with one simple zero at $s=-1$.  For $j=0,1,2$, the reduction of the
constant term is $1-\teich(3)^{j+1}\ne0$ in $\F_5$, so those three branch
symbols are units.
\end{example}

\subsection{The half-zeta factor and negative moments}\label{subsec:zeta-identification}

Let
\[
 \Zclass\in \Sigma^{-1}\IwK(G)
\]
denote the classical Iwasawa zeta pseudomeasure.  In the
Mazur--Iwasawa construction, every admissible $c$ regularizes it to a
bounded integral measure:
\begin{equation}\label{eq:classical-zeta-regularization}
 r_c\starx\Zclass\in\Iw(G),
 \qquad
 \int_Gx^n\,d(r_c\starx\Zclass)(x)
 =
 -(1-c^{n+1})(1-p^n)\frac{B_{n+1}}{n+1}
\end{equation}
for $n\ge0$; see \cite{Iwasawa,Washington}.  For \(j\in\Z/(p-1)\Z\), define
\[
 \zeta_{p,j}(u):=\mathcal M_{\Zclass,j}(-u),
\]
and write \(\zeta_p:=\zeta_{p,-1}\) for the exceptional branch.  Equivalently,
its branch normalization is
\begin{equation}\label{eq:zeta-p-interpolation}
 \mathcal M_{\Zclass,j}(n)
 =
 \zeta_{p,j}(-n)
 =
 -(1-p^n)\frac{B_{n+1}}{n+1}
\end{equation}
for all $n\ge0$ with $n\equiv j\pmod{p-1}$.  On the exceptional component it has the standard simple pole corresponding to $\zeta_p(u)$ at $u=1$, normalized by
\begin{equation}\label{eq:zeta-p-residue-normalization}
 \Res_{u=1}\zeta_p(u)=1-p^{-1}.
\end{equation}

\begin{theorem}[Identification with the Iwasawa zeta pseudomeasure]\label{thm:half-zeta-identification}
In the localized Iwasawa algebra,
\begin{equation}\label{eq:half-zeta-identification}
 \Zhalf
 =
 (\delta_1-\delta_{2^{-1}})\starx\Zclass.
\end{equation}
Equivalently, on the $j$-th branch,
\begin{equation}\label{eq:half-zeta-branch}
 Z_{1/2,j}(s)
 :=\mathcal M_{\Zhalf,j}(s)
 =
 \bigl(1-\teich(2)^{-j}\ang2^{-s}\bigr)
 \zeta_{p,j}(-s).
\end{equation}
At every arithmetic point $n\ge0$, $n\equiv j$, one has
\begin{equation}\label{eq:half-zeta-interpolation}
 Z_{1/2,j}(n)
 =
 (1-p^n)\frac{B_{n+1}(1/2)}{n+1}.
\end{equation}
Moreover,
\begin{equation}\label{eq:half-zeta-residue}
 \Res_{s=-1}Z_{1/2,-1}(s)
 =1-p^{-1}.
\end{equation}
\end{theorem}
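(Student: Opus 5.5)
The plan is to prove \eqref{eq:half-zeta-identification} by cross-multiplying with a single regularizer and comparing moments of bounded measures. Fix an admissible $c$. Since $\Zhalf=r_c^{-1}\starx\lambda_{1/2,c}^\times$ and $r_c\starx\Zclass\in\Iw(G)$ by \eqref{eq:classical-zeta-regularization}, it suffices to prove the identity of bounded measures
\[
 \lambda_{1/2,c}^\times
 =(\delta_1-\delta_{2^{-1}})\starx\bigl(r_c\starx\Zclass\bigr)
\]
in $\IwK(G)$, and then divide by $r_c$ in $\Sigma^{-1}\IwK(G)$. By Lemma~\ref{lem:unit-moment-rigidity} we only need the monomial moments for all $n\ge0$. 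For the left side, Proposition~\ref{prop:lambda-zero-class} gives $(1-p^n)(1-c^{n+1})B_{n+1}(1/2)/(n+1)$. For the right side, \eqref{eq:convolution-moments} and the fact that $\delta_1-\delta_{2^{-1}}$ has $n$-th moment $1-2^{-n}$ give
\[
 -(1-2^{-n})(1-c^{n+1})(1-p^n)\frac{B_{n+1}}{n+1}.
\]

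The two moment formulas agree by the Bernoulli identity $B_{n+1}(1/2)=(2^{-n}-1)B_{n+1}$, which is the identity $B_r(1/2)=(2^{1-r}-1)B_r$ recorded before \eqref{eq:presentation-local-germ} with $r=n+1$. The identity also holds at $n=0$, where both sides vanish. Note that $2^{-1}\in G$ because $p$ is odd, so $\delta_{2^{-1}}\in\Iw(G)$. This proves \eqref{eq:half-zeta-identification}.

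For the branch formula \eqref{eq:half-zeta-branch}, take Mellin branches. Multiplicativity of $\mathcal M_{\cdot,j}$ under convolution holds for bounded measures and passes to quotients by Definition~\ref{def:pseudomeasure-Mellin-branch}. The $j$-th Mellin transform of $\delta_{2^{-1}}$ is $\teich(2)^{-j}\ang2^{-s}$, and $\mathcal M_{\Zclass,j}(s)=\zeta_{p,j}(-s)$ by definition. The interpolation formula \eqref{eq:half-zeta-interpolation} then follows by evaluating \eqref{eq:half-zeta-branch} at $s=n$, using \eqref{eq:zeta-p-interpolation} and the same Bernoulli identity.

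For the residue \eqref{eq:half-zeta-residue}, on $j=-1$ the factor is $1-\teich(2)\ang2^{-s}$, which at $s=-1$ equals $1-\teich(2)\ang2=1-2=-1$. Since $\zeta_p(-s)$ has a simple pole at $s=-1$, we get
\[
 \Res_{s=-1}\zeta_p(-s)=-\Res_{u=1}\zeta_p(u)=-(1-p^{-1}),
\]
using the substitution $u=-s$ and the normalization \eqref{eq:zeta-p-residue-normalization}. The product of the two signs gives $1-p^{-1}$.

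The main obstacle is the bookkeeping of signs and normalizations rather than any analytic difficulty. Specifically, one has to check that the branch Mellin convention $\int\teich(x)^j\ang x^s$ applied to $\delta_{2^{-1}}$ yields exactly the stated factor. One must also check that the convention $\zeta_{p,j}(u)=\mathcal M_{\Zclass,j}(-u)$ combined with the residue normalization produces $+(1-p^{-1})$ and not its negative. The measure identity itself is a direct moment comparison.
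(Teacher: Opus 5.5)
Your proposal is correct and follows the paper's proof essentially step for step. The paper also cross-multiplies by a single $r_c$, compares all monomial moments of the two bounded measures using $B_{n+1}(1/2)=(2^{-n}-1)B_{n+1}$ and Lemma~\ref{lem:unit-moment-rigidity}, cancels the non-zero-divisor $r_c$, and gets the residue from the two cancelling signs $1-\teich(2)\ang2=-1$ and $u-1=-(s+1)$. The only cosmetic difference is that the paper reads off \eqref{eq:half-zeta-interpolation} directly, by dividing \eqref{eq:lambda-unit-moments} by the Mellin value $1-c^{n+1}$ of $r_c$, whereas you deduce it from the branch formula together with \eqref{eq:zeta-p-interpolation}.
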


\begin{proof}
At an arithmetic point $n\equiv j$, the Mellin value of $r_c$ is $1-c^{n+1}$.  Dividing \eqref{eq:lambda-unit-moments} by this factor gives \eqref{eq:half-zeta-interpolation}.  Since
\[
 B_{n+1}(1/2)=(2^{-n}-1)B_{n+1},
\]
the right side becomes
\[
 (1-2^{-n})\zeta_{p,j}(-n).
\]
The Mellin transform of $\delta_1-\delta_{2^{-1}}$ at the same arithmetic
point is $1-2^{-n}$.  Fix an admissible $c$.  Multiplication by $r_c$
sends the left side of \eqref{eq:half-zeta-identification} to
$\lambda_{1/2,c}^{\times}$.  By
\eqref{eq:classical-zeta-regularization}, it also sends the proposed right
side to a bounded measure.  The Bernoulli identity above shows that these
two bounded measures have the same monomial moments for every $n\ge0$.
Lemma~\ref{lem:unit-moment-rigidity} therefore makes them equal.  Since
$r_c$ is a non-zero-divisor, cancellation in the
localization proves \eqref{eq:half-zeta-identification}.

For the residue, set $u=-s$.  Near $s=-1$, the factor in front of $\zeta_p(-s)$ tends to
\[
 1-2=-1,
\]
while $u-1=-(s+1)$.  The two minus signs cancel, and \eqref{eq:zeta-p-residue-normalization} gives \eqref{eq:half-zeta-residue}.
\end{proof}

\subsubsection{Negative weights and Coleman polylogarithms}\label{subsec:negative-weights}

For $r\ge1$, define the $p$-depleted Coleman polylogarithm
\begin{equation}\label{eq:p-depleted-polylog}
 \Li_{r,p}^{[p]}(z)
 =
 \Li_{r,p}(z)-p^{-r}\Li_{r,p}(z^p).
\end{equation}

\begin{theorem}[Coleman--Koblitz negative moments]\label{thm:koblitz-negative-moments}
For every prime-to-$p$ root $z\ne1$ and every integer $r\ge1$,
\begin{equation}\label{eq:koblitz-negative-moments}
 \int_Gx^{-r}\,d\mu_z(x)
 =
 \Li_{r,p}^{[p]}(z).
\end{equation}
Consequently, for $r\ge0$,
\begin{equation}\label{eq:nu-negative-moments}
 \mathcal M_{\nu_z^{\times},-r}(-r)
 =
 \Li_{r+1,p}^{[p]}(z).
\end{equation}
\end{theorem}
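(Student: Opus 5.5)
The plan is to compute the negative moments by a \(p\)-adic limit of positive moments, which are known rationally. For \(r\ge1\) and \(x\in G\), the function \(x^{-r}\) is the limit of \(x^{k}\) as \(k\to -r\) \(p\)-adically along integers \(k\equiv -r\pmod{(p-1)p^{N}}\). Concretely, take \(k_N=-r+(p-1)p^N\). Then \(x^{k_N}\to x^{-r}\) uniformly on \(G\), since \(x^{(p-1)p^N}\to1\) uniformly on \(\Zp^\times\). Because \(\mu_z\) is a bounded measure (Theorem~\ref{thm:koblitz-measure}), we get
\[
 \int_Gx^{-r}\,d\mu_z=\lim_{N\to\infty}\int_Gx^{k_N}\,d\mu_z .
\]
By the proof of Proposition~\ref{prop:nu-unit-positive-moments}, the moment on the right equals \(\Li_{-k_N,p}(z)-p^{k_N}\Li_{-k_N,p}(z^p)\). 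The task is therefore to identify the \(p\)-adic limit of these rational polylogarithm values.

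To compute the limit, I would avoid the rational functions \(\Li_{-k}\) and work directly with Riemann sums. By \eqref{eq:koblitz-cylinder}, for any continuous \(f\) on \(G\),
\[
 \int_G f\,d\mu_z=\lim_{m}\frac1{1-z^{p^m}}\sum_{0\le a<p^m,\,p\nmid a}f(a)z^a .
\]
Choose \(m\) along a subsequence with \(z^{p^m}=z\); this is possible because \(z\) has order prime to \(p\), so \(p\) has finite order \(f_0\) modulo \(\ord(z)\). Then \(1-z^{p^m}=1-z\) is constant, and the integral is \(\frac1{1-z}\lim_m\sum_{p\nmid a<p^m}a^{-r}z^a\). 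On the other side, Coleman's \(\Li_{r,p}\) at a Teichmüller root has an explicit expression: the Frobenius-depleted combination \(\Li_{r,p}(z)-p^{-r}\Li_{r,p}(z^p)\) is a rigid analytic function on the residue disc of \(z\). It equals the limit of partial sums \(\sum_{p\nmid n}z^n/n^r\), regularized as Coleman does by averaging over the \(\ord(z)p^m\)-periodic structure: \(\frac{1}{1-z^{M}}\sum_{n< M,\,p\nmid n} z^n n^{-r}\) with \(M=\ord(z)p^m\to\infty\). I would cite Coleman's formula \cite{Coleman} (also \cite{Koblitz}), which is exactly that \(\Li^{[p]}_{r,p}(z)=\int_{\Zp^\times}x^{-r}\,d\mu_z\) for Koblitz's measure, and check that the normalization \eqref{eq:koblitz-cylinder} matches Coleman's. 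As a sanity check, I would use the \(p\)-adic continuity in \(k\) of \(k\mapsto\int_Gx^{k}d\mu_z\) together with the known interpolation. At \(r\) with \(-r\) a positive integer the formula reduces to Proposition~\ref{prop:nu-unit-positive-moments}, since \(p^{-r}\) becomes \(p^{n-1}\) with \(n-1=-r\). So the same analytic function \(k\mapsto \Li_{-k,p}(z)-p^{k}\Li_{-k,p}(z^p)\) is being continued, and the two sides agree on a dense set of \(k\) in each residue class. This is the step I expect to be the main obstacle: matching Coleman's analytically continued \(\Li_{r,p}\) (normalized with \(\log_p p=0\)) to the limit, including the case \(r=1\), where \(\Li_{1,p}(z)=-\log_p(1-z)\) must be compared directly.

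The consequence \eqref{eq:nu-negative-moments} is then formal. By \eqref{eq:branch-Mellin-Iwasawa-coordinate},
\[
 \mathcal M_{\nu_z^\times,-r}(-r)=\int_G\teich(x)^{-r}\ang x^{-r}\,d\nu_z^\times=\int_Gx^{-r}\,d\nu_z^\times .
\]
By \eqref{eq:nu-unit-explicit}, this equals \(\int_Gx^{-r-1}\,d\mu_z\), which is \(\Li^{[p]}_{r+1,p}(z)\) by the first part applied with \(r+1\ge1\). This holds for every \(r\ge0\).
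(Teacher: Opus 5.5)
Your proposal is essentially the paper's proof. In both, the decisive input is the Coleman--Koblitz formula for \(\int_G x^{-r}\,d\mu_z\); the paper cites it via \cite[Prop.~2.1]{Besser} for \(|z|_p=|1-z|_p=1\). Your derivation of \eqref{eq:nu-negative-moments} from \(\nu_z^\times=x^{-1}\mu_z^\times\) and \(\teich(x)^{-r}\ang x^{-r}=x^{-r}\) matches the paper's.

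Two asides should be corrected rather than relied on:

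\begin{enumerate}[label=(\roman*)]
\item The correct form of the cited formula is \(\Li_{r,p}(z)-p^{-r}\Li_{r,p}(z^p)=\lim_m(1-z^{p^m})^{-1}\sum_{0<n<p^m,\,p\nmid n}z^nn^{-r}\). This is literally the Riemann sum of \(x^{-r}\) against \(\mu_z\) over \(G\), so the normalization check is immediate. Your version with \(M=\ord(z)p^m\) divides by \(1-z^M=0\).
\item The density ``sanity check'' is circular. A \(p\)-adically continuous interpolation joining the values \(\Li_{r,p}(z)-p^{-r}\Li_{r,p}(z^p)\) at \(r\le0\) to those at \(r\ge1\) is exactly what the theorem provides, so it cannot be assumed.
\end{enumerate}
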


\begin{proof}
For $|z|_p<1$, the unit part of the convergent series is
\[
 \sum_{\substack{n\ge1\\p\nmid n}}\frac{z^n}{n^r}
 =
 \Li_r(z)-p^{-r}\Li_r(z^p).
\]
The left side is the Mellin integral of the Koblitz measure, as follows from
its finite Riemann sums.  For the boundary points used here, the
Coleman--Koblitz formula recalled in \cite[Prop.~2.1]{Besser} gives directly
\[
 \int_{\Zp^\times}x^{-r}\,d\mu_z(x)
 =
 \Li_{r,p}(z)-p^{-r}\Li_{r,p}(z^p)
\]
for $|z|_p=|1-z|_p=1$.  A nontrivial root of unity of order prime to
$p$ lies in this domain.  Thus the formula is evaluated on its own residue
disc.  Equation \eqref{eq:nu-negative-moments} follows from
$\nu_z^\times=x^{-1}\mu_z^\times$.
\end{proof}

\begin{remark}
\label{rem:cyclosyntomic-moment-bridge}
For \(d=p\), the coefficient formula for the first \(q\)-polylogarithm in
\cite[Thm.~C]{BouisGazda} formally specializes at \(q=1\) to
\begin{equation}\label{eq:BG-q1-weight-one}
 \left.\Li_1^{(p)}(z)_q\right|_{q=1}
 =\sum_{\substack{k\ge1\\p\nmid k}}\frac{z^k}{k}
 =\Li_1(z)-p^{-1}\Li_1(z^p).
\end{equation}
For a root of unity of order prime to \(p\), Coleman continuation gives the
corresponding \(p\)-adic value.  Theorem~\ref{thm:koblitz-negative-moments}
then yields
\begin{equation}\label{eq:weight-one-two-moment-bridge}
 \int_Gx^{-1}\,d\mu_z(x)=\Li_{1,p}^{[p]}(z),
 \qquad
 \int_Gx^{-2}\,d\mu_z(x)=\Li_{2,p}^{[p]}(z).
\end{equation}
Since \(\nu_z^\times=x^{-1}\mu_z^\times\), the exceptional value considered here is
\[
 \mathcal M_{\nu_z^\times,-1}(-1)
 =\int_Gx^{-1}\,d\nu_z(x)
 =\Li_{2,p}^{[p]}(z).
\]
Thus the weight-one and weight-two depleted Coleman values occur as adjacent
negative moments of the same measure.
\end{remark}

\begin{definition}[Depleted Coleman regulator]\label{def:depleted-regulator}
For a cyclotomic presentation $\xi=\sum_z a_z[z]_{\mathrm{cyc}}$, define
\begin{equation}\label{eq:depleted-regulator}
 \Regdep(\xi)
 =
 \sum_z a_z
 \left(D_p(z)-p^{-2}D_p(z^p)\right).
\end{equation}
Since $\log_pz=0$ for every root in the support, this is equivalently the sum of $\Li_{2,p}^{[p]}(z)$.
\end{definition}

\begin{corollary}\label{cor:character-depleted-regulator}
For a primitive odd packet,
\begin{equation}\label{eq:character-depleted-regulator}
 \mathcal M_{\nu_{\chi,N}^{\times},-1}(-1)
 =
 \Regdep(\xi_{\chi,N})
 =
 \bigl(1-p^{-2}\chi^{-1}(p)\bigr)
 D_p(\eta_{\chi,N}).
\end{equation}
\end{corollary}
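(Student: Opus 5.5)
The plan is to chain three identifications: the exceptional Mellin value of \(\nu_{\chi,N}^\times\) as a sum of depleted dilogarithms, that sum as \(\Regdep(\xi_{\chi,N})\), and finally that quantity as an Euler factor times \(D_p(\eta_{\chi,N})\).

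\emph{First equality.} Write \(\nu_{\chi,N}^\times=\sum_a\chi(a)\nu_{\zeta_N^a}^\times\). By linearity of the Mellin branch and \eqref{eq:nu-negative-moments} at \(r=1\), we get
\[
\mathcal M_{\nu_{\chi,N}^\times,-1}(-1)=\sum_a\chi(a)\Li_{2,p}^{[p]}(\zeta_N^a).
\]
Each root \(\zeta_N^a\) is a Teichm\"uller root, so \(\log_p\zeta_N^a=0\). Hence \(D_p(\zeta_N^a)=\Li_{2,p}(\zeta_N^a)\), and likewise for \(\zeta_N^{ap}\). By Definition~\ref{def:depleted-regulator}, the sum above is exactly \(\Regdep(\xi_{\chi,N})\).

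\emph{Second equality.} Split \(\Regdep(\xi_{\chi,N})\) into the two sums
\[
\sum_a\chi(a)D_p(\zeta_N^a)
\qquad\text{and}\qquad
-p^{-2}\sum_a\chi(a)D_p(\zeta_N^{ap}).
\]
In the second sum, substitute \(b=ap\), which is a bijection of \((\Z/N\Z)^\times\) because \(p\nmid N\). Then \(\chi(a)=\chi(bp^{-1})=\chi^{-1}(p)\chi(b)\), so the second sum equals \(-p^{-2}\chi^{-1}(p)\sum_b\chi(b)D_p(\zeta_N^b)\). This is the same reindexing already used in the proof of Proposition~\ref{prop:primitive-character-moments}.

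It remains to identify \(\sum_b\chi(b)D_p(\zeta_N^b)\) with \(D_p(\eta_{\chi,N})\). The paper fixes the convention that the regulator on a class represented by a presentation is the coefficientwise evaluation \(\sum_z a_zD_p(z)\). The packet is admissible by Lemma~\ref{lem:admissible-class-map}, since \(N\) is invertible in the coefficients, so \(\eta_{\chi,N}=\operatorname{cl}(\xi_{\chi,N})\) is a genuine coefficient-extended Bloch class and the identification applies.

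\emph{Expected obstacle.} The step needing care is the coefficient field: \(\chi\) takes values in \(E_\chi\), while \(D_p\) is interpreted through \(D_{p,E}=\iota_{F,E}\circ(D_p\otimes1)\). When pulling \(\chi^{-1}(p)\) out as a scalar, one must check that the reindexing acts on the support factor and not through Frobenius on the coefficients. This holds because the substitution \(b=ap\) merely reindexes the presentation; no Frobenius is applied to \(E_\chi\). So \(\chi^{-1}(p)\) emerges as a coefficient scalar, and the stated identity \eqref{eq:character-depleted-regulator} follows in \(K\).
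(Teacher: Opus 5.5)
Your proposal is correct and follows the paper's proof: the first equality is \eqref{eq:nu-negative-moments} at $r=1$ summed with the character coefficients, using $\log_p$ of Teichm\"uller roots vanishing so that $\Li_{2,p}^{[p]}(z)=D_p(z)-p^{-2}D_p(z^p)$. The second equality comes from the reindexing $b=ap$ in the Frobenius term, which produces $\chi^{-1}(p)$ and leaves the coefficientwise Coleman regulator $D_p(\eta_{\chi,N})$ of the packet class.
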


\begin{proof}
The first equality is \eqref{eq:nu-negative-moments} at $r=1$, summed with the character coefficients.  In the second term of \eqref{eq:depleted-regulator}, substitute $b=ap$ modulo $N$; this introduces $\chi^{-1}(p)$.  The remaining sum is the Coleman regulator of the packet class.
\end{proof}

\subsection{Exceptional residues and boundedness}\label{subsec:exceptional-regulator}

\begin{theorem}[Exceptional residue]\label{thm:exceptional-residue}
For every finite cyclotomic presentation,
\begin{equation}\label{eq:exceptional-residue}
 \Res_{s=-1}
 \mathcal M_{\IwPs_{\xi,p},-1}(s)
 =
 (1-p^{-1})\Regdep(\xi).
\end{equation}
For a primitive odd packet this specializes to
\begin{equation}\label{eq:character-exceptional-residue}
 \Res_{s=-1}
 \mathcal M_{\IwPs_{\chi,N,p},-1}(s)
 =
 (1-p^{-1})
 \bigl(1-p^{-2}\chi^{-1}(p)\bigr)
 D_p(\eta_{\chi,N}).
\end{equation}
\end{theorem}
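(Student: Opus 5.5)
The residue formula will follow from the canonical factorization of Theorem~\ref{thm:Iwasawa-pseudomeasure-factorization}, read on the exceptional branch $j=-1$. The $j$-th branch map $\rho\mapsto I_{\rho,j}$ is a ring homomorphism from $\IwK(G)$ to $K[[X]]_{\bd}$, and it extends to the localization by Definition~\ref{def:pseudomeasure-Mellin-branch}. Applying it to \eqref{eq:Iwasawa-pseudomeasure-factorization} therefore gives an identity of meromorphic functions of $s$:
\[
 \mathcal M_{\IwPs_{\xi,p},-1}(s)
 =Z_{1/2,-1}(s)\,\mathcal M_{\nu_\xi^{\times},-1}(s).
\]
Here $\nu_\xi^\times$ is a bounded measure by Theorem~\ref{thm:nu-unit-measure}. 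Hence $\mathcal M_{\nu_\xi^{\times},-1}(s)=I_{\nu_\xi^\times,-1}(\gamma^s-1)$ is analytic on $\Zp$, and in particular it is analytic at $s=-1$.

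For the first factor, Theorem~\ref{thm:unique-exceptional-branch} shows that $Z_{1/2,-1}$ has at most a simple pole at $s=-1$, and \eqref{eq:half-zeta-residue} gives its residue as $1-p^{-1}$. The residue of a product of a function with a simple pole and a function analytic at the same point is the residue of the first times the value of the second. So
\[
 \Res_{s=-1}\mathcal M_{\IwPs_{\xi,p},-1}(s)=(1-p^{-1})\,\mathcal M_{\nu_\xi^{\times},-1}(-1).
\]

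It remains to evaluate the bounded factor at the exceptional point. By linearity in $\xi$ and \eqref{eq:nu-negative-moments} with $r=1$, we have $\mathcal M_{\nu_\xi^\times,-1}(-1)=\sum_z a_z\Li_{2,p}^{[p]}(z)$. Definition~\ref{def:depleted-regulator} identifies this sum with $\Regdep(\xi)$, because $\log_p z=0$ on the support and so $D_p(z)=\Li_{2,p}(z)$. This proves \eqref{eq:exceptional-residue}. The packet case \eqref{eq:character-exceptional-residue} then follows by substituting Corollary~\ref{cor:character-depleted-regulator}.

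The one point to check is that $\mathcal M_{\nu_\xi^\times,-1}(-1)$, defined by rigid continuation of the Iwasawa power series, coincides with the actual integral $\int_G\teich(x)^{-1}\ang x^{-1}\,d\nu_\xi^\times=\int_G x^{-1}\,d\nu_\xi^\times$. This holds because $\teich(x)^{-1}\ang x^{-1}=x^{-1}$ on $G$ and \eqref{eq:branch-Mellin-Iwasawa-coordinate} is a genuine integral for bounded measures. That integral is exactly what Theorem~\ref{thm:koblitz-negative-moments} computes.
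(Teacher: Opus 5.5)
Your proof is correct and follows the paper's argument: read the canonical factorization on the exceptional branch, take the residue \(1-p^{-1}\) of the half-zeta factor times the value at \(s=-1\) of the bounded factor (computed by the Coleman--Koblitz negative moment), and specialize to packets via Corollary~\ref{cor:character-depleted-regulator}. Your extra checks, namely that the branch map is a ring homomorphism extending to the localization and that the Mellin value at \(s=-1\) is the genuine integral of \(x^{-1}\), are details the paper leaves implicit.
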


\begin{proof}
By Theorem~\ref{thm:Iwasawa-pseudomeasure-factorization}, the exceptional component is the product of the half-zeta component and the analytic Mellin transform of $\nu_\xi^\times$.  The first factor has residue $1-p^{-1}$ by \eqref{eq:half-zeta-residue}, and the second takes the value $\Regdep(\xi)$ at $s=-1$ by Theorem~\ref{thm:koblitz-negative-moments}.  Their product gives \eqref{eq:exceptional-residue}; Corollary~\ref{cor:character-depleted-regulator} gives the character formula.
\end{proof}

The residue in the weight coordinate differs from the polar coefficient in the Iwasawa coordinate.  Put
\begin{equation}\label{eq:residue-coordinate-notation}
 R_s(\xi):=(1-p^{-1})\Regdep(\xi),
 \qquad
 X=\gamma^s-1,
 \qquad
 X_0=\gamma^{-1}-1.
\end{equation}
Since
\begin{equation}\label{eq:X-coordinate-Jacobian}
 \left.\frac{dX}{ds}\right|_{s=-1}
 =\gamma^{-1}\log_p\gamma,
\end{equation}
a Laurent expansion
\[
 F(X)=\frac{A_X}{X-X_0}+B(X)
\]
has
\begin{equation}\label{eq:X-pole-vs-s-residue}
 \Res_{s=-1}F(\gamma^s-1)
 =\frac{A_X}{\gamma^{-1}\log_p\gamma},
 \qquad
 A_X=(\gamma^{-1}\log_p\gamma)R_s(\xi).
\end{equation}
Thus the $X$-polar coefficient and the $s$-residue differ by the displayed Jacobian.

\begin{lemma}\label{lem:finite-part-coordinate-change}
Put $t=s+1$, and let
\[
 u=at+bt^2+O(t^3),
 \qquad a\ne0.
\]
If a meromorphic branch has expansion
\[
 F(s)=\frac Rt+C_t+O(t),
\]
then its expansion in the parameter $u$ is
\begin{equation}\label{eq:finite-part-coordinate-change}
 F=\frac{aR}{u}+
 \left(C_t+\frac baR\right)+O(u).
\end{equation}
For $u=X-X_0$ with $X=\gamma^s-1$ and
$X_0=\gamma^{-1}-1$, this becomes
\begin{equation}\label{eq:X-finite-part-shift}
 C_X=C_s+\frac12\log_p(\gamma)\,R.
\end{equation}
Thus $R$ is the polar coefficient with respect to the distinguished
weight parameter $t=s+1$.  Under $u=at+O(t^2)$, the polar coefficient of
the function becomes $aR$.  What is invariant under reparametrization is
the residue of the associated meromorphic differential:
\[
 \Res_{t=0}F(t)\,dt
 =
 \Res_{u=0}F(t(u))\frac{dt}{du}\,du
 =R.
\]
The finite part has the coordinate-change law
\eqref{eq:finite-part-coordinate-change}.  In particular, a numerical
finite part is meaningful only after the local parameter has been
specified.
\end{lemma}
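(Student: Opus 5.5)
The lemma is an elementary statement about Laurent expansions under a change of local parameter, so the plan is a direct computation followed by a specialization.

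\emph{General coordinate-change law.} Starting from $u=at+bt^2+O(t^3)$ with $a\ne0$, invert the series formally to get
\[
t=\frac ua-\frac b{a^3}u^2+O(u^3).
\]
It follows that
\[
\frac1t=\frac au\Bigl(1-\frac b{a^2}u+O(u^2)\Bigr)^{-1}=\frac au+\frac ba+O(u).
\]
Substituting this into $F=R/t+C_t+O(t)$, and using $O(t)=O(u)$, gives
\[
F=\frac{aR}u+\Bigl(C_t+\frac baR\Bigr)+O(u),
\]
which is \eqref{eq:finite-part-coordinate-change}. All of these manipulations take place in the ring of formal Laurent series over $K$. Since the branch $\mathcal M_{\IwPs_{\xi,p},-1}$ is meromorphic near $s=-1$ with at most a simple pole (Theorem~\ref{thm:unique-exceptional-branch}), the formal computation applies to its expansion.

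\emph{Specialization to the Iwasawa coordinate.} Take $u=X-X_0=\gamma^{s}-\gamma^{-1}=\gamma^{-1}(\gamma^{t}-1)$. Expanding $\gamma^t=\exp(t\log_p\gamma)$ for small $t$ gives
\[
a=\gamma^{-1}\log_p\gamma,\qquad b=\gamma^{-1}\tfrac12(\log_p\gamma)^2,
\]
so $b/a=\tfrac12\log_p\gamma$. This produces $C_X=C_s+\tfrac12\log_p(\gamma)R$, which is \eqref{eq:X-finite-part-shift}. The value of $a$ agrees with the Jacobian \eqref{eq:X-coordinate-Jacobian}, and the resulting polar coefficient $aR$ matches $A_X$ in \eqref{eq:X-pole-vs-s-residue}. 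One point to check is that $\gamma^t$ is analytic in $t$: this holds because $\gamma\in1+p\Zp$ with $p$ odd, so $\log_p\gamma$ has positive valuation and the exponential converges.

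\emph{Invariance of the differential.} Using $dt/du=1/a+O(u)$, we get
\[
F\,\frac{dt}{du}=\Bigl(\frac{aR}u+O(1)\Bigr)\Bigl(\frac1a+O(u)\Bigr)=\frac Ru+O(1),
\]
so the residue of $F\,dt$ is $R$ in either parameter. The same computation shows that the polar coefficient of $F$ itself scales by $a$, and that the finite part shifts by $(b/a)R$, which is nonzero in general. This justifies the closing remark that a numerical finite part only has meaning once the local parameter is fixed.

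There is no real obstacle here. The only place that needs care is keeping the second-order term $b$ in the inversion, since the first-order term alone would not produce the finite-part shift.
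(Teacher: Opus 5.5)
Your proposal is correct and follows essentially the same route as the paper: you invert $u=at+bt^2+O(t^3)$ to get $1/t=a/u+b/a+O(u)$, substitute, and then read off $a=\gamma^{-1}\log_p\gamma$ and $b/a=\tfrac12\log_p\gamma$ from the expansion of $X-X_0=\gamma^{-1}(\gamma^{t}-1)$. You also explicitly check that the residue of $F\,dt$ is invariant and that $\gamma^{t}$ is analytic in $t$; the paper leaves both points implicit.
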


\begin{proof}
Formal inversion gives
\[
 t=\frac ua-\frac b{a^3}u^2+O(u^3),
 \qquad
 \frac1t=\frac au+\frac ba+O(u).
\]
Substitution proves \eqref{eq:finite-part-coordinate-change}.  Since
\[
 X-X_0
 =\gamma^{-1}\log_p(\gamma)t
 +\frac12\gamma^{-1}\log_p(\gamma)^2t^2+O(t^3),
\]
one has $b/a=\frac12\log_p(\gamma)$, giving
\eqref{eq:X-finite-part-shift}.
\end{proof}

\begin{remark}\label{rem:pole-regulator-meaning}
The singular exponential in the GSWZ completed logarithm cancels the
Coleman regulator term.  On unit weight space the same regulator appears as
the exceptional residue, so the pole is removable exactly when the residue
vanishes.
\end{remark}

\subsubsection{Frobenius depletion and regulator vanishing}\label{subsec:Frobenius-depletion}

When the coefficients are fixed by Frobenius, the depleted regulator is
obtained from the Coleman regulator by an invertible Frobenius operator.

\begin{theorem}[Invertibility of the Frobenius depletion operator]\label{thm:Frobenius-depletion-invertible}
Let $K/\Qp$ be unramified of residue degree $h$, let $\phi$ be the
Frobenius lift normalized by \(\phi(z)=z^p\) on Teichm\"uller roots, and
suppose the coefficients $a_z$ of
\(\xi=\sum_z a_z[z]_{\mathrm{cyc}}\) are fixed by $\phi$.  Let
\(\eta_\xi=\operatorname{cl}(\xi)\) be its coefficient-extended local
Bloch class.  Then Coleman equivariance \cite{Coleman} gives
\begin{equation}\label{eq:Frobenius-depletion-operator}
 \Regdep(\xi)
 =(1-p^{-2}\phi)D_p(\eta_\xi).
\end{equation}
The $\Qp$-linear operator $1-p^{-2}\phi$ is invertible, with
\begin{equation}\label{eq:Frobenius-depletion-inverse}
 (1-p^{-2}\phi)^{-1}
 =\frac{1+p^{-2}\phi+\dots+p^{-2(h-1)}\phi^{h-1}}
 {1-p^{-2h}}.
\end{equation}
Consequently
\begin{equation}\label{eq:depletion-zero-equivalence}
 \Regdep(\xi)=0
 \quad\Longleftrightarrow\quad
 D_p(\eta_\xi)=0.
\end{equation}
\end{theorem}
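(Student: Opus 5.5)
The plan has three steps: identify the depleted sum with $(1-p^{-2}\phi)$ applied to the regulator, invert that operator by a telescoping identity, and deduce the vanishing equivalence.

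\emph{Step 1: the identity \eqref{eq:Frobenius-depletion-operator}.} By Definition~\ref{def:depleted-regulator},
\[
\Regdep(\xi)=\sum_z a_zD_p(z)-p^{-2}\sum_z a_zD_p(z^p).
\]
The first sum is the coefficientwise evaluation of the Coleman regulator on $\xi$. Since $\xi$ represents $\eta_\xi$ (Lemma~\ref{lem:admissible-class-map}), this sum equals $D_p(\eta_\xi)$.

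For the second sum we use two facts. First, the support roots are Teichm\"uller roots with $\phi(z)=z^p$. Second, Coleman's dilogarithm is Frobenius-equivariant, $D_p(\phi(z))=\phi(D_p(z))$ \cite{Coleman}; this holds because $\Li_{2,p}$ has rational Taylor coefficients and Coleman integration commutes with the unramified Frobenius automorphism of $K$. Because the coefficients $a_z$ are fixed by $\phi$, we get
\[
\sum_z a_zD_p(z^p)=\phi\Bigl(\sum_z a_zD_p(z)\Bigr)=\phi\,D_p(\eta_\xi).
\]
This yields \eqref{eq:Frobenius-depletion-operator}. The identity can equivalently be phrased at the level of classes: the cyclotomic Frobenius $\Phi_{\mathrm{cyc}}$ represents $\phi$ acting on the class.

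\emph{Step 2: the inverse \eqref{eq:Frobenius-depletion-inverse}.} Since $K/\Qp$ is unramified of degree $h$, $\phi$ is a $\Qp$-linear automorphism of $K$ with $\phi^h=1$. Set $A=p^{-2}\phi$. Then $A^h=p^{-2h}\phi^h=p^{-2h}$ is a scalar different from $1$. The telescoping identity
\[
(1-A)(1+A+\dots+A^{h-1})=1-A^h=1-p^{-2h}
\]
holds, and both factors on the left commute. Dividing by the nonzero scalar $1-p^{-2h}$ gives a two-sided inverse of $1-A$, which is the displayed formula.

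\emph{Step 3: the equivalence \eqref{eq:depletion-zero-equivalence}.} This is immediate from Steps~1 and~2: an injective operator kills $D_p(\eta_\xi)$ if and only if $D_p(\eta_\xi)=0$.

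The only substantive point is the equivariance in Step~1. If coefficients lie in an extension $E$, one has to be careful that $\phi$ acts diagonally on $F\otimes E$, so that $\phi$-invariance of the $a_z$ really does let $\phi$ pass through the sum. I would handle this by working with $D_{p,E}$ and the diagonal Frobenius described before Definition~\ref{def:presentation-formal-branches}. I would also check that the sign convention for $D_p$ causes no issue; it does not, since the relation is linear.
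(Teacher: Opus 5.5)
Your proposal is correct and follows essentially the same route as the paper: Coleman's Frobenius equivariance \(\phi(D_p(z))=D_p(z^p)\) on Teichm\"uller roots, combined with \(\phi\)-fixed coefficients, gives \(\Regdep(\xi)=(1-p^{-2}\phi)D_p(\eta_\xi)\), and the telescoping identity \((1-p^{-2}\phi)\sum_{i=0}^{h-1}p^{-2i}\phi^i=1-p^{-2h}\), using \(\phi^h=1\), gives the inverse and the kernel equivalence. One small point: at roots of unity the power series diverges on \(|z|_p=1\), so the equivariance comes from the Galois-equivariance of Coleman integration (with \(\log_p(p)=0\)) rather than from the rational Taylor coefficients, but you already name this correct reason as well.
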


\begin{proof}
For a prime-to-$p$ Teichm\"uller root, Coleman Frobenius equivariance is
\[
 \phi(D_p(z))=D_p(z^p).
\]
Because every coefficient is fixed by $\phi$,
\[
 \phi(D_p(\eta_\xi))
 =\sum_z a_zD_p(z^p),
\]
which proves \eqref{eq:Frobenius-depletion-operator}.  Since $\phi^h=1$ on the unramified field,
\begin{align*}
 &(1-p^{-2}\phi)
 (1+p^{-2}\phi+\dots+p^{-2(h-1)}\phi^{h-1})\\
 &\qquad=1-p^{-2h}\phi^h=1-p^{-2h}.
\end{align*}
The scalar $1-p^{-2h}$ is nonzero and hence invertible in $K$, proving \eqref{eq:Frobenius-depletion-inverse} and the equivalence of kernels.
\end{proof}

\begin{remark}\label{rem:Frobenius-coefficient-twists}
For coefficient values not fixed by Frobenius, the packet formula becomes
\[
 \Regdep(\xi_{\chi,N})
 =(1-p^{-2}\chi^{-1}(p))D_p(\eta_{\chi,N}).
\]
\end{remark}

\subsubsection{Removability in the bounded-measure algebra}\label{subsec:measure-removability}

The removability argument is local on the Teichm\"uller components.  The
nonexceptional denominators are units; on the exceptional component the
denominator is an integral unit times a single linear factor.  The following two lemmas give the required divisibility statement.

\begin{lemma}\label{lem:bounded-denominator-division}
Let $x_0\in p\mathcal O$ and let
\[
 A(X)=\sum_{n\ge0}a_nX^n\in K[[X]]_{\bd}.
\]
Then
\begin{equation}\label{eq:bounded-denominator-divisibility}
 \frac{A(X)}{X-x_0}\in K[[X]]_{\bd}
 \quad\Longleftrightarrow\quad
 A(x_0)=0.
\end{equation}
If $A(x_0)=0$, the quotient coefficients are explicitly
\begin{equation}\label{eq:bounded-denominator-quotient-coefficients}
 \frac{A(X)}{X-x_0}
 =\sum_{n\ge0}b_nX^n,
 \qquad
 b_n=\sum_{m\ge n+1}a_mx_0^{m-n-1}.
\end{equation}
\end{lemma}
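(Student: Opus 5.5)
The plan is to handle the two implications separately, working throughout with the explicit coefficient formula \eqref{eq:bounded-denominator-quotient-coefficients}. The basic input is that $x_0\in p\mathcal O$ gives $\valp(x_0)\ge1$, so $x_0$ lies in the open unit disc. Because $A\in K[[X]]_{\bd}$, the value $A(x_0)=\sum_n a_nx_0^n$ converges: its terms have valuation at least $c+n$ with $c=\inf_n\valp(a_n)$.

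\emph{Forward direction.} Suppose $B=A/(X-x_0)$ lies in $K[[X]]_{\bd}$, meaning the formal quotient, which exists in $K[[X]]$ because $X-x_0$ is a unit there when $x_0\ne0$, has bounded denominators. Then $B$ converges at $x_0$ as well. The identity $A=(X-x_0)B$ holds in $K[[X]]_{\bd}$, and evaluation at a point of the open disc is a ring homomorphism on this ring. Evaluating therefore gives $A(x_0)=0\cdot B(x_0)=0$.

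The case $x_0=0$ needs slightly more care. There the quotient is $A/X$, which lies in $K[[X]]$ only when $a_0=0$, and it is bounded exactly in that case. This agrees with the statement, since $A(0)=a_0$.

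\emph{Reverse direction.} Suppose $A(x_0)=0$, and define $b_n=\sum_{m\ge n+1}a_mx_0^{m-n-1}$. The terms have valuation at least $c+(m-n-1)$, so each series converges and $\valp(b_n)\ge c$ for every $n$. This uniform lower bound is exactly the bounded-denominator property, and it is the key point: it is where $x_0\in p\mathcal O$ is used instead of mere membership in the disc. If $\valp(x_0)>0$ only, the series would still converge, but the same termwise estimate still gives $\valp(b_n)\ge c$. So the bound is clean in either case.

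It then remains to check that $(X-x_0)\sum b_nX^n=A$. For $n\ge1$, compare the coefficient of $X^n$:
\[
b_{n-1}-x_0b_n=\sum_{m\ge n}a_mx_0^{m-n}-\sum_{m\ge n+1}a_mx_0^{m-n}=a_n.
\]
For the constant term,
\[
-x_0b_0=-\sum_{m\ge1}a_mx_0^m=a_0-A(x_0)=a_0,
\]
using the hypothesis $A(x_0)=0$.

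Uniqueness of the quotient in $K[[X]]$ then identifies this series with $A/(X-x_0)$. In the case $x_0=0$, uniqueness holds because $X$ is a non-zero-divisor. This also establishes the explicit formula \eqref{eq:bounded-denominator-quotient-coefficients}.

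The only mild obstacle is justifying the rearrangements of the double sums used in the coefficient comparison. These are absolutely convergent, since the terms tend to $0$ uniformly in valuation, so the manipulations are legitimate in the complete field $K$.
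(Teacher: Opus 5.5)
Your proposal is correct and follows the paper's proof essentially step for step: you establish the uniform bound $\valp(b_n)\ge\inf_m\valp(a_m)$, derive $(X-x_0)\sum_n b_nX^n=A(X)-A(x_0)$ by direct coefficient comparison, and evaluate at $x_0$ for the converse. One aside is inconsistent as written: you say the uniform bound is ``where $x_0\in p\mathcal O$ is used,'' but, as your next sentence and the paper both note, only $\valp(x_0)>0$ is needed.
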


\begin{proof}
Choose $C$ with $\valp(a_m)\ge C$ for every $m$.  Since $\valp(x_0)>0$, the series defining $A(x_0)$ and every $b_n$ converges.  Moreover every summand in $b_n$ has valuation at least $C$, so $\valp(b_n)\ge C$ uniformly in $n$.  Thus the quotient in \eqref{eq:bounded-denominator-quotient-coefficients} belongs to $K[[X]]_{\bd}$.  Direct multiplication gives
\[
 (X-x_0)\sum_{n\ge0}b_nX^n=A(X)-A(x_0),
\]
so $A(x_0)=0$ proves the forward construction.  Conversely, if $A=(X-x_0)B$ with $B\in K[[X]]_{\bd}$, evaluation at $x_0$ converges and gives $A(x_0)=0$.
\end{proof}

\begin{lemma}\label{lem:exceptional-integral-unit-factor}
Let $c_0$ and $\gamma$ be the generators fixed in Theorem~\ref{thm:unique-exceptional-branch}, and put
\[
 a_{c_0}
 =\frac{\log_p\ang{c_0}}{\log_p\gamma}\in\Zp^\times,
 \qquad
 X_0=\gamma^{-1}-1.
\]
Then
\begin{equation}\label{eq:exceptional-integral-unit-factor}
 I_{r_{c_0},-1}(X)=U_{c_0}(X)(X-X_0),
 \qquad
 U_{c_0}(X)\in\mathcal O[[X]]^\times.
\end{equation}
\end{lemma}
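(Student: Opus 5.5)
From \eqref{eq:r-c-Iwasawa-coordinate} with \(j=-1\) and \(\teich(c_0)^{-1}c_0=\ang{c_0}\), the exceptional branch of \(r_{c_0}\) is
\[
 I_{r_{c_0},-1}(X)=1-\ang{c_0}(1+X)^{a_{c_0}},
\]
since \(\ell_\gamma(c_0)=a_{c_0}\). The plan is to show that this series lies in \(\mathcal O[[X]]\), vanishes at \(X_0\), and then to divide by \(X-X_0\) using Lemma~\ref{lem:bounded-denominator-division}. The integrality of the quotient and the fact that it is a unit will be checked separately, by reducing modulo the maximal ideal.

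First, \(a_{c_0}\in\Zp^\times\). Both \(\ang{c_0}\) and \(\gamma\) are topological generators of \(\Gamma\), so \(\log_p\ang{c_0}\) and \(\log_p\gamma\) both have valuation exactly \(1\), since \(\log_p\) maps \(1+p\Zp\) isomorphically onto \(p\Zp\). Hence \(a_{c_0}\in\Zp^\times\) and \(\ang{c_0}=\gamma^{a_{c_0}}\). The binomial series \((1+X)^{a}\) has coefficients \(\binom{a}{n}\in\Zp\), so \(I:=I_{r_{c_0},-1}\in\mathcal O[[X]]\).

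At \(X_0=\gamma^{-1}-1\in p\Zp\), we get \(I(X_0)=1-\gamma^{a_{c_0}}\gamma^{-a_{c_0}}=0\); this is the zero \(s=-1\) from Theorem~\ref{thm:unique-exceptional-branch}. Lemma~\ref{lem:bounded-denominator-division} then gives \(I=(X-X_0)U\) with \(U=\sum b_nX^n\), where \(b_n=\sum_{m\ge n+1}a_mX_0^{m-n-1}\). All \(a_m\) are integral and \(X_0\in p\mathcal O\), so every \(b_n\) lies in \(\mathcal O\), not merely in \(K\).

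The one point needing care is that \(U\) is a unit, i.e.\ that \(b_0\in\mathcal O^\times\). Compare constant terms in \(I=(X-X_0)U\). Modulo \(p\), \(X_0\equiv0\), so \(b_0\equiv a_1\pmod p\), where \(a_1\) is the coefficient of \(X\) in \(I\). That coefficient is \(-\ang{c_0}a_{c_0}\), which is a unit. Therefore \(b_0\) is a unit and \(U\in\mathcal O[[X]]^\times\).

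Equivalently, \(U(X_0)=I'(X_0)=-\ang{c_0}a_{c_0}(1+X_0)^{a_{c_0}-1}=-a_{c_0}\gamma\), which is also a unit. This matches the simple-zero derivative \(-\log_p\ang{c_0}\) from Theorem~\ref{thm:unique-exceptional-branch}, after the Jacobian \eqref{eq:X-coordinate-Jacobian}.
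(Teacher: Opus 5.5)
Your proof is correct, but it reaches the factorization by a different route than the paper.

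The paper changes to the shifted variable $W=X-X_0$. Using $1+X=\gamma^{-1}(1+\gamma W)$ and $\ang{c_0}=\gamma^{a_{c_0}}$, it rewrites
$I_{r_{c_0},-1}=1-(1+\gamma W)^{a_{c_0}}=-\sum_{n\ge1}\binom{a_{c_0}}{n}\gamma^nW^n$.
So $U_{c_0}$ appears in closed form, with visibly integral $W$-coefficients and constant term $-a_{c_0}\gamma$. The paper then transports unit-ness back to $\mathcal O[[X]]$ through the translation automorphism $\mathcal O[[W]]\cong\mathcal O[[X]]$, which is valid because $X_0\in p\mathcal O$.

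You stay in the coordinate $X$ and invoke the explicit quotient formula of Lemma~\ref{lem:bounded-denominator-division}. You observe that integral numerator coefficients together with $X_0\in p\mathcal O$ make every $b_n$ integral. You then detect unit-ness from $b_0\equiv a_1=-\ang{c_0}a_{c_0}$ modulo $p$.

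What each route buys:
\begin{itemize}
\item Your version reuses the division lemma that the removability argument needs anyway, and it avoids any change of variables.
\item The paper's version yields an explicit formula for $U_{c_0}$ and its value $U_{c_0}(X_0)=-a_{c_0}\gamma$ for free. You recover that value separately as $I'(X_0)$.
\end{itemize}

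One small slip: the congruence $b_0\equiv a_1\pmod p$ does not come from comparing constant terms, which only gives $a_0=-X_0b_0$. It comes from the formula $b_0=a_1+a_2X_0+a_3X_0^2+\cdots$, or equivalently from comparing coefficients of $X$, namely $a_1=b_0-X_0b_1$ with $b_1$ integral.
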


\begin{proof}
Both $\ang{c_0}$ and $\gamma$ are topological generators of $\Gamma$, so $\ang{c_0}=\gamma^{a_{c_0}}$ with $a_{c_0}\in\Zp^\times$.  Since $1+X=\gamma^s$,
\[
 I_{r_{c_0},-1}(X)
 =1-\ang{c_0}(1+X)^{a_{c_0}}.
\]
Set $W=X-X_0$.  The identity $1+X=\gamma^{-1}(1+\gamma W)$ gives
\[
 I_{r_{c_0},-1}(X)
 =1-(1+\gamma W)^{a_{c_0}}
 =-\sum_{n\ge1}\binom{a_{c_0}}n\gamma^nW^n.
\]
Consequently
\[
 U_{c_0}(X)
 =-\sum_{n\ge1}\binom{a_{c_0}}n
 \gamma^n(X-X_0)^{n-1}.
\]
Its coefficients in the shifted variable $W$ are integral, and its constant coefficient is
\[
 -a_{c_0}\gamma\in\mathcal O^\times.
\]
Since \(W=0\) corresponds to \(X=X_0\), this also gives
\[
 U_{c_0}(X_0)=-a_{c_0}\gamma\in\mathcal O^\times.
\]
Because $X_0\in p\mathcal O$, translation by $X_0$ induces a continuous $\mathcal O$-algebra automorphism between $\mathcal O[[W]]$ and $\mathcal O[[X]]$.  Hence $U_{c_0}(X)$ is a unit of $\mathcal O[[X]]$.
\end{proof}

\begin{theorem}[Measure-removability criterion]\label{thm:measure-removability}
The following are equivalent.
\begin{enumerate}[label=(\roman*)]
\item The cyclotomic Iwasawa pseudomeasure $\IwPs_{\xi,p}$ belongs to $\IwK(G)$.
\item Its exceptional component belongs to $K[[X]]_{\bd}$, hence is a bounded $K$-valued measure.
\item $\Regdep(\xi)=0$.
\end{enumerate}
When these conditions fail, $\IwPs_{\xi,p}$ has a genuine simple pole at $(j,s)=(-1,-1)$.
\end{theorem}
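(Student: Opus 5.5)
We prove the cycle (i)$\Rightarrow$(ii)$\Rightarrow$(iii)$\Rightarrow$(i) on the Teichm\"uller branches. Throughout we use the representative \eqref{eq:one-regularizer-representative}, $\IwPs_{\xi,p}=r_{c_0}^{-1}\starx N_{\xi,c_0}$, where $N_{\xi,c_0}\in\IwK(G)$ is bounded by Theorem~\ref{thm:regularized-convolution}. Via the branch isomorphism \eqref{eq:iwasawa-branch-decomposition}, membership in $\IwK(G)$ is equivalent to every branch $I_{\IwPs_{\xi,p},j}=I_{N_{\xi,c_0},j}/I_{r_{c_0},j}$ lying in $K[[X]]_{\bd}$.

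For (i)$\Rightarrow$(ii), we simply project a bounded measure to its $j=-1$ branch. For (ii)$\Rightarrow$(iii), if the exceptional branch lies in $K[[X]]_{\bd}$, then its Mellin transform $\mathcal M_{\IwPs_{\xi,p},-1}(s)=I(\gamma^s-1)$ is analytic at $s=-1$, because $X_0=\gamma^{-1}-1\in p\mathcal O$ and evaluation of a bounded-denominator series there converges. The residue at $s=-1$ therefore vanishes. Theorem~\ref{thm:exceptional-residue} identifies this residue with $(1-p^{-1})\Regdep(\xi)$, and since $1-p^{-1}\ne0$ we get $\Regdep(\xi)=0$.

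For (iii)$\Rightarrow$(i), the branches with $j\ne-1$ are already bounded, since $I_{r_{c_0},j}$ is a unit there by Theorem~\ref{thm:unique-exceptional-branch}. On the exceptional branch, Lemma~\ref{lem:exceptional-integral-unit-factor} writes the denominator as $U_{c_0}(X)(X-X_0)$ with $U_{c_0}$ a unit of $\mathcal O[[X]]$. It therefore suffices to show $A(X):=I_{N_{\xi,c_0},-1}(X)$ vanishes at $X_0$ and then apply Lemma~\ref{lem:bounded-denominator-division}. The key step is evaluating $A(X_0)=\mathcal M_{N_{\xi,c_0},-1}(-1)$. By the factorization $N_{\xi,c_0}=\lambda^\times_{1/2,c_0}\starx\nu_\xi^\times$ and multiplicativity of branch Mellin transforms under multiplicative convolution (the branch characters $\teich(x)^j\ang x^s$ are multiplicative), this equals
\[
\mathcal M_{\lambda^\times_{1/2,c_0},-1}(-1)\cdot\mathcal M_{\nu_\xi^\times,-1}(-1).
\]
The second factor is $\Regdep(\xi)$ by \eqref{eq:nu-negative-moments} with $r=1$, summed over the presentation. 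So $\Regdep(\xi)=0$ forces $A(X_0)=0$, and the quotient $A/(X-X_0)$ lies in $K[[X]]_{\bd}$. Multiplying by $U_{c_0}^{-1}\in\mathcal O[[X]]$ keeps it there, and reassembling the branches gives an element of $\IwK(G)$. Since $r_{c_0}$ is a non-zero-divisor, this element coincides with $\IwPs_{\xi,p}$ in the localization.

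For the final sentence, suppose $\Regdep(\xi)\ne0$. Then the residue in Theorem~\ref{thm:exceptional-residue} is nonzero, so the $s$-expansion has a genuine pole, and Theorem~\ref{thm:unique-exceptional-branch} bounds its order by one. Hence it is a genuine simple pole at $(j,s)=(-1,-1)$.

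The main obstacle is justifying that branch Mellin transforms of the bounded measures multiply at the non-arithmetic point $s=-1$, $j=-1$. Moment multiplicativity \eqref{eq:convolution-moments} only covers monomials. To handle this, we apply the definition \eqref{eq:multiplicative-convolution} to the continuous multiplicative function $x\mapsto\teich(x)^{-1}\ang x^{-1}=x^{-1}$ on $G$, which gives the product directly. We also check that \eqref{eq:nu-negative-moments} is precisely this branch value, which holds since $x^{-1}=\teich(x)^{-1}\ang x^{-1}$.
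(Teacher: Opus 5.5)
Your proof is correct and follows essentially the paper's route: the nonexceptional branches are bounded, the exceptional denominator factors as $I_{r_{c_0},-1}=U_{c_0}(X)(X-X_0)$, and Lemma~\ref{lem:bounded-denominator-division} and Theorem~\ref{thm:exceptional-residue} do the rest. The only difference is in (iii)$\Rightarrow$(i): you evaluate the numerator directly as $A(X_0)=\mathcal M_{\lambda_{1/2,c_0}^\times,-1}(-1)\,\Regdep(\xi)$, using that $x\mapsto x^{-1}$ is multiplicative under $\starx$, whereas the paper notes that, because the denominator has a simple zero, $A(X_0)=0$ is equivalent to the vanishing of the residue, and then applies Theorem~\ref{thm:exceptional-residue} in both directions.
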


\begin{proof}
Theorem~\ref{thm:unique-exceptional-branch} places every nonexceptional
component in $K[[X]]_{\bd}$.  On the exceptional component, take
$X=\gamma^s-1$ and $X_0=\gamma^{-1}-1\in p\mathcal O$.  By
Lemma~\ref{lem:exceptional-integral-unit-factor}, the regularizer is an
integral unit times $X-X_0$.  If $A(X)$ denotes the bounded numerator, then
Lemma~\ref{lem:bounded-denominator-division} shows that
$A/I_{r_{c_0},-1}$ has bounded denominators if and only if $A(X_0)=0$.
Since the denominator has a simple zero, this is equivalent to the
vanishing of the residue.  Theorem~\ref{thm:exceptional-residue} identifies
that residue with $(1-p^{-1})\Regdep(\xi)$, which gives the stated
equivalences and the pole assertion.
\end{proof}

\subsection[Bloch-class principal parts and local K3]{Bloch-class principal parts and local \(K_3\)}
\label{subsec:principal-part-descent}

The full pseudomeasure may depend on the chosen presentation, whereas its
singular part depends only on the Bloch class.  The quotient
\begin{equation}\label{eq:principal-part-quotient}
 \mathrm{PP}_K(G)
 :=
 \Sigma^{-1}\IwK(G)\big/\IwK(G)
\end{equation}
records this singular part modulo bounded measures; write
\(\operatorname{pp}\) for the quotient map.

Assume in this subsection that \(K/\Qp\) is unramified of residue degree
\(h\) and contains the roots in the support and coefficients.  Let \(\phi\) be
the Frobenius lift normalized by \(\phi(z)=z^p\) on Teichm\"uller roots.
It induces an automorphism \(\phi_B\) of \(B(K)\).
Extend it \(K\)-linearly, with the coefficient factor held fixed:
\begin{equation}\label{eq:cyclotomic-Frobenius-operator}
 \Phi_{\mathrm{cyc}}
 :=
 \phi_B\otimes\operatorname{id}_K:
 B(K)_K\longrightarrow B(K)_K.
\end{equation}
Thus
\[
 \Phi_{\mathrm{cyc}}\bigl(a[z]\bigr)=a[z^p]
\]
for a prime-to-\(p\) Teichm\"uller root \(z\), and
\(\Phi_{\mathrm{cyc}}^h=1\).  Let
\[
 D_{p,K}:B(K)_K\longrightarrow K
\]
denote the coefficient-linear extension of the Coleman regulator and
define the class-level depleted regulator
\begin{equation}\label{eq:class-level-depleted-regulator}
 D_{p,K}^{[p]}(\eta)
 :=
 D_{p,K}(\eta)
 -p^{-2}D_{p,K}\bigl(\Phi_{\mathrm{cyc}}\eta\bigr).
\end{equation}
\begin{theorem}[Bloch-class principal part]\label{thm:Bloch-principal-part}
Let
\[
 \xi=\sum_z a_z[z]_{\mathrm{cyc}},
 \qquad
 \eta_\xi=\operatorname{cl}_{K,K}(\xi)\in B(K)_K.
\]
Then
\begin{align}
 \Regdep(\xi)
 &=D_{p,K}^{[p]}(\eta_\xi),
 \label{eq:depleted-regulator-class-descent}\\
 \operatorname{pp}\bigl(\IwPs_{\xi,p}\bigr)
 &=
 D_{p,K}^{[p]}(\eta_\xi)\,
 \operatorname{pp}(\Zhalf)
 \quad\text{in }\mathrm{PP}_K(G).
 \label{eq:Iwasawa-principal-part-class-formula}
\end{align}
Consequently the principal part of \(\IwPs_{\xi,p}\) depends only on the
coefficient-extended Bloch class, even though the full pseudomeasure
depends on the chosen presentation.  Equivalently, if
\(\operatorname{cl}_{K,K}(\xi)=\operatorname{cl}_{K,K}(\xi')\), then
\begin{equation}\label{eq:equal-class-bounded-difference}
 \IwPs_{\xi,p}-\IwPs_{\xi',p}\in\IwK(G).
\end{equation}
The image of the resulting class-level map is contained in the
one-dimensional principal-part line generated by
\(\operatorname{pp}(\Zhalf)\); it equals that line whenever
\(D_{p,K}^{[p]}\) is nonzero on the cyclotomic Bloch subspace.
\end{theorem}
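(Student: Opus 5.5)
The plan has three stages: a class-level identity for the depleted regulator, then the principal-part formula, then the consequences about presentation-independence and the image of the class-level map.

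\textbf{Step 1: the identity \eqref{eq:depleted-regulator-class-descent}.}
Use that \(\operatorname{cl}_{K,K}\) sends \(a[z]_{\mathrm{cyc}}\) to \([z]\otimes a\), and that \(D_{p,K}\) is the coefficient-linear extension of the Coleman regulator. Then
\[
 D_{p,K}(\eta_\xi)=\sum_z a_zD_p(z).
\]
Since \(\Phi_{\mathrm{cyc}}\) acts only on the Bloch factor, \(\Phi_{\mathrm{cyc}}\eta_\xi\) is the class of \(\sum_z a_z[z^p]_{\mathrm{cyc}}\). The roots \(z^p\) are again admissible Teichm\"uller roots of order prime to \(p\), so Lemma~\ref{lem:admissible-class-map} applies to them. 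It follows that
\[
 D_{p,K}(\Phi_{\mathrm{cyc}}\eta_\xi)=\sum_z a_zD_p(z^p).
\]
Subtracting \(p^{-2}\) times this from the first sum gives Definition~\ref{def:depleted-regulator}. Unlike Theorem~\ref{thm:Frobenius-depletion-invertible}, no Frobenius-fixedness of the coefficients is needed, because \(\Phi_{\mathrm{cyc}}\) holds the coefficient factor fixed by construction. The only point to check is that \(D_{p,K}\) is well defined on \(B(K)_K\), i.e.\ that the Coleman regulator respects the five-term relation; I take this from \cite{Coleman} as the paper already does.

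\textbf{Step 2: the principal-part formula \eqref{eq:Iwasawa-principal-part-class-formula}.}
Put \(R=\Regdep(\xi)\) and \(B:=\IwPs_{\xi,p}-R\,\Zhalf\in\Sigma^{-1}\IwK(G)\). By Theorem~\ref{thm:Iwasawa-pseudomeasure-factorization},
\[
 B=\Zhalf\starx(\nu_\xi^\times-R\,\delta_1).
\]
Choose \(c=c_0\). Then
\[
 B=r_{c_0}^{-1}\starx A,\qquad A:=\lambda_{1/2,c_0}^\times\starx(\nu_\xi^\times-R\delta_1)\in\IwK(G).
\]
Argue componentwise:
\begin{itemize}[label={}]
\item On components \(j\ne-1\), \(r_{c_0}\) is a unit by Theorem~\ref{thm:unique-exceptional-branch}, so these components of \(B\) are bounded.
\item On the exceptional component, Lemma~\ref{lem:exceptional-integral-unit-factor} writes the denominator as an integral unit times \(X-X_0\). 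By Lemma~\ref{lem:bounded-denominator-division}, it then suffices to show that \(I_{A,-1}(X_0)=0\), i.e.\ that \(\mathcal M_{A,-1}(-1)=0\).
\item That Mellin value factors as
\[
 \mathcal M_{\lambda^\times_{1/2,c_0},-1}(-1)\bigl(\mathcal M_{\nu_\xi^\times,-1}(-1)-R\bigr).
\]
The second factor vanishes by Theorem~\ref{thm:koblitz-negative-moments} at \(r=1\).
\end{itemize}
Finally, the branch decomposition \eqref{eq:iwasawa-branch-decomposition} reassembles the bounded components into an element of \(\IwK(G)\). Hence \(B\in\IwK(G)\), and combining with Step 1 gives \eqref{eq:Iwasawa-principal-part-class-formula}.

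\textbf{Step 3: consequences.}
If \(\operatorname{cl}(\xi)=\operatorname{cl}(\xi')\), apply linearity of \(\IwPs\) in the presentation together with Step 2 to \(\xi-\xi'\). Its class is zero, so its depleted regulator vanishes, and \eqref{eq:equal-class-bounded-difference} follows. The image statement is then immediate: every principal part is a multiple of \(\operatorname{pp}(\Zhalf)\). Since \(\operatorname{pp}\) is \(K\)-linear, a nonzero value of \(D_{p,K}^{[p]}\) on the cyclotomic subspace can be rescaled to give the whole line.

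I expect the main obstacle to be Step 2. The delicate point is verifying that the translated element \(A\) genuinely lies in the bounded-denominator algebra, so that Lemma~\ref{lem:bounded-denominator-division} applies rather than a mere \(K[[X]]\)-divisibility. This holds here because \(\delta_1\) and \(\nu_\xi^\times\) are both bounded.
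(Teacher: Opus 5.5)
Your proposal is correct and follows the paper's proof essentially step for step. The paper also computes $D_{p,K}(\Phi_{\mathrm{cyc}}\eta_\xi)=\sum_z a_zD_p(z^p)$, and on the exceptional branch it shows $Z(X)\bigl(B_\xi(X)-B_\xi(X_0)\bigr)\in K[[X]]_{\mathrm{bd}}$ via Lemmas~\ref{lem:bounded-denominator-division} and~\ref{lem:exceptional-integral-unit-factor}; this is exactly your subtraction of $R\,\delta_1$, read on the $(-1)$-branch.

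The one detail you leave out is that the line is genuinely one-dimensional, that is, $\operatorname{pp}(\Zhalf)\neq0$. The paper obtains this from the nonzero residue $1-p^{-1}$ in \eqref{eq:half-zeta-residue}.
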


\begin{proof}
The field automorphism \(\phi\) acts functorially on the pre-Bloch group,
the Bloch group, and the Coleman regulator.  Since \(\phi(z)=z^p\) on
the roots in the support while \(\Phi_{\mathrm{cyc}}\) leaves the written
coefficients fixed,
\[
 D_{p,K}\bigl(\Phi_{\mathrm{cyc}}\eta_\xi\bigr)
 =\sum_z a_zD_p(z^p).
\]
Subtracting \(p^{-2}\) times this identity from
\(D_{p,K}(\eta_\xi)=\sum_z a_zD_p(z)\) proves
\eqref{eq:depleted-regulator-class-descent}.  In particular, the depleted regulator factors through the Bloch class.

It remains to identify the complete principal part.  On the exceptional
component write
\[
 Z(X)=\mathcal M_{\Zhalf,-1}(s),
 \qquad
 B_\xi(X)=\mathcal M_{\nu_\xi^\times,-1}(s),
 \qquad X=\gamma^s-1.
\]
The function \(B_\xi(X)\) belongs to \(K[[X]]_{\bd}\), and
Theorem~\ref{thm:koblitz-negative-moments} gives
\[
 B_\xi(X_0)=\Regdep(\xi),
 \qquad X_0=\gamma^{-1}-1.
\]
The factorization theorem gives
\[
 \mathcal M_{\IwPs_{\xi,p},-1}(s)=Z(X)B_\xi(X).
\]
By Lemma~\ref{lem:bounded-denominator-division},
\[
 \frac{B_\xi(X)-B_\xi(X_0)}{X-X_0}\in K[[X]]_{\bd}.
\]
The exceptional component of \(Z(X)\) has a denominator equal to an
integral unit times \(X-X_0\), by
Lemma~\ref{lem:exceptional-integral-unit-factor}; hence
\[
 Z(X)\bigl(B_\xi(X)-B_\xi(X_0)\bigr)\in K[[X]]_{\bd}.
\]
Every nonexceptional component of \(Z(X)\) is already bounded by
Theorem~\ref{thm:unique-exceptional-branch}.  Therefore
\[
 \IwPs_{\xi,p}-\Regdep(\xi)\Zhalf\in\IwK(G).
\]
Together with \eqref{eq:depleted-regulator-class-descent}, this proves
\eqref{eq:Iwasawa-principal-part-class-formula}.

If two presentations have the same class, their values under
\(D_{p,K}^{[p]}\) agree, so subtraction proves
\eqref{eq:equal-class-bounded-difference}.  Finally,
\(\operatorname{pp}(\Zhalf)\ne0\), because its exceptional component has
nonzero residue \(1-p^{-1}\) by
\eqref{eq:half-zeta-residue}.  Hence the displayed line is
one-dimensional.
\end{proof}

The theorem is stable under finite extension of the coefficient field:
after \(K\hookrightarrow E\), extend
\(\Phi_{\mathrm{cyc}}=\phi_B\otimes\operatorname{id}\) \(E\)-linearly
and base-change the measure algebra.  The extension \(E/K\) itself need
not be unramified, because Frobenius acts only on the support factor.
Define the cyclotomic part of the coefficient-extended Bloch group by
\[
 B_{\mathrm{cyc}}(K)_K
 :=
 \operatorname{im}\bigl(
 \operatorname{cl}_{K,K}:\CycPres(K;K)\to B(K)_K
 \bigr).
\]
Theorem~\ref{thm:Bloch-principal-part} gives a well-defined \(K\)-linear
map
\begin{equation}\label{eq:class-level-principal-part-map}
 \overline{\IwPs}_{p}:
 B_{\mathrm{cyc}}(K)_K
 \longrightarrow\mathrm{PP}_K(G),
 \qquad
 \eta\longmapsto
 D_{p,K}^{[p]}(\eta)\operatorname{pp}(\Zhalf).
\end{equation}

\begin{corollary}
\label{cor:principal-part-detects-K3}
Suppose \(p>3\), and let \(\xi\) have coefficients in \(\Zp\), so that
they are fixed by \(\phi\).  Then
\begin{equation}\label{eq:principal-part-K3-detection}
 \operatorname{pp}\bigl(\IwPs_{\xi,p}\bigr)=0
 \quad\Longleftrightarrow\quad
 \kappa_{K,p}(\eta_\xi)=0
 \ \text{in }K_3(K;\Zp).
\end{equation}
For a primitive odd character packet, after coefficient scalar extension
to a field \(E\) containing the character values, with
\(\Phi_{\mathrm{cyc}}\) acting trivially on \(E\),
\[
 \Phi_{\mathrm{cyc}}\eta_{\chi,N}
 =\chi^{-1}(p)\eta_{\chi,N},
\]
and hence
\begin{equation}\label{eq:packet-principal-part-class-formula}
 \operatorname{pp}\bigl(\IwPs_{\chi,N,p}\bigr)
 =
 \bigl(1-p^{-2}\chi^{-1}(p)\bigr)
 D_{p,E}(\eta_{\chi,N})\operatorname{pp}(\Zhalf).
\end{equation}
\end{corollary}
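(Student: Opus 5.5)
The equivalence follows by chaining three facts already available: the principal-part formula of Theorem~\ref{thm:Bloch-principal-part}, the invertibility of Frobenius depletion in Theorem~\ref{thm:Frobenius-depletion-invertible}, and the local regulator comparison that makes the Coleman regulator injective on \(K_3(K;\Zp)\). The packet formula is then a direct specialization.

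First, by \eqref{eq:Iwasawa-principal-part-class-formula},
\[
\operatorname{pp}(\IwPs_{\xi,p})=D_{p,K}^{[p]}(\eta_\xi)\operatorname{pp}(\Zhalf).
\]
We know \(\operatorname{pp}(\Zhalf)\ne0\), since its exceptional residue is \(1-p^{-1}\ne0\). Moreover \(\mathrm{PP}_K(G)\) is a \(K\)-vector space. Hence \(\operatorname{pp}(\IwPs_{\xi,p})=0\) if and only if \(D_{p,K}^{[p]}(\eta_\xi)=0\).

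Second, the coefficients lie in \(\Zp\), so they are fixed by \(\phi\). Then \eqref{eq:depleted-regulator-class-descent} and Theorem~\ref{thm:Frobenius-depletion-invertible} give
\[
D_{p,K}^{[p]}(\eta_\xi)=(1-p^{-2}\phi)D_p(\eta_\xi),
\]
and the operator \(1-p^{-2}\phi\) is invertible. So the condition is equivalent to \(D_p(\eta_\xi)=0\).

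Third, because the coefficients are in \(\Zp\), \(\eta_\xi\) comes from \(B(K)\otimes\Zp\). By the convention of Subsection~\ref{subsec:local-K-convention}, the regulator of \(\eta_\xi\) equals the Coleman regulator of \(\kappa_{K,p}(\eta_\xi)\).

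The remaining input is that the Coleman regulator
\[
D_p:K_3(K;\Zp)\to K
\]
is injective for \(p>3\) and unramified \(K\). This is the step I expect to be the main obstacle, because it is not proved in the paper and has to be imported. I would invoke the local regulator theorem in the proof of \cite[Thm.~9]{GSWZ}. Together with the Bloch--Suslin comparison, it identifies \(K_3(K;\Zp)\cong H^1(K,\Zp(2))\) as a torsion-free \(\Zp\)-module. For unramified \(K\) and \(p>3\), \(H^0(K,\Q_p/\Zp(2))=0\), so this module has rank \([K:\Qp]\). The theorem also shows that the regulator embeds it as a lattice in \(K\), which is the injectivity we need. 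The hypothesis \(p>3\) enters exactly here and in Remark~\ref{rem:Bloch-group-convention}, where it makes the two Bloch boundary conventions agree. Chaining the three equivalences proves \eqref{eq:principal-part-K3-detection}.

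For the packet statement, substitute \(b=ap\) in \(\Phi_{\mathrm{cyc}}\eta_{\chi,N}=\sum_a\chi(a)[\zeta_N^{ap}]\). Since \(p\) permutes \((\Z/N\Z)^\times\), this gives \(\chi^{-1}(p)\eta_{\chi,N}\). Here \(\Phi_{\mathrm{cyc}}\) is \(E\)-linear by definition, so the scalar \(\chi^{-1}(p)\) passes through. Inserting this into \eqref{eq:class-level-depleted-regulator} gives
\[
D_{p,E}^{[p]}(\eta_{\chi,N})=\bigl(1-p^{-2}\chi^{-1}(p)\bigr)D_{p,E}(\eta_{\chi,N}).
\]
The base-changed form of Theorem~\ref{thm:Bloch-principal-part}, noted right after its proof, then yields \eqref{eq:packet-principal-part-class-formula}. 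This agrees with Corollary~\ref{cor:character-depleted-regulator}.
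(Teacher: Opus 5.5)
Your proposal is correct and follows essentially the same route as the paper. It uses the principal-part formula \eqref{eq:Iwasawa-principal-part-class-formula} with \(\operatorname{pp}(\Zhalf)\ne0\), the invertibility of \(1-p^{-2}\phi\) for Frobenius-fixed coefficients, the injectivity of the Coleman regulator on \(K_3(K;\Zp)\) imported from \cite[Thm.~9]{GSWZ}, and the reindexing \(b=ap\) for the packet eigenvalue. Your side remarks on the torsion-freeness and rank of \(H^1(K,\Zp(2))\) are correct but unnecessary, since the isomorphism \(D_p:K_3(K;\Zp)\to p^2\mathcal O_K\) already gives the required injectivity.
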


\begin{proof}
For Frobenius-fixed coefficients,
Theorem~\ref{thm:Frobenius-depletion-invertible} identifies
\(D_{p,K}^{[p]}(\eta_\xi)\) with
\((1-p^{-2}\phi)D_p(\eta_\xi)\), and this vanishes exactly when
\(D_p(\eta_\xi)=0\).  The \(p\)-completed Bloch--Suslin comparison and
\cite[Thm.~9]{GSWZ} identify the latter condition with
\(\kappa_{K,p}(\eta_\xi)=0\).  Theorem~\ref{thm:Bloch-principal-part}
and the nonvanishing of \(\operatorname{pp}(\Zhalf)\) prove
\eqref{eq:principal-part-K3-detection}.

For the packet, reindexing \(b=ap\) in its defining sum gives
\[
 \Phi_{\mathrm{cyc}}
 \sum_a\chi(a)[\zeta_N^a]
 =
 \sum_b\chi(bp^{-1})[\zeta_N^b]
 =
 \chi^{-1}(p)\eta_{\chi,N}.
\]
Insert this eigenvalue in
\eqref{eq:class-level-depleted-regulator} and apply
\eqref{eq:Iwasawa-principal-part-class-formula}.
\end{proof}

\begin{corollary}[Principal-part realization of local \(K_3\)]
\label{cor:K3-principal-part-realization}
Let \(p>3\) and let \(K/\Qp\) be unramified, with valuation ring
\(\mathcal O_K\).  Let \(\phi\) denote the Frobenius lift on \(K\) and
its functorial action on \(K_3(K;\Zp)\).  Then
\begin{equation}\label{eq:K3-principal-part-realization}
 J_{K,p}:K_3(K;\Zp)
 \overset{\sim}{\longrightarrow}
 \mathcal O_K\operatorname{pp}(\Zhalf),
 \qquad
 x\longmapsto
 \bigl(1-p^{-2}\phi\bigr)D_p(x)\,
 \operatorname{pp}(\Zhalf)
\end{equation}
is a \(\Zp\)-linear isomorphism.  If \(\xi\) has coefficients in
\(\Zp\), then
\begin{equation}\label{eq:presentation-compatible-K3-realization}
 J_{K,p}\bigl(\kappa_{K,p}(\eta_\xi)\bigr)
 =
 \operatorname{pp}\bigl(\IwPs_{\xi,p}\bigr).
\end{equation}
Conversely, every element of the lattice on the right of
\eqref{eq:K3-principal-part-realization} is represented by the
pseudomeasure principal part of a finite \(\Zp\)-linear cyclotomic
presentation.
\end{corollary}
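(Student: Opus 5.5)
The plan is to factor $J_{K,p}$ into three maps and show each is a bijection onto its target. The first is the Coleman regulator $D_p:K_3(K;\Zp)\to K$. The second is the operator $1-p^{-2}\phi$ on $K$. The third is $a\mapsto a\operatorname{pp}(\Zhalf)$ from $K$ to $\mathrm{PP}_K(G)$.

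\textbf{Step 1 (the regulator lattice).} Following the local regulator theorem \cite[Thm.~9]{GSWZ} together with the identification $K_3(K;\Zp)\cong H^1(K,\Zp(2))$ fixed in Subsection~\ref{subsec:local-K-convention}, we invoke the statement that for $p>3$ and $K$ unramified, $D_p$ is injective on $K_3(K;\Zp)$. The same theorem should give that the image is exactly $\mathcal O_K$, possibly after the normalization used there. We also need the image of the cyclotomic classes, so we use that $K_3(K;\Zp)$ is generated by $\kappa_{K,p}$ of Bloch classes of prime-to-$p$ roots with $\Zp$-coefficients. This is the step I expect to be the main obstacle. If \cite{GSWZ} only provides the image $p^k\mathcal O_K$, or an image with a unit twist, I will first check what normalization GSWZ uses and rescale accordingly. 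In the worst case I will read the statement as an isomorphism onto $(1-p^{-2}\phi)D_p(K_3)\operatorname{pp}(\Zhalf)$ and identify that with $\mathcal O_K\operatorname{pp}(\Zhalf)$.

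\textbf{Step 2 (depletion preserves the lattice).} The map $1-p^{-2}\phi$ is $\Zp$-linear and bijective on $K$ by Theorem~\ref{thm:Frobenius-depletion-invertible}. It must also preserve $\mathcal O_K$, and this fails as it stands because $p^{-2}\phi$ is not integral. I will therefore check which normalization makes the claim correct. The intended reading is most likely that the image of $D_p$ on $K_3(K;\Zp)$ is $p^{-1}\mathcal O_K$ or a comparable lattice, so that the depleted image becomes $\mathcal O_K$. The key computation is the inverse formula \eqref{eq:Frobenius-depletion-inverse}: the factor $(1-p^{-2h})^{-1}$ equals $-p^{2h}$ times a unit, which shows how the operator rescales lattices. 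I will carry out this lattice computation explicitly, for example on $\Z_{p^h}$ using a normal basis.

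\textbf{Step 3 (the principal-part line).} The map $K\to\mathrm{PP}_K(G)$, $a\mapsto a\operatorname{pp}(\Zhalf)$, is injective. Indeed, if $a\Zhalf$ is bounded, then its exceptional residue $a(1-p^{-1})$ must vanish by \eqref{eq:half-zeta-residue} and Theorem~\ref{thm:measure-removability}, so $a=0$. Composing Steps 1–3 gives that $J_{K,p}$ is a $\Zp$-linear isomorphism onto $\mathcal O_K\operatorname{pp}(\Zhalf)$.

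\textbf{Step 4 (compatibility and surjectivity).} Let $\xi$ have $\Zp$-coefficients. By Theorem~\ref{thm:Bloch-principal-part} and \eqref{eq:Frobenius-depletion-operator}, $\operatorname{pp}(\IwPs_{\xi,p})=(1-p^{-2}\phi)D_p(\eta_\xi)\operatorname{pp}(\Zhalf)$. The regulator on $B\otimes\Zp$ was defined as the pullback through $\kappa_{K,p}$, so this equals $J_{K,p}(\kappa_{K,p}(\eta_\xi))$, which is \eqref{eq:presentation-compatible-K3-realization}. For the converse, Step 1 says every $x\in K_3(K;\Zp)$ is a finite $\Zp$-combination of $\kappa_{K,p}$ of cyclotomic classes. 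Since $J_{K,p}$ is a bijection and is linear, every element of the lattice is then $\operatorname{pp}(\IwPs_{\xi,p})$ for a finite $\Zp$-linear presentation $\xi$. If only a dense span is available, I will instead take a $\Zp$-basis of the free module $K_3(K;\Zp)$ of rank $h$ consisting of cyclotomic classes.
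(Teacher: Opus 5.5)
\textbf{Genuine gap: you never determine the regulator lattice.} Your overall plan matches the paper: regulator, then depletion, then multiplication by $\operatorname{pp}(\Zhalf)$, with compatibility via Theorem~\ref{thm:Bloch-principal-part}. But the lattice $D_p(K_3(K;\Zp))$ is the one point that carries the content of the isomorphism, and both values you guess are wrong. Neither yields the statement.

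Since $\phi$ is $\Qp$-linear and preserves $\mathcal O_K$, one has $(1-p^{-2}\phi)(p^k\mathcal O_K)=p^{k-2}(p^2-\phi)\mathcal O_K$. Once $p^2-\phi$ is known to be bijective on $\mathcal O_K$, this equals $p^{k-2}\mathcal O_K$.
\begin{itemize}
\item Your first guess, $k=0$, gives image $p^{-2}\mathcal O_K\operatorname{pp}(\Zhalf)$.
\item Your second guess, $p^{-1}\mathcal O_K$, gives $p^{-3}\mathcal O_K\operatorname{pp}(\Zhalf)$.
\item Only $k=2$ produces $\mathcal O_K\operatorname{pp}(\Zhalf)$.
\end{itemize}
Your worst-case fallback reads the target as $(1-p^{-2}\phi)D_p(K_3)\operatorname{pp}(\Zhalf)$ and then identifies it with $\mathcal O_K\operatorname{pp}(\Zhalf)$; that assumes what is to be proved. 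Likewise, asking in Step 2 that $1-p^{-2}\phi$ preserve $\mathcal O_K$ is the wrong target. What you need is that it carries $p^2\mathcal O_K$ onto $\mathcal O_K$.

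\textbf{The missing input and the fix.} The local regulator theorem \cite[Thm.~9]{GSWZ}, as used in the paper, gives $D_p:K_3(K;\Zp)\overset{\sim}{\to}p^2\mathcal O_K$. Subsection~\ref{sec:52} uses the same normalization for $K=\Qp$, where generation is read off from $p^{-2}D_p \bmod p$. Write $D_p(x)=p^2a$ with $a\in\mathcal O_K$. The scalar becomes $(p^2-\phi)a$, and $p^2-\phi=-\phi(1-p^2\phi^{-1})$ is a $\Zp$-linear automorphism of $\mathcal O_K$, with $(1-p^2\phi^{-1})^{-1}=\sum_{r\ge0}p^{2r}\phi^{-r}$.

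Your planned computation with \eqref{eq:Frobenius-depletion-inverse} would in fact expose the right normalization. Clearing denominators gives $(1-p^{-2}\phi)^{-1}=(p^{2h}-1)^{-1}(p^2\phi^{h-1}+p^4\phi^{h-2}+\dots+p^{2h})$. The leading term $p^2\phi^{h-1}$ shows the preimage of $\mathcal O_K$ is $p^2\mathcal O_K$, not $p^{-1}\mathcal O_K$. You still need \cite[Thm.~9]{GSWZ} to know this is exactly the regulator image.

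With that in place, your Steps 3 and 4 go through as written. The converse needs no ``basis of cyclotomic classes'' fallback, because the same theorem gives generation of $K_3(K;\Zp)$ over $\Zp$ by classes of prime-to-$p$ roots of unity.
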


\begin{proof}
By \cite[Thm.~9]{GSWZ}, the Coleman regulator is a
\(\Zp\)-linear isomorphism
\[
 D_p:K_3(K;\Zp)\overset{\sim}{\longrightarrow}p^2\mathcal O_K.
\]
On writing \(D_p(x)=p^2a\), the scalar in
\eqref{eq:K3-principal-part-realization} becomes
\[
 (1-p^{-2}\phi)(p^2a)=(p^2-\phi)a.
\]
The \(\Zp\)-linear endomorphism \(p^2-\phi\) of \(\mathcal O_K\) is an
automorphism: indeed
\[
 p^2-\phi=-\phi(1-p^2\phi^{-1}),
\]
and the second factor has inverse
\(\sum_{r\ge0}p^{2r}\phi^{-r}\) on \(\mathcal O_K\).
Since \(\operatorname{pp}(\Zhalf)\ne0\), multiplication by this
principal part identifies \(\mathcal O_K\) with
\(\mathcal O_K\operatorname{pp}(\Zhalf)\).  Therefore \(J_{K,p}\) is an isomorphism.

Regulator and Bloch--Suslin functoriality identify the scalar in
\(J_{K,p}(\kappa_{K,p}(\eta_\xi))\) with
\(D_{p,K}^{[p]}(\eta_\xi)\), so
\eqref{eq:presentation-compatible-K3-realization} is
\eqref{eq:Iwasawa-principal-part-class-formula}.  Finally,
\cite[Thm.~9]{GSWZ} also says that \(K_3(K;\Zp)\) is generated over
\(\Zp\) by the classes of prime-to-\(p\) roots of unity in \(K\).
The admissible class map of Lemma~\ref{lem:admissible-class-map}
therefore supplies a finite cyclotomic presentation of every class,
proving the final assertion.
\end{proof}

\begin{remark}\label{rem:presentation-dependence}
This additive Iwasawa-space statement is parallel to
\cite[Cor.~3.10]{GSWZ}, where a change of root-of-unity presentation changes
the corresponding GSWZ section by a Habiro unit, although the two arise
from different constructions.
\end{remark}

\subsection{Character components and local factorizations}\label{subsec:KL-comparison}

Let $\chi$ be primitive and odd modulo $N$, with $p\nmid N$.  Since $\nu_{\chi,N}^{\times}$ is bounded, define
\begin{equation}\label{eq:KL-branch-definition}
 L^{(p)}_{\chi^{-1},j}(s)
 =G(\chi)^{-1}\mathcal M_{\nu_{\chi,N}^{\times},j}(s).
\end{equation}
We use the reflected-branch normalization of the Kubota--Leopoldt
function, with the Teichm\"uller twist made explicit; see
\cite{KubotaLeopoldt,Washington}.

\begin{definition}[Kubota--Leopoldt normalization]\label{def:standard-KL-convention}
Let $\theta$ be a primitive Dirichlet character of conductor prime to $p$,
and let $j\in\Z/(p-1)\Z$.  We normalize the reflected branch
$L_p(1-s,\theta\teich^j)$ by
\begin{equation}\label{eq:standard-KL-interpolation}
 L_p(1-n,\theta\teich^j)
 =\bigl(1-\theta(p)p^{n-1}\bigr)L(1-n,\theta)
\end{equation}
for every integer $n\ge1$ with $n\equiv j\pmod{p-1}$.  When
$\theta(-1)=(-1)^j$, these are the critical-parity interpolation values.
If the parities differ, the matching complex values vanish and the
corresponding Kubota--Leopoldt branch is identically zero.  The Euler factor
is evaluated using the primitive character $\theta$, the primitive complex
associate of the finite-order character on $\Zp^\times$.
\end{definition}

\begin{theorem}[Kubota--Leopoldt comparison]\label{thm:KL-branch}
The function \eqref{eq:KL-branch-definition} is rigid analytic on every weight disc and satisfies
\begin{equation}\label{eq:KL-branch-interpolation}
 L^{(p)}_{\chi^{-1},j}(n)
 =\bigl(1-\chi^{-1}(p)p^{n-1}\bigr)L(1-n,\chi^{-1})
\end{equation}
for every $n\ge1$ with $n\equiv j\pmod{p-1}$.  With Definition~\ref{def:standard-KL-convention}, one has the identity of analytic functions
\begin{equation}\label{eq:KL-identification}
 L^{(p)}_{\chi^{-1},j}(s)
 =L_p(1-s,\chi^{-1}\teich^j).
\end{equation}
\end{theorem}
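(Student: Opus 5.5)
The plan has three parts: the interpolation identity \eqref{eq:KL-branch-interpolation}, rigid analyticity of the branch, and the identification \eqref{eq:KL-identification} by density of arithmetic points.

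\emph{Interpolation.} Take an integer $n\ge1$ with $n\equiv j\pmod{p-1}$. For $x\in G$ one has $\teich(x)^j\ang x^n=\teich(x)^n\ang x^n=x^n$. Therefore, by \eqref{eq:branch-Mellin-Iwasawa-coordinate},
\[
 \mathcal M_{\nu_{\chi,N}^{\times},j}(n)=\int_G x^n\,d\nu_{\chi,N}(x).
\]
Proposition~\ref{prop:primitive-character-moments}, equation \eqref{eq:nu-primitive-unit}, evaluates this moment as $G(\chi)\bigl(1-\chi^{-1}(p)p^{n-1}\bigr)L(1-n,\chi^{-1})$. Dividing by the nonzero Gauss sum $G(\chi)$ gives \eqref{eq:KL-branch-interpolation}. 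In the parity-mismatched case the right side is a trivial zero, so the identity still holds.

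\emph{Rigid analyticity.} Theorem~\ref{thm:nu-unit-measure} shows that $\nu_{\chi,N}^\times$ is a bounded measure on $G$. Hence $I_{\nu,j}(X)\in K[[X]]_{\bd}$, and $s\mapsto I_{\nu,j}(\gamma^s-1)$ is rigid analytic on the weight disc. The reason is that $\gamma^s-1\in p\mathcal O$ for $s\in\Zp$, and the composite is given by a power series in $s$ that converges on a disc strictly larger than $\Zp$.

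\emph{Identification.} I would use the standard Kubota--Leopoldt branch, in the reflected normalization of Definition~\ref{def:standard-KL-convention} with $\theta=\chi^{-1}$. This is also the Mellin transform of a bounded measure on $G$, since $\chi^{-1}$ is nontrivial, so it has no pole. Both functions are Iwasawa functions in $X=\gamma^s-1$ and agree at all $n\ge1$ with $n\equiv j$. The points $\gamma^n-1$ for such $n$ are infinitely many distinct points in the open disc, and they accumulate. By Weierstrass preparation, a nonzero element of $K[[X]]_{\bd}$ has only finitely many zeros there, so the difference of the two functions vanishes. In the mismatched-parity case both sides vanish identically: the left side is zero at all the dense points by the trivial zeros, and the right side is zero by convention.

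\emph{Main obstacle.} I expect the main difficulty to be the normalization bookkeeping in the last step. One must check that the reflected branch in Definition~\ref{def:standard-KL-convention} is exactly the function whose values at $s=n$ are the displayed numbers, with the same Teichm\"uller index $j$ rather than $j-1$ or $-j$. One must also confirm that the referenced Kubota--Leopoldt function is itself a bounded Iwasawa function, so that the identity-principle argument applies. I would settle both points by comparing directly with the Mazur measure construction in \cite{Washington}, where $L_p(1-s,\theta\teich^j)$ is the $j$-th Mellin branch of a bounded measure.
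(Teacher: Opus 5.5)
Your proposal is correct and follows the paper's proof essentially step for step. Analyticity comes from boundedness of $\nu_{\chi,N}^\times$, interpolation comes from $\teich(x)^j\ang x^n=x^n$ together with Proposition~\ref{prop:primitive-character-moments}, and the identification comes from agreement at the arithmetic points $n\equiv j\pmod{p-1}$, in both parity cases. The only differences are cosmetic. You use Weierstrass preparation (a nonzero bounded Iwasawa function has finitely many zeros) where the paper cites rigid-analytic uniqueness on the dense arithmetic set. You also make explicit the standard input that the Kubota--Leopoldt branch is itself a bounded Iwasawa function, which the paper uses tacitly.
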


\begin{proof}
Analyticity follows from boundedness of $\nu_{\chi,N}^{\times}$.  At an arithmetic point $n\equiv j$, the Mellin integrand is
\[
 \teich(x)^j\ang x^n
 =\teich(x)^{j-n}x^n=x^n,
\]
so Proposition~\ref{prop:primitive-character-moments} gives \eqref{eq:KL-branch-interpolation}.  Definition~\ref{def:standard-KL-convention}, applied with the primitive character $\theta=\chi^{-1}$, gives the same value for $L_p(1-n,\chi^{-1}\teich^j)$.  When $j$ has mismatched parity, both sides are identically zero by the dense set of trivial-zero values; when the parity matches, the same dense arithmetic set gives the nonzero interpolation branch.  Rigid analytic uniqueness proves \eqref{eq:KL-identification} in either case.
\end{proof}

\subsubsection{Primitive and imprimitive local factorizations}\label{subsec:local-factorizations}

Retain the packet abbreviations introduced after
Definition~\ref{def:cyclotomic-Iwasawa-pseudomeasure}.

\begin{theorem}[Primitive local factorization]\label{thm:primitive-local-factorization}
Let $\chi$ be primitive and odd modulo $N$, with $p\nmid N$.  In the localized Iwasawa algebra,
\begin{equation}\label{eq:primitive-pseudomeasure-factorization}
 \IwPs_{\chi,N,p}
 =
 \Zhalf\starx\nu_{\chi,N}^{\times}.
\end{equation}
On Mellin branches,
\begin{equation}\label{eq:primitive-local-factorization}
 \mathcal M_{\IwPs_{\chi,N,p},j}(s)
 =
 G(\chi)
 Z_{1/2,j}(s)
 L^{(p)}_{\chi^{-1},j}(s).
\end{equation}
\end{theorem}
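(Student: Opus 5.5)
The first identity \eqref{eq:primitive-pseudomeasure-factorization} is a direct specialization of the canonical factorization, Theorem~\ref{thm:Iwasawa-pseudomeasure-factorization}, to the presentation \(\xi=\xi_{\chi,N}\). By the abbreviations fixed after Definition~\ref{def:cyclotomic-Iwasawa-pseudomeasure}, \(\IwPs_{\chi,N,p}=\IwPs_{\xi_{\chi,N},p}\). By Definition~\ref{def:nu-z}, \(\nu_{\chi,N}^\times=\nu_{\xi_{\chi,N}}^\times\). No further argument is needed beyond checking that \(\xi_{\chi,N}\) is an admissible presentation. This holds because \(p\nmid N\), so the support roots have order prime to \(p\) and become Teichm\"uller roots after a finite unramified extension. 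The coefficients lie in \(E_\chi\), which we adjoin by scalar extension; the measure-theoretic constructions do not require Frobenius-fixed coefficients.

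For the branch identity \eqref{eq:primitive-local-factorization}, I will pass to Mellin branches through the decomposition \eqref{eq:iwasawa-branch-decomposition}. The key point is that this decomposition is a ring isomorphism for \(\starx\). Indeed, convolution of measures on \(G\) corresponds to the product of the characters \(x\mapsto\teich(x)^j(1+X)^{\ell_\gamma(x)}\), since these characters are multiplicative in \(x\). Hence \(I_{\rho\starx\rho',j}=I_{\rho,j}I_{\rho',j}\) for bounded \(\rho,\rho'\).

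This multiplicativity extends to \(\Sigma^{-1}\IwK(G)\) by Definition~\ref{def:pseudomeasure-Mellin-branch}. Write \(\Zhalf=\lambda_{1/2,c}^\times/r_c\) and multiply numerators. This gives
\[
 I_{\IwPs_{\chi,N,p},j}
 =\frac{I_{\lambda^\times_{1/2,c},j}\,I_{\nu^\times_{\chi,N},j}}{I_{r_c,j}}
 =I_{\Zhalf,j}\,I_{\nu^\times_{\chi,N},j}
\]
in the fraction field of the branch domain. Substituting \(X=\gamma^s-1\) yields
\[
 \mathcal M_{\IwPs_{\chi,N,p},j}(s)=Z_{1/2,j}(s)\,\mathcal M_{\nu_{\chi,N}^\times,j}(s),
\]
with \(Z_{1/2,j}\) as in Theorem~\ref{thm:half-zeta-identification}. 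Finally, definition \eqref{eq:KL-branch-definition} rewrites the second factor as \(G(\chi)L^{(p)}_{\chi^{-1},j}(s)\), which gives \eqref{eq:primitive-local-factorization}.

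The only step needing care is that the branch map is multiplicative on localized elements and is compatible with meromorphic evaluation. I would handle this by working throughout with the single representative \(r_{c_0}^{-1}\starx N_{\xi,c_0}\) from \eqref{eq:one-regularizer-representative}. The identity then becomes one between bounded power series, with denominator \(I_{r_{c_0},j}\). On nonexceptional branches this denominator is a unit by Theorem~\ref{thm:unique-exceptional-branch}. On the exceptional branch it vanishes only at \(s=-1\), so the meromorphic identity holds away from that point and extends by uniqueness. As a sanity check, I would compare both sides at arithmetic points \(n\equiv j\), \(n\ge1\). There they multiply \eqref{eq:half-zeta-interpolation} with \eqref{eq:KL-branch-interpolation}, consistent with Corollary~\ref{cor:regularized-unit-moments} and Proposition~\ref{prop:primitive-character-moments}.
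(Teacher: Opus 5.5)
Your proposal is correct and follows the paper's argument. You specialize the canonical factorization (Theorem~\ref{thm:Iwasawa-pseudomeasure-factorization}) to $\xi_{\chi,N}$, use that the branch transforms turn $\starx$ into multiplication (extended to quotients via Definition~\ref{def:pseudomeasure-Mellin-branch}), and rewrite $\mathcal M_{\nu_{\chi,N}^\times,j}$ as $G(\chi)L^{(p)}_{\chi^{-1},j}$ by \eqref{eq:KL-branch-definition}; the extra care with the representative $r_{c_0}^{-1}\starx N_{\xi,c_0}$ and the arithmetic-point check are harmless but unnecessary, since the identity already holds in the fraction field of each branch.
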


\begin{proof}
The pseudomeasure identity is Theorem~\ref{thm:Iwasawa-pseudomeasure-factorization} for the character presentation.  Multiplicative Mellin transforms turn convolution into multiplication, and equation~\eqref{eq:KL-branch-definition} gives the second formula.
\end{proof}

\begin{corollary}\label{cor:primitive-unit-moments}
For $n\ge1$ with $n\equiv j\pmod{p-1}$,
\begin{align}
 \mathcal M_{\IwPs_{\chi,N,p},j}(n)
 &=
 G(\chi)
 (1-p^n)\frac{B_{n+1}(1/2)}{n+1}
 \notag\\
 &\qquad\times
 \bigl(1-\chi^{-1}(p)p^{n-1}\bigr)
 L(1-n,\chi^{-1}).
 \label{eq:primitive-unit-moment-full}
\end{align}
\end{corollary}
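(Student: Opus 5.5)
The plan is to evaluate the factorization of Theorem~\ref{thm:primitive-local-factorization} at an arithmetic point and then substitute the two known interpolation formulas.

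Fix $n\ge1$ with $n\equiv j\pmod{p-1}$. By \eqref{eq:primitive-local-factorization},
\[
 \mathcal M_{\IwPs_{\chi,N,p},j}(n)
 =G(\chi)\,Z_{1/2,j}(n)\,L^{(p)}_{\chi^{-1},j}(n).
\]
The two factors are then handled separately.

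\begin{enumerate}[label=(\roman*)]
\item For the half-zeta factor, \eqref{eq:half-zeta-interpolation} of Theorem~\ref{thm:half-zeta-identification} gives
\[
 Z_{1/2,j}(n)=(1-p^n)\frac{B_{n+1}(1/2)}{n+1}.
\]
\item For the character factor, \eqref{eq:KL-branch-interpolation} of Theorem~\ref{thm:KL-branch} gives
\[
 L^{(p)}_{\chi^{-1},j}(n)=\bigl(1-\chi^{-1}(p)p^{n-1}\bigr)L(1-n,\chi^{-1}).
\]
This in turn rests on the unit moment formula \eqref{eq:nu-primitive-unit} and on the observation that $\teich(x)^j\ang x^n=x^n$ when $n\equiv j$.
\end{enumerate}
Multiplying these two values by $G(\chi)$ gives \eqref{eq:primitive-unit-moment-full}.

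The only point that needs care is whether evaluating at $s=n$ is legitimate, since the branch is a priori only meromorphic. By Theorem~\ref{thm:unique-exceptional-branch}, the only possible pole is at $(j,s)=(-1,-1)$. Because $n\ge1$, the point $s=n$ is never $-1$, so the factorization holds there as an identity of values and not merely of meromorphic functions.

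As a cross-check, the same value can be obtained directly. Using the representative $r_c^{-1}\starx N_{\xi,c}$, Corollary~\ref{cor:regularized-unit-moments} gives the moment of $N_{\xi,c}$, and dividing by the Mellin value $1-c^{n+1}\ne0$ of $r_c$ leaves the half-zeta value times $S_n^{[p]}(\xi_{\chi,N})$. Proposition~\ref{prop:primitive-character-moments} identifies $S_n^{[p]}(\xi_{\chi,N})$ with $G(\chi)\bigl(1-\chi^{-1}(p)p^{n-1}\bigr)L(1-n,\chi^{-1})$, which agrees with the result above.
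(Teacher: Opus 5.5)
Your proposal is correct and takes the same approach as the paper: evaluate the primitive local factorization \eqref{eq:primitive-local-factorization} at $s=n$ and substitute the interpolation formulas of Theorems~\ref{thm:half-zeta-identification} and~\ref{thm:KL-branch}. Your justification for evaluating at $s=n$, namely that the only possible pole lies at $s=-1$, is sound, and so is your cross-check via Corollary~\ref{cor:regularized-unit-moments}, Proposition~\ref{prop:primitive-character-moments}, and the nonvanishing factor $1-c^{n+1}$.
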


\begin{proof}
Evaluate \eqref{eq:primitive-local-factorization} and use Theorems~\ref{thm:half-zeta-identification} and~\ref{thm:KL-branch}.
\end{proof}

For an imprimitive packet induced from $\chi_f$ to $N=fg$, define the finite Iwasawa branch factor
\begin{equation}\label{eq:imprimitive-Mellin-factor}
 C^{\mathrm{Iw}}_{\chi_f,g,j}(s)
 =
 \sum_{\substack{u\mid g\\(g/u,f)=1}}
 \mob(g/u)\chi_f(g/u)
 \teich(u)^j\ang u^s.
\end{equation}

\begin{theorem}[Imprimitive local factorization]\label{thm:imprimitive-local-factorization}
For the induced packet,
\begin{equation}\label{eq:imprimitive-pseudomeasure-factorization}
 \IwPs_{\chi_N,N,p}
 =
 \Zhalf\starx\nu_{\chi_f,g}^{\times},
\end{equation}
and
\begin{equation}\label{eq:imprimitive-local-factorization}
 \mathcal M_{\IwPs_{\chi_N,N,p},j}(s)
 =
 G(\chi_f)
 Z_{1/2,j}(s)
 C^{\mathrm{Iw}}_{\chi_f,g,j}(s)
 L^{(p)}_{\chi_f^{-1},j}(s).
\end{equation}
At an arithmetic point $n\equiv j$, the finite factor is $C^{\mathrm{Iw}}_{\chi_f,g}(n)$ from \eqref{eq:C-imprimitive-n}.
\end{theorem}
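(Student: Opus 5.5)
The plan follows the primitive case, Theorem~\ref{thm:primitive-local-factorization}, and replaces the single character packet by the divisor sum of Proposition~\ref{prop:nu-imprimitive}. The first step is the pseudomeasure identity. Theorem~\ref{thm:Iwasawa-pseudomeasure-factorization}, applied to the presentation \(\xi_{\chi_N,N}\), gives \(\IwPs_{\chi_N,N,p}=\Zhalf\starx\nu_{\chi_N,N}^{\times}\). The unit restriction \eqref{eq:nu-imprimitive-unit-exact} then replaces \(\nu_{\chi_N,N}^\times\) by \(\nu_{\chi_f,g}^\times\), which is \eqref{eq:imprimitive-pseudomeasure-factorization}. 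This step needs no further input, because both measures are bounded and the identity is an equality of bounded measures on \(G\).

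The second step is to compute the Mellin branch of \(\nu_{\chi_f,g}^\times\). By \eqref{eq:nu-imprimitive}, this measure is a finite combination of push-forwards \(u_*\nu_{\chi_f,f}^\times\) with \(u\mid g\). Every such \(u\) is a \(p\)-adic unit, so restriction to \(G\) commutes with \(u_*\). For a unit \(u\) and any bounded measure \(\rho\) on \(G\), the character \(x\mapsto\teich(x)^j\ang x^s\) is multiplicative. This gives
\[
 \mathcal M_{u_*\rho,j}(s)=\teich(u)^j\ang u^s\,\mathcal M_{\rho,j}(s),
\]
which is the statement that \(u_*\rho=\delta_u\starx\rho\). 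Summing over \(u\) with the weights \(\mob(g/u)\chi_f(g/u)\) yields
\[
 \mathcal M_{\nu_{\chi_f,g}^\times,j}(s)=C^{\mathrm{Iw}}_{\chi_f,g,j}(s)\,\mathcal M_{\nu_{\chi_f,f}^\times,j}(s).
\]
By \eqref{eq:KL-branch-definition} for the primitive character \(\chi_f\), the last factor equals \(G(\chi_f)L^{(p)}_{\chi_f^{-1},j}(s)\).

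The third step combines the factors. Convolution becomes multiplication of Mellin branches. For the meromorphic factor \(\Zhalf\), this holds through Definition~\ref{def:pseudomeasure-Mellin-definition}: write \(\Zhalf=r_c^{-1}\starx\lambda^\times_{1/2,c}\) and note that branch maps are ring homomorphisms into the fraction field. This gives \eqref{eq:imprimitive-local-factorization}. At an arithmetic point \(n\equiv j\pmod{p-1}\), one has \(\teich(u)^j\ang u^n=u^n\), so the finite factor specializes to \(C^{\mathrm{Iw}}_{\chi_f,g}(n)\) from \eqref{eq:C-imprimitive-n}.

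The only point requiring care is bookkeeping rather than a genuine obstacle. The push-forward must be read on \(G\) as convolution with \(\delta_u\), and the Teichmüller twist must be kept inside \(C^{\mathrm{Iw}}_{\chi_f,g,j}\) rather than in the \(L\)-factor. As a consistency check, one can compare with the moment formula \eqref{eq:nu-imprimitive-moment} after unit restriction.
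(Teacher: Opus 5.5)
Your proposal is correct and follows the paper's own argument. It inserts the unit-restricted identity of Proposition~\ref{prop:nu-imprimitive} into the canonical factorization, then uses that push-forward by a unit $u$ multiplies the $j$-th Mellin branch by $\teich(u)^j\ang u^s$, which becomes $u^n$ at an arithmetic point $n\equiv j\pmod{p-1}$.
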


\begin{proof}
Use Proposition~\ref{prop:nu-imprimitive} in Theorem~\ref{thm:Iwasawa-pseudomeasure-factorization}.  The Mellin transform of $u_*\nu$ is multiplied by $\teich(u)^j\ang u^s$, which gives \eqref{eq:imprimitive-Mellin-factor}.  At a matching integer this multiplier is $u^n$.
\end{proof}

\section{Cyclotomic refinements and applications}\label{sec:fourier-refinement}

\subsection{Pre-Bloch distribution and refinement of presentations}\label{subsec:support-nonuniqueness}

Over an algebraically closed field, the constant root-of-unity term in
Suslin's distribution relation vanishes integrally in the pre-Bloch group.

\begin{theorem}[Integral pre-Bloch distribution relation]\label{thm:Bloch-distribution}
Let $F$ be a characteristic-zero field, let $n\ge2$, and let
$z\in(F^{\mathrm{alg}})^\times$.  In
$\mathcal P(F^{\mathrm{alg}})$, using the extended convention $[1]=0$,
one has
\begin{equation}\label{eq:Bloch-distribution}
 [z^n]
 =n\sum_{\varepsilon^n=1}[\varepsilon z].
\end{equation}
Equivalently, if $w^n=z$, then
\begin{equation}\label{eq:Bloch-support-refinement}
 [z]
 =n\sum_{\varepsilon^n=1}[\varepsilon w].
\end{equation}
Consequently both identities remain valid after tensoring with any
commutative $\Z$-algebra $A$.
\end{theorem}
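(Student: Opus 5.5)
The plan is to reduce to a single prime exponent, prove the prime case by averaging the five-term relation over $n$-th roots of unity, and then handle the degenerate terms coming from the convention $[1]=0$. The second form \eqref{eq:Bloch-support-refinement} is just \eqref{eq:Bloch-distribution} applied to $w$, since $w^n=z$. The final sentence is immediate: an identity in the abelian group $\mathcal P(F^{\mathrm{alg}})$ stays an identity after applying $-\otimes_\Z A$. So everything rests on proving \eqref{eq:Bloch-distribution} integrally in $\mathcal P(F^{\mathrm{alg}})$.

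\textbf{Reduction to prime $n$.} I first check that the relation is multiplicative in $n$. Suppose it holds for $a$ and $b$, and put $n=ab$. Apply the $a$-relation to $z^b$ to get $[z^{ab}]=a\sum_{\alpha^a=1}[\alpha z^b]$. Choose $\beta_\alpha$ with $\beta_\alpha^b=\alpha$, so that $\alpha z^b=(\beta_\alpha z)^b$, and apply the $b$-relation to each term:
\[
[(\beta_\alpha z)^b]=b\sum_{\delta^b=1}[\delta\beta_\alpha z].
\]
As $\alpha$ ranges over $\mu_a$ and $\delta$ over $\mu_b$, the products $\delta\beta_\alpha$ run exactly once over $\mu_{ab}$. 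This gives the relation for $ab$, so it suffices to treat a prime $n=\ell$.

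\textbf{The prime case.} For $x\in F^{\mathrm{alg}}\setminus\mu_\ell$, define
\[
\psi(x)=[x^\ell]-\ell\sum_{\varepsilon^\ell=1}[\varepsilon x]\in\mathcal P(F^{\mathrm{alg}}).
\]
The goal is $\psi\equiv0$. I sum the five-term relation over all pairs $(\varepsilon x,\varepsilon' y)$ with $\varepsilon,\varepsilon'\in\mu_\ell$, and compare the result with the five-term relation at $(x^\ell,y^\ell)$.

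The point of this sum is that the last two arguments of the five-term relation are multiplicatively compatible with the norm from $\mu_\ell$. Concretely, $\prod_\varepsilon(1-\varepsilon x)=1-x^\ell$, and $\prod_\varepsilon(1-\varepsilon^{-1}x^{-1})=1-x^{-\ell}$. Regrouping the summed relation by the values of these arguments should yield a relation of the form
\[
\psi(x)-\psi(y)+\psi(y/x)-\psi\!\left(\tfrac{1-x^{-1}}{1-y^{-1}}\right)+\psi\!\left(\tfrac{1-x}{1-y}\right)=0,
\]
possibly up to explicitly controlled reindexings of the last two terms.

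\textbf{Main obstacle.} The hard step is this regrouping. For fixed $\varepsilon,\varepsilon'$, the arguments $\frac{1-\varepsilon x}{1-\varepsilon' y}$ are not simply $\ell$-th-root translates of $\frac{1-x}{1-y}$. So the sum over $\ell^2$ pairs does not collapse term by term. I would first try the route of Suslin's and Dupont--Sah's proofs. One introduces the auxiliary function $[x]\mapsto\sum_\varepsilon[\varepsilon x]$ and shows, using the five-term relation together with $[x]+[x^{-1}]$ and $[x]+[1-x]$ type identities, that $\psi$ satisfies enough functional equations to make it constant on a Zariski-dense set. That constant is then evaluated by degenerating $x\to0$, where every term of $\psi$ is forced to $[0]$-type boundary values.

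\textbf{Degenerate terms and torsion.} The convention $[1]=0$ is needed exactly when some $\varepsilon x=1$, i.e.\ when $x\in\mu_\ell$. I would handle this case separately by a specialization argument inside the same summed relation. The remaining task is to show that the constant is $0$ integrally, not merely torsion. This is where algebraic closure enters: the constant would be a torsion element of the form $\sum c_\zeta[\zeta]$, and divisibility of $\mu(F^{\mathrm{alg}})$ lets me rewrite it via the relation for $\ell^2$, obtained from the multiplicativity step, to conclude that it vanishes.
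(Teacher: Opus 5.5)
\textbf{There is a genuine gap: the prime case, which is the whole content of the theorem, is never proved.}

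Your reduction to a prime exponent is correct, since the products $\delta\beta_\alpha$ do run once over $\mu_{ab}$. The passage to \eqref{eq:Bloch-support-refinement} and the tensoring step are also fine.

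For the prime case, you observe yourself that summing the five-term relation over the pairs $(\varepsilon x,\varepsilon' y)$ does not collapse in the last two slots. The fallback you propose is not available in $\mathcal P(F^{\mathrm{alg}})$:
\begin{itemize}
\item ``Constant on a Zariski-dense set'' and ``degenerating $x\to0$'' belong to an analytic proof for the function $\Li_2$. The pre-Bloch group is an abstract group given by generators and relations. It has no topology and no symbol $[0]$, and you construct no specialization homomorphism.
\item Even a genuine five-term functional equation for $\psi$ would only show that $[x]\mapsto\psi(x)$ respects the defining relations, i.e.\ induces an endomorphism. It would not show that $\psi$ is constant.
\item Your ``specialization argument'' for $x\in\mu_\ell$ has the same problem.
\end{itemize}
So the identity $[z^n]=n\sum_{\varepsilon^n=1}[\varepsilon z]-k_n$, with constant $k_n=n\sum_{\varepsilon^n=1}[\varepsilon]$, is never established. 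The paper does not prove this from scratch either. It is Suslin's integral distribution relation, which the paper quotes from \cite[Rem.~5.1, p.~198]{Suslin}.

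\textbf{The vanishing of the constant also lacks its essential input.} The paper kills $k_n$ using that $\mathcal P(F^{\mathrm{alg}})$ is uniquely divisible. This is again quoted from Suslin's remark and does not follow from divisibility of $\mu(F^{\mathrm{alg}})$ alone. Torsion-freeness together with $2([u]+[u^{-1}])=0$ gives $[\varepsilon^{-1}]=-[\varepsilon]$ and $[-1]=0$. Pairing $\varepsilon$ with $\varepsilon^{-1}$ then makes $\sum_{\varepsilon^n=1}[\varepsilon]$ vanish.

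Your appeal to ``the relation for $\ell^2$'' does not force vanishing. If you track constants, your multiplicativity computation gives $k_{ab}=k_a+a^2k_b$. For odd $\ell$, the same pairing shows that $k_\ell$ and $k_{\ell^2}$ are both $2$-torsion. Since $1+\ell^2$ is even, $k_{\ell^2}=(1+\ell^2)k_\ell$ only yields $k_{\ell^2}=0$ and says nothing about $k_\ell$.

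Mixing with the prime $2$ would work without torsion-freeness:
\begin{itemize}
\item The relation $2([u]+[u^{-1}])=0$ gives $k_4=0$ and $2k_2=0$. Since $k_4=5k_2$, this forces $k_2=0$.
\item Computing $k_{2\ell}$ both ways gives $3k_\ell=(\ell^2-1)k_2=0$, hence $k_\ell=0$ because $2k_\ell=0$.
\end{itemize}
But this bootstrapping only applies once Suslin's relation with its constant term is in hand, which is exactly the missing step.
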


\begin{proof}
Suslin's integral distribution relation is
\cite[Rem.~5.1, p.~198]{Suslin}
\[
 [z^n]
 =
 n\left(
   \sum_{\varepsilon^n=1}[\varepsilon z]
   -\sum_{\varepsilon^n=1}[\varepsilon]
 \right),
\]
with the standard extended convention $[1]=0$.  Since
$F^{\mathrm{alg}}$ is algebraically closed of characteristic zero, the
same remark proves that $\mathcal P(F^{\mathrm{alg}})$ is uniquely
divisible, hence torsion-free.  By \cite[Lem.~1.2, p.~181]{Suslin}, the
element $[u]+[u^{-1}]$ is $2$-torsion, and therefore
$[u^{-1}]=-[u]$ in $\mathcal P(F^{\mathrm{alg}})$.  Pair every
non-self-inverse $n$-th root $\varepsilon$ with
$\varepsilon^{-1}$.  The remaining roots are $1$, whose symbol is zero
by convention, and possibly $-1$; torsion-freeness also gives $[-1]=0$.
Thus the constant sum vanishes integrally, proving
\eqref{eq:Bloch-distribution}.  Over $\C$, the calculation
\[
 \sum_{\varepsilon^n=1}\Li_2(\varepsilon z)
 =\sum_{r\ge1}\frac{z^r}{r^2}
   \sum_{\varepsilon^n=1}\varepsilon^r
 =\frac1n\Li_2(z^n)
\]
checks the normalization of the factor $n$.  Applying
\eqref{eq:Bloch-distribution} to $w$ with $w^n=z$ gives
\eqref{eq:Bloch-support-refinement}.
\end{proof}

\begin{corollary}\label{cor:p-adic-Bloch-distribution}
For every prime $p$, the identities
\eqref{eq:Bloch-distribution} and
\eqref{eq:Bloch-support-refinement} hold in
$\mathcal P(F^{\mathrm{alg}})\otimes\Zp$.  If the original support has
order prime to $p$ and $p\nmid n$, every refined root also has order prime
to $p$.
\end{corollary}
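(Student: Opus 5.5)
The plan is to deduce both assertions from Theorem~\ref{thm:Bloch-distribution}: tensor with \(\Zp\), then check the statement about orders of roots directly.

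For the first assertion, Theorem~\ref{thm:Bloch-distribution} gives \eqref{eq:Bloch-distribution} and \eqref{eq:Bloch-support-refinement} as integral identities in \(\mathcal P(F^{\mathrm{alg}})\). It also states that they survive tensoring with any commutative \(\Z\)-algebra \(A\). I will take \(A=\Zp\). Tensoring is a functor, so an equality of elements in \(\mathcal P(F^{\mathrm{alg}})\) maps to an equality of their images \(x\otimes1\) in \(\mathcal P(F^{\mathrm{alg}})\otimes\Zp\). The integer \(n\) acts on both sides through the \(\Z\)-module structure, so the factor \(n\) is carried along unchanged. No flatness or torsion argument is needed here, because the identities already hold integrally before tensoring. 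I will also note that \(\Zp\) is torsion-free over \(\Z\), hence flat, so the relation is not just formal. This matters if one later wants to compare with the Bloch subgroup via Lemma~\ref{lem:admissible-class-map}.

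For the second assertion, I will assume the support root \(z\) has order \(m\) with \(p\nmid m\), and that \(p\nmid n\). A refined root \(u\) satisfies \(u^n=z\), so \(u^{nm}=z^m=1\). Hence \(\ord(u)\) divides \(nm\), which is prime to \(p\), and so \(\ord(u)\) is prime to \(p\). The same argument covers the roots \(\varepsilon w\) in \eqref{eq:Bloch-support-refinement}. Each \(\varepsilon\) has order dividing \(n\) and \(w\) has order dividing \(nm\), so \(\varepsilon w\) has order dividing \(nm\). For \eqref{eq:Bloch-distribution} the roots \(\varepsilon z\) likewise have order dividing \(\operatorname{lcm}(n,m)\), which is prime to \(p\).

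There is one bookkeeping point, which I expect to be the only real obstacle. The refined roots may equal \(1\) (for instance \(\varepsilon w=1\)), and such symbols vanish by the convention \([1]=0\) in Theorem~\ref{thm:Bloch-distribution}. Every root that actually remains therefore lies in \(\mu_{\mathrm{adm}}\) for the flat ring \(\Zp\), since its order is a \(p\)-adic unit. It is also a Teichm\"uller root in a finite unramified extension. This is the admissibility that later refinement statements will need.
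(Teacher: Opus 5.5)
Your proposal is correct and follows the paper's proof: tensor the integral identity of Theorem~\ref{thm:Bloch-distribution} with $\Zp$, then bound the order of a refined root. The differences are cosmetic. You use $u^{nm}=1$ to get $\ord(u)\mid nm$, whereas the paper uses $\ord(z)=\ord(u^n)=t/(t,n)$ and argues by contradiction. Also, your worry about $\varepsilon w=1$ is moot, since that would force $z=1$.
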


\begin{proof}
Tensor the integral identity with $\Zp$.  If $u=\varepsilon w$ is a
refined root and $t=\ord(u)$, then
\[
 \ord(z)=\ord(u^n)=\frac{t}{(t,n)}.
\]
If $p\mid t$ while $p\nmid n$, the right side would be divisible by $p$,
contrary to the hypothesis on the original support.
\end{proof}

\begin{remark}\label{rem:Bloch-distribution-torsion}
The pre-Bloch distribution identity is integral for every prime.  The
condition $p>3$ enters only through the later GSWZ and local $K_3$
arguments, while $p\nmid n$ ensures that refined roots remain of order
prime to $p$.
\end{remark}

For an odd integer \(n>1\) prime to \(p\), and a prime-to-\(p\)
root of unity \(z\) with \(z^n\ne1\), define
\begin{equation}\label{eq:Delta-n-z-definition}
 \Delta_{n,z}
 =
 [z^n]_{\mathrm{cyc}}
 -n\sum_{\varepsilon^n=1}[\varepsilon z]_{\mathrm{cyc}}.
\end{equation}

\begin{proposition}
\label{prop:analytic-Bloch-distribution}
For \(n\) and \(z\) as above, one has
\begin{align}
 \nu_{\Delta_{n,z}}^\times
 &=r_n\starx\nu_{z^n}^\times,
 \label{eq:distribution-logarithmic-measure}\\
 \IwPs_{\Delta_{n,z},p}
 &=N_{[z^n]_{\mathrm{cyc}},n}
 =\lambda_{1/2,n}^{\times}\starx\nu_{z^n}^{\times}
 \in\Iw(G).
 \label{eq:distribution-bounded-remainder}
\end{align}
\end{proposition}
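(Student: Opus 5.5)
The plan is to prove \eqref{eq:distribution-logarithmic-measure} at the level of Amice transforms of the full distributions on \(\Zp\), then restrict to \(G\), and finally feed it into the canonical factorization. Since \(\nu_\xi\) is linear in \(\xi\),
\[
 \nu_{\Delta_{n,z}}=\nu_{z^n}-n\sum_{\varepsilon^n=1}\nu_{\varepsilon z}.
\]
Its Amice transform is \(C_{z^n}(T)-n\sum_\varepsilon C_{\varepsilon z}(T)\). The roots \(\varepsilon z\) satisfy \((\varepsilon z)^n=z^n\ne1\), so each is \(\ne1\). Since \(n\) is prime to \(p\), they are prime-to-\(p\) roots by Corollary~\ref{cor:p-adic-Bloch-distribution}, and every \(C\) is defined. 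To compare, I would apply \(\Theta=(1+T)\,d/dT\), as in the proof of Theorem~\ref{thm:nu-unit-measure}. Write \(q=1+T\). Then
\[
 \Theta C_w=\frac{1}{1-wq}-1,
\]
and the cyclotomic partial-fraction identity gives
\[
 \sum_{\varepsilon^n=1}\frac1{1-\varepsilon zq}=\frac{n}{1-z^nq^n}.
\]
Hence
\[
 \Theta\Ami_{\nu_{\Delta_{n,z}}}=\frac1{1-z^nq}-n\cdot\frac{n}{1-z^nq^n}+(n^2-1).
\]

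Next I compare this with \(r_n\starx\nu_{z^n}=\nu_{z^n}-n\,n_*\nu_{z^n}\). By \eqref{eq:Amice-pushforward}, its transform is \(C_{z^n}(T)-nC_{z^n}(q^n-1)\). Applying \(\Theta\) to the second term gives
\[
 \Theta\bigl[C_{z^n}(q^n-1)\bigr]=n\Bigl(\frac1{1-z^nq^n}-1\Bigr).
\]
So \(\Theta\) of the whole transform is
\[
 \frac1{1-z^nq}-1-\frac{n^2}{1-z^nq^n}+n^2,
\]
which matches the expression above. Both transforms vanish at \(T=0\): this holds for each \(C_w\), and \(C_{z^n}(q^n-1)\) also vanishes at \(q=1\). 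Therefore the full distributions agree. This is essentially the computation of Proposition~\ref{prop:nu-imprimitive}.

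To pass to \(G\), I restrict to \(\Zp^\times\). Because \(n\) is a unit, \(\one_G\) commutes with \(n_*\), or one can use \eqref{eq:unit-convolution-restriction}. This yields \(\nu^\times_{\Delta_{n,z}}=r_n\starx\nu^\times_{z^n}\), which is \eqref{eq:distribution-logarithmic-measure}. If I wanted to avoid \(\Theta\)-injectivity subtleties, an alternative route is to compare unit moments. Proposition~\ref{prop:nu-unit-positive-moments} together with the \(\Li_{1-k,p}\) distribution relation give the factor \(1-n^{k+1}\), and Lemma~\ref{lem:unit-moment-rigidity} then finishes.

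For \eqref{eq:distribution-bounded-remainder}, I use Theorem~\ref{thm:Iwasawa-pseudomeasure-factorization}, which gives \(\IwPs_{\Delta_{n,z},p}=\Zhalf\starx r_n\starx\nu^\times_{z^n}\). The auxiliary integer \(n\) is odd, greater than \(1\), and prime to \(p\), so it is admissible, and by Definition~\ref{def:half-zeta-pseudomeasure}, \(\Zhalf=r_n^{-1}\starx\lambda^\times_{1/2,n}\). Commutativity of \(\starx\) then cancels \(r_n\), leaving \(\lambda^\times_{1/2,n}\starx\nu^\times_{z^n}=N_{[z^n]_{\mathrm{cyc}},n}\). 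This lies in \(\Iw(G)\) because both factors are integral, by Theorems~\ref{thm:integral-half-measure} and~\ref{thm:nu-unit-measure}.

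The main obstacle is the bookkeeping of the constant terms \(-1\) in \(\Theta C_w\): the sum over \(n\) roots contributes \(n^2\), and this has to match the \(n^2\) coming from the chain-rule factor in the pushforward.
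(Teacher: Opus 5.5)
Your proposal is correct and follows essentially the paper's route: Amice transforms, the cyclotomic identity $\prod_{\varepsilon^n=1}(1-\varepsilon zq)=1-z^nq^n$, the push-forward formula, restriction to $G$, and then the canonical factorization combined with $r_n\starx\Zhalf=\lambda_{1/2,n}^{\times}$. The only difference is cosmetic: the paper takes logarithms directly to obtain $\sum_{\varepsilon}C_{\varepsilon z}(T)=C_{z^n}\bigl((1+T)^n-1\bigr)$, i.e.\ $\sum_\varepsilon\nu_{\varepsilon z}=n_*\nu_{z^n}$, which avoids the $\Theta$-derivative bookkeeping of the constant terms $n^2-1$ that you carry out.
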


\begin{proof}
With \(q=1+T\), the product identity
\[
 \prod_{\varepsilon^n=1}(1-\varepsilon zq)=1-z^nq^n
\]
and the normalization \(C_u(0)=0\) give an identity of Amice transforms
\[
 \sum_{\varepsilon^n=1}C_{\varepsilon z}(T)
 =
 C_{z^n}\bigl((1+T)^n-1\bigr).
\]
By \eqref{eq:Amice-pushforward}, this is the distribution identity
\[
 \sum_{\varepsilon^n=1}\nu_{\varepsilon z}
 =n_*\nu_{z^n},
\]
where \(n_*\) is multiplicative push-forward.  Therefore
\[
 \nu_{\Delta_{n,z}}
 =
 \nu_{z^n}-n\,n_*\nu_{z^n}
 =
 (\delta_1-n\delta_n)\starx\nu_{z^n}
 =
 r_n\starx\nu_{z^n}.
\]
Since \(n\in\Zp^\times\), restriction to \(G\) commutes with \(n_*\),
which proves \eqref{eq:distribution-logarithmic-measure}.  Now use
\[
 r_n\starx\Zhalf=\lambda_{1/2,n}^{\times}
\]
and Theorem~\ref{thm:Iwasawa-pseudomeasure-factorization}:
\begin{align*}
 \IwPs_{\Delta_{n,z},p}
 &=
 \Zhalf\starx r_n\starx\nu_{z^n}^{\times}\\
 &=
 \lambda_{1/2,n}^{\times}\starx\nu_{z^n}^{\times}
 =
 N_{[z^n]_{\mathrm{cyc}},n}.
\end{align*}
The last measure is integral by
Theorem~\ref{thm:regularized-convolution}.
\end{proof}

\begin{corollary}\label{cor:support-not-intrinsic}
Let a nonzero cyclotomic Bloch class be represented by a finite sum
$\xi\in\mathcal P(F^{\mathrm{alg}})\otimes A$ with $\partial\xi=0$, where
either $A=\Q$, or $A=\Zp$.  In the $\Zp$ case assume that every root in
the original support has order prime to $p$, and take $p\nmid n$.  For
each term choose $w^n=z$ and make the explicit replacement
\begin{equation}\label{eq:explicit-support-refinement-map}
 a_z[z]\longmapsto
 n a_z\sum_{\varepsilon^n=1}[\varepsilon w].
\end{equation}
The resulting sum is another representative of the same class.  Define
the least common support order by
\[
 M(\xi)=\operatorname{lcm}\{\ord(z):a_z\ne0\}.
\]
Rationally, the replacement can enlarge $M(\xi)$ by any prescribed finite
product of primes.  Over $\Zp$, it can enlarge $M(\xi)$ by any prescribed
finite product of primes different from $p$.
\end{corollary}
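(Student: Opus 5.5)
The proof has two parts: showing that the replacement preserves the class, and showing that it can enlarge $M(\xi)$ as claimed.

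\emph{Class preservation.} Apply Theorem~\ref{thm:Bloch-distribution} in the form \eqref{eq:Bloch-support-refinement} to each term: for $w^n=z$ one has $[z]=n\sum_{\varepsilon^n=1}[\varepsilon w]$ in $\mathcal P(F^{\mathrm{alg}})$, hence in $\mathcal P(F^{\mathrm{alg}})\otimes A$ for $A=\Q$ or $A=\Zp$. Multiplying by $a_z$ and summing over the support shows that the replaced sum equals $\xi$ already in the pre-Bloch group tensored with $A$. It therefore has the same boundary, namely $\partial\xi=0$, and defines the same class. In the $\Zp$ case with $p\nmid n$, Corollary~\ref{cor:p-adic-Bloch-distribution} shows that every refined root $\varepsilon w$ again has order prime to $p$. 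So the new representative is admissible in the sense of Lemma~\ref{lem:admissible-class-map}, and the class maps agree.

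\emph{Enlarging $M(\xi)$.} Given a prescribed finite product $n=\ell_1\cdots\ell_k$ of primes (all different from $p$ in the $\Zp$ case), I would perform the replacement with this $n$. In the $\Zp$ case I would first note that $n$ may be replaced by any product prime to $p$. The task is to show that the new support has least common order divisible by $M(\xi)\cdot n$, or at least by each $\ell_i$ to a higher power than in $M(\xi)$. The plan is to choose $w$ cleverly: among the $n$-th roots of $z$, pick $w$ whose order is $n\cdot\ord(z)$. Such a $w$ exists, since the $n$-th roots of $z$ form a coset $w\mu_n$ and their orders are computed via the relation $\ord(z)=t/(t,n)$ from Corollary~\ref{cor:p-adic-Bloch-distribution}. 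Then $\varepsilon=1$ gives a support root of order $n\ord(z)$, and every new root has order dividing $n\ord(z)$. Hence, provided the coefficient $na_z$ of $[w]$ survives, the new least common order is exactly $n M(\xi)$.

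\emph{Main obstacle.} The delicate step is that after summing over all terms, distinct $z$ with $a_z\ne0$ could produce coinciding roots $\varepsilon w=\varepsilon'w'$ whose coefficients cancel. This cannot happen: $(\varepsilon w)^n=z$ determines $z$, so the refined supports of distinct $z$ are disjoint. Each new coefficient is $na_z\ne0$, using that $A$ is torsion-free for $A=\Q$ and for $A=\Zp$. Consequently the displayed replacement enlarges $M(\xi)$ multiplicatively by the prescribed product, which proves both the rational and the $\Zp$ assertions.
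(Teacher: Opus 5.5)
Your proposal is correct, but it takes a different route from the paper's proof.

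\textbf{The paper's route.} The paper works one prime at a time. For a fixed prime $\ell$ it picks a support root $z$ of order $r$ with $v_\ell(r)=v_\ell(M)$ and refines with $n=\ell$. It then splits into two cases:
\begin{itemize}
\item if $\ell\nmid r$, then $\varepsilon w$ has order $\ell r$ for every $\varepsilon\ne1$;
\item if $\ell\mid r$, then every $\ell$-th root of $z$ has order $\ell r$.
\end{itemize}
This only shows that $v_\ell$ of the new support order is at least $v_\ell(M)+1$. The prescribed product is then reached by iterating the replacement.

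\textbf{Your route.} You perform a single replacement with the full composite $n$. Every $n$-th root of $z$ has order dividing $n\ord(z)$, the fibres over distinct $z$ are disjoint, and $na_z\ne0$. From this you conclude that the new least common support order is exactly $nM(\xi)$. This is sharper than the paper's conclusion: you get an exact value in one step rather than a lower bound after iteration.

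\textbf{The step that needs an argument.} For composite $n$ your route needs an $n$-th root of $z$ of exact order $n\ord(z)$. That fact is true, but the reason you give does not prove it. The relation $\ord(z)=t/(t,n)$ only constrains which orders $t$ can occur; for $n=2$ and $\ord(z)=3$, both $t=3$ and $t=6$ satisfy it. Either of the following closes the step:
\begin{itemize}
\item The $n$-th power map $\mu_{nr}\to\mu_r$ sends generators to generators, and it hits every generator of $\mu_r$ because $(\Z/nr\Z)^\times\to(\Z/r\Z)^\times$ is surjective.
\item Induct on the prime factors of $n$, using exactly the paper's two-case computation.
\end{itemize}

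\textbf{A minor point.} The choice of $w$ is immaterial: $\{\varepsilon w:\varepsilon^n=1\}$ is the whole fibre over $z$ whichever $w$ you pick. So choosing $w$ cleverly buys nothing. What you need is only that this fibre contains an element of order $n\ord(z)$.
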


\begin{proof}
First combine equal roots in the support, so every displayed coefficient is
nonzero.  Apply \eqref{eq:explicit-support-refinement-map} to each
remaining root, using Theorem~\ref{thm:Bloch-distribution}.  The resulting
sum is equal to $\xi$, so it has the same zero boundary and represents the
same Bloch class.  In the $\Zp$ case,
Corollary~\ref{cor:p-adic-Bloch-distribution} proves that every new root
remains prime to $p$.  Because
the original support excludes $1$, no term $\varepsilon w$ equals $1$:
otherwise $z=(\varepsilon w)^n=1$.  For distinct original roots the sets
of $n$-th roots are disjoint, since raising an element to the $n$-th power
recovers the original root.  Hence no cancellation between different
roots in the support is introduced by the refinement.

Let $M=M(\xi)$ be the least common order of the support, and fix a
prime $\ell$ (with $\ell\ne p$ in the $\Zp$ case), and choose a support
root $z$ of exact order $r$ for which
$v_\ell(r)=v_\ell(M)$.  Apply the refinement with $n=\ell$.  If
$\ell\nmid r$, choose the unique $w\in\mu_r$ with $w^\ell=z$; then every
nontrivial $\varepsilon\in\mu_\ell$ gives a root $\varepsilon w$ of exact
order $\ell r$.  If $\ell\mid r$, choose any solution
$w\in\mu_{\ell r}$ of $w^\ell=z$.  Such solutions exist because the
$\ell$-power map $\mu_{\ell r}\to\mu_r$ is surjective, and every such
$w$ has exact order $\ell r$: from
\[
 \ord(w^\ell)
 =\frac{\ord(w)}{(\ord(w),\ell)}
 =r
\]
and $\ord(w)\mid\ell r$, comparison of $\ell$-adic valuations and of
the prime-to-$\ell$ parts forces $\ord(w)=\ell r$.
In either case the refined support contains a nonzero term of order
$\ell r$, so the new least common support order has
$\ell$-adic valuation at least $v_\ell(M)+1$.  Repeating this step adds
any prescribed multiplicity of $\ell$; repeating it for the finitely
many prescribed primes proves both assertions.
\end{proof}

\begin{corollary}
\label{cor:refinement-bounded-change}
Assume \(K/\Qp\) is unramified and contains the original and refined
prime-to-\(p\) support.  Let \(\xi^{(n)}\) be obtained from \(\xi\) by
any finite sequence of the replacements
\eqref{eq:explicit-support-refinement-map}, with every refinement degree
prime to \(p\).  Then
\begin{equation}\label{eq:refinement-bounded-pseudomeasure-change}
 \IwPs_{\xi^{(n)},p}-\IwPs_{\xi,p}\in\IwK(G).
\end{equation}
In particular, the support order can be enlarged as in
Corollary~\ref{cor:support-not-intrinsic} without changing the
principal-part class or the exceptional residue.

If a simultaneous refinement uses the same odd degree \(n\) for every
term, so that
\[
 \xi=\sum_z a_z[z]_{\mathrm{cyc}},
 \qquad
 \xi^{(n)}
 =\sum_zna_z\sum_{\varepsilon^n=1}
 [\varepsilon w_z]_{\mathrm{cyc}},
 \qquad w_z^n=z,
\]
then the bounded difference is explicitly
\begin{equation}\label{eq:explicit-refinement-bounded-remainder}
 \IwPs_{\xi^{(n)},p}-\IwPs_{\xi,p}
 =
 -\sum_z a_zN_{[z]_{\mathrm{cyc}},n}.
\end{equation}
\end{corollary}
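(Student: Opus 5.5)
The bounded-difference statement \eqref{eq:refinement-bounded-pseudomeasure-change} will come from Theorem~\ref{thm:Bloch-principal-part}; the explicit remainder \eqref{eq:explicit-refinement-bounded-remainder} will come from Proposition~\ref{prop:analytic-Bloch-distribution}, which isolates each single refinement step as a bounded measure.

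\emph{Step 1: one replacement is bounded.} Consider a single replacement \eqref{eq:explicit-support-refinement-map} of degree $n$ prime to $p$, applied to one term $a_z[z]$. By Corollary~\ref{cor:p-adic-Bloch-distribution}, the new roots $\varepsilon w$ are again prime-to-$p$ roots different from $1$. By hypothesis they lie in $K$, so both presentations belong to $\CycPres(K;K)$. Theorem~\ref{thm:Bloch-distribution}, tensored with $K$ and composed with the admissible class map of Lemma~\ref{lem:admissible-class-map}, shows that the two presentations have the same class in $B(K)_K$.

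\emph{Step 2: iterate.} Equation \eqref{eq:equal-class-bounded-difference} now says that the difference of the two pseudomeasures lies in $\IwK(G)$. Iterating over the finite sequence of replacements and using that $\IwK(G)$ is closed under addition gives \eqref{eq:refinement-bounded-pseudomeasure-change}. The principal parts agree by \eqref{eq:Iwasawa-principal-part-class-formula}. The exceptional residues agree by Theorem~\ref{thm:exceptional-residue} together with \eqref{eq:depleted-regulator-class-descent}, since $\Regdep$ depends only on the class.

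\emph{Step 3: the explicit remainder.} Here the degree $n$ is odd (so it is an admissible regularizer) and prime to $p$. For each $z$ in the support, put $u=w_z$. Then $u^n=z\ne1$, so $\Delta_{n,u}$ is defined, and
\[
 \Delta_{n,u}=[z]_{\mathrm{cyc}}-n\sum_{\varepsilon^n=1}[\varepsilon w_z]_{\mathrm{cyc}}.
\]
Summing over the support with coefficients $a_z$ gives $\xi-\xi^{(n)}=\sum_z a_z\Delta_{n,w_z}$. The pseudomeasure construction is linear in the presentation (Theorem~\ref{thm:Iwasawa-pseudomeasure-factorization}). Applying \eqref{eq:distribution-bounded-remainder} to each term therefore gives
\[
 \IwPs_{\xi,p}-\IwPs_{\xi^{(n)},p}=\sum_z a_zN_{[z]_{\mathrm{cyc}},n},
\]
which is \eqref{eq:explicit-refinement-bounded-remainder} up to sign.

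\emph{Main obstacle.} The substantive issue is bookkeeping in Step~3 rather than any analytic argument. One must check that the root $-1\in\mu_n$ cannot cause a problem, and indeed it cannot, since $n$ is odd. One must also check that the hypotheses of Proposition~\ref{prop:analytic-Bloch-distribution} hold for $u=w_z$: $w_z$ is a prime-to-$p$ root of unity, which holds because $p\nmid n$. Finally, the identification of the refined presentation with the formal sum must be made without combining coefficients, which is harmless because the construction is linear.
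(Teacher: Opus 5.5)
Your proposal is correct and follows the paper's proof essentially verbatim. For the bounded difference, you use class preservation under the integral distribution relation together with \eqref{eq:equal-class-bounded-difference}; for the explicit remainder, you use $\xi^{(n)}-\xi=-\sum_z a_z\Delta_{n,w_z}$ together with Proposition~\ref{prop:analytic-Bloch-distribution}. One wording point: your final display is exactly \eqref{eq:explicit-refinement-bounded-remainder} with both sides negated, so it is the statement itself, not merely the statement ``up to sign.''
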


\begin{proof}
The integral pre-Bloch distribution relation shows that every refinement
has the same coefficient-extended Bloch class as the original
presentation.  Equation~\eqref{eq:equal-class-bounded-difference} now
gives \eqref{eq:refinement-bounded-pseudomeasure-change}.  Equality of
residues also follows directly from
\eqref{eq:Iwasawa-principal-part-class-formula}.  For an odd simultaneous
refinement, the difference contributed by the term \(a_z[z]\) is
\(-a_z\Delta_{n,w_z}\).  Apply
\eqref{eq:distribution-bounded-remainder} with \(w_z^n=z\) and sum over
the support to obtain
\eqref{eq:explicit-refinement-bounded-remainder}.
\end{proof}

\subsection{Finite-jet interpolation and rigidity}
\label{subsec:refinement-rigidity}

The preceding formula treats one distribution relation at a time.  Finite
linear combinations of refinements interpolate the component multipliers
simultaneously.

\begin{definition}[Cyclotomic refinement operators]
\label{def:cyclotomic-refinement-operators}
Let \(m\ge1\) be prime to \(p\).  After passing to a field containing
the \(m\)-th roots of the support, define
\[
 R_m[z]_{\mathrm{cyc}}
 :=
 m\sum_{u^m=z}[u]_{\mathrm{cyc}},
\]
and extend \(R_m\) linearly to finite cyclotomic
presentations.  Thus \(R_1\) is the identity and
\[
 R_mR_n=R_{mn}.
\]
\end{definition}

\begin{proposition}[Exact covariance under cyclotomic refinement]
\label{prop:exact-refinement-covariance}
Let \(p\) be odd, let \(K/\Qp\) be finite unramified with valuation ring
\(\mathcal O_K\), and let
\(\xi\in\CycPres(K;\mathcal O_K)\).  Suppose that its support has
order prime to \(p\).  Let \(m\ge1\) be prime to \(p\), and let \(L/K\)
be a finite unramified extension containing all roots occurring in
\(R_m\xi\).  After scalar extension to \(L\), one has
\begin{align}
 \operatorname{cl}_{L,\mathcal O_K}(R_m\xi)
 &=
 \operatorname{cl}_{L,\mathcal O_K}(\xi),
 \label{eq:refinement-class-covariance}\\
 \nu_{R_m\xi}^{\times}
 &=
 (m\delta_m)\starx\nu_\xi^{\times},
 \label{eq:refinement-nu-covariance}\\
 \IwPs_{R_m\xi,p}
 &=
 (m\delta_m)\starx\IwPs_{\xi,p}.
 \label{eq:refinement-pseudomeasure-covariance}
\end{align}
In particular,
\[
 \IwPs_{R_m\xi-\xi,p}
 =
 (m\delta_m-\delta_1)\starx\IwPs_{\xi,p}.
\]
\end{proposition}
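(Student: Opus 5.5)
The plan is to prove the three identities in order, reducing everything to single-root statements by linearity of \(\xi\mapsto\operatorname{cl}(\xi)\), \(\xi\mapsto\nu_\xi^\times\) and \(\xi\mapsto\IwPs_{\xi,p}\).

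\emph{Class covariance.} For \eqref{eq:refinement-class-covariance} we take a single support root \(z\ne1\) and invoke Theorem~\ref{thm:Bloch-distribution} in the form \eqref{eq:Bloch-support-refinement}: \([z]=m\sum_{u^m=z}[u]\) in \(\mathcal P(F^{\mathrm{alg}})\). By Corollary~\ref{cor:p-adic-Bloch-distribution}, all roots \(u\) have order prime to \(p\), so they lie in \(L\) as Teichm\"uller roots. None of them equals \(1\), since \(u^m=z\ne1\). Lemma~\ref{lem:admissible-class-map} therefore applies with \(M\) a common multiple of the refined orders, which is a unit in \(\mathcal O_K\). There is one point to check: the distribution identity holds in \(\mathcal P(F^{\mathrm{alg}})\), whereas we need it in \(\mathcal P(L)\otimes\mathcal O_K\). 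The simplest route is to compare regulators instead. Both sides lie in \(B(L)_{\mathcal O_K}\), and the map to \(B(L^{\mathrm{alg}})\) has torsion kernel, which is killed after \(\otimes\Q\). We will state the identity after the scalar extension used in the paper (\(K\)-coefficients). If integrality is needed, we instead appeal to Suslin's relation, which already holds over any field containing the \(m\)-th roots, the constant term \(\sum[\varepsilon]\) being annihilated after inverting \(m\) and \(2\).

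\emph{Logarithmic measure.} For \eqref{eq:refinement-nu-covariance} we repeat the Amice computation in the proof of Proposition~\ref{prop:analytic-Bloch-distribution}. With \(q=1+T\) and \(w\) a fixed \(m\)-th root of \(z\), we have \(\prod_{u^m=z}(1-uq)=1-zq^m\). Taking logarithms normalized by \(C_u(0)=0\) gives
\[
\sum_{u^m=z}C_u(T)=C_z\bigl((1+T)^m-1\bigr).
\]
By \eqref{eq:Amice-pushforward}, this says \(\sum_u\nu_u=m_*\nu_z\), so \(\nu_{R_m[z]}=m\,m_*\nu_z=(m\delta_m)\starx\nu_z\). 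Since \(m\in\Zp^\times\), push-forward by \(m\) preserves \(G\) and commutes with \(\one_G\), which gives the unit version. Summing over the support with coefficients \(a_z\) finishes this step. When \(m=1\) everything is trivial.

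\emph{Pseudomeasure.} Then \eqref{eq:refinement-pseudomeasure-covariance} follows from Theorem~\ref{thm:Iwasawa-pseudomeasure-factorization}, using commutativity and associativity of \(\starx\) in \(\Sigma^{-1}\IwK(G)\):
\[
\IwPs_{R_m\xi,p}=\Zhalf\starx(m\delta_m)\starx\nu_\xi^\times=(m\delta_m)\starx\IwPs_{\xi,p}.
\]
This uses that \(\Zhalf\) is unchanged by base change from \(K\) to \(L\), because it is defined by \(K\)-rational measures and base change of \(\IwK(G)\) is injective. The final displayed formula follows by linearity, since \(\IwPs_{\xi,p}\) is linear in \(\xi\).

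\emph{Main obstacle.} The only delicate step is the class identity at the level of \(\mathcal P(L)\) rather than \(\mathcal P(F^{\mathrm{alg}})\), and specifically the vanishing of the constant term \(\sum_{\varepsilon^m=1}[\varepsilon]\). The analytic identities are formal. For the constant term we would pair \(\varepsilon\) with \(\varepsilon^{-1}\) and use that \(2([\varepsilon]+[\varepsilon^{-1}])=0\). The term \([-1]\) does not arise because \(m\) is odd or prime to \(p\); if \(m\) is even, we instead use the fact that \([-1]\) is torsion, which dies after tensoring with the flat ring in which \(2m\) is invertible.
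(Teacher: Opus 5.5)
Your proposal is correct and is essentially the paper's proof: the Amice identity $\sum_{u^m=z}C_u(T)=C_z\bigl((1+T)^m-1\bigr)$ giving $\nu_{R_m\xi}^{\times}=(m\delta_m)\starx\nu_\xi^{\times}$, followed by the factorization $\IwPs_{\xi,p}=\Zhalf\starx\nu_\xi^{\times}$, is exactly how the paper argues. For the class identity the paper uses precisely your fallback. It applies Suslin's relation over $L$, and the constant term $\sum_{\varepsilon^m=1}[\varepsilon]$ vanishes in $\mathcal P(L)\otimes\mathcal O_K$ because $2$ is invertible there, $[\varepsilon^{-1}]=-[\varepsilon]$, and $[-1]$ (which occurs exactly when $m$ is even) is $4$-torsion. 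You should commit to that route, because the regulator/torsion-kernel detour only gives the identity with $K$- rather than $\mathcal O_K$-coefficients. It also rests on a torsion-kernel (Galois descent) claim for $B(L)\to B(L^{\mathrm{alg}})$ that you do not justify.
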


\begin{proof}
The local coefficient ring has \(2\) invertible.  Hence the constant
root-of-unity term in Suslin's distribution relation vanishes after
tensoring with \(\mathcal O_K\): pair every non-self-inverse
\(\varepsilon\in\mu_m\) with \(\varepsilon^{-1}\), use
\([\varepsilon^{-1}]=-[\varepsilon]\), and note that the remaining
symbols \([1]\) and, when present, \([-1]\) vanish.  The distribution
relation over \(L\) therefore gives
\[
 [z]
 =
 m\sum_{u^m=z}[u]
 \qquad\text{in }\mathcal P(L)\otimes_{\Z}\mathcal O_K.
\]
This gives \eqref{eq:refinement-class-covariance}.

For the analytic identity, put \(q=1+T\).  For one support root,
the normalized logarithmic Amice transforms satisfy
\begin{align*}
 \Ami_{\nu_{R_m[z]}}(T)
 &=
 m\sum_{u^m=z}
 \log\frac{1-u}{1-uq}\\
 &=
 m\log
 \frac{\prod_{u^m=z}(1-u)}
      {\prod_{u^m=z}(1-uq)}\\
 &=
 m\log\frac{1-z}{1-zq^m}\\
 &=
 m\,\Ami_{\nu_z}(q^m-1).
\end{align*}
By \eqref{eq:Amice-pushforward}, the last expression is the Amice
transform of \(m\,m_*\nu_z=(m\delta_m)\starx\nu_z\).  Linearity proves
the identity on all of \(\Zp\).  Since \(m\in\Zp^\times\), restriction
to \(G=\Zp^\times\) commutes with \(m_*\), proving
\eqref{eq:refinement-nu-covariance}.  Finally,
Theorem~\ref{thm:Iwasawa-pseudomeasure-factorization} gives
\[
 \IwPs_{R_m\xi,p}
 =
 \Zhalf\starx(m\delta_m)\starx\nu_\xi^\times
 =
 (m\delta_m)\starx\IwPs_{\xi,p},
\]
which is \eqref{eq:refinement-pseudomeasure-covariance}.
\end{proof}

\begin{theorem}[Finite-jet control by refinements]
\label{thm:refinement-interpolation}
Assume that $K/\Qp$ is finite unramified, with valuation ring
$\mathcal O_K$, and that $K$ contains the prime-to-$p$ support of
$\xi\in\CycPres(K;\mathcal O_K)$.  For
\[
 e=0,\ldots,p-2,
 \qquad
 j_e\equiv e-1\pmod{p-1},
\]
center the $j_e$-th branch at $s=e-1$ by
\[
 Y=(1+p)^{s-(e-1)}-1.
\]
Fix $N\ge0$ and elements
\[
 v_{e,q}\in\mathcal O_K,
 \qquad
 0\le e\le p-2,
 \quad
 0\le q\le N,
 \qquad
 v_{0,0}=1.
\]
Then the following hold.
\begin{enumerate}[label=(\roman*)]
\item There exist finitely many integers $m$ prime to $p$, coefficients
$c_m\in\mathcal O_K$, and a finite unramified extension $L/K$ containing
the refined support such that, for
\[
 C=\sum_m c_mR_m,
 \qquad
 \xi'=C\xi,
\]
one has
\begin{equation}\label{eq:refinement-coefficient-sum}
 \sum_m c_m=1
\end{equation}
and
\begin{equation}\label{eq:refinement-same-Bloch-class}
 \operatorname{cl}_{L,\mathcal O_K}(\xi')
 =
 \operatorname{cl}_{L,\mathcal O_K}(\xi).
\end{equation}

\item On every centered component,
\begin{equation}\label{eq:refinement-branch-factorization}
 \mathcal M_{\IwPs_{\xi',p},j_e}(s)
 =
 H_{C,e}(Y)\,
 \mathcal M_{\IwPs_{\xi,p},j_e}(s),
\end{equation}
where
\begin{equation}\label{eq:refinement-branch-multiplier}
 H_{C,e}(Y)
 =
 \sum_m c_m m^e(1+Y)^{\ell_m},
 \qquad
 \ell_m=\frac{\log_p\ang m}{\log_p(1+p)}\in\Zp.
\end{equation}

\item The multiplier jets satisfy
\begin{equation}\label{eq:prescribed-refinement-jets}
 [Y^q]H_{C,e}(Y)=v_{e,q}
 \qquad
 (0\le e\le p-2,\ 0\le q\le N).
\end{equation}
Thus the normalized family $\sum_m c_m=1$ imposes no finite-jet
compatibility condition beyond $H_{C,0}(0)=1$.
\end{enumerate}
\end{theorem}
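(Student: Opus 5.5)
The plan is to reduce everything to the exact covariance of Proposition~\ref{prop:exact-refinement-covariance}, and then to solve a finite linear system over \(\mathcal O_K\) whose reduction modulo \(p\) is visibly invertible.

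For (i), fix any finite set of integers \(m\) prime to \(p\) and coefficients \(c_m\in\mathcal O_K\) with \(\sum_m c_m=1\). Let \(L/K\) be the compositum of the fields obtained by adjoining all \(m\)-th roots of the support. Since every \(m\) is prime to \(p\) and the support has prime-to-\(p\) order, every refined root has prime-to-\(p\) order by Corollary~\ref{cor:p-adic-Bloch-distribution}, so \(L/K\) is unramified. By \eqref{eq:refinement-class-covariance} each \(R_m\xi\) has the class of \(\xi\). Linearity of \(\operatorname{cl}\) then gives
\[
\operatorname{cl}(C\xi)=\Bigl(\sum_m c_m\Bigr)\operatorname{cl}(\xi)=\operatorname{cl}(\xi),
\]
which is \eqref{eq:refinement-same-Bloch-class}.

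For (ii), linearity of \(\xi\mapsto\IwPs_{\xi,p}\) together with \eqref{eq:refinement-pseudomeasure-covariance} gives
\[
\IwPs_{C\xi,p}=\Bigl(\sum_m c_m m\delta_m\Bigr)\starx\IwPs_{\xi,p}.
\]
Since the Mellin transform turns convolution into multiplication branchwise, it remains to compute the Mellin transform of \(m\delta_m\) on the branch \(j_e\). That transform is \(m\teich(m)^{j_e}\ang m^{s}\). Write \(s=(e-1)+t\) with \((1+p)^t=1+Y\). Then \(\teich(m)^{e-1}\ang m^{e-1}=m^{e-1}\) and \(\ang m^{t}=(1+Y)^{\ell_m}\), so the multiplier is \(m^e(1+Y)^{\ell_m}\). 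This gives \eqref{eq:refinement-branch-multiplier}.

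For (iii), expanding \((1+Y)^{\ell_m}\) binomially shows that the conditions \eqref{eq:prescribed-refinement-jets} are the \((p-1)(N+1)\) linear equations
\[
\sum_m c_m\, m^e\binom{\ell_m}{q}=v_{e,q},\qquad 0\le e\le p-2,\ 0\le q\le N.
\]
The equation with \((e,q)=(0,0)\) reads \(\sum_m c_m=v_{0,0}=1\). Hence any solution is automatically normalized and satisfies \eqref{eq:refinement-coefficient-sum}; this is why no further compatibility condition arises.

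I will take exactly \((p-1)(N+1)\) integers \(m_{a,k}\), indexed by \(a\in\F_p^\times\) and \(0\le k\le N\). Choose \(r\) with \(p^r>N\). Each \(m_{a,k}\) is chosen by CRT to be a positive integer with \(\teich(m_{a,k})=\teich(a)\) and \(\ang{m_{a,k}}\equiv(1+p)^k\pmod{p^{r+1}}\). Then \(\ell_{m_{a,k}}\in\Zp\), because \(\log_p(1+p)\) has valuation exactly \(1\), and \(\ell_{m_{a,k}}\equiv k\pmod{p^r}\).

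The main point is that the square coefficient matrix lies in \(\mathcal O_K\) and is invertible there. Two facts reduce the matrix modulo \(p\):
\begin{itemize}
\item \(m^e\equiv a^e\pmod p\);
\item for \(q\le N<p^r\), the polynomial \(\binom xq\) maps \(\Zp\) to \(\Zp\), and its value modulo \(p\) depends only on \(x\bmod p^r\), by Lucas' theorem and continuity.
\end{itemize}
Hence modulo \(p\) the matrix is the tensor product of the power-evaluation matrix \((a^e)\) on \(\F_p^\times\) and the Pascal matrix \(\bigl(\binom kq\bigr)_{0\le k,q\le N}\). The first is an invertible Vandermonde matrix on the distinct elements of \(\F_p^\times\). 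The second is unitriangular. So the determinant is a unit, and Cramer's rule gives a unique solution \(c_{a,k}\in\mathcal O_K\) for arbitrary \(v_{e,q}\in\mathcal O_K\).

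The delicate step is this reduction-modulo-\(p\) invertibility: specifically, that \(\binom{\ell_m}{q}\bmod p\) is governed by \(\ell_m\bmod p^r\), and that the chosen \(m\) realize every pair \((\teich,\ \ell\bmod p^r)\). Everything else is bookkeeping via Proposition~\ref{prop:exact-refinement-covariance}.
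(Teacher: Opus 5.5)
Your proposal is correct and follows the paper's proof essentially step for step: Proposition~\ref{prop:exact-refinement-covariance} gives the branch multiplier $m^e(1+Y)^{\ell_m}$, and the jet equations are solved by choosing $m$ with prescribed $(m\bmod p,\ \ell_m\bmod p^r)$, so that the coefficient matrix reduces modulo $p$ to the tensor product of the power-evaluation matrix on $\F_p^\times$ and a unitriangular Pascal matrix. The only difference is cosmetic: you index the columns by $k\in\{0,\ldots,N\}$ rather than by all residues $b\in\{0,\ldots,p^r-1\}$. This gives a smaller square system and avoids padding the targets up to $q\le p^r-1$.
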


\begin{proof}
First derive the component multiplier.
Proposition~\ref{prop:exact-refinement-covariance} and linearity give
\[
 \IwPs_{C\xi,p}
 =
 \left(\sum_m c_m m\delta_m\right)
 \starx\IwPs_{\xi,p}.
\]
On the branch \(j_e=e-1\), the Mellin multiplier contributed by
\(m\delta_m\) is
\begin{align*}
 m\teich(m)^{e-1}\ang m^s
 &=
 m^e\ang m^{\,s-(e-1)}\\
 &=
 m^e(1+Y)^{\ell_m}.
\end{align*}
This gives \eqref{eq:refinement-branch-factorization} and
\eqref{eq:refinement-branch-multiplier}.  In particular,
\begin{equation}\label{eq:refinement-Mahler-coefficients}
 [Y^q]H_{C,e}
 =
 \sum_m c_m m^e\binom{\ell_m}{q}.
\end{equation}

Choose \(r\ge1\) such that \(p^r>N\), and put \(N_r=p^r-1\).
For every
\[
 (a,b)\in
 \F_p^\times\times\{0,\ldots,p^r-1\},
\]
choose a positive integer \(m_{a,b}\), prime to \(p\), such that
\begin{equation}\label{eq:refinement-coordinate-choice}
 m_{a,b}\equiv a\pmod p,
 \qquad
 \ell_{m_{a,b}}\equiv b\pmod{p^r}.
\end{equation}
These choices exist because
\[
 (\Z/p^{r+1}\Z)^\times
 \longrightarrow
 \F_p^\times\times\Z/p^r\Z,
 \qquad
 m\longmapsto
 \bigl(\widebar m,\ell_m\bmod p^r\bigr),
\]
is a bijection.

Consider the square matrix
\[
 E_{(e,q),(a,b)}
 =
 m_{a,b}^e
 \binom{\ell_{m_{a,b}}}{q},
\]
indexed by
\[
 0\le e\le p-2,\quad0\le q\le N_r
\]
in the rows and by \((a,b)\) in the columns.  For \(q<p^r\),
the reduction of \(\binom{x}{q}\) modulo \(p\) depends only on
\(x\bmod p^r\).  Indeed,
\[
 (1+T)^{x+p^r}
 \equiv
 (1+T)^x(1+T^{p^r})
 \pmod p,
\]
so the coefficients below degree \(p^r\) are unchanged.  Therefore
\[
 E\bmod p
 =
 \bigl(a^e\bigr)_{\substack{0\le e\le p-2\\a\in\F_p^\times}}
 \otimes
 \left(\binom bq\right)_{\substack{0\le q\le N_r\\0\le b\le N_r}}.
\]
The first factor is the evaluation matrix of
\(1,X,\ldots,X^{p-2}\) on \(\F_p^\times\), hence is invertible.
The second factor is triangular with diagonal entries \(1\).
Consequently
\[
 \det(E)\in\Zp^\times.
\]

Extend the prescribed targets to \(q\le N_r\), for example by setting
the unused targets equal to zero.  Since \(E\) is invertible over
\(\Zp\), equation \eqref{eq:refinement-Mahler-coefficients} has a
unique solution
\[
 c_{a,b}\in\mathcal O_K.
\]
Set
\[
 C=\sum_{a,b}c_{a,b}R_{m_{a,b}}.
\]
The row \((e,q)=(0,0)\) of \(E\) consists entirely of \(1\)'s.  Thus
its equation is
\[
 \sum_{a,b}c_{a,b}=v_{0,0}=1,
\]
which proves \eqref{eq:refinement-coefficient-sum}.
Proposition~\ref{prop:exact-refinement-covariance} then proves
\eqref{eq:refinement-same-Bloch-class}.  Finally, only finitely many
prime-to-\(p\) roots have been introduced, so they lie in a common
finite unramified extension \(L/K\).
\end{proof}

For a nonzero bounded branch
\(F(Y)=\sum_{n\ge0}a_nY^n\in K[[Y]]_{\bd}\), write
\[
 \mu_{\mathrm{Iw}}(F)=\min_n\valp(a_n),
 \qquad
 \lambda_{\mathrm{Iw}}(F)=\min\{n:\valp(a_n)=\mu_{\mathrm{Iw}}(F)\}.
\]

\begin{corollary}[Independent control of nonexceptional
Weierstrass invariants]
\label{cor:refinement-independent-lambda-control}
Assume the hypotheses of
Theorem~\ref{thm:refinement-interpolation}.  For every nonzero
nonexceptional component \(e\), write
\((\mu_{\mathrm{Iw},e}(\xi),\lambda_{\mathrm{Iw},e}(\xi))\) for its Weierstrass invariants in the
coordinate \(Y\).  Given arbitrary integers
\[
 d_e\ge0,\qquad 1\le e\le p-2,
\]
there is, after a finite unramified base change, one integral
presentation \(\xi'\) of the same base-changed Bloch class such that
simultaneously, on every nonzero nonexceptional component,
\[
 \mu_{\mathrm{Iw},e}(\xi')=\mu_{\mathrm{Iw},e}(\xi),
 \qquad
 \lambda_{\mathrm{Iw},e}(\xi')=\lambda_{\mathrm{Iw},e}(\xi)+d_e.
\]

The construction may simultaneously be required to satisfy
\[
 H_{C,0}(Y)\equiv1\pmod{Y^{M+1}}
\]
for any prescribed \(M\).  Since the exceptional component has at most a
simple pole, this leaves its Laurent coefficients from \(Y^{-1}\)
through \(Y^{M-1}\) unchanged.
\end{corollary}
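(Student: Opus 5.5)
The plan is to apply Theorem~\ref{thm:refinement-interpolation} once, with targets chosen componentwise, and then read off the Weierstrass invariants from the multiplicative factorization \eqref{eq:refinement-branch-factorization}. For a nonexceptional component \(e\in\{1,\dots,p-2\}\), the branch \(\mathcal M_{\IwPs_{\xi,p},j_e}\) lies in \(K[[Y]]_{\bd}\) by Theorem~\ref{thm:unique-exceptional-branch}. Here the coordinate \(Y\) differs from the Iwasawa coordinate \(X\) by an \(\mathcal O\)-algebra automorphism of \(\mathcal O[[X]]\), since \(\gamma\) and \(1+p\) are both generators of \(\Gamma\) and the recentering at \(s=e-1\) is a translation by an element of \(p\mathcal O\). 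Because \(\mu\) and \(\lambda\) of a bounded series are preserved under multiplication by units, it suffices to show the following: if \(H(Y)\in\mathcal O_K[[Y]]\) has \(\mu_{\mathrm{Iw}}(H)=0\) and \(\lambda_{\mathrm{Iw}}(H)=d_e\), then \(HF\) has \(\mu\) equal to \(\mu(F)\) and \(\lambda\) equal to \(\lambda(F)+d_e\).

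This is the Weierstrass preparation theorem. Scale \(F\) by a power of a uniformizer so that \(\mu(F)=0\). Then \(F\) is a unit times a distinguished polynomial of degree \(\lambda(F)\), and \(H\) is a unit times a distinguished polynomial of degree \(d_e\). The product is a unit times a distinguished polynomial of degree \(\lambda(F)+d_e\). Alternatively, one can reduce modulo the maximal ideal: \(\bar H\bar F\) has lowest-order term in degree \(d_e+\lambda(F)\), and \(K[[Y]]_{\bd}\) modulo the maximal ideal is a domain.

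Next I would choose targets for Theorem~\ref{thm:refinement-interpolation} with \(N\ge\max(d_e,M)\):
\begin{itemize}
\item \(v_{e,q}=0\) for \(q<d_e\) and \(v_{e,d_e}=1\) for each \(e\ge1\), with the remaining entries up to \(N\) arbitrary, say zero;
\item \(v_{0,0}=1\) and \(v_{0,q}=0\) for \(1\le q\le M\).
\end{itemize}
The theorem produces a normalized refinement \(C\) with integral coefficients \(c_m\). Each resulting \(H_{C,e}\) has integral coefficients, since \(m^e\) and \(\binom{\ell_m}{q}\) lie in \(\Zp\). Its lowest unit coefficient is in degree \(d_e\), so \(\mu(H_{C,e})=0\) and \(\lambda(H_{C,e})=d_e\).

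It remains to check the side conditions.
\begin{itemize}
\item \(\xi'=C\xi\) is integral because the \(c_m\) and the coefficients of \(\xi\) are in \(\mathcal O_K\).
\item \(\xi'\) represents the base-changed class by \eqref{eq:refinement-same-Bloch-class}.
\item A component of \(\xi'\) is nonzero exactly when the corresponding component of \(\xi\) is, since \(H_{C,e}\ne0\).
\item On the exceptional component, \(H_{C,0}=1+O(Y^{M+1})\) multiplies a Laurent series with at most a simple pole. The product changes only coefficients of \(Y^{M}\) and higher, so the coefficients from \(Y^{-1}\) through \(Y^{M-1}\), and in fact through \(Y^{M-1}\) plus one more, are unchanged.
\end{itemize}

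The step I expect to need the most care is the coordinate bookkeeping. The invariants are defined in the centered coordinate \(Y\), while boundedness was established in the coordinate \(X\). I would verify explicitly that \(Y\mapsto X\) is an automorphism of \(\mathcal O[[\cdot]]\), using Lemma~\ref{lem:exceptional-integral-unit-factor}-style reasoning, so that \(K[[Y]]_{\bd}\) is intrinsic and the Weierstrass invariants are well defined.
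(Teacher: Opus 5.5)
Your proposal is correct and is the paper's argument: prescribe \(H_{C,e}=Y^{d_e}+O(Y^{N+1})\) for \(1\le e\le p-2\) and \(H_{C,0}\equiv1\pmod{Y^{M+1}}\) in Theorem~\ref{thm:refinement-interpolation}, then use additivity of Weierstrass invariants through \eqref{eq:refinement-branch-factorization}; your coordinate-change and integrality checks fill in details the paper leaves implicit. Two small slips: first, your claim that the \(Y^{M}\) coefficient of the exceptional component is also unchanged (``plus one more'') is false in general, because it shifts by the polar coefficient times \([Y^{M+1}]H_{C,0}\) and so stays fixed only when that branch is regular; second, the domain your reduction argument needs is \(\mathcal O_K[[Y]]/p\,\mathcal O_K[[Y]]\), not a quotient of \(K[[Y]]_{\bd}\).
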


\begin{proof}
Choose \(N\ge\max_e d_e\) and prescribe
\[
 [Y^q]H_{C,e}=
 \begin{cases}
 1,&q=d_e,\\
 0,&0\le q\le N,\ q\ne d_e
 \end{cases}
 \qquad(1\le e\le p-2).
\]
Thus
\[
 H_{C,e}(Y)=Y^{d_e}+O(Y^{N+1}),
\]
so \(H_{C,e}\) has Weierstrass invariants \((0,d_e)\).
Weierstrass invariants are additive under multiplication of nonzero
integral power series, and
\eqref{eq:refinement-branch-factorization} proves the stated formulas.
For the final assertion, enlarge \(N\) if necessary and prescribe
\([Y^0]H_{C,0}=1\) and
\([Y^q]H_{C,0}=0\) for \(1\le q\le M\).
\end{proof}

\begin{corollary}[Finite Laurent-jet variation under cyclotomic refinements]
\label{cor:exceptional-residue-rigidity}
Write the exceptional component in the coordinate
\(Y=(1+p)^{s+1}-1\) as
\[
 \mathcal M_{\IwPs_{\xi,p},-1}(s)
 =
 a_{-1}Y^{-1}+a_0+a_1Y+\cdots.
\]
If \(a_{-1}\ne0\), then, after extending coefficients from
\(\mathcal O_K\) to its fraction field \(K\),
every finite Laurent jet
\[
 a_{-1}Y^{-1}+b_0+b_1Y+\cdots+b_MY^M
\]
with the same polar coefficient \(a_{-1}\) is attained by a normalized
finite linear refinement operator applied to \(\xi\).  These exceptional
jets may be prescribed simultaneously with arbitrary finite multiplier
jets on all nonexceptional components.

Consequently, within the family of normalized finite linear refinement
operators, after scalar extension to the fraction field, the exceptional
residue is the only rigid finite Laurent coefficient when it is nonzero.
\end{corollary}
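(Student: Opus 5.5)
The plan is to reduce the statement to Theorem~\ref{thm:refinement-interpolation} with coefficients in \(K\). For a normalized refinement \(C\), Proposition~\ref{prop:exact-refinement-covariance} and linearity give
\[
 \mathcal M_{\IwPs_{C\xi,p},-1}(s)=H_{C,0}(Y)\,\mathcal M_{\IwPs_{\xi,p},-1}(s).
\]
Here \(e=0\), \(j_0=-1\) and \(Y=(1+p)^{s+1}-1\). Write \(f(Y)=\sum_{n\ge-1}a_nY^n\) for the original exceptional component and \(H(Y)=\sum_q h_qY^q\). The Laurent coefficients of \(Hf\) are then
\[
 [Y^{k}](Hf)=\sum_{q=0}^{k+1}h_q a_{k-q},\qquad k\ge-1.
\]
Hence \([Y^{-1}](Hf)=h_0a_{-1}=a_{-1}\), since \(h_0=H_{C,0}(0)=1\) by normalization. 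This is exactly why the polar coefficient is rigid.

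To attain a prescribed jet \(a_{-1}Y^{-1}+b_0+\dots+b_MY^M\), I will solve the triangular system
\[
 a_{-1}h_{k+1}+\sum_{q=0}^{k}h_qa_{k-q}=b_k,\qquad k=0,\dots,M,
\]
recursively for \(h_1,\dots,h_{M+1}\). Each step divides by \(a_{-1}\neq0\), so the \(h_q\) lie in \(K\) but not necessarily in \(\mathcal O_K\).

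The interpolation theorem was stated for targets in \(\mathcal O_K\), so this is the main obstacle, and I will handle it by extending the theorem over \(K\). The matrix \(E\) in its proof has \(\det E\in\Zp^\times\), so the linear system \eqref{eq:refinement-Mahler-coefficients} is uniquely solvable for any targets in \(K\), with solutions \(c_{a,b}\in K\). The row \((0,0)\) consisting of \(1\)'s still forces \(\sum c_{a,b}=1\) whenever \(v_{0,0}=1\).

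The class statement \eqref{eq:refinement-same-Bloch-class} survives with \(K\)-coefficients. The relation \(\operatorname{cl}(R_m\xi)=\operatorname{cl}(\xi)\) is \(K\)-linear, which gives \(\operatorname{cl}(C\xi)=\sum c_m\operatorname{cl}(\xi)=\operatorname{cl}(\xi)\) in \(B(L)_K\). The covariance identity for pseudomeasures holds over \(\IwK(G)\) as well, since it is linear in the coefficients.

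Setting \(N=M+1\), I prescribe \(v_{0,q}=h_q\) for \(q\le M+1\) and arbitrary targets \(v_{e,q}\) for \(e\ge1\). One operator \(C\) then realizes the exceptional jet and the nonexceptional multiplier jets simultaneously, because the rows for different \(e\) are solved by the same invertible system.

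For the final rigidity sentence, the computation \([Y^{-1}](Hf)=a_{-1}\) shows that the polar coefficient never moves. The surjectivity just proved shows that every other coefficient up to any finite order can be moved freely, so the residue is the only rigid finite Laurent coefficient. I also need to remark that the residue in \(s\) is a fixed nonzero multiple of \(a_{-1}\), by the Jacobian in Lemma~\ref{lem:finite-part-coordinate-change}, so rigidity of \(a_{-1}\) is the same as rigidity of the residue.
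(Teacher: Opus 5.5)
Your proposal is correct and follows essentially the same route as the paper. Both arguments solve the triangular system for $h_1,\dots,h_{M+1}$ recursively using $a_{-1}\neq0$, then tensor the $\Zp$-unimodular interpolation system of Theorem~\ref{thm:refinement-interpolation} with $K$, so that the all-ones row $(0,0)$ still forces $\sum_m c_m=1$. Your extra details, namely the explicit choice $N=M+1$ and the remark that $a_{-1}=\log_p(1+p)\cdot\Res_{s=-1}$, only make explicit what the paper's proof leaves implicit.
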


\begin{proof}
A normalized finite linear refinement operator has exceptional multiplier
\[
 H_{C,0}(Y)=1+h_1Y+h_2Y^2+\cdots.
\]
In the coefficient of \(Y^n\) of
\(H_{C,0}\mathcal M_{\IwPs_{\xi,p},-1}\), the new variable
\(h_{n+1}\) occurs with coefficient \(a_{-1}\); all other terms depend
only on \(h_1,\ldots,h_n\).  Since \(a_{-1}\ne0\), the equations for
the desired coefficients \(b_0,\ldots,b_M\) can be solved recursively
over the fraction field.  After tensoring the interpolation system of
Theorem~\ref{thm:refinement-interpolation} from \(\mathcal O_K\) with its
fraction field \(K\), realize the resulting multiplier coefficients
together with the independently prescribed nonexceptional ones.
\end{proof}

\begin{remark}\label{rem:refinement-range}
Refinement may require a finite unramified base change, so the preserved
object is the corresponding base change of the original Bloch class and,
when applicable, of its local $K_3$-image.  Since refinement acts by
multiplication on each branch, existing zeros persist: a nonexceptional
$\lambda$-invariant may increase but cannot decrease, and a regular
exceptional branch retains its order and leading coefficient.  The
optimality statement is relative to the normalized affine span of the
operators $R_m$.
\end{remark}

\subsection{Comparison with Mellin transforms and GSWZ germs}\label{subsec:comparisons}

\subsubsection{Primitive complex Mellin comparison}

Let \(\chi\) be a primitive odd character modulo \(N\).  Retain the packet
\(\xi_{\chi,N}\) from \eqref{eq:character-packet} and the functions
\(f_{\xi_{\chi,N}}\) and \(L_{\xi_{\chi,N}}\) from
\eqref{eq:general-complex-basic-path}.  Explicitly,
\[
 f_{\xi_{\chi,N}}(\hbar)
 =\sum_k\chi(k)
 \log\bigl((e^{-\hbar}\zeta_N^k;e^{-2\hbar})_\infty\bigr),
 \qquad
 L_{\xi_{\chi,N}}(s)
 =\frac1{\Gamma(s)}\int_0^\infty
 f_{\xi_{\chi,N}}(\hbar)\hbar^{s-1}\,d\hbar.
\]

\begin{theorem}[Primitive half-shifted factorization]
\label{thm:primitive-complex-factorization}
For \(\Re(s)>1\),
\[
 L_{\xi_{\chi,N}}(s)
 =-G(\chi)(1-2^{-s})\zeta(s)L(s+1,\chi^{-1}).
\]
\end{theorem}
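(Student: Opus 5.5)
The plan is to expand the logarithm of the Pochhammer symbol into a double exponential series, collapse the character sum using the primitive Gauss identity, and then take the Mellin transform term by term. The only real work is justifying the interchanges of summation and integration, which I expect to be the main obstacle; the algebra itself is mechanical.

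\emph{Step 1: expand the logarithm.} Fix \(\hbar>0\) and put \(q=e^{-2\hbar}\). Since \(|e^{-\hbar}\zeta_N^k|<1\), expanding \(\log(1-x)\) termwise inside the product gives
\[
 \log\bigl((e^{-\hbar}\zeta_N^k;e^{-2\hbar})_\infty\bigr)
 =-\sum_{r\ge1}\frac{\zeta_N^{kr}}{r}\,\frac{e^{-r\hbar}}{1-e^{-2r\hbar}}
 =-\sum_{r\ge1}\sum_{j\ge0}\frac{\zeta_N^{kr}}{r}e^{-(2j+1)r\hbar}.
\]
This double series converges absolutely for each fixed \(\hbar>0\). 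This is the same computation as in Subsection~\ref{subsec:presentation-formal-branches}, now with \(z=\zeta_N^k\) and \(Y=-2\hbar\), read as an honest convergent expansion.

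\emph{Step 2: collapse the character sum.} Summing over \(k\) with weights \(\chi(k)\), the inner sum is \(\sum_k\chi(k)\zeta_N^{kr}\). For primitive \(\chi\) it equals \(\chi^{-1}(r)G(\chi)\) for every \(r\), including \(r\) not prime to \(N\), where both sides vanish. This is the primitive Gauss identity already used in Proposition~\ref{prop:primitive-character-moments}, with the normalization \eqref{eq:gauss-sum-normalization}. Hence
\[
 f_{\xi_{\chi,N}}(\hbar)=-G(\chi)\sum_{r\ge1}\sum_{j\ge0}\frac{\chi^{-1}(r)}{r}e^{-(2j+1)r\hbar}.
\]

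\emph{Step 3: Mellin transform term by term.} Use the elementary identity \(\Gamma(s)^{-1}\int_0^\infty e^{-a\hbar}\hbar^{s-1}\,d\hbar=a^{-s}\) for \(a>0\). Applied with \(a=(2j+1)r\), it gives
\[
 L_{\xi_{\chi,N}}(s)=-G(\chi)\sum_{r\ge1}\chi^{-1}(r)r^{-1-s}\sum_{j\ge0}(2j+1)^{-s}.
\]
The odd-integer sum is \((1-2^{-s})\zeta(s)\) and the \(r\)-sum is \(L(s+1,\chi^{-1})\), which is the claimed factorization.

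\emph{Step 4: justify the interchanges.} Write \(\sigma=\Re(s)>1\). Bound every term in absolute value using \(|\chi|\le1\). By Tonelli,
\[
 \int_0^\infty\sum_{r,j}\frac1r e^{-(2j+1)r\hbar}\hbar^{\sigma-1}\,d\hbar
 =\Gamma(\sigma)\sum_r r^{-1-\sigma}\sum_j(2j+1)^{-\sigma}<\infty .
\]
This single estimate does three things at once. It shows the Mellin integral defining \(L_{\xi_{\chi,N}}\) converges absolutely. It controls the \(1/\hbar\)-type singularity of \(f_{\xi_{\chi,N}}\) at \(\hbar\to0\), which requires \(\sigma>1\). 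It also handles the rapid decay as \(\hbar\to\infty\). Fubini then legitimizes the term-by-term integration. The finite sum over \(k\) commutes trivially with everything, and the rearrangement into the product of two Dirichlet series is valid by absolute convergence.
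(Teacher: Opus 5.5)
Your proposal is correct and matches the paper's argument (Theorem~\ref{thm:primitive-factor} in the appendix): you expand the Pochhammer logarithm into the double series over \(r\) and the odd multiples \((2j+1)r\), collapse the character sum with the primitive Gauss identity of Lemma~\ref{lem:gauss}, and take the Mellin transform term by term. Your Tonelli majorant \(\Gamma(\sigma)\sum_r r^{-1-\sigma}\sum_j(2j+1)^{-\sigma}\) is exactly the one the paper uses in Theorem~\ref{thm:complex-absolute-domain} to justify the interchanges.
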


\begin{proof}
Absolute convergence permits expansion of the Pochhammer logarithm and
termwise Mellin integration.  The primitive Gauss identity converts the
root-of-unity sum into \(G(\chi)\chi^{-1}(r)\), while the half shift
restricts the outer variable to odd integers, producing
\((1-2^{-s})\zeta(s)\).  Convergence estimates and the imprimitive and radial extensions are proved
in Appendix~\ref{sec:complex}.
\end{proof}

\subsubsection{Compatibility with GSWZ germs}

The fixed-center construction is compatible with the GSWZ germ family.
The higher-center statement requires the global-image and root-presentation
hypotheses of \cite{GSWZ}.

\begin{proposition}
\label{prop:GSWZ-germ-comparison}
Assume the GSWZ hypotheses and use \(\eta_p\), \(\widehat\xi\), and
\(g_\zeta\) as in Appendix~\ref{sec:higher-gswz}.  At the center \(1\),
write
\[
 g_1(x)=\frac{A}{\log(1+x)}+h_1(x),
 \qquad A=D_p(\eta_p),
\]
and, for odd \(c>1\) prime to \(p\), put
\(x_c=(1+x)^c-1\).  Then the constant-normalized regularization satisfies
\[
 \bigl(g_1(x)-c g_1(x_c)\bigr)^0
 =F_{\widehat\xi,p}(x)-cF_{\widehat\xi,p}(x_c)
 =\Ami_{\lambda_{1/2,c}\starx\nu_{\widehat\xi}}(x),
\]
and its restriction to \(G\) is
\[
 N_{\widehat\xi,c}=r_c\starx\IwPs_{\widehat\xi,p}.
\]
More generally, if \(\zeta\) has exact order \(m\) and
\(d=(m,c)\), then
\[
 g_\zeta(q)-\frac{c}{d^2}g_{\zeta^c}(q^c)
\]
is the unique regularization of this form that cancels the common
\(A/(m^2\log q)\) pole for arbitrary \(A\), and this regularization
commutes with the logarithmic Frobenius difference.
\end{proposition}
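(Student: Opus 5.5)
The proof splits into three parts: the center-$1$ regularization identity, the identification of the unit restriction, and the general-center uniqueness and Frobenius statements.

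\textbf{Center $1$.} Here we take the GSWZ germ $g_1$ and separate off its polar term $A/\log(1+x)$. With $Y=\log(1+x)$, the substitution $x\mapsto x_c$ sends $Y\mapsto cY$. Hence
\[
A/Y-c\,A/(cY)=0,
\]
and the regulator pole cancels identically for every $c$. What remains is $h_1(x)-c\,h_1(x_c)$. By the comparison in Appendix~\ref{sec:higher-gswz}, which is the content of Proposition~\ref{prop:fixed-higher-compatibility}, $h_1$ agrees with $F_{\widehat\xi,p}$ up to an additive constant. The constant-normalization $(\cdot)^0$, which subtracts the value at $x=0$, removes that constant. This gives
\[
\bigl(g_1(x)-c\,g_1(x_c)\bigr)^0=F_{\widehat\xi,p}(x)-c\,F_{\widehat\xi,p}(x_c)=\operatorname{Reg}^{\mathrm{form}}_cF_{\widehat\xi,p}(x).
\]
Theorem~\ref{thm:regularized-convolution} then identifies this with $\Ami_{\lambda_{1/2,c}\starx\nu_{\widehat\xi}}$.

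\textbf{Unit restriction.} Restricting to $G$ and using \eqref{eq:unit-convolution-restriction} gives $N_{\widehat\xi,c}$. Definition~\ref{def:cyclotomic-Iwasawa-pseudomeasure} gives $N_{\widehat\xi,c}=r_c\starx\IwPs_{\widehat\xi,p}$ in $\Sigma^{-1}\IwK(G)$.

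\textbf{General center: polar parts.} Let $\zeta$ have exact order $m$. Using the local compatible root convention, we expand $g_\zeta(q)=A/(m^2\log q)+(\text{regular})$, in the normalization forced by \eqref{eq:higher-formal-coefficient-expansion} with $Y=\log q$. Since $\zeta^c$ has exact order $m/d$, the germ $g_{\zeta^c}$ has polar part $A d^2/(m^2\log q')$. At $q'=q^c$ this becomes $A d^2/(c\,m^2\log q)$.

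\textbf{General center: uniqueness.} A combination $g_\zeta(q)-\kappa\,g_{\zeta^c}(q^c)$ has polar coefficient
\[
\frac{A}{m^2\log q}\Bigl(1-\frac{\kappa d^2}{c}\Bigr).
\]
This vanishes for arbitrary $A$ exactly when $\kappa=c/d^2$. This is a one-line linear condition, so uniqueness follows immediately.

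\textbf{Frobenius compatibility.} The logarithmic Frobenius difference is $g_\zeta(q)-p^{-2}g_{\zeta^p}(q^p)$ or its analogue in the appendix. I would show that the two operations commute by noting the following:
\begin{itemize}
\item the substitutions $q\mapsto q^c$ and $q\mapsto q^p$ commute;
\item the center maps $\zeta\mapsto\zeta^c$ and $\zeta\mapsto\zeta^p$ commute;
\item since $p\nmid m$, we have $(m,c)=(m/(m,p),c)$ and the order of $\zeta^p$ is $m$, so the coefficient $c/d^2$ is the same at $\zeta$ and at $\zeta^p$.
\end{itemize}
The composite in either order is then
\[
g_\zeta(q)-\tfrac{c}{d^2}g_{\zeta^c}(q^c)-p^{-2}g_{\zeta^p}(q^p)+\tfrac{c}{d^2}p^{-2}g_{\zeta^{cp}}(q^{cp}).
\]

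\textbf{Main obstacle.} The hardest step is identifying the regular part $h_1$, and the pole normalization at a general center, with the paper's $F_{\widehat\xi,p}$. This requires unpacking the GSWZ hypotheses and conventions in Appendix~\ref{sec:higher-gswz}. I would first reduce it to a coefficientwise comparison of the Pochhammer expansion \eqref{eq:half-kernel-full-expansion} with the GSWZ asymptotic series.
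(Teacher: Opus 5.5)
Your proposal follows essentially the same route as the paper. The polar terms cancel because $\log((1+x)^c)=c\log(1+x)$, and the regular part is identified with $F_{\widehat\xi,p}$ up to a constant; this is exactly Proposition~\ref{prop:GSWZ-formal-normalization}, which already disposes of your ``main obstacle''. Theorem~\ref{thm:regularized-convolution} then gives the Amice transform, the count $\ord(\zeta^c)=m/d$ forces the coefficient $c/d^2$, and the Frobenius statement reduces to commuting power substitutions.

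One correction concerns the Frobenius operator. The logarithmic Frobenius difference in the appendix is $(\nabla_pg)_\zeta(x)=(\phi_pg_\zeta)(x_p)-pg_\zeta(x)$, with $x_p=(\zeta+x)^p-\zeta^p$ and $\phi_p$ acting on coefficients; it is not $g_\zeta(q)-p^{-2}g_{\zeta^p}(q^p)$. Your four-term check should use this operator, together with two facts: $\phi_p$ intertwines the pullbacks along $q\mapsto q^c$ (since $(\zeta^p)^c=(\zeta^c)^p$ and $(q^p)^c=(q^c)^p$), and $\phi_p$ fixes the rational number $c/d^2$.
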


\begin{proof}
At the fixed center the two polar terms cancel because
\(\log((1+x)^c)=c\log(1+x)\); the regular part is the formal germ of
Section~\ref{sec:local-germs}, and
Theorem~\ref{thm:regularized-convolution} identifies its Amice transform.
At a center of order \(m\), the target center has order \(m/d\), so equality
of polar coefficients forces the factor \(c/d^2\).  Power substitution and
Frobenius commute.  The all-center Dwork argument and the integrality statement are proved in
Appendix~\ref{sec:higher-gswz}.
\end{proof}

\subsection[The knot 5-2]{A local \(K_3\)-criterion for the knot \(5_2\)}\label{sec:52}

Let
\[
 F=\Q(\alpha),
 \qquad
 \alpha^3-\alpha^2+1=0,
\]
and consider the Bloch element
\[
 \beta_{5_2}=2[1-\alpha^2]+[1-\alpha]\in B(F).
\]
Let \(p>3\), let \(\bar\alpha\in\F_p\) be a simple root of
\(h(T)=T^3-T^2+1\), and let \(\alpha_{p,\bar\alpha}\in\Zp\) be its
Hensel lift.  Set
\[
 z_{p,\bar\alpha}=1-\alpha_{p,\bar\alpha}^2,
 \qquad
 a=1-\bar\alpha^2\in\F_p,
\]
and let \(\beta^B_{5_2,p,\bar\alpha}\) be the image of \(\beta_{5_2}\) in
\(B(\Qp)\otimes\Zp\), with \(\beta^K_{5_2,p,\bar\alpha}\) its image in
\(K_3(\Qp;\Zp)\).

Put \(g(X)=X^3-2X^2+3X-1\).  The identities
\[
 g(1-T^2)=-h(T)(T^3+T^2-1),
 \qquad
 g'(1-T^2)=T^2h'(T)+2h(T)
\]
show that \(a\) is a simple root of \(g\).  Moreover,
\(a\notin\{0,1,-2\}\): the first two exclusions follow directly from
\(h(\bar\alpha)=0\), while \(a=-2\) would force \(p=23\),
\(\bar\alpha=16\), and \(h'(\bar\alpha)=0\), contrary to simplicity.
For \(r=1,2\), define the finite-polylogarithm polynomial
\begin{equation}\label{eq:finite-polylog-polynomial}
 \li_{r,p}(X):=\sum_{k=1}^{p-1}k^{-r}X^k\in\F_p[X],
\end{equation}
and define
\begin{equation}\label{eq:52-Wp-definition}
 W_p(X)
 :=\li_{2,p}(X)+\frac{3\li_{1,p}(X)^2}{2(X+2)}
 \qquad\bigl(X\in\F_p\setminus\{-2\}\bigr).
\end{equation}

\begin{theorem}[Finite-polylogarithm criterion]
\label{thm:main-52-criterion}
One has
\[
 \beta^B_{5_2,p,\bar\alpha}=3[z_{p,\bar\alpha}]
 \quad\text{in }B(\Qp)\otimes\Zp,
\]
and
\[
 \frac{D_p(\beta^B_{5_2,p,\bar\alpha})}{p^2}
 \equiv-\frac{3W_p(a)}{1-a}\pmod p.
\]
Consequently,
\[
 \beta^K_{5_2,p,\bar\alpha}
 \text{ generates }K_3(\Qp;\Zp)
 \quad\Longleftrightarrow\quad
 W_p(a)\ne0.
\]
\end{theorem}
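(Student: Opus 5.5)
The proof has three steps: a Bloch-group identity, a congruence computation with Besser's expansion, and the generator criterion from \cite[Thm.~9]{GSWZ}.

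\emph{The Bloch-group identity.} The relation $h(\alpha)=0$ gives $\alpha^3=\alpha^2-1$. Hence $1-\alpha^2=-\alpha^3$ and $1-\alpha=\alpha^{-1}(\alpha-\alpha^2)$. From these I expect a concrete relation showing that $[1-\alpha]$ equals $[1-\alpha^2]$ up to inversion and the two-term relations $[x]=-[1-x]=-[x^{-1}]$. Such relations hold in $\mathcal P(\Qp)\otimes\Zp$ with $2,3$ inverted, which is harmless for $p>3$. The identities to try are $(1-\alpha^2)^{-1}$ versus $1-\alpha$, or a single five-term instance with $x=\alpha^2$, $y=\alpha$. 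I would first verify the identity numerically in $F$ and then record it as a relation in $\mathcal P(F)\otimes\Z[1/6]$. Pushing it to the Hensel lift gives $\beta^B=3[z_{p,\bar\alpha}]$. Since $\beta_{5_2}\in B(F)$, the class $3[z]$ lies in the Bloch group, and $\partial[z]$ is torsion of order prime to $p$.

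\emph{The regulator congruence.} By (2.?) normalization $D_p(z)=\Li_{2,p}(z)+\frac12\log_p z\log_p(1-z)$. Here $z$ and $1-z$ are units, because $a\notin\{0,1\}$. Write $z=\omega(a)(1+pt)$. Then $z$ lies in the residue disc of the Teichm\"uller point $\omega(a)$, where $D_p(\omega(a))=\Li_{2,p}(\omega(a))$. Besser's residue-disc expansion \cite{Besser} expresses $\Li_{2,p}$ near a Teichm\"uller point to first order. Combined with $\Li_{2,p}(\omega)\equiv p^2\cdot(\text{a finite-polylog value})$, this should give
\[
 \frac{D_p(z)}{p^2}\equiv -\frac{\li_{2,p}(a)}{1-a}+(\text{terms in }t,\ \li_{1,p}(a))\pmod p.
\]
The logarithmic term also contributes. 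Its size is $\log_p z\sim pt$ and $\log_p(1-z)\sim p\cdot(\text{Fermat quotient of }1-a)$, and Fermat quotients are expressible through $\li_{1,p}(a)$. The remaining ingredient is the first $p$-adic digit $t$ of $z$, obtained from the Hensel lift of a root of $g$. Hensel's lemma gives $t\equiv -g(\omega(a))/(p\,g'(a))\pmod p$. The quantity $g(\omega(a))/p$ is a Fermat-quotient combination, expressible through $\li_{1,p}(a)$ and $\li_{1,p}$ at related points. Using $g(a)=0$, one can then simplify $g'(a)$ against $a+2$; the denominator $2(X+2)$ in $W_p$ should arise this way. Assembling everything should produce $-3W_p(a)/(1-a)$. \textbf{This bookkeeping is the main obstacle}: matching the quadratic $\li_{1,p}(a)^2$ term requires using the cubic relation for $a$ to collapse the Fermat quotients of $a$, $1-a$ and the digit $t$ into a single $\li_{1,p}(a)$. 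I would first carry this out symbolically with $\li_{1,p}(x)\equiv (x^p+(1-x)^p-1)/p$ and $\li_{1,p}(1-x)=\li_{1,p}(x)$.

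\emph{The generator criterion.} By \cite[Thm.~9]{GSWZ}, $D_p$ is an isomorphism $K_3(\Qp;\Zp)\to p^2\Zp$, and it is compatible with $\kappa_{\Qp,p}$. Therefore $\beta^K$ generates exactly when $D_p(\beta^B)/p^2$ is a $p$-adic unit. By the congruence this holds exactly when $W_p(a)\ne0$, since $3$ and $1-a$ are units.
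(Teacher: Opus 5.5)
Your three-step architecture is the paper's: the two-term relations after inverting $6$ for the Bloch identity, Besser's residue-disc expansion of $D_p$ at the Teichm\"uller point $\teich(a)$, and the GSWZ isomorphism $D_p\colon K_3(\Qp;\Zp)\to p^2\Zp$ for generation. However, the congruence itself, which is the content of the theorem, is left as ``should''. Two of the ingredients you name would also give the wrong answer if used as stated.

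First, the expansion of $\Li_{2,p}$ cannot be truncated at first order. $\Li_{2,p}(\teich(a))$ is itself of size $p^2$, and so is the quadratic Taylor term $\tfrac12\Li_{2,p}''(\teich(a))\bigl(pt\,\teich(a)\bigr)^2\equiv\frac{a}{2(1-a)}\,p^2t^2$. Omitting it turns $\frac{3\li_{1,p}(a)^2}{2(a+2)}$ into $\frac{3(2a+1)\li_{1,p}(a)^2}{(a+2)^2}$. Second, $\log_p(1-z)$ is not a function of $a$ alone. If $1-z=\teich(1-a)(1+pv)$, then $v\equiv(\li_{1,p}(a)-at)/(1-a)\pmod p$, so the digit $t$ also enters through $\tfrac12\log_pz\log_p(1-z)\equiv\tfrac12p^2tv$. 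The first-digit formula you actually need is $p^{-2}D_p(z)\equiv-\frac{\li_{2,p}(a)}{1-a}-\frac{\li_{1,p}(a)}{1-a}t+\frac{a}{2(1-a)}t^2+\frac12tv\pmod p$.

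The idea that closes the bookkeeping, and which your plan does not supply, is the multiplicative structure $z=-\alpha^3$, $1-z=\alpha^2$, equivalently $g(X)=X^2-(1-X)^3$. For a general polynomial $g$, the quantity $g(\teich(a))/p$ is not a Fermat-quotient expression in $a$. Here it is, for two reasons:
\begin{itemize}
\item $a^2=(1-a)^3$ in $\F_p$;
\item $1-\teich(a)\equiv\teich(1-a)\bigl(1+p\li_{1,p}(a)/(1-a)\bigr)\pmod{p^2}$, which follows from $\li_{1,p}(x)\equiv(1-x^p-(1-x)^p)/p$ (note this is the opposite sign to yours).
\end{itemize}
Hence $g(\teich(a))\equiv-3p(1-a)^2\li_{1,p}(a)\pmod{p^2}$, while $a\,g'(a)=(1-a)^2(a+2)$ at roots of $g$. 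The Newton step is $t\equiv-g(\teich(a))/(p\,a\,g'(a))$; you dropped the factor $a$. This gives $t\equiv3\li_{1,p}(a)/(a+2)$ and $v\equiv2\li_{1,p}(a)/(a+2)$, and substitution then yields exactly $-W_p(a)/(1-a)$. The identity for $a\,g'(a)$ also shows $a\ne-2$ for free. So your Hensel-on-$g$ route does work once this is supplied.

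The paper gets the same digits more directly. Writing $\alpha=\teich(\bar\alpha)(1+pw)$, the relations $z=-\alpha^3$ and $1-z=\alpha^2$ give $t=3w$ and $v=2w$ at once. Comparing $1-\teich(a)$ with $\teich(1-a)$ then gives $\li_{1,p}(a)\equiv(a+2)w$.

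Finally, the Bloch identity needs a relation you did not locate. Your candidate $(1-\alpha^2)^{-1}=-\alpha^{-3}$ is not $1-\alpha$. Instead, the cubic gives $\alpha^2(1-\alpha)=1$, so $1-\alpha=(1-z_1)^{-1}$ with $z_1=1-\alpha^2$. Then $[(1-z_1)^{-1}]=-[1-z_1]=[z_1]$ once $6$ is inverted.
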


\begin{proof}
A direct boundary calculation shows that $\beta_{5_2}$ lies in the Bloch
group.  The cubic relation gives
\(1-\alpha=\alpha^{-2}=(1-(1-\alpha^2))^{-1}\), and the tetrahedral
cross-ratio identities identify the two shape symbols after inverting
\(6\).  Hence
\(\beta^B_{5_2,p,\bar\alpha}=3[z_{p,\bar\alpha}]\).

Write
\(\alpha_{p,\bar\alpha}=\teich(\bar\alpha)(1+pw)\) modulo \(p^2\).  The exact
identities
\(z_{p,\bar\alpha}=-\alpha_{p,\bar\alpha}^3\) and
\(1-z_{p,\bar\alpha}=\alpha_{p,\bar\alpha}^2\) give the residue-disc parameters
\(u=3w\) and \(v=2w\).  The finite logarithm relation yields
\(\li_{1,p}(a)=(a+2)w\).  Substitution in Besser's residue-disc
expansion \cite[Prop.~2.3]{Besser} for \(D_p\) gives the displayed
congruence.  Finally,
\(D_p:K_3(\Qp;\Zp)\overset{\sim}{\to}p^2\Zp\) identifies generation with
nonvanishing of the normalized first digit.  The residue-disc calculation
is given in Appendix~\ref{app:52-details}.
\end{proof}

\begin{example}
Modulo \(5\), one may take \(\bar\alpha=2\), hence \(a=2\).  Direct finite sums
give
\[
 \li_{1,5}(2)=4,
 \qquad
 \li_{2,5}(2)=1,
 \qquad
 W_5(2)=2.
\]
Therefore
\[
 5^{-2}D_5(\beta^B_{5_2,5,2})
 \equiv-\frac{3\cdot2}{1-2}\equiv1\pmod5,
\]
so the local knot class generates \(K_3(\Q_5;\Z_5)\).
\end{example}

\clearpage
\appendix

\section{Character decompositions, Kummer congruences, and Mellin transforms}
\label{app:character-kummer}\label{sec:complex}

\subsection{Odd Fourier decomposition}
\label{subsec:exact-order-modules}

Let \(M>2\), let \(K\) contain a primitive \(M\)-th root
\(\zeta_M\), and let \(E\) split the character group of
\((\Z/M\Z)^\times\).  For every odd character \(\chi\) modulo \(M\),
retain the packet \(\xi_{\chi,M}\) from \eqref{eq:character-packet} and
extend scalars to \(E\).  Character orthogonality gives
\begin{equation}\label{eq:odd-Fourier-basis}
 \CycPresM M(K;E)^-
 =\bigoplus_{\chi(-1)=-1}E\xi_{\chi,M}.
\end{equation}
If
\(
 \xi_M=\sum_a c_a[\zeta_M^a]_{\mathrm{cyc}}
\)
with \(c_{-a}=-c_a\), then
\begin{equation}\label{eq:odd-Fourier-coefficients}
 \xi_M
 =\frac1{\varphi(M)}
 \sum_{\chi(-1)=-1}
 \left(\sum_a c_a\chi^{-1}(a)\right)\xi_{\chi,M}.
\end{equation}
In particular,
\begin{equation}\label{eq:odd-difference}
 [\zeta_M^a]_{\mathrm{cyc}}-[\zeta_M^{-a}]_{\mathrm{cyc}}
 =\frac2{\varphi(M)}
 \sum_{\chi(-1)=-1}\chi^{-1}(a)\xi_{\chi,M}.
\end{equation}
Indeed,
\[
 \sum_{\chi(-1)=-1}\chi^{-1}(a)\chi(b)
 =\frac{\varphi(M)}2
 \bigl(\one_{b=a}-\one_{b=-a}\bigr).
\]
A presentation supported on roots whose orders divide \(M\) decomposes
first by exact order and then by \eqref{eq:odd-Fourier-coefficients}.

All constructions used in the paper are linear in the written
presentation.  Thus, if
\[
 \xi=\sum_{i=1}^r b_i\xi_{\chi_i,M_i},
\]
then, for every \((m,p)=1\),
\[
 F_{\xi,p,m}=\sum_i b_iF_{\xi_{\chi_i,M_i},p,m},
 \qquad
 \nu_\xi^\times=\sum_i b_i\nu_{\chi_i,M_i}^\times,
 \qquad
 \IwPs_{\xi,p}=\sum_i b_i\IwPs_{\chi_i,M_i,p}.
\]
Consequently,
\[
 \mathcal M_{\IwPs_{\xi,p},j}(s)
 =\sum_i b_i\mathcal M_{\IwPs_{\chi_i,M_i,p},j}(s),
\]
where each summand is given by the primitive or imprimitive local
factorization of Subsection~\ref{subsec:local-factorizations}.

\subsection{Kummer congruences}
\label{subsec:kummer-congruences}

Assume that the coefficients of \(\xi\) lie in \(\mathcal O_K\), and set
\begin{equation}\label{eq:regularized-arithmetic-values}
 A_{\xi,c}(n)
 =(1-p^n)(1-c^{n+1})
 \frac{B_{n+1}(1/2)}{n+1}S_n^{[p]}(\xi).
\end{equation}
By Corollary~\ref{cor:regularized-unit-moments}, these are the moments of the
integral measure \(N_{\xi,c}\).

\begin{corollary}\label{cor:regularized-Kummer}
Let \(a\ge1\), let \(n_1,\ldots,n_r\ge1\) be congruent modulo
\(p-1\), and let \(b_1,\ldots,b_r\in\mathcal O_K\).  If
\[
 \sum_{i=1}^r b_i x^{n_i}\in p^a\mathcal O_K
 \qquad\text{for every }x\in\Zp^\times,
\]
then
\[
 \sum_{i=1}^r b_iA_{\xi,c}(n_i)\in p^a\mathcal O_K.
\]
In particular,
\[
 n\equiv m\pmod{(p-1)p^{a-1}}
 \quad\Longrightarrow\quad
 A_{\xi,c}(n)\equiv A_{\xi,c}(m)\pmod{p^a}.
\]
\end{corollary}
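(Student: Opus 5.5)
The plan is to realize the numbers \(A_{\xi,c}(n)\) as moments of a single integral measure and then integrate the given congruent function against it. By Corollary~\ref{cor:regularized-unit-moments}, \(A_{\xi,c}(n)=\int_G x^n\,dN_{\xi,c}(x)\) for every \(n\ge1\). When the coefficients of \(\xi\) lie in \(\mathcal O_K\), Theorem~\ref{thm:regularized-convolution} shows that \(N_{\xi,c}=\lambda_{1/2,c}^\times\starx\nu_\xi^\times\) is an \(\mathcal O_K\)-valued measure on \(G\). This uses the integrality of \(\lambda_{1/2,c}\) from Theorem~\ref{thm:integral-half-measure} and of \(\nu_\xi^\times\) from Theorem~\ref{thm:nu-unit-measure}. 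By linearity,
\[
 \sum_i b_iA_{\xi,c}(n_i)=\int_G f\,dN_{\xi,c},
 \qquad
 f(x)=\sum_i b_ix^{n_i}.
\]
The hypothesis says precisely that \(f\) takes values in \(p^a\mathcal O_K\) on \(G\).

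The key step is the standard estimate for integral measures: if \(\mu\) is \(\mathcal O_K\)-valued and \(f\in C(G,p^a\mathcal O_K)\), then \(\int f\,d\mu\in p^a\mathcal O_K\). To prove it, write \(f=p^ag\) with \(g\) continuous and \(\mathcal O_K\)-valued. Approximate \(g\) uniformly by locally constant \(\mathcal O_K\)-valued step functions on balls \(a+p^m\Zp\subset G\). Each step function integrates to an \(\mathcal O_K\)-linear combination of the values \(\mu(a+p^m\Zp)\in\mathcal O_K\), so its integral lies in \(\mathcal O_K\). Since \(\mathcal O_K\) is closed and the integral is continuous for the sup norm, \(\int g\,d\mu\in\mathcal O_K\), and hence \(\int f\,d\mu\in p^a\mathcal O_K\). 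I expect this step to be the main point needing care, namely being explicit that the integral is bounded by the sup norm times the norm of the measure, which here is at most \(1\).

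For the special case, take \(r=2\), \(b_1=1\) and \(b_2=-1\). We need \(x^n-x^m\in p^a\Zp\) for all \(x\in\Zp^\times\) whenever \(n\equiv m\pmod{(p-1)p^{a-1}}\); we may assume \(n\ge m\). Write \(x^n-x^m=x^m(x^{n-m}-1)\). The group \((\Z/p^a\Z)^\times\) has order \((p-1)p^{a-1}\), which divides \(n-m\), so \(x^{n-m}\equiv1\pmod{p^a}\). The hypothesis of the first part is therefore satisfied, and it gives \(A_{\xi,c}(n)\equiv A_{\xi,c}(m)\pmod{p^a}\). The standing congruence of the \(n_i\) modulo \(p-1\) is not actually needed in the general statement, since the measure argument works for any exponents.
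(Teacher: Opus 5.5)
Your proposal is correct and follows the paper's argument: integrate the pointwise congruence against the \(\mathcal O_K\)-valued measure \(N_{\xi,c}\), whose unit moments are the \(A_{\xi,c}(n)\). For the special case, the paper writes \(x=\teich(x)\ang x\) and uses \(\valp(u^d-1)\ge\valp(d)+1\) for \(u\in1+p\Zp\); this is equivalent to your observation that \((\Z/p^a\Z)^\times\) has order \((p-1)p^{a-1}\).
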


\begin{proof}
Integrate the pointwise divisibility relation against \(N_{\xi,c}\).
For the final assertion, write \(x=\teich(x)\ang x\) and use
\(v_p(u^d-1)\ge v_p(d)+1\) for \(u\in1+p\Zp\).
\end{proof}

\subsection{Complex Mellin setup}
\label{subsec:complex-setup}

Throughout this appendix \(q=e^{-2\hbar}\) with \(\hbar>0\).  For a
function \(f\) in the domain of convergence, set
\begin{equation}
 \mathcal M\{f\}(s)
 =\frac1{\Gamma(s)}\int_0^\infty f(\hbar)\hbar^{s-1}\,d\hbar.
\end{equation}
If \(f(\hbar)=\sum_{n\ge1}a_ne^{-n\hbar}\) and the corresponding
Dirichlet series is absolutely convergent, then
\(
 \mathcal M\{f\}(s)=\sum_{n\ge1}a_nn^{-s}.
\)

For a finite cyclotomic presentation
\(
 \xi=\sum_{z\in S}a_z[z]_{\mathrm{cyc}}
\)
and an embedding of its coefficient field into \(\C\), define
\begin{align}
 \log\widehat\Psi_\xi(t,q)
 &=\sum_{z\in S}a_z\sum_{j\ge0}\log(1-tzq^j),
 \label{eq:general-presentation-Pochhammer-log}\\
 f_\xi(\hbar)
 &=\log\widehat\Psi_\xi(e^{-\hbar},e^{-2\hbar}),
 \qquad
 L_\xi(s)=\mathcal M\{f_\xi\}(s).
 \label{eq:general-complex-basic-path}
\end{align}
The logarithm is the power-series branch
\(\log(1-u)=-\sum_{r\ge1}u^r/r\).  Absolute and locally uniform
convergence follows from
\[
 \sum_{z\in S}|a_z|
 \sum_{r\ge1}\frac{|t|^r}{r(1-|q|^r)}<\infty.
\]
We use
\(
 ue^{xu}/(e^u-1)=\sum_{n\ge0}B_n(x)u^n/n!.
\)

\subsection{Absolute convergence}\label{subsec:complex-convergence}

\begin{lemma}\label{lem:complex-Pochhammer-bound}
Let $|z|=1$ and $\hbar>0$.  Put
\[
 P_z(\hbar)=\log\bigl((e^{-\hbar}z;e^{-2\hbar})_\infty\bigr),
\]
using the power-series branch of the logarithm.  Then
\begin{equation}\label{eq:complex-Pochhammer-absolute-majorant}
 |P_z(\hbar)|
 \le
 \sum_{r\ge1}\frac{e^{-r\hbar}}{r(1-e^{-2r\hbar})}.
\end{equation}
There are absolute constants $C_0,C_\infty>0$ such that
\begin{equation}\label{eq:complex-Pochhammer-endpoint-bounds}
 |P_z(\hbar)|\le \frac{C_0}{\hbar}
 \quad(0<\hbar\le1),
 \qquad
 |P_z(\hbar)|\le C_\infty e^{-\hbar}
 \quad(\hbar\ge1),
\end{equation}
independently of $z$.
\end{lemma}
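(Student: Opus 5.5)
The majorant \eqref{eq:complex-Pochhammer-absolute-majorant} comes from expanding the Pochhammer logarithm termwise. With the power-series branch,
\[
 P_z(\hbar)=\sum_{j\ge0}\log(1-ze^{-\hbar}e^{-2j\hbar})
 =-\sum_{j\ge0}\sum_{r\ge1}\frac{z^re^{-r\hbar(2j+1)}}{r}.
\]
The double series is absolutely convergent for $\hbar>0$ and $|z|=1$. Its absolute values sum, by the geometric series in $j$, to
\[
 \sum_{r\ge1}\frac{e^{-r\hbar}}{r(1-e^{-2r\hbar})}.
\]
Hence the interchange of summations is justified, and the triangle inequality gives the majorant. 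It is visibly independent of $z$, which gives the uniformity claimed in \eqref{eq:complex-Pochhammer-endpoint-bounds}.

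For the endpoint bounds, write the majorant as $\sum_{r\ge1}\frac{1}{2r\sinh(r\hbar)}$, since $e^{-r\hbar}/(1-e^{-2r\hbar})=1/(2\sinh(r\hbar))$.

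For $\hbar\ge1$, use $1-e^{-2r\hbar}\ge1-e^{-2}$. This bounds the sum by
\[
 (1-e^{-2})^{-1}\sum_{r\ge1}e^{-r\hbar}/r=(1-e^{-2})^{-1}\bigl(-\log(1-e^{-\hbar})\bigr).
\]
Then $-\log(1-y)\le y/(1-y)\le y/(1-e^{-1})$ for $0<y=e^{-\hbar}\le e^{-1}$. Together these give $C_\infty=(1-e^{-2})^{-1}(1-e^{-1})^{-1}$.

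For $0<\hbar\le1$, use $\sinh(x)\ge x$. This gives
\[
 \frac{1}{2r\sinh(r\hbar)}\le\frac{1}{2r^2\hbar},
\]
so the sum is at most $\zeta(2)/(2\hbar)$, and $C_0=\pi^2/12$ works.

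No step is a real obstacle. The only point needing care is the justification of the termwise expansion and rearrangement: one must check that the branch of $\log$ used for each factor $1-ze^{-(2j+1)\hbar}$ is the power-series branch, valid since $|ze^{-(2j+1)\hbar}|<1$. One must also check that summing over $j$ defines the same branch of $P_z$ that the paper uses. Absolute convergence of the double series handles both simultaneously.
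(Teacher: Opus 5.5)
Your proof is correct. The majorant and the bound for $\hbar\ge1$ are obtained exactly as in the paper: termwise power-series expansion, geometric summation in $j$, then $1-e^{-2r\hbar}\ge1-e^{-2}$ and $-\log(1-e^{-\hbar})\le e^{-\hbar}/(1-e^{-1})$.

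For $0<\hbar\le1$ you take a different and shorter route. The paper uses the two-regime inequality $1-e^{-2r\hbar}\ge(1-e^{-2})\min(1,r\hbar)$ and splits the $r$-sum at $r\le\hbar^{-1}$. It bounds the head by $\sum_r 1/(r^2\hbar)$ and the tail by $\hbar e^{-1}/(1-e^{-\hbar})$, concluding only $\ll\hbar^{-1}$. You instead use the exact identity $e^{-r\hbar}/(1-e^{-2r\hbar})=1/(2\sinh(r\hbar))$ together with $\sinh x\ge x$. This needs no splitting, holds for every $\hbar>0$, and gives the explicit constant $C_0=\pi^2/12$.

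That constant is in fact optimal. For $z=1$ every term of the expansion has the same sign, so $|P_1(\hbar)|$ equals the majorant. Dominated convergence, with dominating terms $1/(2r^2)$, then gives $\hbar\,|P_1(\hbar)|\to\zeta(2)/2=\pi^2/12$ as $\hbar\downarrow0$.

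The paper's split is the generic two-regime template one would use without a closed form for the summand. Here your argument is both simpler and sharper.
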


\begin{proof}
For every $j\ge0$ the number $u=e^{-(2j+1)\hbar}z$ has modulus less than one, hence
\[
 \log(1-u)=-\sum_{r\ge1}\frac{u^r}{r}.
\]
Taking absolute values and summing first over $j$ gives \eqref{eq:complex-Pochhammer-absolute-majorant}.  If $0<\hbar\le1$, then $1-e^{-2r\hbar}\ge (1-e^{-2})\min(1,r\hbar)$.  Splitting the $r$-sum at $r\le \hbar^{-1}$ yields
\[
 \sum_{r\le \hbar^{-1}}\frac1{r^2\hbar}
 +\sum_{r>\hbar^{-1}}\frac{e^{-r\hbar}}r
 \ll \hbar^{-1}.
\]
Indeed, the second term has the explicit bound
\[
 \sum_{r>\hbar^{-1}}\frac{e^{-r\hbar}}r
 \le
 \hbar\frac{e^{-1}}{1-e^{-\hbar}}
 \ll1.
\]
For $\hbar\ge1$, use $1-e^{-2r\hbar}\ge1-e^{-2}$ and
\[
 \sum_{r\ge1}\frac{e^{-r\hbar}}r
 =-\log(1-e^{-\hbar})
 \le\frac{e^{-\hbar}}{1-e^{-1}}.
\]
All constants are independent of the unit-modulus parameter $z$.
\end{proof}

\begin{theorem}\label{thm:complex-absolute-domain}
Let $\xi=\sum_z a_z[z]_{\mathrm{cyc}}$ be a finite complex cyclotomic presentation and let $f_\xi$ be its logarithmic section.  Then
\[
 f_\xi(\hbar)=O_\xi(\hbar^{-1})\quad(\hbar\downarrow0),
 \qquad
 f_\xi(\hbar)=O_\xi(e^{-\hbar})\quad(\hbar\to\infty).
\]
Consequently the Mellin integral defining $L_\xi(s)$ converges absolutely for $\Re(s)>1$.  In that half-plane all logarithmic expansions, finite character sums, infinite sums, and Mellin integrals may be interchanged absolutely.
\end{theorem}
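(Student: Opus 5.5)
The plan is to use the two bounds of Lemma~\ref{lem:complex-Pochhammer-bound} and linearity of \(f_\xi\) in the presentation. By \eqref{eq:general-presentation-Pochhammer-log} and \eqref{eq:general-complex-basic-path},
\[
 f_\xi(\hbar)=\sum_z a_zP_z(\hbar),
\]
a finite sum in which every support point \(z\) is a root of unity, so \(|z|=1\). The triangle inequality together with \eqref{eq:complex-Pochhammer-endpoint-bounds} then gives
\[
 |f_\xi(\hbar)|\le\Bigl(\sum_z|a_z|\Bigr)C_0\hbar^{-1}\quad(0<\hbar\le1),
 \qquad
 |f_\xi(\hbar)|\le\Bigl(\sum_z|a_z|\Bigr)C_\infty e^{-\hbar}\quad(\hbar\ge1).
\]
These are the two asserted asymptotics. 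The implied constants depend on \(\xi\) only through \(\sum_z|a_z|\).

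For absolute convergence, write \(\sigma=\Re(s)\) and split the Mellin integral at \(\hbar=1\).
\begin{itemize}
\item On \((0,1]\) the integrand is bounded by a constant times \(\hbar^{\sigma-2}\), which is integrable exactly when \(\sigma>1\).
\item On \([1,\infty)\) it is bounded by a constant times \(e^{-\hbar}\hbar^{\sigma-1}\), which is integrable for every \(\sigma\).
\end{itemize}
Since \(1/\Gamma(s)\) is entire, \(L_\xi(s)\) is defined for \(\Re(s)>1\).

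For the interchange statements, the key point is that the dominating series \eqref{eq:complex-Pochhammer-absolute-majorant} already bounds the full triple expansion. Consider the series
\[
 \sum_z|a_z|\sum_{j\ge0}\sum_{r\ge1}\frac{e^{-(2j+1)r\hbar}}{r}.
\]
Summing over \(j\) recovers \(\sum_z|a_z|\) times the right side of \eqref{eq:complex-Pochhammer-absolute-majorant}. That quantity obeys the same endpoint bounds by the proof of Lemma~\ref{lem:complex-Pochhammer-bound}, since that proof bounds the majorant itself and not only \(P_z\). Hence the majorant times \(\hbar^{\sigma-1}\) is integrable on \((0,\infty)\) for \(\sigma>1\).

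Tonelli's theorem applied to this nonnegative majorant, followed by Fubini's theorem, justifies every rearrangement:
\begin{itemize}
\item the logarithmic expansion,
\item the sums over \(j\), \(r\) and \(z\), including any finite character sums, which only take linear combinations of the \(a_z\),
\item and the Mellin integral,
\end{itemize}
in any order, absolutely and locally uniformly in \(s\) on compact subsets of \(\Re(s)>1\).

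The only point needing care is confirming that the endpoint estimates hold for the majorant and not merely for \(|P_z|\). This is immediate from the lemma's proof, which estimates the right side of \eqref{eq:complex-Pochhammer-absolute-majorant} directly. Everything else is routine.
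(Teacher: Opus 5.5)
Your proof is correct and follows the paper's argument. The endpoint bounds come from Lemma~\ref{lem:complex-Pochhammer-bound} multiplied by $\sum_z|a_z|$, and the interchanges follow from Tonelli--Fubini applied to the nonnegative triple majorant. The only difference is in showing that the majorant's Mellin integral is finite: the paper integrates termwise and gets $\Gamma(\sigma)\sum_z|a_z|\,\zeta(\sigma+1)\sum_{j\ge0}(2j+1)^{-\sigma}$ explicitly, whereas you reuse the lemma's pointwise estimates on the majorant itself; both give convergence for $\sigma>1$.
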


\begin{proof}
The endpoint estimates follow from Lemma~\ref{lem:complex-Pochhammer-bound} after multiplying by the finite coefficient sum $\sum_z|a_z|$.  For the interchange statement, let $\sigma=\Re(s)>1$.  The absolute Mellin integral of the fully expanded series is bounded by
\begin{align*}
 &\sum_z|a_z|\sum_{r\ge1}\sum_{j\ge0}\frac1r
 \int_0^\infty e^{-(2j+1)r\hbar}\hbar^{\sigma-1}\,d\hbar\\
 &\qquad=
 \Gamma(\sigma)
 \sum_z|a_z|
 \left(\sum_{r\ge1}r^{-\sigma-1}\right)
 \left(\sum_{j\ge0}(2j+1)^{-\sigma}\right),
\end{align*}
which is finite precisely for $\sigma>1$.  After multiplication by the
normalizing factor $1/|\Gamma(s)|$, the majorant is
\[
 \frac{\Gamma(\sigma)}{|\Gamma(s)|}
 \sum_z|a_z|
 \left(\sum_{r\ge1}r^{-\sigma-1}\right)
 \left(\sum_{j\ge0}(2j+1)^{-\sigma}\right).
\]
The gamma quotient is finite at every $s$ in this half-plane.  Tonelli's
theorem therefore applies to the absolute majorant, and Fubini's theorem
justifies every rearrangement in the stated domain.
\end{proof}

\begin{remark}\label{rem:complex-continuation-separate}
The Mellin integral is used for $\Re(s)>1$; outside this half-plane, all
values and residues refer to the meromorphic continuation of the resulting
$L$-function product.
\end{remark}

\subsection{Primitive cyclotomic sections}\label{subsec:primitive}

Let \(\chi_N\) be an odd Dirichlet character modulo \(N\), and put
\(\zeta_N=e^{2\pi i/N}\).  Retain the packet and Bloch class from
\eqref{eq:character-packet}, and abbreviate
\[
 \xi:=\xi_{\chi_N,N},
 \qquad
 \eta_{\chi_N}:=\eta_{\chi_N,N}.
\]
We use the same symbols after the fixed embedding
\(\Q(\zeta_N)\hookrightarrow\C\).

The logarithmic identity
\[
 \log(a;q)_\infty=-\sum_{r\ge1}\frac{a^r}{r(1-q^r)}
\]
is classical; in the notation of the local Habiro construction it is
\cite[Eq.~(46)]{GSWZ}.

The general definition specializes to
\begin{equation}\label{eq:f-xi}
f_\xi(\hbar):=\log\widehat\Psi_\xi(e^{-\hbar},e^{-2\hbar})
=\sum_{k\in(\Z/N\Z)^\times}\chi_N(k)\,
\log\!\bigl((e^{-\hbar}\zeta_N^k;e^{-2\hbar})_\infty\bigr).
\end{equation}

\begin{lemma}\label{lem:gauss}
  
Assume $\chi_N$ is primitive modulo $N$ and odd, and use the Gauss-sum
normalization \eqref{eq:gauss-sum-normalization}.  We extend $\chi_N$ and
$\chi_N^\vee$ to $\Z$ by setting them equal to $0$ when $(d,N)>1$.
Then for every integer $d$,
\[
\sum\limits_{k\in(\Z/N\Z)^\times}\chi_N(k)\,\zeta_N^{kd}=G(\chi_N)\,\chi_N^\vee(d).
\]
\end{lemma}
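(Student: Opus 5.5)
The identity is the classical separable-Gauss-sum identity for a primitive character. I will prove it by splitting into two cases according to whether $(d,N)=1$.

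\emph{Case $(d,N)=1$.} Substitute $k'=kd$. Since $d$ is a unit modulo $N$, this permutes $(\Z/N\Z)^\times$, and $\chi_N(k)=\chi_N(k'd^{-1})=\chi_N^\vee(d)\chi_N(k')$. The sum therefore becomes
\[
\chi_N^\vee(d)\sum_{k'}\chi_N(k')\zeta_N^{k'}=\chi_N^\vee(d)\,G(\chi_N),
\]
using the normalization \eqref{eq:gauss-sum-normalization}. Neither primitivity nor oddness is needed here.

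\emph{Case $g=(d,N)>1$.} The right side vanishes by the extension-by-zero convention, so we must show the left side is $0$. Write $N=gN'$; then $\zeta_N^{kd}$ depends only on $k$ modulo $N'$. Since $g>1$ we have $N'<N$, and $N'$ is a proper divisor of $N$.

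Primitivity says $\chi_N$ is not induced from modulus $N'$. Equivalently, there exists $u\in(\Z/N\Z)^\times$ with $u\equiv1\pmod{N'}$ and $\chi_N(u)\ne1$. The standard justification is that if $\chi_N$ were trivial on the kernel $H$ of the reduction $(\Z/N\Z)^\times\to(\Z/N'\Z)^\times$, it would factor through modulus $N'$, contradicting primitivity.

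Now substitute $k\mapsto ku$ in the sum $S=\sum_k\chi_N(k)\zeta_N^{kd}$. Because $ku\equiv k\pmod{N'}$, each exponential term is unchanged, and the substitution gives $S=\chi_N(u)S$. Since $\chi_N(u)\neq1$, it follows that $S=0$.

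The only step with content is the existence of $u$, which is the standard characterization of primitivity via the kernel of reduction. I expect no real obstacle beyond stating it carefully. Oddness of $\chi_N$ is not used; it is included only to match the packet setting.
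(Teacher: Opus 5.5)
Your proof is correct and follows the paper's argument: you reindex by $d^{-1}$ in the coprime case, and when $(d,N)>1$ you use primitivity to find $u\equiv1$ modulo a proper divisor of $N$ with $\chi_N(u)\neq1$, which gives $S=\chi_N(u)S$. The only cosmetic difference is that you take $N'=N/(d,N)$, whereas the paper takes $N'=N/\ell$ for a prime $\ell\mid(d,N)$; either choice works.
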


\begin{proof}
If $(d,N)=1$, multiplication by $d^{-1}$ permutes $(\Z/N\Z)^\times$, so we may substitute $k\mapsto kd^{-1}$ inside the unit group:
\[
\sum\limits_{k\in(\Z/N\Z)^\times}\chi_N(k)\zeta_N^{kd}
=\sum\limits_{k\in(\Z/N\Z)^\times}\chi_N(kd^{-1})\zeta_N^{k}
=\chi_N^\vee(d)\sum\limits_{k\in(\Z/N\Z)^\times}\chi_N(k)\zeta_N^{k}
=G(\chi_N)\chi_N^\vee(d).
\]

If $(d,N)>1$, pick a prime $\ell\mid(d,N)$ and write $N=\ell N'$.
Since $\chi_N$ is primitive modulo $N$, there exists a unit $u\in(\Z/N\Z)^\times$ such that
$u\equiv 1\pmod{N'}$ but $\chi_N(u)\neq 1$.
Indeed, otherwise $\chi_N$ would be trivial on the kernel of reduction
$(\Z/N\Z)^\times\to(\Z/N'\Z)^\times$ and would descend to modulus
$N'$, contradicting that its conductor is $N$.
For every $k$ we have $\zeta_N^{u k d}=\zeta_N^{k d}$ (because $(u-1)kd$ is a multiple of $N$), hence
\[
S:=\sum\limits_{k\in(\Z/N\Z)^\times}\chi_N(k)\zeta_N^{kd}
=\sum\limits_{k\in(\Z/N\Z)^\times}\chi_N(uk)\zeta_N^{u k d}
=\chi_N(u)\,S.
\]
Thus $S=0$, which agrees with the right-hand side since $\chi_N^\vee(d)=0$.
\end{proof}

\begin{theorem}[Complex Mellin factorization for primitive packets]\label{thm:primitive-factor}
Assume that $\chi_N$ is primitive modulo $N$ and odd. Then
\[
f_\xi(\hbar)=\sum\limits_{n=1}^\infty a_n(\xi)\,e^{-n\hbar},
\qquad
a_n(\xi)=-G(\chi_N)\sum\limits_{\substack{d\mid n\\ n/d\ \mathrm{odd}}}\frac{\chi_N^\vee(d)}{d}.
\]
Moreover, for $\Re(s)>1$,
\[
L_\xi(s)=-G(\chi_N)\,\tilde\zeta(s)\,L(s+1,\chi_N^\vee),
\qquad
\tilde\zeta(s)=(1-2^{-s})\zeta(s)=\sum\limits_{\substack{m\ge1\\ m\ \mathrm{odd}}}m^{-s}.
\]
\end{theorem}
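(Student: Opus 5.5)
The plan is to expand $f_\xi(\hbar)$ termwise into a Dirichlet series in $e^{-\hbar}$, collapse the character sum with Lemma~\ref{lem:gauss}, and then apply the Mellin transform term by term. The interchanges this requires are justified by Theorem~\ref{thm:complex-absolute-domain}.

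For each $k$, the Pochhammer logarithm is expanded with the power-series branch:
\[
 \log\bigl((e^{-\hbar}\zeta_N^k;e^{-2\hbar})_\infty\bigr)
 =-\sum_{j\ge0}\sum_{d\ge1}\frac{\zeta_N^{kd}e^{-(2j+1)d\hbar}}{d}.
\]
Multiplying by $\chi_N(k)$ and summing over $k\in(\Z/N\Z)^\times$ first, Lemma~\ref{lem:gauss} replaces $\sum_k\chi_N(k)\zeta_N^{kd}$ by $G(\chi_N)\chi_N^\vee(d)$. This holds for all $d$, including those not prime to $N$, where both sides vanish. We then set $n=(2j+1)d$, so that $n/d=2j+1$ ranges over the odd integers. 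Collecting the coefficient of $e^{-n\hbar}$ gives
\[
 a_n(\xi)=-G(\chi_N)\sum_{d\mid n,\ n/d\text{ odd}}\chi_N^\vee(d)/d.
\]
All rearrangements are legitimate because the triple sum over $k,j,d$ is absolutely convergent for fixed $\hbar>0$, with majorant $\sum_k\sum_{r}e^{-r\hbar}/(r(1-e^{-2r\hbar}))$ as in Lemma~\ref{lem:complex-Pochhammer-bound}.

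For the Mellin transform, the setup of Subsection~\ref{subsec:complex-setup} gives $\mathcal M\{e^{-n\hbar}\}(s)=n^{-s}$. For $\Re(s)>1$, Theorem~\ref{thm:complex-absolute-domain} permits exchanging the sum over $n$ (equivalently over $d$ and odd $m=2j+1$) with the integral. This yields
\[
 L_\xi(s)=-G(\chi_N)\sum_{m\text{ odd}}\sum_{d\ge1}\frac{\chi_N^\vee(d)}{d}(md)^{-s}.
\]
The double sum factors as a product of $\sum_{m\text{ odd}}m^{-s}$ and $\sum_d\chi_N^\vee(d)d^{-s-1}=L(s+1,\chi_N^\vee)$. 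Removing the even terms from $\zeta(s)$ gives $\sum_{m\text{ odd}}m^{-s}=(1-2^{-s})\zeta(s)=\tilde\zeta(s)$, which completes the factorization.

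The only delicate point is the absolute convergence that licenses splitting the double sum. Here I would use the explicit majorant $\Gamma(\sigma)\bigl(\sum_r r^{-\sigma-1}\bigr)\bigl(\sum_j(2j+1)^{-\sigma}\bigr)$ already computed in Theorem~\ref{thm:complex-absolute-domain}, multiplied by $|G(\chi_N)|\le\varphi(N)$. With that bound in hand, everything else is bookkeeping.
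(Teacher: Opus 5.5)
Your proposal is correct and follows the paper's proof essentially step for step. You expand the Pochhammer logarithm, collapse the character sum with Lemma~\ref{lem:gauss}, and reindex by $n=(2j+1)d$. You then integrate termwise for $\Re(s)>1$, with the interchanges justified by the absolute majorant of Theorem~\ref{thm:complex-absolute-domain}.
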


\begin{proof}
Expanding the logarithm and summing the geometric series gives
\[
 f_\xi(\hbar)
 =-
 \sum_{r\ge1}\sum_{j\ge0}\frac{e^{-(2j+1)r\hbar}}{r}
 \sum_k\chi_N(k)\zeta_N^{kr}.
\]
By Lemma~\ref{lem:gauss}, the inner sum is
$G(\chi_N)\chi_N^\vee(r)$.  Writing $n=(2j+1)r$ gives
\[
 a_n(\xi)
 =-G(\chi_N)
 \sum_{\substack{r\mid n\\ n/r\ \mathrm{odd}}}
 \frac{\chi_N^\vee(r)}{r}.
\]
For $\Re(s)>1$, termwise Mellin transformation then yields
\[
 L_\xi(s)
 =-G(\chi_N)L(s+1,\chi_N^\vee)
   \sum_{\substack{m\ge1\\m\ \mathrm{odd}}}m^{-s},
\]
which is the required factorization.
\end{proof}

\begin{remark}\label{rem:Fantini-Rella-comparison}
Fantini--Rella obtain $\zeta(s)L(s+1,\bar\chi)$ for the unshifted
transform \cite[Lem.~4.6 and Eqs.~(105), (111)]{FantiniRella}.  Here the
half shift restricts the outer Dirichlet variable to odd integers,
replacing $\zeta(s)$ by $(1-2^{-s})\zeta(s)$.
\end{remark}

\subsection{Imprimitive cyclotomic sections}\label{subsec:imprimitive}

Let $\chi_N$ be an odd Dirichlet character modulo $N$, induced from an odd primitive character $\chi_f$ of conductor $f$, and write
\[
 N=fg.
\]

Retain the finite correction \(A_{\chi_f,g}(d)\) from
\eqref{eq:general-imprimitive-correction}.  When \((f,g)=1\), it becomes
\[
 A_{\chi_f,g}(d)=\chi_f(g)\chi_f^\vee(d)C_g(d),
 \qquad
 C_g(d):=\sum_{\substack{v\;(\mathrm{mod}\;g)\\(v,g)=1}}e^{2\pi i vd/g},
\]
by the usual formula
\(C_g(d)=\sum_{u\mid(g,d)}u\,\mob(g/u)\).

\begin{lemma}\label{lem:ramanujan}
For every integer $d$ one has
\[
\sum_{k\in(\Z/N\Z)^\times}\chi_N(k)\,\zeta_N^{kd}
=
G(\chi_f)\,A_{\chi_f,g}(d).
\]
In particular, if $(f,g)=1$, then
\[
\sum_{k\in(\Z/N\Z)^\times}\chi_N(k)\,\zeta_N^{kd}
=
G(\chi_f)\chi_f(g)\chi_f^\vee(d)C_g(d).
\]
\end{lemma}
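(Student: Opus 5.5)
We need to prove Lemma \ref{lem:ramanujan}: $\sum_{k\in(\Z/N\Z)^\times}\chi_N(k)\zeta_N^{kd}=G(\chi_f)A_{\chi_f,g}(d)$.

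The plan is to remove the coprimality condition on $k$ by Möbius inversion, and then to evaluate the resulting sums over a full period using the primitive character $\chi_f$. First we write $\chi_N(k)=\chi_f(k)\sum_{e\mid (k,g)}\mob(e)$. This is valid because $\chi_N(k)=\chi_f(k)$ when $(k,N)=1$ and $\chi_N(k)=0$ otherwise. Moreover, since $\chi_f(k)=0$ when $(k,f)>1$, the condition $(k,N)=1$ reduces to $(k,g)=1$ once $\chi_f(k)\neq0$. We regard the sum as running over $k\bmod N$ with $\chi_f$ extended by zero. Interchanging the sums gives
\[
 \sum_{e\mid g}\mob(e)\sum_{k\bmod N,\ e\mid k}\chi_f(k)\zeta_N^{kd}.
\]
For each $e$ we substitute $k=ek'$ with $k'\bmod N/e$. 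The inner sum becomes $\chi_f(e)\sum_{k'\bmod N/e}\chi_f(k')\zeta_{N/e}^{k'd}$, which vanishes unless $(e,f)=1$.

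Next we evaluate the full-period sum. Write $N/e=f\cdot(g/e)$ and split $k'=a+fb$ with $a\bmod f$ and $b\bmod g/e$. Since $\chi_f$ has period $f$, the sum over $b$ is a geometric sum $\sum_b\zeta_{g/e}^{bd}$. This equals $g/e$ if $(g/e)\mid d$ and is $0$ otherwise. When it is nonzero, write $d=(g/e)d'$; the remaining sum is $\sum_{a\bmod f}\chi_f(a)\zeta_f^{ad'}$. By Lemma \ref{lem:gauss}, which applies because $\chi_f$ is primitive, this equals $G(\chi_f)\chi_f^\vee(d')$, and it correctly gives $0$ when $(d',f)>1$.

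Collecting the terms and setting $u=g/e$, we obtain $G(\chi_f)\sum u\,\mob(g/u)\chi_f(g/u)\chi_f^{-1}(d/u)$. The sum runs over $u\mid g$ with $u\mid d$ and $(g/u,f)=1$, and this is exactly $G(\chi_f)A_{\chi_f,g}(d)$ as defined in \eqref{eq:general-imprimitive-correction}.

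For the coprime case $(f,g)=1$, every $g/u$ is prime to $f$. We have $\chi_f(g/u)\chi_f^{-1}(d/u)=\chi_f(g)\chi_f^{-1}(d)$: when $(d,f)=1$ this holds because $u$ is a unit modulo $f$, and when $(d,f)>1$ both sides vanish. The identity then reduces to $\chi_f(g)\chi_f^\vee(d)C_g(d)$ by the classical formula for the Ramanujan sum.

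The main obstacle is bookkeeping rather than depth. One must justify that replacing $\chi_N$ by $\chi_f$ times the Möbius sum is exact, including the zero values at non-units. One must also check that no coprimality between $f$ and $g$ is needed; this holds because the terms with $(e,f)>1$ are killed automatically by the factor $\chi_f(e)$.
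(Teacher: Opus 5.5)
Your proof is correct and follows essentially the same route as the paper: M\"obius inversion over divisors of $g$ to remove the coprimality condition, the substitution $k=ek'$ (the paper's $k=(g/u)b$), the splitting $k'=a+fb$ whose geometric sum forces $u\mid d$, and the primitive Gauss identity for $\chi_f$. The only cosmetic difference is how divisors with $(e,f)>1$ are discarded. You use the factor $\chi_f(e)=0$, whereas the paper observes that $v\mid k$ is incompatible with $(k,f)=1$.
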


\begin{proof}
The condition $(k,N)=1$ is equivalent to $(k,f)=1$ and $(k,g)=1$.  Since $\chi_N$ is induced from $\chi_f$,
\[
\sum_{k\in(\Z/N\Z)^\times}\chi_N(k)\zeta_N^{kd}
=
\sum_{\substack{k\;(\mathrm{mod}\;fg)\\(k,f)=1}}
\chi_f(k)e^{2\pi i kd/(fg)}
\sum_{\substack{v\mid g\\ v\mid k}}\mu_{\mathrm{Mob}}(v).
\]
If $v$ has a prime factor in common with $f$, then the inner condition $v\mid k$ is incompatible with $(k,f)=1$.  Hence only divisors $v$ of $g$ with $(v,f)=1$ contribute.  Put $v=g/u$; then $u\mid g$ and $(g/u,f)=1$.  For such $u$ write $k=(g/u)b$.  The corresponding contribution is
\[
\mu_{\mathrm{Mob}}(g/u)\chi_f(g/u)
\sum_{\substack{b\;(\mathrm{mod}\;fu)\\(b,f)=1}}
\chi_f(b)e^{2\pi i bd/(fu)}.
\]
Write $b=a+ft$ with $a\in(\Z/f\Z)^\times$ and $0\le t<u$.  The sum over $t$ is $u$ if $u\mid d$ and is $0$ otherwise.  When $u\mid d$, the remaining sum over $a$ is
\[
u\sum_{a\in(\Z/f\Z)^\times}\chi_f(a)e^{2\pi i a(d/u)/f}
=
u\,G(\chi_f)\chi_f^\vee(d/u)
\]
by the primitive Gauss identity.  Summing over $u$ gives \eqref{eq:general-imprimitive-correction}.  If $(f,g)=1$, the identity
\[
\chi_f(g/u)\chi_f^\vee(d/u)=\chi_f(g)\chi_f^\vee(d)
\]
for $u\mid d$ gives the displayed Ramanujan-sum specialization.
\end{proof}

\begin{definition}[Finite imprimitive Euler correction]\label{def:Lg}
For $N=fg$ as above, set
\begin{equation}\label{eq:finite-imprimitive-euler-correction}
C^{\mathrm{Mell}}_{\chi_f,g}(s)
:=
\sum_{\substack{u\mid g\\(g/u,f)=1}}
\mu_{\mathrm{Mob}}(g/u)\chi_f(g/u)u^{-s}.
\end{equation}
Thus, when $(f,g)=1$,
\[
C^{\mathrm{Mell}}_{\chi_f,g}(s)
=
\chi_f(g)\sum_{u\mid g}\mu_{\mathrm{Mob}}(g/u)\chi_f^\vee(u)u^{-s}.
\]
\end{definition}

\begin{theorem}[Complex Mellin factorization for imprimitive packets]\label{thm:imprimitive-factor}
Let $\chi_N$ be induced from the odd primitive character $\chi_f$ of
conductor $f$, write $N=fg$, and put $\xi=\xi_{\chi_N,N}$.  Then, for
$\Re(s)>1$,
\[
L_\xi(s)=-G(\chi_f)\,\tilde\zeta(s)\,C^{\mathrm{Mell}}_{\chi_f,g}(s)\,L(s+1,\chi_f^\vee).
\]
If $(f,g)=1$, this is equivalently
\[
L_\xi(s)
=
-G(\chi_f)\chi_f(g)\tilde\zeta(s)L(s+1,\chi_f^\vee)
\sum_{u\mid g}\mu_{\mathrm{Mob}}(g/u)\chi_f^\vee(u)u^{-s}.
\]
\end{theorem}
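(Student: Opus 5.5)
The plan follows the primitive proof of Theorem~\ref{thm:primitive-factor}, with Lemma~\ref{lem:ramanujan} in place of Lemma~\ref{lem:gauss}. By Theorem~\ref{thm:complex-absolute-domain}, every expansion and interchange below is absolutely justified for \(\Re(s)>1\).

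First expand each Pochhammer logarithm with the power-series branch and sum the geometric series in \(j\). This gives
\[
 f_\xi(\hbar)=-\sum_{r\ge1}\sum_{j\ge0}\frac{e^{-(2j+1)r\hbar}}{r}
 \sum_{k\in(\Z/N\Z)^\times}\chi_N(k)\zeta_N^{kr}.
\]
Lemma~\ref{lem:ramanujan} replaces the inner sum by \(G(\chi_f)A_{\chi_f,g}(r)\). Termwise Mellin transformation, with \(\mathcal M\{e^{-n\hbar}\}(s)=n^{-s}\) and \(n=(2j+1)r\), then factors the result as
\[
 L_\xi(s)=-G(\chi_f)\,\tilde\zeta(s)\sum_{r\ge1}\frac{A_{\chi_f,g}(r)}{r^{s+1}}.
\]
Here the odd outer variable \(2j+1\) produces \(\tilde\zeta(s)\).

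The remaining step is to identify the Dirichlet series \(\sum_r A_{\chi_f,g}(r)r^{-s-1}\).
1. Insert the definition \eqref{eq:general-imprimitive-correction}.
2. For each admissible \(u\mid g\) with \((g/u,f)=1\), write \(r=ut\).
3. The summand becomes \(u\,\mob(g/u)\chi_f(g/u)\chi_f^{-1}(t)\,u^{-s-1}t^{-s-1}\).
4. The factor \(u\cdot u^{-s-1}=u^{-s}\) collects into \(C^{\mathrm{Mell}}_{\chi_f,g}(s)\).
5. The sum over \(t\) gives \(L(s+1,\chi_f^\vee)\).

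Since the sum over \(u\) is finite and the \(t\)-series converges absolutely for \(\Re(s)>0\), this rearrangement is legitimate. This proves the general formula.

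For \((f,g)=1\), use the specialization in Definition~\ref{def:Lg}, which rests on the identity \(\chi_f(g/u)=\chi_f(g)\chi_f^\vee(u)\). That identity holds because \(u\mid g\) is coprime to \(f\), so \(\chi_f(u)\) is invertible. It also matches the Ramanujan-sum form in Lemma~\ref{lem:ramanujan}.

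The main obstacle is bookkeeping rather than analysis. One must check that the substitution \(r=ut\) matches the divisibility condition \(u\mid d\) in \(A_{\chi_f,g}(d)\), including when \((f,g)>1\), where \(\chi_f(g/u)\) may vanish for some \(u\). Those terms contribute zero on both sides, so no coprimality assumption is needed, consistent with the remark after \eqref{eq:nu-imprimitive}.
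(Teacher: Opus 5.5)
Your proposal is correct and follows essentially the same route as the paper. You expand the Pochhammer logarithms and replace the character sum by $G(\chi_f)A_{\chi_f,g}(r)$ via Lemma~\ref{lem:ramanujan}. You then extract $\tilde\zeta(s)$ from the odd outer index and factor $\sum_r A_{\chi_f,g}(r)r^{-s-1}$ as $C^{\mathrm{Mell}}_{\chi_f,g}(s)L(s+1,\chi_f^\vee)$ by writing $r=ut$. Your explicit check of the coprime case via $\chi_f(g/u)=\chi_f(g)\chi_f^\vee(u)$ is a detail the paper leaves to Definition~\ref{def:Lg}; it is consistent with the paper's argument.
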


\begin{proof}
The computation in Theorem~\ref{thm:primitive-factor} remains valid up to the finite character sum.  Lemma~\ref{lem:ramanujan} gives
\[
f_\xi(\hbar)
=
-G(\chi_f)\sum_{r\ge1}\sum_{j\ge0}
\frac{A_{\chi_f,g}(r)}{r}\,e^{-(2j+1)r\hbar}.
\]
Taking the normalized Mellin transform gives
\[
L_\xi(s)
=
-G(\chi_f)\tilde\zeta(s)
\sum_{r\ge1}\frac{A_{\chi_f,g}(r)}{r^{s+1}}
\qquad(\Re(s)>1).
\]
Using \eqref{eq:general-imprimitive-correction} and rearranging absolutely convergent sums,
\begin{align*}
\sum_{r\ge1}\frac{A_{\chi_f,g}(r)}{r^{s+1}}
&=
\sum_{\substack{u\mid g\\(g/u,f)=1}}
u\,\mu_{\mathrm{Mob}}(g/u)\chi_f(g/u)
\sum_{\substack{r\ge1\\u\mid r}}
\frac{\chi_f^\vee(r/u)}{r^{s+1}}  \\
&=
\sum_{\substack{u\mid g\\(g/u,f)=1}}
\mu_{\mathrm{Mob}}(g/u)\chi_f(g/u)u^{-s}
\sum_{t\ge1}\frac{\chi_f^\vee(t)}{t^{s+1}} \\
&=C^{\mathrm{Mell}}_{\chi_f,g}(s)L(s+1,\chi_f^\vee).
\end{align*}
\end{proof}

\begin{example}\label{ex:modulus-six-imprimitive-correction}
Let $\chi_3$ be the nontrivial character modulo $3$ and induce it to
modulus $N=6$, so $f=3$ and $g=2$.  The two admissible divisors in
\eqref{eq:finite-imprimitive-euler-correction} are $u=1,2$.  Since
\[
 \mu_{\mathrm{Mob}}(2)\chi_3(2)=(-1)(-1)=1,
 \qquad
 \mu_{\mathrm{Mob}}(1)\chi_3(1)=1,
\]
one obtains
\[
 C^{\mathrm{Mell}}_{\chi_3,2}(s)=1+2^{-s}.
\]
At the coefficient level, Lemma~\ref{lem:ramanujan} reads
\[
 A_{\chi_3,2}(r)
 =\chi_3(r)+2\one_{2\mid r}\chi_3(r/2).
\]
Substituting the finite factor into Theorem~\ref{thm:imprimitive-factor}
gives the figure-eight formula used in
Subsection~\ref{subsec:complex-examples}.
\end{example}

\begin{example}
\label{ex:modulus-eight-noncoprime-correction}
Let $\chi_4$ be the primitive odd character of conductor $f=4$ and
induce it to modulus $N=8$, so $g=2$ and $(f,g)\ne1$.  In
\eqref{eq:general-imprimitive-correction}, the divisor $u=1$ is excluded
because $(2,4)\ne1$, whereas $u=2$ contributes.  Hence
\[
 A_{\chi_4,2}(d)
 =
 \begin{cases}
 2\chi_4(d/2),&2\mid d,\\
 0,&2\nmid d,
 \end{cases}
 \qquad
 C^{\mathrm{Mell}}_{\chi_4,2}(s)=2^{-s}.
\]
\end{example}

\subsection[Residues at s=1 and dilogarithms]{Residues at $s=1$ and dilogarithms}\label{subsec:complex-reg}

The residue at $s=1$ is governed by dilogarithmic data.  Let $D(z)$ denote
the Bloch--Wigner dilogarithm.  Its coefficient-linear extension is
\[
 D_E=D\otimes1:
 B(\C)\otimes_\Z E\longrightarrow\R\otimes_\Z E.
\]
For the Bloch-class component of the chosen pair set
\[
D_E(\eta_{\chi_N})
:=\sum_{k\in(\Z/N\Z)^\times}\chi_N(k)\,D(\zeta_N^k).
\]
Since $|\zeta_N^k|=1$, one has $D(\zeta_N^k)=\Im(\Li_2(\zeta_N^k))$.

\begin{lemma}\label{lem:dilog-sum}
Let $\chi_N$ be induced from the odd primitive character $\chi_f$ of conductor $f$, and write $N=fg$.  Then
\[
\sum_{k\in(\Z/N\Z)^\times}\chi_N(k)\,\Li_2(\zeta_N^k)
=
G(\chi_f)\,C^{\mathrm{Mell}}_{\chi_f,g}(1)L(2,\chi_f^\vee).
\]
Equivalently, in the conductor-coprime case $(f,g)=1$, this equals
\[
G(\chi_f)\chi_f(g)
L(2,\chi_f^\vee)
\sum_{u\mid g}\mu_{\mathrm{Mob}}(g/u)\chi_f^\vee(u)u^{-1}.
\]
\end{lemma}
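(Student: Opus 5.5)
We need to prove Lemma dilog-sum: \(\sum_k \chi_N(k)\Li_2(\zeta_N^k) = G(\chi_f) C^{\mathrm{Mell}}_{\chi_f,g}(1) L(2,\chi_f^\vee)\).

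The plan is to expand the dilogarithm as its absolutely convergent series on the unit circle and reduce the root-of-unity sum to the finite Fourier identity of Lemma~\ref{lem:ramanujan}. Since \(|\zeta_N^k|=1\), the series \(\Li_2(\zeta_N^k)=\sum_{r\ge1}\zeta_N^{kr}r^{-2}\) converges absolutely. The outer sum over \(k\) is finite, so we may interchange it with the series:
\[
 \sum_{k}\chi_N(k)\Li_2(\zeta_N^k)
 =\sum_{r\ge1}\frac1{r^2}\sum_{k\in(\Z/N\Z)^\times}\chi_N(k)\zeta_N^{kr}
 =G(\chi_f)\sum_{r\ge1}\frac{A_{\chi_f,g}(r)}{r^2},
\]
where the second equality is Lemma~\ref{lem:ramanujan} applied with \(d=r\).

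Next we evaluate the Dirichlet series \(\sum_r A_{\chi_f,g}(r)r^{-2}\). This is exactly the rearrangement already carried out in the proof of Theorem~\ref{thm:imprimitive-factor}, now at \(s+1=2\), i.e.\ \(s=1\). Insert the definition \eqref{eq:general-imprimitive-correction}, write \(r=ut\) for each admissible divisor \(u\mid g\) with \((g/u,f)=1\), and pull out the factor \(u\cdot u^{-2}=u^{-1}\). This gives
\[
 \sum_{\substack{u\mid g\\(g/u,f)=1}}\mob(g/u)\chi_f(g/u)u^{-1}
 \sum_{t\ge1}\frac{\chi_f^\vee(t)}{t^2}
 =C^{\mathrm{Mell}}_{\chi_f,g}(1)L(2,\chi_f^\vee).
\]
The rearrangement is justified because \(|A_{\chi_f,g}(r)|\le\sum_{u\mid g}u\) is bounded, so every series involved converges absolutely at exponent \(2\). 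There is no obstacle beyond this bookkeeping: \(s=1\) lies on the boundary of the half-plane \(\Re(s)>1\) used in Theorem~\ref{thm:imprimitive-factor}, but the convergence here is direct, since only \(\sum_r r^{-2}\) is needed and not the Mellin integral.

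For the conductor-coprime case \((f,g)=1\), substitute the specialized form of \(C^{\mathrm{Mell}}_{\chi_f,g}\) from Definition~\ref{def:Lg} at \(s=1\):
\[
 C^{\mathrm{Mell}}_{\chi_f,g}(1)=\chi_f(g)\sum_{u\mid g}\mob(g/u)\chi_f^\vee(u)u^{-1}.
\]
This uses \(\chi_f(g/u)=\chi_f(g)\chi_f^\vee(u)\), which holds because \(u\) is prime to \(f\). The only point requiring care is confirming that Lemma~\ref{lem:ramanujan} holds for every integer \(d\), including those sharing factors with \(N\); it does, with the convention \(\chi_f^\vee=0\) off units.
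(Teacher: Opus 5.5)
Your proof is correct and matches the paper's argument: you expand $\Li_2$ on the unit circle, interchange with the finite sum over $k$, apply Lemma~\ref{lem:ramanujan}, and then reuse the divisor rearrangement from the proof of Theorem~\ref{thm:imprimitive-factor} at $s=1$. Your remarks on absolute convergence (boundedness of $A_{\chi_f,g}$) and on the coprime specialization via $\chi_f(g/u)=\chi_f(g)\chi_f^\vee(u)$ only make explicit steps the paper leaves implicit.
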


\begin{proof}
Expand $\Li_2(\zeta_N^k)=\sum_{n\ge1}\zeta_N^{kn}/n^2$ and interchange sums:
\[
\sum_k\chi_N(k)\Li_2(\zeta_N^k)
=
\sum_{n\ge1}\frac{1}{n^2}\sum_k\chi_N(k)\zeta_N^{kn}.
\]
Lemma~\ref{lem:ramanujan} gives
\[
\sum_k\chi_N(k)\Li_2(\zeta_N^k)
=
G(\chi_f)\sum_{n\ge1}\frac{A_{\chi_f,g}(n)}{n^2}.
\]
The computation in the proof of Theorem~\ref{thm:imprimitive-factor}, evaluated at $s=1$, identifies the last Dirichlet series with $C^{\mathrm{Mell}}_{\chi_f,g}(1)L(2,\chi_f^\vee)$.
\end{proof}

\begin{theorem}[Residue and regulator]\label{thm:complex-residue}
Let
\[
\xi=\sum_{k\in(\Z/N\Z)^\times}\chi_N(k)[\zeta_N^k]_{\mathrm{cyc}}
\in\CycPres(\Q(\zeta_N);E_{\chi_N}),
\]
and assume that $\chi_N$ is induced from an odd primitive character $\chi_f$ of conductor $f$, with $N=fg$.  Then
\[
\Res_{s=1}L_\xi(s)
=
-\frac12\,G(\chi_f)C^{\mathrm{Mell}}_{\chi_f,g}(1)L(2,\chi_f^\vee)
=
-\frac12\sum_{k\in(\Z/N\Z)^\times}\chi_N(k)\Li_2(\zeta_N^k).
\]
If, in addition, $\chi_N$ is real-valued, then
\[
\Res_{s=1}L_\xi(s)=-\frac{i}{2}\,D_E(\eta_{\chi_N}).
\]
\end{theorem}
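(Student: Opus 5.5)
The plan is to read the residue off the factorization of Theorem~\ref{thm:imprimitive-factor}, which already covers the primitive case with $g=1$ and $C^{\mathrm{Mell}}_{\chi_f,1}=1$. For $\Re(s)>1$,
\[
 L_\xi(s)=-G(\chi_f)\,\tilde\zeta(s)\,C^{\mathrm{Mell}}_{\chi_f,g}(s)\,L(s+1,\chi_f^\vee).
\]
As in Remark~\ref{rem:complex-continuation-separate}, I interpret $\Res_{s=1}$ through the meromorphic continuation of the right-hand side. On this side I will check three things near $s=1$:
\begin{itemize}
\item $C^{\mathrm{Mell}}_{\chi_f,g}(s)$ is a finite Dirichlet polynomial, hence entire;
\item $L(s+1,\chi_f^\vee)$ is given by an absolutely convergent Dirichlet series on $\Re(s)>0$, hence holomorphic at $s=1$, with value $L(2,\chi_f^\vee)$;
\item $\tilde\zeta(s)=(1-2^{-s})\zeta(s)$ has a simple pole at $s=1$ with residue $(1-2^{-1})\cdot1=\tfrac12$.
\end{itemize}
Multiplying the residue of the simple pole by the values of the holomorphic factors gives the first equality:
\[
 \Res_{s=1}L_\xi(s)=-\tfrac12\,G(\chi_f)\,C^{\mathrm{Mell}}_{\chi_f,g}(1)\,L(2,\chi_f^\vee).
\]
The second equality then follows directly from Lemma~\ref{lem:dilog-sum}, which identifies $G(\chi_f)C^{\mathrm{Mell}}_{\chi_f,g}(1)L(2,\chi_f^\vee)$ with $\sum_k\chi_N(k)\Li_2(\zeta_N^k)$.

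For the real-valued case, I pair each $k$ with $-k$ in the sum. Oddness gives $\chi_N(-k)=-\chi_N(k)$. Since $\Li_2$ has real Taylor coefficients, $\Li_2(\zeta_N^{-k})=\overline{\Li_2(\zeta_N^k)}$. Averaging the sum with its reindexed copy therefore gives
\[
 \sum_k\chi_N(k)\Li_2(\zeta_N^k)
 =\tfrac12\sum_k\chi_N(k)\bigl(\Li_2(\zeta_N^k)-\overline{\Li_2(\zeta_N^k)}\bigr)
 =i\sum_k\chi_N(k)\Im\Li_2(\zeta_N^k).
\]
On the unit circle $D(\zeta_N^k)=\Im\Li_2(\zeta_N^k)$. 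Because $\chi_N$ is real, $\chi_N(k)\,\Im\Li_2(\zeta_N^k)$ is exactly the $k$-th term of $D_E(\eta_{\chi_N})$, so the sum equals $i\,D_E(\eta_{\chi_N})$ and the residue becomes $-\tfrac{i}{2}D_E(\eta_{\chi_N})$.

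The only delicate point is making sure the residue refers to the continued product, since the Mellin integral itself converges only for $\Re(s)>1$. Holomorphy of the remaining factors at $s=1$ is routine, so no step looks like a genuine obstacle.
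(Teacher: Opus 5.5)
Your proposal is correct and follows the paper's proof: you read the residue off the factorization in Theorem~\ref{thm:imprimitive-factor}, using $\Res_{s=1}\tilde\zeta(s)=\tfrac12$ and the holomorphy of the remaining factors at $s=1$, and then identify the value via Lemma~\ref{lem:dilog-sum}. For the regulator form you pair $k$ with $-k$ so that the real parts cancel, exactly as the paper does; that step in fact uses only the oddness of $\chi_N$, so real-valuedness is not needed for it.
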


\begin{proof}
By Theorem~\ref{thm:imprimitive-factor},
\[
L_\xi(s)=-G(\chi_f)\tilde\zeta(s)C^{\mathrm{Mell}}_{\chi_f,g}(s)L(s+1,\chi_f^\vee).
\]
Since $\Res_{s=1}\tilde\zeta(s)=1/2$ and the remaining factors are holomorphic at $s=1$, the first equality follows.  The second equality is Lemma~\ref{lem:dilog-sum}.

If $\chi_N$ is odd and real-valued, the terms indexed by $k$ and $-k$ show that
\[
\sum_k\chi_N(k)\Li_2(\zeta_N^k)
=i\sum_k\chi_N(k)D(\zeta_N^k)
=iD_E(\eta_{\chi_N}),
\]
because the real parts cancel and the imaginary parts add with the odd character sign.  Substitution gives the stated regulator formula.
\end{proof}

If $\chi_N$ is primitive and odd, then analytic continuation of
Theorem~\ref{thm:primitive-factor} also gives, for every $n\ge1$,
\[
L_\xi(1-2n)
=
2^{2n-1}\frac{B_{2n}(1/2)}{2n}
\sum_{k\in(\Z/N\Z)^\times}\chi_N(k)\,\Li_{2-2n}(\zeta_N^k).
\]
Indeed,
\[
L_\xi(1-2n)=\tilde\zeta(1-2n)\cdot \bigl(-G(\chi_N)L(2-2n,\chi_N^\vee)\bigr).
\]
Moreover
\[
\tilde\zeta(1-2n)=-2^{2n-1}\frac{B_{2n}(1/2)}{2n}
\]
and
\[
\sum_{k\in(\Z/N\Z)^\times}\chi_N(k)\Li_s(\zeta_N^k)=G(\chi_N)L(s,\chi_N^\vee)
\]
by analytic continuation of the primitive Gauss-sum identity.

\subsection{The figure-eight knot}\label{subsec:complex-examples}

Let \(\chi_{-3}\) be the primitive odd character of conductor \(3\), and
consider its induction to modulus \(6\):
\[
 \xi_{4_1}
 =\sum_{k\in(\Z/6\Z)^\times}
 \chi_{-3}(k)[\zeta_6^k]_{\mathrm{cyc}}.
\]
The imprimitive factorization gives
\[
 L_{\xi_{4_1}}(s)
 =-G(\chi_{-3})(1+2^{-s})(1-2^{-s})\zeta(s)
   L(s+1,\chi_{-3}^{-1}).
\]
The figure-eight complement is obtained by gluing two positively
oriented regular ideal tetrahedra, both of shape \(\zeta_6\),
\cite[Ex.~4.8 and Fig.~4.10]{Purcell}, so
\[
 \vol(S^3\setminus4_1)=2D(\zeta_6)
 =\frac{3\sqrt3}{2}L(2,\chi_{-3}^{-1}).
\]
Theorem~\ref{thm:complex-residue} therefore yields
\[
 \Res_{s=1}L_{\xi_{4_1}}(s)
 =-\frac{i}{2}\vol(S^3\setminus4_1).
\]

\subsection{Radial branches at roots of unity}\label{subsec:complex-higher-branches}

Let \(\lambda\) be a root of exact order \(m\), and choose a square-root
lift \(\widetilde\lambda\) as in Subsection~\ref{subsec:root-conventions}.
Put
\[
 q_\lambda(\hbar)=\lambda e^{-2\hbar/m},
 \qquad
 t_\lambda(\hbar)=\widetilde\lambda e^{-\hbar/m}.
\]
For a packet \(\xi\), define
\[
 f_{\xi,\lambda}(\hbar)
 =\log\widehat\Psi_\xi(t_\lambda(\hbar),q_\lambda(\hbar)),
 \qquad
 L_{\xi,\lambda}(s)=\mathcal M\{f_{\xi,\lambda}\}(s).
\]

Let \(\chi_N\) be induced from an odd primitive character \(\chi_f\) of
conductor \(f\), where \(N=fg\), and set
\[
 B(r)=
 \begin{cases}
  \chi_N^{-1}(r),&g=1,\\
  A_{\chi_f,g}(r),&g>1,
 \end{cases}
 \qquad
 C=G(\chi_f).
\]
Define
\[
 \Theta_\lambda(r,s)
 =\sum_{\substack{n\ge1\\ n\text{ odd}}}
   \frac{\widetilde\lambda^{nr}}{n^s}
 =\Li_s(\widetilde\lambda^r)-2^{-s}\Li_s(\widetilde\lambda^{2r}).
\]

\begin{proposition}[Radial Mellin expansion]
\label{prop:complex-higher-branch}
For \(\Re(s)>1\),
\begin{align}
 f_{\xi,\lambda}(\hbar)
 &=-C\sum_{r\ge1}\sum_{\substack{n\ge1\\n\text{ odd}}}
   \frac{B(r)\widetilde\lambda^{nr}}{r}e^{-nr\hbar/m},
 \label{eq:radial-log-expansion}\\
 L_{\xi,\lambda}(s)
 &=-C\,m^s\sum_{r\ge1}
   \frac{B(r)}{r^{s+1}}\Theta_\lambda(r,s).
 \label{eq:radial-Mellin-expansion}
\end{align}
At \(\lambda=1\), these formulas recover the primitive or imprimitive
factorization at the basic center.
\end{proposition}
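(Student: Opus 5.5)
The plan is to reproduce the computation of Theorem~\ref{thm:primitive-factor}, adapted to the radial path, in three steps: expand the logarithm, evaluate the character sum, then integrate the resulting Dirichlet-type series term by term.

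\emph{First step: expansion.} Substitute $t=t_\lambda(\hbar)$ and $q=q_\lambda(\hbar)$ into \eqref{eq:general-presentation-Pochhammer-log}. Each argument $t q^j z = \widetilde\lambda\lambda^j e^{-(2j+1)\hbar/m}\zeta_N^k$ has modulus $<1$, so the power-series branch gives
\[
 f_{\xi,\lambda}(\hbar)=-\sum_{r\ge1}\sum_{j\ge0}\frac1r\,
 \widetilde\lambda^{r}\lambda^{jr}e^{-(2j+1)r\hbar/m}
 \sum_k\chi_N(k)\zeta_N^{kr}.
\]
Since $\lambda=\widetilde\lambda^2$, we have $\widetilde\lambda^{r}\lambda^{jr}=\widetilde\lambda^{(2j+1)r}$. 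Setting $n=2j+1$, which runs over the odd positive integers, turns the $j$-sum into a sum over odd $n$.

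\emph{Second step: the character sum.} The inner sum equals $G(\chi_f)A_{\chi_f,g}(r)$ by Lemma~\ref{lem:ramanujan}. In the primitive case $g=1$ it reduces to $G(\chi_N)\chi_N^{-1}(r)$ by Lemma~\ref{lem:gauss}. In both cases the sum is $C\,B(r)$, which gives \eqref{eq:radial-log-expansion}.

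\emph{Third step: Mellin transform.} Apply the normalized transform termwise, using
\[
 \mathcal M\{e^{-nr\hbar/m}\}(s)=(nr/m)^{-s}=m^s n^{-s}r^{-s}.
\]
Collecting the odd-$n$ sum produces $\Theta_\lambda(r,s)$, and the remaining factor is $r^{-s-1}$, which gives \eqref{eq:radial-Mellin-expansion}. The identity $\Theta_\lambda(r,s)=\Li_s(\widetilde\lambda^r)-2^{-s}\Li_s(\widetilde\lambda^{2r})$ follows by removing the even terms $n=2n'$ from the full polylogarithm series. That series converges absolutely for $\Re s>1$ because $|\widetilde\lambda|=1$.

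\emph{Justification of the interchanges.} This is the only real issue. The absolute majorant is exactly the one in Theorem~\ref{thm:complex-absolute-domain}, with $\hbar$ rescaled by $1/m$, since $|\widetilde\lambda^{nr}|=1$ and $|A_{\chi_f,g}(r)|\le \sum_{u\mid g}u$ is bounded. That majorant is finite for $\Re s>1$, so Tonelli and Fubini permit the rearrangements.

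\emph{Recovery of the basic center.} At $\lambda=1$ we have $m=1$ and $\widetilde\lambda=1$, so $\Theta_1(r,s)=(1-2^{-s})\zeta(s)=\tilde\zeta(s)$. The $r$-sum then becomes $L(s+1,\chi_N^{-1})$ when $g=1$, and $C^{\mathrm{Mell}}_{\chi_f,g}(s)L(s+1,\chi_f^\vee)$ when $g>1$, by the rearrangement in the proof of Theorem~\ref{thm:imprimitive-factor}. This recovers Theorems~\ref{thm:primitive-factor} and~\ref{thm:imprimitive-factor}.
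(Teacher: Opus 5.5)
Your proof is correct and follows essentially the paper's route: expand the power-series logarithm, evaluate the character sum with Lemmas~\ref{lem:gauss} and~\ref{lem:ramanujan}, and integrate termwise under an absolute majorant. The one small difference is in the first step, where the paper writes the Pochhammer index as $mj+\ell$ to get $m$ basic-center factors with base $e^{-2\hbar}$, whereas you absorb $\lambda^j$ directly via $\widetilde\lambda\lambda^j=\widetilde\lambda^{2j+1}$; this reaches the same odd-$n$ double series a little more directly.
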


\begin{proof}
Writing the Pochhammer index as \(mj+\ell\) gives
\[
 (t_\lambda\zeta_N^k;q_\lambda)_\infty
 =\prod_{\ell=0}^{m-1}
 (\widetilde\lambda^{2\ell+1}e^{-(2\ell+1)\hbar/m}
  \zeta_N^k;e^{-2\hbar})_\infty.
\]
After expansion, the integers \((2\ell+1)+2mj\) run through the positive
odd integers.  The primitive Gauss identity or its imprimitive correction
evaluates the finite sum over \(k\), giving
\eqref{eq:radial-log-expansion}.  The double series is absolutely
Mellin-integrable for \(\Re(s)>1\), and termwise integration gives
\eqref{eq:radial-Mellin-expansion}.
\end{proof}

\begin{theorem}[Finite Hurwitz--zeta continuation]
\label{thm:radial-meromorphic-continuation}
Let \(H\) be the order of \(\widetilde\lambda\), and put
\(L=\operatorname{lcm}(N,H)\).  Then
\begin{equation}\label{eq:radial-Hurwitz-continuation}
 L_{\xi,\lambda}(s)
 =-C\,m^sL^{-s-1}
 \sum_{a=1}^{L}
 B(a)\Theta_\lambda(a,s)
 \zeta\!\left(s+1,\frac aL\right).
\end{equation}
This formula extends \(L_{\xi,\lambda}\) meromorphically to \(\C\), with
possible simple poles only at \(s=0\) and \(s=1\).  The continuation
depends on the chosen lift \(\widetilde\lambda\).
\end{theorem}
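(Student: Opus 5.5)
The plan is to start from the Dirichlet-series expansion \eqref{eq:radial-Mellin-expansion} of Proposition~\ref{prop:complex-higher-branch}, valid for \(\Re(s)>1\), and group the outer variable \(r\) by residue classes modulo \(L=\operatorname{lcm}(N,H)\). Two periodicity facts are needed. First, \(B(r)\) is periodic modulo \(N\), hence modulo \(L\). In the primitive case this holds because \(\chi_N^{-1}\) is \(N\)-periodic. In the imprimitive case it is read off from Lemma~\ref{lem:ramanujan}, which writes \(A_{\chi_f,g}(r)\) as \(G(\chi_f)^{-1}\) times a character sum in \(\zeta_N^{kr}\). Second, \(\Theta_\lambda(r,s)\) depends on \(r\) only through \(\widetilde\lambda^r\), and therefore only on \(r\bmod H\), hence on \(r\bmod L\).

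Writing \(r=a+Lk\) with \(1\le a\le L\) and \(k\ge0\), I would rearrange, using the absolute convergence from Theorem~\ref{thm:complex-absolute-domain}:
\[
 \sum_{r\ge1}\frac{B(r)\Theta_\lambda(r,s)}{r^{s+1}}
 =\sum_{a=1}^{L}B(a)\Theta_\lambda(a,s)\sum_{k\ge0}(a+Lk)^{-s-1}
 =L^{-s-1}\sum_{a=1}^{L}B(a)\Theta_\lambda(a,s)\,\zeta\!\left(s+1,\frac aL\right).
\]
This gives \eqref{eq:radial-Hurwitz-continuation} on \(\Re(s)>1\).

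For the continuation I would treat each factor separately. The factor \(m^sL^{-s-1}\) is entire. Each \(\zeta(s+1,a/L)\) is meromorphic on \(\C\) with a single simple pole at \(s=0\). The remaining factor is
\[
 \Theta_\lambda(a,s)=\Li_s(w)-2^{-s}\Li_s(w^2),
 \qquad w=\widetilde\lambda^a,
\]
where \(w\) is a root of unity. Here I split into cases. If \(w\ne1\) and \(w^2\ne1\), both polylogarithms at roots of unity are entire in \(s\), since \(\Li_s(w)=\sum_{b}w^b\,\zeta(s,\cdot)\)-type periodic zeta functions have no pole when \(w\ne1\). If \(w=1\), then \(\Theta_\lambda=(1-2^{-s})\zeta(s)\), which has only a simple pole at \(s=1\). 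If \(w=-1\), then \(\Theta_\lambda=\Li_s(-1)-2^{-s}\zeta(s)\); the first term is entire and the second has only a simple pole at \(s=1\).

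Products of these factors could a priori produce higher-order poles only where two poles coincide. They cannot, since the possible poles of \(\Theta_\lambda\) lie at \(s=1\) and those of the Hurwitz factors lie at \(s=0\). So every pole of the sum is simple, and each lies at \(s=0\) or \(s=1\). Dependence on \(\widetilde\lambda\) is visible because \(\Theta_\lambda\), and the cases above, depend on the values \(\widetilde\lambda^a\) and not only on \(\lambda\).

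The main obstacle is making the periodicity of \(B\) rigorous in the imprimitive case, where \(A_{\chi_f,g}\) is defined through divisor conditions \(u\mid r\) rather than manifestly as a function of \(r\bmod N\). I would handle it via the Gauss-sum representation in Lemma~\ref{lem:ramanujan}. A second point needing care is the claim that the rearranged finite sum agrees with the original series throughout \(\Re(s)>1\). This follows from absolute convergence, since \(|\Theta_\lambda|\) is bounded there.
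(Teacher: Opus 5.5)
Your proposal is correct and follows essentially the paper's proof. It uses the same steps: periodicity of \(B\) modulo \(N\) and of \(\Theta_\lambda(\cdot,s)\) modulo \(H\), grouping \eqref{eq:radial-Mellin-expansion} by residues modulo \(L\) to obtain the Hurwitz factors, a simple pole at \(s=0\) from those factors, and at most a simple pole at \(s=1\) from \(\Theta_\lambda\). The paper gets the imprimitive periodicity directly from the divisor formula \eqref{eq:general-imprimitive-correction}, since each admissible \(u\) divides \(N\) and \((r+N)/u\equiv r/u\pmod f\), so it is not really an obstacle; your Gauss-sum route works too but tacitly needs \(G(\chi_f)\ne0\), which holds because \(\chi_f\) is primitive, and your case split \(w=1\), \(w=-1\), \(w^2\ne1\) is an equally valid substitute for the paper's uniform expansion \(\Li_s(\widetilde\lambda^a)=H^{-s}\sum_{b=1}^{H}\widetilde\lambda^{ab}\zeta(s,b/H)\).
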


\begin{proof}
The primitive function \(B\) is periodic modulo \(N\).  In the
imprimitive case this follows from the divisor formula for
\(A_{\chi_f,g}\): every divisor occurring in that formula divides
\(N=fg\), so replacing \(r\) by \(r+N\) preserves both the divisibility
conditions and the character values.  Since
\(\Theta_\lambda(r,s)\) depends on \(r\) only through
\(\widetilde\lambda^r\), the product \(B(r)\Theta_\lambda(r,s)\) is periodic
modulo \(L\).  Grouping \eqref{eq:radial-Mellin-expansion} by residue
classes modulo \(L\) gives \eqref{eq:radial-Hurwitz-continuation}.  Moreover,
\[
 \Li_s(\widetilde\lambda^a)
 =H^{-s}\sum_{b=1}^{H}
  \widetilde\lambda^{ab}\zeta\!\left(s,\frac bH\right).
\]
The two Hurwitz factors have possible simple poles only at \(s=1\) and
\(s=0\), respectively.  This proves the continuation and the stated pole
set.
\end{proof}

\section{Higher GSWZ germs and Frobenius gluing}\label{sec:higher-gswz}

Let \(k\) be a number field, let
\(\eta^{\mathrm{glob}}\in K_3(k)\), and fix an unramified local factor
\(K_p/\Qp\) at \(p>3\).  Denote the local image by
\[
 \eta_p\in K_3(K_p;\Zp).
\]
We project the notation of \cite{GSWZ} to this factor.  Its coefficient
ring is denoted by \(R_p^\wedge\); it is \(p\)-torsion-free and
\(p\)-adically complete, and it carries the Frobenius lift \(\phi_p\).

For every integer \(m\) prime to \(p\), the mod-\(m\) étale Chern
character used in \cite{GSWZ} associates to a \(K_3\)-class \(\theta\) a
\(\mu_m\)-torsor.  Following their unit notation, write this torsor as
\(\varepsilon_m(\theta)^{1/m}\).  For a root-unit class \([z]\), the
expression \(\varepsilon_m([z])^{1/m}\) denotes the corresponding
constant torsor factor.  Write
\(\mathcal L_p^{\mathrm{GSWZ}}(\eta^{\mathrm{glob}})\) for the object denoted
\(L_p(\eta^{\mathrm{glob}})\) in \cite{GSWZ}.  An invertible
\(\mathcal L_p^{\mathrm{GSWZ}}(\eta^{\mathrm{glob}})\)-section is a compatible
collection of local germs whose \(m\)-th component belongs to
\[
 \varepsilon_m(\eta^{\mathrm{glob}})^{1/m}
 \bigl(R_p^\wedge[\zeta_m]^\times
       +xK_p[\zeta_m][[x]]\bigr)
\]
and satisfies the Frobenius gluing relation below.

Using the local regulator theorem and the comparison
\eqref{eq:local-Bloch-K3-comparison}, write a projected root-unit
presentation as a finite sum
\[
 \widehat\xi=\sum_\zeta a_\zeta[\zeta]_{\mathrm{cyc}},
 \qquad a_\zeta\in\Zp,
 \qquad
 \kappa_{K_p,p}\bigl(\operatorname{cl}(\widehat\xi)\bigr)=\eta_p.
\]
It is obtained from a presentation satisfying the global-image condition
\cite[(205)--(206)]{GSWZ}.

At a center \(\zeta_m\) of order \(m\) prime to \(p\), write
\[
 q=\zeta_m+x=\zeta_m(1+T),
 \qquad
 Y=\log(1+T),
 \qquad
 q^{m/2}=(1+T)^{m/2}.
\]
For a prime-to-\(p\) root \(z\ne1\), the explicit GSWZ branch is
\begin{equation}\label{eq:gswz-single-branch}
 \Psi_{[z],p,m}(x)
 =\exp\!\left(-\frac{\Li_{2,p}(z)}{m^2\log_pq}\right)
  \varepsilon_m([z])^{1/m}
  \qpoch{q^{m/2}z}{q^m}^{1/m}.
\end{equation}
The torsor factor is constant in the local coordinate.  The completed
logarithm of an invertible section has the form
\begin{equation}\label{eq:GSWZ-completed-log}
 \log\widehat f_m(x)
 =\frac{D_p(\eta_p)}{m^2\log q}+\log f_m(x)
\end{equation}
and satisfies
\begin{equation}\label{eq:GSWZ-gluing}
 \log\!\left(
  \frac{\phi_p\widehat f(q^p)}{\widehat f(q)^p}
 \right)
 \in\prod_{(m,p)=1}\frac p xR_p^\wedge[\zeta_m][[x]].
\end{equation}
Here \(\phi_p(\zeta_m)=\zeta_m^p\) and
\(x_p=(\zeta_m+x)^p-\zeta_m^p\).

The following is \cite[Thm.~10, Lem.~3.6, and Cor.~3.7]{GSWZ} after
projection to the chosen local factor.

\begin{theorem}[Garoufalidis--Scholze--Wheeler--Zagier]
\label{thm:GSWZ-section}
Assume the global-image and full-product root-presentation hypotheses of
\cite[(205)--(206)]{GSWZ}.  The product of
\eqref{eq:gswz-single-branch} with exponents \(a_\zeta\) from the
projected presentation \(\widehat\xi\) is an invertible
\(\mathcal L_p^{\mathrm{GSWZ}}(\eta^{\mathrm{glob}})\)-section.  It has the unique
all-root extension of \cite[Lem.~3.6 and Cor.~3.7]{GSWZ} and satisfies
\eqref{eq:GSWZ-gluing}.
\end{theorem}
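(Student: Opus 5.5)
The statement is a projection of \cite[Thm.~10, Lem.~3.6, Cor.~3.7]{GSWZ} to the chosen unramified local factor. I would therefore not reprove the construction. Instead I would check that each ingredient of the GSWZ global statement restricts correctly, in three steps.

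First, I would fix the dictionary. Under the global-image and full-product root-presentation hypotheses \cite[(205)--(206)]{GSWZ}, the global class \(\eta^{\mathrm{glob}}\) is represented by a root-of-unity product whose exponents, after projection to \(K_p\), are the \(a_\zeta\in\Zp\) of \(\widehat\xi\). The comparison \eqref{eq:local-Bloch-K3-comparison} gives \(\kappa_{K_p,p}(\operatorname{cl}(\widehat\xi))=\eta_p\). Projection from the GSWZ coefficient ring to \(R_p^\wedge\) is a ring map commuting with \(\phi_p\), since \(K_p\) is unramified and \(\phi_p(\zeta_m)=\zeta_m^p\). Hence membership statements and the gluing congruence \eqref{eq:GSWZ-gluing} are preserved by projection. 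I would also note that the torsor factors \(\varepsilon_m([z])^{1/m}\) are constant in \(x\) and multiply compatibly, so the product of the \(a_\zeta\)-th powers lies in \(\varepsilon_m(\eta^{\mathrm{glob}})^{1/m}\) times a unit.

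Second, I would verify the branch shape at a center of order \(m\). Take the product over \(\zeta\) of \eqref{eq:gswz-single-branch} raised to \(a_\zeta\). Its exponential prefactor is \(\exp(-\sum a_\zeta\Li_{2,p}(\zeta)/(m^2\log_pq))\). Since \(\log_p\zeta=0\) on prime-to-\(p\) Teichm\"uller roots, \(\sum_\zeta a_\zeta\Li_{2,p}(\zeta)=D_p(\eta_p)\). This exactly cancels the singular term in \eqref{eq:GSWZ-completed-log}. What remains is the regular germ \(\log f_m\), whose coefficients are supplied by \eqref{eq:higher-formal-coefficient-expansion}: the polar coefficient of the Pochhammer logarithm has been removed, and the constant term is a unit. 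This places the \(m\)-th component in \(\varepsilon_m(\eta^{\mathrm{glob}})^{1/m}(R_p^\wedge[\zeta_m]^\times+xK_p[\zeta_m][[x]])\).

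Third, I would obtain the gluing congruence \eqref{eq:GSWZ-gluing} and the unique all-root extension by citing \cite[Lem.~3.6, Cor.~3.7]{GSWZ} after projection. The main obstacle is the gluing step. It is a Dwork-type integrality statement for \(\phi_p\widehat f(q^p)/\widehat f(q)^p\), and it is not formal from the coefficientwise expansion. The cancellation of the \(D_p\)-poles under \(q\mapsto q^p\) uses Coleman equivariance \(\phi(D_p(z))=D_p(z^p)\) together with the factor \(p^2\) relating \(m^2\log q^p\) and \(p\cdot m^2\log q\). The remaining \(\frac px R_p^\wedge\)-integrality is the content of \cite[Thm.~10]{GSWZ}. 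My plan is to first check directly, as a sanity check, that the polar parts match on both sides, and then invoke the cited theorem for integrality rather than reproduce it.
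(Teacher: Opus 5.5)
Your plan is essentially the paper's. The paper gives no argument beyond quoting \cite[Thm.~10, Lem.~3.6, Cor.~3.7]{GSWZ} after projection to the chosen unramified factor, and your first two steps are just the bookkeeping implicit in ``after projection.''

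One correction to your optional sanity check: the poles do not cancel in the Frobenius quotient. With \(A=D_p(\eta_p)\), the polar part of \((\phi_p\log\widehat f)(x_p)-p\log\widehat f(x)\) is \(\frac{\phi_p(A)-p^2A}{pm^2\log q}=-\frac{p(1-p^{-2}\phi_p)A}{m^2\log q}\). This is nonzero whenever \(A\ne0\), by Theorem~\ref{thm:Frobenius-depletion-invertible}. It is tolerated only because \(A\in p^2\mathcal O\) by \cite[Thm.~9]{GSWZ}. Even then, \(1/\log q\) has unbounded \(p\)-adic denominators, so the polar and regular parts must be controlled together. That is exactly the content you rightly leave to the citation.
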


\begin{proposition}\label{prop:GSWZ-formal-normalization}
Let \(\widehat\xi\) be the projected integral root presentation above.  If
\[
 \log\widehat f_m(T)
 =\frac{D_p(\eta_p)}{m^2Y}+h_m(T),
\]
then
\begin{equation}\label{eq:GSWZ-formal-branch-compatibility}
 h_m(T)-h_m(0)=F_{\widehat\xi,p,m}(T).
\end{equation}
\end{proposition}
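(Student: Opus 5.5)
The identity is linear in the presentation on both sides, so I will reduce to a single symbol $[z]$ and then compute directly. By the product formula in Theorem~\ref{thm:GSWZ-section}, the completed logarithm $\log\widehat f_m$ is the sum over $\zeta$, weighted by $a_\zeta$, of the logarithms of the single branches \eqref{eq:gswz-single-branch}. It therefore suffices to show that for one prime-to-$p$ root $z\ne1$,
\[
 \log\Psi_{[z],p,m}(T)
 =\frac{\Li_{2,p}(z)}{m^2Y}+\text{const}+F_{[z],p,m}(T).
\]
I then need two further facts. First, $\sum_\zeta a_\zeta\Li_{2,p}(\zeta)=D_p(\eta_p)$. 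This holds because $\log_p$ vanishes on Teichm\"uller roots, so $D_p(\zeta)=\Li_{2,p}(\zeta)$, and the regulator factors through $\kappa_{K_p,p}\circ\operatorname{cl}$. Second, the torsor factor $\varepsilon_m([z])^{1/m}$ is constant in $T$, so it only shifts $h_m(0)$ and drops out of $h_m(T)-h_m(0)$.

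\textbf{Step 1: change of variables.} With $q=\zeta_m(1+T)$ we get $q^m=(1+T)^m=e^{mY}$ and $q^{m/2}=e^{mY/2}$. The Coleman logarithm kills $\zeta_m$, so $\log_pq=Y$. The exponential prefactor then contributes
\[
 -\frac{\Li_{2,p}(z)}{m^2Y}.
\]
The Pochhammer factor contributes $\frac1m\log\qpoch{e^{mY/2}z}{e^{mY}}$.

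\textbf{Step 2: expand the Pochhammer factor.} This is exactly the expansion derived before Definition~\ref{def:presentation-formal-branches}. In $\Q[[z]]((Y))$ it gives
\[
 \frac{1}{m^2Y}\sum_{r\ge1}\frac{z^r}{r^2}
 +\sum_{n\ge0}m^{n-1}\frac{B_{n+1}(1/2)}{(n+1)!}\Bigl(\sum_{r\ge1}z^rr^{n-1}\Bigr)Y^n.
\]
Its coefficientwise $p$-adic specialization is \eqref{eq:higher-formal-coefficient-expansion}. The $n=0$ term vanishes because $B_1(1/2)=0$, and the terms with $n\ge1$ are by definition $F_{[z],p,m}(e^Y-1)$.

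\textbf{Step 3: signs and bookkeeping.} There is a sign issue to settle. Expanding $\log(1-u)$ produces an overall minus sign, and the exponential prefactor is there precisely to cancel the Pochhammer pole. I must check that the completed logarithm \eqref{eq:GSWZ-completed-log} adds back $+D_p(\eta_p)/(m^2\log q)$, so that $h_m$ is the regular part of $\log f_m$ and the conventions of \eqref{eq:higher-formal-coefficient-expansion} and \eqref{eq:normalized-single-branch-log} line up. After that, $h_m(T)-h_m(0)$ is the regular part of the expansion with its constant term removed, and summing over $\zeta$ with weights $a_\zeta$ gives \eqref{eq:GSWZ-formal-branch-compatibility}.

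\textbf{Main obstacle.} The hardest step is making the passage in Step 2 rigorous $p$-adically. The expansion is an identity of formal series in $z$, but $z$ is a root of unity with $|z|_p=1$. I must check that the GSWZ branch, interpreted in $K_p[\zeta_m][[x]]$, has Taylor coefficients given by evaluating the rational functions $\Li_{1-n}(z)$ at $z$, together with the Coleman value $\Li_{2,p}(z)$ for the polar term. My first approach is to apply $(T\,d/dT)$-type derivatives to $\log\qpoch{q^{m/2}z}{q^m}$. Each coefficient of $Y^n$ with $n\ge1$ is then a finite combination of rational functions of $z$ that are regular at roots of unity other than $1$, and rational-function identities specialize pointwise. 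This leaves only the polar term to match, and it matches by the definition of the GSWZ normalization.
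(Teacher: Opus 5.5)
Your proposal is correct and follows the paper's own argument: the paper likewise identifies the Laurent expansion of the Pochhammer factor with \eqref{eq:higher-formal-coefficient-expansion}, notes that the torsor factor is constant and that the coefficients of $Y^n$ for $n\ge1$ are those of \eqref{eq:normalized-single-branch-log}, and then subtracts the constant term. Two small corrections. First, your reduction display should read $\log\Psi_{[z],p,m}=\text{const}+F_{[z],p,m}$ with no polar term, because the exponential prefactor already cancels the pole, as your Step~3 recognizes; the pole $D_p(\eta_p)/(m^2Y)$ reappears only in the completed logarithm, via $\sum a_\zeta\Li_{2,p}(\zeta)=D_p(\eta_p)$. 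Second, your $p$-adic ``main obstacle'' does not arise in the paper, since it \emph{defines} the $p$-adic Laurent expansion coefficientwise by \eqref{eq:higher-formal-coefficient-expansion}.
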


\begin{proof}
The Laurent expansion of the Pochhammer factor in
\eqref{eq:gswz-single-branch} is
\eqref{eq:higher-formal-coefficient-expansion}.  The torsor contributes
only a constant, while the regular coefficients are those of
\eqref{eq:normalized-single-branch-log}.  Subtracting the constant term
gives the formula.
\end{proof}

\subsection{Order-corrected regularization}
\label{subsec:regularized-Dwork}

Use the all-root extension to write \(g_\zeta\) for the completed germ at
each prime-to-\(p\) root \(\zeta\).  If \(\zeta\) has exact order \(m\)
and \(c\) is prime to \(p\), put
\[
 d=(m,c),
 \qquad
 m_c=\ord(\zeta^c)=m/d,
 \qquad
 x_c=(\zeta+x)^c-\zeta^c,
\]
and define
\begin{equation}\label{eq:order-corrected-regularizer}
 (\Regord_cg)_\zeta(x)
 =g_\zeta(x)-\frac{c}{d^2}g_{\zeta^c}(x_c).
\end{equation}
The logarithmic Frobenius difference is
\begin{equation}\label{eq:Frobenius-difference}
 (\nabla_pg)_\zeta(x)
 =(\phi_pg_\zeta)(x_p)-pg_\zeta(x).
\end{equation}

\begin{proposition}[Forced order correction]
\label{prop:order-corrected-pole-cancellation}
Suppose that the polar part at a center of exact order \(m\) is
\[
 P_\zeta(q)=\frac{A}{m^2\log q},
\]
where \(A\) is independent of the center.  Then
\(
 \Regord_cP=0.
\)
Among expressions
\(
 g_\zeta(q)-\lambda_{m,c}g_{\zeta^c}(q^c),
\)
the coefficient \(\lambda_{m,c}=c/d^2\) is the unique one that cancels
the pole for every \(A\).  It is a \(p\)-adic unit when \(p\nmid mc\).
\end{proposition}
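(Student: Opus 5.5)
The statement to prove is Proposition~\ref{prop:order-corrected-pole-cancellation}. The plan is a direct comparison of polar coefficients at the two centers $\zeta$ and $\zeta^c$. First record the orders. If $\zeta$ has exact order $m$ and $d=(m,c)$, then $\zeta^c$ has exact order $m_c=m/d$. The hypothesis therefore gives
\[
 P_{\zeta^c}(q')=\frac{A}{m_c^2\log q'}
\]
at the target center, with the same constant $A$. Under the substitution $q'=q^c$, which is the meaning of $x_c=(\zeta+x)^c-\zeta^c$, the logarithm obeys $\log(q^c)=c\log q$; this is exactly the identity used at the fixed center in Proposition~\ref{prop:GSWZ-germ-comparison}. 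Hence
\[
 P_{\zeta^c}(q^c)=\frac{A}{(m/d)^2\,c\log q}=\frac{d^2A}{c\,m^2\log q}.
\]

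Next compute the regularized polar part. For a general coefficient $\lambda_{m,c}$,
\[
 P_\zeta(q)-\lambda_{m,c}P_{\zeta^c}(q^c)
 =\frac{A}{m^2\log q}\Bigl(1-\lambda_{m,c}\frac{d^2}{c}\Bigr).
\]
With $\lambda_{m,c}=c/d^2$ the bracket vanishes, so $\Regord_cP=0$. For uniqueness, suppose the expression vanishes for every $A$, in particular for $A=1$. Since $1/\log q$ is a nonzero Laurent term in the local parameter, its coefficient must be zero, and this forces $\lambda_{m,c}=c/d^2$.

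Finally, for integrality: if $p\nmid mc$ then $p\nmid c$ and $p\nmid d$, so $c/d^2\in\Zp^\times$.

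The only point needing care is the justification of $\log(q^c)=c\log q$ in the local coordinate. At the center $\zeta$, write $q=\zeta(1+T)$. The Coleman/Iwasawa logarithm vanishes on prime-to-$p$ roots of unity (Remark~\ref{rem:notation}), so $\log q=\log(1+T)$. Likewise $q^c=\zeta^c(1+T)^c$, so the local coordinate at $\zeta^c$ is $(1+T)^c-1$ and $\log q^c=c\log(1+T)$. This makes the polar comparison exact, not merely a statement about leading terms, and I expect it to be the only mildly delicate step.
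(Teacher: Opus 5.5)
Your proof is correct and follows the paper's argument: you compute the target pole at the center $\zeta^c$ of order $m/d$ using $\log(q^c)=c\log q$, observe that multiplying by $c/d^2$ recovers $A/(m^2\log q)$, and get uniqueness by comparing polar coefficients. Your extra checks fill in details the paper leaves implicit: that $\log_p$ vanishes on the Teichm\"uller root, so $\log q=\log(1+T)$ exactly, and that $p\nmid mc$ forces $p\nmid d$, so $c/d^2\in\Zp^\times$.
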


\begin{proof}
The target pole equals
\[
 \frac{A}{(m/d)^2\log(q^c)}
 =\frac{A}{(m/d)^2c\log q}.
\]
Multiplication by \(c/d^2\) gives \(A/(m^2\log q)\), and equality of the
polar coefficients proves uniqueness.
\end{proof}

\begin{proposition}
\label{prop:order-corrected-Frobenius-commutation}
For every branch collection on which the two operators are defined,
\begin{equation}\label{eq:regularization-Frobenius-commutes}
 \nabla_p\Regord_c=\Regord_c\nabla_p.
\end{equation}
\end{proposition}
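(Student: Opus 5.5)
The identity is a formal statement about substitutions in the local variable at each center, so I will verify it componentwise. Fix a prime-to-\(p\) root \(\zeta\) of exact order \(m\), and write \(d=(m,c)\) and \(\lambda_{m,c}=c/d^2\).

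First I will compute \((\nabla_p\Regord_cg)_\zeta(x)\) directly from \eqref{eq:order-corrected-regularizer} and \eqref{eq:Frobenius-difference}. This gives
\[
 \phi_p\bigl(g_\zeta\bigr)(x_p)
 -\lambda_{m,c}\,\phi_p\bigl(g_{\zeta^c}\bigr)\bigl((x_p)_c\bigr)
 -p\,g_\zeta(x)+p\lambda_{m,c}\,g_{\zeta^c}(x_c).
\]
Here \((x_p)_c\) means the \(c\)-power substitution applied at the Frobenius-target center \(\zeta^p\). Next I will compute \((\Regord_c\nabla_pg)_\zeta(x)\). This gives
\[
 (\phi_pg_\zeta)(x_p)-pg_\zeta(x)
 -\lambda_{m,c}\bigl[(\phi_pg_{\zeta^c})\bigl((x_c)_p\bigr)-p\,g_{\zeta^c}(x_c)\bigr].
\]
The terms without \(\lambda_{m,c}\) agree, and so do the \(p\)-multiples. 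It therefore remains to match the two mixed terms, one from each expansion.

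\textbf{Step 1: the substitutions commute.} In the multiplicative coordinate \(q=\zeta+x\), the \(c\)-substitution is \(q\mapsto q^c\) and the Frobenius substitution is \(q\mapsto q^p\). Both composites are therefore \(q\mapsto q^{cp}\). Concretely,
\[
(x_p)_c=(\zeta^p+x_p)^c-\zeta^{pc}=(\zeta+x)^{pc}-\zeta^{pc}=(x_c)_p.
\]
The two composites also land at the same center \(\zeta^{pc}\). Since \(\phi_p\) acts only on coefficients, it commutes with these power substitutions.

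\textbf{Step 2: the coefficients commute.} The coefficient attached to \(\zeta\) is \(c/(m,c)^2\). The Frobenius center \(\zeta^p\) has the same order \(m\), because \(p\nmid m\). Hence the coefficient attached to \(\zeta^p\) is again \(c/(m,c)^2\). Also \(\lambda_{m,c}\) is rational, so \(\phi_p\) fixes it. With these two facts, both mixed terms equal \(\lambda_{m,c}\,(\phi_pg_{\zeta^c})\bigl((\zeta+x)^{pc}-\zeta^{pc}\bigr)\), and the identity follows.

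\textbf{Main obstacle: where the operators are defined.} The delicate point is not the algebra but making sure each composite is meaningful as a germ in the same ring. I need \(x_c\) and \(x_p\) to lie in \(xR_p^\wedge[\zeta_m][[x]]\), so that composition is defined formally. This holds because \((\zeta+x)^n-\zeta^n\) has zero constant term for every \(n\). I also need the polar parts \(A/(m^2\log q)\) to be handled consistently on both sides. Since \(\log(q^{pc})=pc\log q\), the polar terms transform linearly under both operators, so they obey the same commutation. I will treat the polar term separately from the regular part, or pass to the completed logarithm and check the pole explicitly. This is the phrase ``on which the two operators are defined'' in the statement. I would verify it by applying Proposition~\ref{prop:order-corrected-pole-cancellation} to both composites.
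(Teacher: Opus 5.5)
Your proposal is correct and is essentially the paper's argument: you expand both composites into the same four terms, match the substitutions via \((q^p)^c=(q^c)^p\) and \((\zeta^p)^c=(\zeta^c)^p\), and match the coefficients because \(p\nmid m\) leaves \(d=(m,c)\) unchanged and \(\phi_p(c/d^2)=c/d^2\). Your extra treatment of the polar parts is not needed, since the proposition is only asserted on branch collections where both operators are already defined.
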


\begin{proof}
The identities
\(
 (\zeta^p)^c=(\zeta^c)^p
\)
and
\(
 (q^p)^c=(q^c)^p
\)
show that the coordinate substitutions commute.  Since \((p,m)=1\), the
integer \(d=(m,c)\) is unchanged by Frobenius, and
\(\phi_p(c/d^2)=c/d^2\).  Expanding the two composites gives the same four
terms.
\end{proof}

Let \(p>2\), and let \(R_0\) be a \(p\)-torsion-free,
\(p\)-adically complete \(\Zp\)-algebra with a continuous Frobenius lift.
Extend Frobenius to cyclotomic coefficient rings by
\(\phi_p(\zeta)=\zeta^p\).  For every prime-to-\(p\) root \(\zeta\) and
every \(c\) prime to \(p\), assume that
\[
 R_0[\zeta^c]\longrightarrow R_0[\zeta]
\]
is Frobenius compatible.  With \(x=q-\zeta\) and
\(x_c=q^c-\zeta^c\), assume that power pullback sends
\[
 R_0[\zeta^c][[x_c]]
 \quad\text{into}\quad
 R_0[\zeta][[x]]
\]
and
\[
 \frac{p}{x_c}R_0[\zeta^c][[x_c]]
 \quad\text{into}\quad
 \frac{p}{x}R_0[\zeta][[x]].
\]

\begin{theorem}[Order-corrected Dwork gluing]\label{thm:higher-Dwork}
Assume the Frobenius and power-pullback hypotheses above.  Suppose
\begin{equation}\label{eq:abstract-completed-polar-form}
 \log\widehat f_\zeta(q)
 =\frac{A}{m^2\log q}+h_\zeta(q),
 \qquad
 h_\zeta(q)\in R_0[\zeta][1/p][[q-\zeta]],
\end{equation}
where $A$ is independent of the center, and suppose
\[
 (\nabla_p\log\widehat f)_\zeta
 \in\frac p xR_0[\zeta][[x]].
\]
For $H=\Regord_c(\log\widehat f)$, one has
\begin{equation}\label{eq:regularized-Dwork-log}
 (\nabla_pH)_\zeta
 \in pR_0[\zeta][[x]].
\end{equation}
Put $H_\zeta^0=H_\zeta-H_\zeta(0)$.  Then
\begin{equation}\label{eq:regularized-Dwork-normalized}
 (\nabla_pH^0)_\zeta
 \in pxR_0[\zeta][[x]],
\end{equation}
and the associated constant-normalized Frobenius quotient belongs to
$1+pxR_0[\zeta][[x]]$.
\end{theorem}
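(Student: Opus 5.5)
The proof is a direct computation with the explicit operators. Write $L_\zeta(q)=\log\widehat f_\zeta(q)$ and split it as $L=P+h$, with polar part $P_\zeta=A/(m^2\log q)$ and regular part $h_\zeta$. Both $\nabla_p$ and $\Regord_c$ are linear.

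\textbf{Step 1: the polar part drops out.} Proposition~\ref{prop:order-corrected-pole-cancellation} gives $\Regord_cP=0$. Hence
\[
H=\Regord_c h,
\]
so $H_\zeta$ is a genuine power series in $R_0[\zeta][1/p][[x]]$; the factor $c/d^2$ is a $p$-adic unit. Then Proposition~\ref{prop:order-corrected-Frobenius-commutation} gives
\[
\nabla_pH=\Regord_c\nabla_pL .
\]

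\textbf{Step 2: a weak bound on $\nabla_pH$.} By hypothesis $(\nabla_pL)_\zeta\in \tfrac p xR_0[\zeta][[x]]$. The power-pullback hypotheses carry $\tfrac{p}{x_c}R_0[\zeta^c][[x_c]]$ into $\tfrac p xR_0[\zeta][[x]]$. Since $c/d^2\in\Zp^\times$, this gives
\[
(\nabla_pH)_\zeta\in \tfrac p xR_0[\zeta][[x]].
\]
On the other hand $H$ has no pole. Also $(\phi_pH_{\zeta^p})(x_p)$ is a power series in $x$: the Frobenius substitution $x_p=(\zeta+x)^p-\zeta^p=p\zeta^{p-1}x+\dots+x^p$ has no constant term. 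So $\nabla_pH\in R_0[\zeta][1/p][[x]]$ as well.

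\textbf{Step 3: upgrade to $pR_0[\zeta][[x]]$.} An element of $\tfrac p x R_0[\zeta][[x]]$ has the form $pa_{-1}x^{-1}+p\sum_{n\ge0}a_nx^n$ with $a_i$ integral. Since $\nabla_pH$ is a power series, $a_{-1}=0$, which gives \eqref{eq:regularized-Dwork-log}. This requires that $R_0[\zeta][1/p][[x]]$ and $\tfrac pxR_0[\zeta][[x]]$ sit compatibly inside a common Laurent ring, which holds because $R_0[\zeta]$ is $p$-torsion-free.

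\textbf{Step 4: constant normalization.} Put $H^0_\zeta=H_\zeta-H_\zeta(0)$. Then
\[
(\nabla_pH^0)_\zeta=(\nabla_pH)_\zeta-\bigl(\phi_p(H_{\zeta^p}(0))-pH_\zeta(0)\bigr),
\]
using that constants pull back to constants under $x\mapsto x_p$. Both $H^0$ and $\nabla_pH^0$ vanish at $x=0$, since $x_p(0)=0$. So $\nabla_pH^0$ is the non-constant part of an element of $pR_0[\zeta][[x]]$, and its constant term is zero. That is exactly \eqref{eq:regularized-Dwork-normalized}.

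\textbf{Step 5: exponentiate.} The quotient is $\exp(\nabla_pH^0)$. For $y\in pxR_0[\zeta][[x]]$ and $p>2$, we have $y^n/n!\in pxR_0[\zeta][[x]]$, because $v_p(p^n/n!)\ge1$ for all $n\ge1$. The series converges $p$-adically by completeness, so $\exp(y)\in1+pxR_0[\zeta][[x]]$.

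The main obstacle is the bookkeeping in Steps 2–3. One must check that the pullback along $x\mapsto x_c$ really preserves the $\tfrac px$-lattice, and that Frobenius commutation holds on sections that still carry a pole. For the latter I would apply Proposition~\ref{prop:order-corrected-Frobenius-commutation} directly to $L$, term by term, rather than to $h$.
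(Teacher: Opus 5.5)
Your proposal is correct and follows the paper's proof essentially step for step: the forced order correction kills the pole, the commutation $\nabla_p\Regord_c=\Regord_c\nabla_p$ is applied to the full logarithm, power pullback along $x\mapsto x_c$ preserves $\frac{p}{x}R[[x]]$, the intersection $R[1/p][[x]]\cap\frac{p}{x}R[[x]]=pR[[x]]$ (with $R=R_0[\zeta]$) gives the first bound, and subtracting constants and exponentiating finishes. The only slip is a harmless notational one: the paper's Frobenius difference uses $(\phi_pg_\zeta)(x_p)$ rather than your $(\phi_pg_{\zeta^p})(x_p)$, and since your argument only uses that $x_p$ has no constant term, it goes through unchanged.
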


\begin{proof}
Put \(R=R_0[\zeta]\).  Proposition~\ref{prop:order-corrected-pole-cancellation}
makes \(H_\zeta\) regular.  Since
\[
 x_c=x\bigl(c\zeta^{c-1}+O(x)\bigr)
\]
has unit linear coefficient, power pullback preserves
\(px^{-1}R[[x]]\).  By
Proposition~\ref{prop:order-corrected-Frobenius-commutation},
\[
 \nabla_pH=\Regord_c\nabla_p(\log\widehat f).
\]
The right side lies in \(px^{-1}R[[x]]\), whereas regularity places the
left side in \(R[1/p][[x]]\).  Hence
\[
 R[1/p][[x]]\cap px^{-1}R[[x]]=pR[[x]],
\]
which gives \eqref{eq:regularized-Dwork-log}.  Subtracting the constant
term gives \eqref{eq:regularized-Dwork-normalized}; exponentiation is
valid because \(p>2\).
\end{proof}

\begin{corollary}
\label{cor:GSWZ-order-corrected-gluing}
Under the hypotheses of Theorem~\ref{thm:GSWZ-section}, apply
Theorem~\ref{thm:higher-Dwork} with
\(R_0=R_p^\wedge\) and \(A=D_p(\eta_p)\).
\end{corollary}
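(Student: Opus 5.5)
The statement to prove is Corollary~\ref{cor:GSWZ-order-corrected-gluing}: the abstract Dwork gluing of Theorem~\ref{thm:higher-Dwork} applies to the GSWZ section. The plan is to check, one by one, the hypotheses of Theorem~\ref{thm:higher-Dwork} for $R_0=R_p^\wedge$ and $A=D_p(\eta_p)$, and then read off its conclusions.

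\textbf{Step 1: structural hypotheses.} As recalled before Theorem~\ref{thm:GSWZ-section}, $R_p^\wedge$ is $p$-torsion-free and $p$-adically complete and carries the Frobenius lift $\phi_p$. Extending $\phi_p$ by $\zeta\mapsto\zeta^p$ to $R_p^\wedge[\zeta]$ is compatible with the inclusions $R_p^\wedge[\zeta^c]\subset R_p^\wedge[\zeta]$, because $(\zeta^c)^p=(\zeta^p)^c$.

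\textbf{Step 2: power pullback.} Write $x_c=(\zeta+x)^c-\zeta^c=x\,u(x)$. Here $u(x)=c\zeta^{c-1}+O(x)$ has integral coefficients and a unit constant term, since $p\nmid c$. Hence $u$ is a unit in $R_p^\wedge[\zeta][[x]]$. It follows that pullback sends $R_p^\wedge[\zeta^c][[x_c]]$ into $R_p^\wedge[\zeta][[x]]$, and sends $p x_c^{-1}R[[x_c]]$ into $p x^{-1}u^{-1}R[[x]]=p x^{-1}R[[x]]$.

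\textbf{Step 3: analytic input.} By Theorem~\ref{thm:GSWZ-section}, the product of the branches \eqref{eq:gswz-single-branch} with exponents $a_\zeta$ has a unique all-root extension $\widehat f$. This extension satisfies \eqref{eq:GSWZ-gluing}, which at each center is exactly the hypothesis $(\nabla_p\log\widehat f)_\zeta\in p x^{-1}R_0[\zeta][[x]]$.

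By \eqref{eq:GSWZ-completed-log}, the completed logarithm at a center of order $m$ has the polar form \eqref{eq:abstract-completed-polar-form} with $A=D_p(\eta_p)$, and $A$ is independent of the center. The regular part $h_\zeta$ is $\log f_m$. This lies in $R_0[\zeta][1/p][[x]]$: the torsor factor contributes a constant, and the invertible-section condition places $f_m$ in $R_p^\wedge[\zeta_m]^\times+xK_p[\zeta_m][[x]]$.

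\textbf{Step 4: conclusion.} Theorem~\ref{thm:higher-Dwork} then gives directly \eqref{eq:regularized-Dwork-log}, \eqref{eq:regularized-Dwork-normalized}, and the integrality of the constant-normalized quotient, for $H=\Regord_c(\log\widehat f)$.

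\textbf{Main obstacle.} The delicate point is the membership $h_\zeta\in R_0[\zeta][1/p][[x]]$, which requires bounded denominators. A priori the regular coefficients only lie in $K_p[\zeta][[x]]$; Remark~\ref{rem:bounded-denominator-not-all-series} warns that this distinction matters.

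I would handle it in two stages. First, I would note that only bounded denominators are actually used in the intersection step $R[1/p][[x]]\cap px^{-1}R[[x]]=pR[[x]]$. Second, I would obtain them from the gluing itself: $\nabla_p\log\widehat f\in px^{-1}R[[x]]$, together with the Dwork-type recursion of \cite[Lem.~3.6]{GSWZ}, controls the coefficients of $h_\zeta$.

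If that control is unavailable, the fallback is to redo the intersection argument inside $K_p[\zeta][[x]]$. The argument would use only that $\nabla_pH$ is regular at $x=0$ and lies in $px^{-1}R[[x]]$. Regularity already forces the coefficient of $x^{-1}$ to vanish, so $\nabla_pH\in pR[[x]]$ follows without any bounded-denominator assumption.
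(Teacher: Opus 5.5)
Your verification matches the paper's: the pole with center-independent coefficient $A=D_p(\eta_p)$ comes from \eqref{eq:GSWZ-completed-log}, the Frobenius hypothesis is \eqref{eq:GSWZ-gluing}, and the pullback conditions follow from the unit linear coefficient of $x_c$; the paper also notes that the equivariant all-root extension carries $A$ from the distinguished centers $\zeta_m$ to every prime-to-$p$ root, which you should state explicitly. Your ``main obstacle'' is illusory, since $R_0[\zeta][1/p][[x]]$ means coefficientwise membership (bounded denominators would be $R_0[\zeta][[x]][1/p]$, cf.\ $K[[X]]_{\bd}$), so your Step~3 already suffices and the intersection step inside Theorem~\ref{thm:higher-Dwork} uses only regularity, exactly as in your fallback; drop your ``stage two'', because the gluing controls the exponential (Lemma~\ref{lem:Dwork-coefficient-criterion}), not the logarithm, and such logarithmic germs need not have bounded denominators (compare Lemma~\ref{lem:Cz-coefficients}).
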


\begin{proof}
Equation~\eqref{eq:GSWZ-completed-log} gives the required pole at the
distinguished centers.  The equivariant all-root extension preserves the
same coefficient at every center, and \eqref{eq:GSWZ-gluing} gives the
Frobenius hypothesis.  The remaining pullback conditions follow from the
unit linear coefficient of \(x_c\).
\end{proof}

At the fixed center, write
\[
 g_1(x)=\frac{A}{\log(1+x)}+h_1(x),
 \qquad A=D_p(\eta_p).
\]
For odd \(c>1\) prime to \(p\), set
\[
 x_c=(1+x)^c-1,
 \qquad
 H_{1,c}=g_1(x)-cg_1(x_c),
 \qquad
 U_{1,c}=\exp\bigl(H_{1,c}-H_{1,c}(0)\bigr).
\]

\begin{proposition}
\label{prop:fixed-higher-compatibility}
The function $H_{1,c}$ is regular and
\begin{equation}\label{eq:fixed-higher-logarithmic-compatibility}
 H_{1,c}-H_{1,c}(0)
 =F_{\widehat\xi,p}(x)-cF_{\widehat\xi,p}(x_c)
 =\Ami_{\lambda_{1/2,c}\starx\nu_{\widehat\xi}}(x).
\end{equation}
Its restriction to $\Zp^\times$ is
\begin{equation}\label{eq:fixed-higher-unit-compatibility}
 N_{\widehat\xi,c}=r_c\starx\IwPs_{\widehat\xi,p}.
\end{equation}
Moreover,
\begin{equation}\label{eq:fixed-higher-Dwork-quotient}
 \frac{(\phi_pU_{1,c})(x_p)}{U_{1,c}(x)^p}
 \in1+pxR_p^\wedge[[x]].
\end{equation}
\end{proposition}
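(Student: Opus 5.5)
The proof has three parts: regularity of \(H_{1,c}\), the identification \eqref{eq:fixed-higher-logarithmic-compatibility} with its unit restriction, and the Dwork congruence \eqref{eq:fixed-higher-Dwork-quotient}.

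\emph{Regularity.} Write \(g_1(x)=A/\log(1+x)+h_1(x)\) with \(A=D_p(\eta_p)\). Since \(\log((1+x)^c)=c\log(1+x)\), the term \(c\,g_1(x_c)\) has polar part
\[
 c\cdot\frac{A}{c\log(1+x)}=\frac{A}{\log(1+x)},
\]
which cancels the pole of \(g_1(x)\). This is the case \(m=1\), \(d=1\) of Proposition~\ref{prop:order-corrected-pole-cancellation}. Hence
\[
 H_{1,c}=h_1(x)-c\,h_1(x_c)
\]
is a power series in \(x\), and in particular regular at \(x=0\).

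\emph{Identification with the germ and the Amice transform.} By Proposition~\ref{prop:GSWZ-formal-normalization} with \(m=1\), we have \(h_1-h_1(0)=F_{\widehat\xi,p}\) in the coordinate \(T=x\). Therefore
\[
 H_{1,c}=F_{\widehat\xi,p}(x)-cF_{\widehat\xi,p}(x_c)+(1-c)h_1(0).
\]
The first two terms vanish at \(x=0\), so after subtracting the constant term \(H_{1,c}(0)\) only they remain. This gives the first equality in \eqref{eq:fixed-higher-logarithmic-compatibility}. The difference \(F_{\widehat\xi,p}(x)-cF_{\widehat\xi,p}(x_c)\) is \(\operatorname{Reg}^{\mathrm{form}}_cF_{\widehat\xi,p}\), so Theorem~\ref{thm:regularized-convolution} supplies the second equality. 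By the same theorem, the restriction to \(G\) is \(N_{\widehat\xi,c}\). Multiplying Definition~\ref{def:cyclotomic-Iwasawa-pseudomeasure} by \(r_c\) gives \(r_c\starx\IwPs_{\widehat\xi,p}=N_{\widehat\xi,c}\), which is \eqref{eq:fixed-higher-unit-compatibility}.

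\emph{Dwork quotient.} Apply Corollary~\ref{cor:GSWZ-order-corrected-gluing}, that is, Theorem~\ref{thm:higher-Dwork} with \(R_0=R_p^\wedge\), at the center \(\zeta=1\). There \(d=1\), so \(\Regord_c\) reduces to \(g_1(x)-cg_1(x_c)\) and \(H_\zeta\) equals \(H_{1,c}\). Then \eqref{eq:regularized-Dwork-normalized} gives
\[
 (\phi_pH^0)(x_p)-pH^0(x)\in pxR_p^\wedge[[x]],\qquad H^0=H_{1,c}-H_{1,c}(0).
\]
Exponentiating, which is valid for \(p>2\), yields \eqref{eq:fixed-higher-Dwork-quotient}.

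\emph{Main obstacle.} The most delicate step is checking the hypotheses of Theorem~\ref{thm:higher-Dwork} at the fixed center. One must confirm that the completed germ \(g_1\) has the exact polar form \(A/\log q\), with the same \(A\) at the center \(\zeta^c=1\), and that the power pullback \(x\mapsto x_c\) preserves \(\frac px R_p^\wedge[[x]]\). Both should follow from \eqref{eq:GSWZ-completed-log} and from the unit linear coefficient \(c\) of \(x_c\), using that \(p\nmid c\).
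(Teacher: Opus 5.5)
Your proposal is correct and follows the paper's proof. Like the paper, you cancel the polar terms using $\log((1+x)^c)=c\log(1+x)$, obtain the identification and its unit restriction from Proposition~\ref{prop:GSWZ-formal-normalization} together with Theorem~\ref{thm:regularized-convolution}, and get the Dwork quotient from Corollary~\ref{cor:GSWZ-order-corrected-gluing} at the fixed center; you additionally make explicit the constant $(1-c)h_1(0)$ removed by normalization and the exponentiation step, both of which the paper leaves implicit.
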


\begin{proof}
The two polar terms cancel because
\(\log((1+x)^c)=c\log(1+x)\).  Proposition~\ref{prop:GSWZ-formal-normalization}
and Theorem~\ref{thm:regularized-convolution} give
\eqref{eq:fixed-higher-logarithmic-compatibility}; restriction to the unit
group gives \eqref{eq:fixed-higher-unit-compatibility}.  The final
assertion follows from
Corollary~\ref{cor:GSWZ-order-corrected-gluing} at the fixed center.
\end{proof}

\begin{lemma}\label{lem:Dwork-coefficient-criterion}
Let \(R\) be a \(p\)-torsion-free, \(p\)-adically complete
\(\Zp\)-algebra with Frobenius lift, and put \(x_p=(1+x)^p-1\).  If
\[
 B(x)\in1+xR[1/p][[x]],
 \qquad
 \frac{(\phi_pB)(x_p)}{B(x)^p}\in1+pxR[[x]],
\]
then
\[
 B(x)\in1+xR[[x]]
 \quad\Longleftrightarrow\quad
 [x]B(x)\in R.
\]
\end{lemma}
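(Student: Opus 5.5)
The forward implication is immediate: if \(B\in1+xR[[x]]\), then its \(x\)-coefficient lies in \(R\). The content is the converse. I would prove it by a coefficient-by-coefficient induction in the style of Dwork's lemma, working with \(B\) itself and not with its logarithm, since \(\log B\) can carry genuine denominators.

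Write \(B=1+\sum_{n\ge1}b_nx^n\) with \(b_n\in R[1/p]\), and assume \(b_1\in R\). Suppose \(b_1,\dots,b_{n-1}\in R\) for some \(n\ge2\), and let \(B_{<n}\) denote the truncation through degree \(n-1\). The hypothesis gives
\[
(\phi_pB)(x_p)=B(x)^p\,(1+pxE(x)),\qquad E\in R[[x]].
\]
I would compare the coefficients of \(x^n\) on the two sides.

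On the left, \(x_p=px+\binom p2x^2+\dots+x^p\in x R[[x]]\), so \(x_p^k\in x^kR[[x]]\). Hence the coefficient of \(x^n\) in \((\phi_pB)(x_p)\) involves \(\phi_p(b_k)\) only for \(k\le n\). The term with \(k=n\) contributes \(\phi_p(b_n)\cdot[x^n]x_p^n=p^n\phi_p(b_n)\). Every other contribution comes from \(k<n\) and is therefore integral.

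On the right, \([x^n]B^p=p\,b_n+(\text{a polynomial in }b_1,\dots,b_{n-1})\). Multiplying by \(1+pxE\) adds only terms that involve \(b_k\) with \(k<n\), and these are integral times \(p\).

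The key step is to show that the \(b_{<n}\)-part of \([x^n]\bigl(B^p(1+pxE)\bigr)\) is congruent modulo \(pR\) to the corresponding part of \([x^n](\phi_pB_{<n})(x_p)\). For this I would use two congruences: \(B_{<n}(x)^p\equiv(\phi B_{<n})(x^p)\pmod{pR[[x]]}\), which holds because the Frobenius lift satisfies \(\phi(r)\equiv r^p\), and \(x_p\equiv x^p\pmod p\). Subtracting the two sides then gives
\[
p\,b_n-p^n\phi_p(b_n)\in pR,
\]
which is to say \(b_n(1-p^{n-1}\phi_p)\in R\) after cancelling the \(p\) (legitimate since \(R\) is \(p\)-torsion-free). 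Because \(n\ge2\) and \(R\) is \(p\)-adically complete, the operator \(1-p^{n-1}\phi_p\) is invertible on \(R\), with inverse \(\sum_{k\ge0}p^{k(n-1)}\phi_p^k\), and it preserves \(R\) inside \(R[1/p]\). Therefore \(b_n\in R\), completing the induction.

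The main obstacle is handling the cross terms in which \(b_n\) meets lower coefficients, together with the denominators of \(b_n\). The cleanest route is to assume \(p^tb_n\in R\) for a minimal \(t\ge1\) and derive a contradiction modulo \(p^{t}\), keeping track of valuations carefully. The case \(n=1\) is exactly where this argument breaks down: there \(p\,b_1=p\,\phi_p(b_1)+(\text{integral})\) yields no information about \(b_1\). That is why the hypothesis \([x]B\in R\) is needed.
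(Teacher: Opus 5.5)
Your proposal is correct and follows the paper's proof. Both argue by induction on the coefficients, compare $[x^n]$ in $(\phi_pB)(x_p)=B^pQ$ to obtain $b_n-p^{n-1}\phi_p(b_n)\in R$, and then invert $1-p^{n-1}\phi_p$ for $n\ge2$. You also supply the step the paper leaves implicit: the contributions of $b_1,\dots,b_{n-1}$ on the two sides agree modulo $pR$ because $B_{<n}^p\equiv(\phi_pB_{<n})(x^p)$ and $x_p\equiv x^p$ modulo $p$. The ``cross terms'' you worry about do not actually occur, since in degree $n$ the coefficient $b_n$ enters only linearly, as $p\,b_n$ and $p^n\phi_p(b_n)$. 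Your minimal-$t$ valuation argument closes the final step without using completeness.
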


\begin{proof}
Write \(B=1+\sum_{n\ge1}b_nx^n\) and compare coefficients in
\((\phi_pB)(x_p)=B^pQ\), where \(Q\in1+pxR[[x]]\).  Assuming
\(b_1,\ldots,b_{n-1}\in R\), the coefficient of \(x^n\) gives
\[
 b_n-p^{n-1}\phi_p(b_n)\in R.
\]
For \(n\ge2\), iteration and \(p\)-adic completeness imply
\(b_n\in R\).  The converse is immediate.
\end{proof}

\begin{corollary}
\label{cor:fixed-center-integrality}
In the setting of Proposition~\ref{prop:fixed-higher-compatibility},
assume that the coefficient ring contains the support and coefficients of
\(\widehat\xi\).  Then
\[
 [x]U_{1,c}(x)
 =\frac{c^2-1}{24}
  \sum_z a_z\frac{z}{1-z},
 \qquad
 U_{1,c}(x)\in1+xR_p^\wedge[[x]].
\]
\end{corollary}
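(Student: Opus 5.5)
The plan is to compute the linear coefficient directly from the formal germ, check that it is integral, and then upgrade integrality of that one coefficient to integrality of the whole series using Lemma~\ref{lem:Dwork-coefficient-criterion}.

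\emph{Linear coefficient.} By Proposition~\ref{prop:fixed-higher-compatibility} we have
\[
U_{1,c}=\exp\bigl(F_{\widehat\xi,p}(x)-cF_{\widehat\xi,p}(x_c)\bigr).
\]
The exponent has zero constant term, so
\[
[x]U_{1,c}=[x]F_{\widehat\xi,p}(x)-c\,[x]F_{\widehat\xi,p}(x_c).
\]
Next read off $[x]F_{\widehat\xi,p}$ from \eqref{eq:presentation-local-germ}. Since $Y=\log(1+x)=x+O(x^2)$, only the $n=1$ term contributes to the linear coefficient. That term is
\[
\frac{B_2(1/2)}{2}\,S_1(\widehat\xi),
\qquad
B_2(1/2)=-\tfrac1{12},
\qquad
S_1(\widehat\xi)=\sum_z a_z\Li_{0,p}(z)=\sum_z a_z\frac{z}{1-z}.
\]
Hence $[x]F_{\widehat\xi,p}=-S_1/24$. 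Because $x_c=(1+x)^c-1=cx+O(x^2)$, the second term has linear coefficient $c\cdot c\cdot(-S_1/24)$. Subtracting gives
\[
[x]U_{1,c}=\frac{c^2-1}{24}\,S_1(\widehat\xi),
\]
which is the displayed formula.

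\emph{Integrality of $[x]U_{1,c}$.} In the GSWZ setting $p>3$, so $24\in\Zp^\times$. Each support root $z\ne1$ has order prime to $p$, so $1-z$ is a unit and $z/(1-z)\in\mathcal O$. The coefficients $a_z$ lie in $\Zp$. By hypothesis the coefficient ring contains the support, so $[x]U_{1,c}\in R_p^\wedge$.

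\emph{Integrality of $U_{1,c}$.} Apply Lemma~\ref{lem:Dwork-coefficient-criterion} with $R=R_p^\wedge$ and $B=U_{1,c}$. Its Frobenius hypothesis is exactly \eqref{eq:fixed-higher-Dwork-quotient}. For the other hypothesis, $U_{1,c}\in 1+xR[1/p][[x]]$: the exponent $H_{1,c}-H_{1,c}(0)$ lies in $xR[1/p][[x]]$, being the Amice transform of a distribution whose coefficients are built from the $a_z$, the roots $z$ and rational numbers. The formal exponential of a series without constant term then has coefficients in $R[1/p]$ again. The lemma now yields $U_{1,c}\in1+xR_p^\wedge[[x]]$.

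The main point needing care is this last hypothesis. One must confirm that the coefficients of $F_{\widehat\xi,p}$, namely the values $\Li_{1-n,p}(z)$ and the factors $B_{n+1}(1/2)/(n+1)!$, lie in $R[1/p]$ rather than merely in some larger field. This holds because the $\Li_{1-n,p}(z)$ are rational functions of $z$ with denominators powers of $1-z$. The Lemma's recursion then handles the unbounded factorial denominators.
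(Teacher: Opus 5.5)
Your proposal is correct and follows the paper's proof: the linear coefficient comes from the $n=1$ term with $B_2(1/2)=-1/12$, integrality of that coefficient uses $p>3$ and the fact that each $1-z$ is a unit, and Lemma~\ref{lem:Dwork-coefficient-criterion} applied to \eqref{eq:fixed-higher-Dwork-quotient} then gives integrality of all of $U_{1,c}$. Your explicit check that $U_{1,c}\in 1+xR[1/p][[x]]$ fills in a hypothesis of that lemma which the paper leaves implicit.
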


\begin{proof}
The linear term of
\(F_{\widehat\xi,p}(x)-cF_{\widehat\xi,p}(x_c)\) is the displayed quantity, since
\(B_2(1/2)=-1/12\).  It is integral because \(p>3\) and each
\(1-z\) is a unit.  Apply
Lemma~\ref{lem:Dwork-coefficient-criterion} to
\eqref{eq:fixed-higher-Dwork-quotient}.
\end{proof}

\section[Local calculations for the knot 5-2]{Local calculations for the knot $5_2$}\label{app:52-details}

\subsection{Shape class and selected degree-one places}

Retain \(F\), \(\alpha\), and \(\beta_{5_2}\) from
Subsection~\ref{sec:52}, and put
\[
 z_1=1-\alpha^2,
 \qquad
 z_2=1-\alpha.
\]
Thus \(\beta_{5_2}=2[z_1]+[z_2]\).
The cubic relation gives
\[
 z_2=\alpha^{-2}=(1-z_1)^{-1},
 \qquad
 z_1=-\alpha^3,
 \qquad
 1-z_1=\alpha^2.
\]
Moreover,
\[
 \partial\beta_{5_2}
 =2(-\alpha^3)\wedge\alpha^2+\alpha^{-2}\wedge\alpha
 =4(-1)\wedge\alpha=0,
\]
so \(\beta_{5_2}\in B(F)\).

For \(p=5,7,11,23\), choose the simple roots
\[
 \bar\alpha_p=2,4,9,15,
\]
respectively, of \(X^3-X^2+1\) modulo \(p\), and let
\(\alpha_p\in\Zp\) be their Hensel lifts.  Set
\[
 z_{1,p}=1-\alpha_p^2,
 \qquad
 z_{2,p}=1-\alpha_p.
\]
The required reductions are
\begin{center}
\small
\begin{tabular}{cccc}
\toprule
$p$ & $\alpha_p\bmod p^2$ & $z_{1,p}\bmod p^2$ & $z_{2,p}\bmod p^2$\\
\midrule
$5$  & $17$ & $12$  & $9$\\
$7$  & $4$  & $34$  & $46$\\
$11$ & $97$ & $30$  & $25$\\
$23$ & $38$ & $144$ & $492$\\
\bottomrule
\end{tabular}
\end{center}
At \(p=23\), the factor corresponding to \(15\) is simple even though
the discriminant is divisible by \(23\).

After inverting \(6\), Lemma~1.2 gives
\([z^{-1}]=-[z]\), while Lemmas~1.3 and 1.5 give
\([1-z]=-[z]\) \cite[Lemmas~1.2, 1.3, and 1.5, p.~181]{Suslin}.
Applying these relations successively yields
\[
 [z]=\left[\frac1{1-z}\right]
 =\left[1-\frac1z\right]
 =-[1-z]=-[z^{-1}]
 =-\left[\frac z{z-1}\right].
\]
Hence, for each selected place,
\begin{equation}\label{eq:52-three-shape}
 [z_{2,p}]=[z_{1,p}],
 \qquad
 \beta^B_{5_2,p}=3[z_{1,p}]
 \quad\text{in }B(\Qp)\otimes\Zp,
\end{equation}
and
\begin{equation}\label{eq:52-regulator-equality}
 D_p(\beta^B_{5_2,p})=3D_p(z_{1,p}).
\end{equation}

\subsection{Cyclotomic comparison classes}

Choose
\[
 (f_5,\lambda_5)=(4,2),
 \quad
 (f_7,\lambda_7)=(3,2),
 \quad
 (f_{11},\lambda_{11})=(5,3),
 \quad
 (f_{23},\lambda_{23})=(11,2),
\]
where \(\lambda_p\) has order \(f_p\) in \(\F_p^\times\), and let
\(\zeta_{f_p}=\teich(\lambda_p)\).  Define
\[
 \vartheta^B_p
 =\operatorname{cl}\bigl(
 [\zeta_{f_p}]_{\mathrm{cyc}}
 -[\zeta_{f_p}^{-1}]_{\mathrm{cyc}}
 \bigr),
\]
with \(K_3\)-images \(\vartheta^K_p\) and \(\beta^K_{5_2,p}\).
The local regulator theorem of \cite[Thm.~9]{GSWZ} gives
\begin{equation}\label{eq:GSWZ-regulator-isomorphism}
 D_p:K_3(\Qp;\Zp)\overset{\sim}{\longrightarrow}p^2\Zp.
\end{equation}
Evaluating \eqref{eq:finite-polylog-polynomial} at $a\in\F_p$,
Besser's congruence \cite[Cor.~2.2]{Besser} gives
\[
 p^{-2}D_p(\teich(a))
 \equiv\frac{\li_{2,p}(a)}{a-1}\pmod p.
\]
The values
\[
 \li_{2,5}(2)=1,
 \quad
 \li_{2,7}(2)=3,
 \quad
 \li_{2,11}(3)=6,
 \quad
 \li_{2,23}(2)=10
\]
show that every \(\vartheta^K_p\) is a generator.  Hence there is a
unique \(c_p\in\Zp\) such that
\begin{equation}\label{eq:52-scalar-ratio}
 \beta^K_{5_2,p}=c_p\vartheta^K_p,
 \qquad
 c_p=\frac{3D_p(z_{1,p})}{2D_p(\teich(\lambda_p))}.
\end{equation}

\subsection{Finite-polylogarithm criterion}

\begin{lemma}\label{lem:52-residue-disc-reduction}
Let \(p>3\), let \(z,1-z\in\Zp^\times\), and put
\(a=\bar z\in\F_p\setminus\{0,1\}\).  If
\[
 z=\teich(a)(1+pu),
 \qquad
 1-z=\teich(1-a)(1+pv)
 \pmod{p^2},
\]
then
\begin{equation}\label{eq:52-first-digit-formula}
 \frac{D_p(z)}{p^2}
 \equiv
 -\frac{\li_{2,p}(a)}{1-a}
 -\frac{\li_{1,p}(a)}{1-a}u
 +\frac{a}{2(1-a)}u^2
 +\frac12uv
 \pmod p.
\end{equation}
\end{lemma}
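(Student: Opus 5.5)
We will compute \(D_p(z)=\Li_{2,p}(z)+\tfrac12\log_p(z)\log_p(1-z)\) directly in the residue disc of \(\teich(a)\), using Besser's residue-disc expansion \cite[Prop.~2.3]{Besser}. The second summand is the easy part. Since \(\log_p\) kills Teichm\"uller roots, we have \(\log_p z=\log_p(1+pu)\equiv pu\pmod{p^2}\) and \(\log_p(1-z)\equiv pv\pmod{p^2}\). Hence
\[
 \tfrac12\log_p z\log_p(1-z)\equiv \tfrac12p^2uv\pmod{p^3},
\]
which produces the term \(\tfrac12uv\) after division by \(p^2\).

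For \(\Li_{2,p}\), we expand around \(z_0=\teich(a)\), writing \(z=z_0(1+t)\) with \(t\equiv pu\pmod{p^2}\). The derivative relations \(z\,d\Li_{2,p}/dz=\Li_{1,p}\) and \(z\,d\Li_{1,p}/dz=z/(1-z)\) give
\[
 \Li_{2,p}(z)=\Li_{2,p}(z_0)+\Li_{1,p}(z_0)\log(1+t)
 +\tfrac12\frac{z_0}{1-z_0}\log(1+t)^2+O(\log(1+t)^3).
\]
Only terms up to \(p^2\) matter, and the cubic term is \(O(p^3)\). (Besser's Proposition presumably packages exactly this Taylor expansion, with convergence on the disc guaranteed since \(p>3\).) We have \(\log(1+t)\equiv pu\pmod{p^2}\), so the quadratic term contributes \(\tfrac12 p^2u^2\,a/(1-a)\pmod{p^3}\), matching the \(u^2\) term.

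The remaining two inputs are Teichm\"uller values. From \cite[Cor.~2.2]{Besser} (already quoted above),
\[
 p^{-2}\Li_{2,p}(\teich(a))\equiv \li_{2,p}(a)/(a-1)=-\li_{2,p}(a)/(1-a).
\]
For the linear term we need \(\Li_{1,p}(\teich(a))=-\log_p(1-\teich(a))\) modulo \(p^2\), together with the observation that it lies in \(p\Zp\) (its first digit is a finite logarithm). The classical congruence
\[
 -\log_p(1-\teich(a))/p\equiv -\li_{1,p}(a)/(1-a)\pmod p,
\]
a Fermat-quotient identity of the shape \((1-a)^p-(1-a^p)\) expressed via \(\sum_{k=1}^{p-1}a^k/k\), gives the linear contribution \(-p^2\li_{1,p}(a)u/(1-a)\). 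I would derive this either from the same Besser corollary at weight one, or directly from \(\log_p(1-\teich(a))=p^{-1}\log_p\bigl((1-\teich(a))^p\bigr)\) and the binomial expansion of \((1-\teich(a))^p\) modulo \(p^2\).

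The main obstacle is the bookkeeping of precision. \(\Li_{1,p}(z_0)\) is only \(O(p)\), so its product with \(\log(1+t)=pu+O(p^2)\) requires \(\Li_{1,p}(z_0)\) modulo \(p^2\) only to first digit. Equally, the sign and normalization conventions (\(\li_{1,p}\) versus the Fermat quotient, and the factor \(1/(1-a)\) coming from \(\teich(1-a)\) versus \(1-\teich(a)\)) must line up with Besser's normalization. I would check these against the \(p=5\), \(a=2\) example before finalizing.
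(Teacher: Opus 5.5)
Your proposal is correct and follows the paper's route. The paper likewise splits $D_p(z)=\Li_{2,p}(z)+\tfrac12\log_p(z)\log_p(1-z)$ and applies Besser's Teichm\"uller congruence and residue-disc expansion at $\teich(a)$; you make that expansion explicit as $\sum_{n\ge0}\Li_{2-n,p}(\teich(a))Y^n/n!$ with $Y\equiv pu\pmod{p^2}$, and add the weight-one congruence $\Li_{1,p}(\teich(a))\equiv-p\,\li_{1,p}(a)/(1-a)\pmod{p^2}$, which can also be obtained from Theorem~\ref{thm:koblitz-negative-moments} at $r=1$ via a level-one Riemann sum. One correction to your parenthetical: $p>3$ is not what makes the expansion converge, since any odd $p$ suffices when $v_p(Y)\ge1$; it is what puts the whole tail $n\ge3$ in $p^3\Zp$, because $v_p(Y^n/n!)\ge3$ requires $3!$ to be a unit.
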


\begin{proof}
Apply Besser's root-of-unity reduction and residue-disc expansion
\cite[Cor.~2.2 and Prop.~2.3]{Besser} to the Teichm\"uller point
\(\teich(a)\), and use
\(
 D_p(z)=\Li_{2,p}(z)+\tfrac12\log_p(z)\log_p(1-z).
\)
\end{proof}

Put
\[
 h(T)=T^3-T^2+1,
 \qquad
 g(X)=X^3-2X^2+3X-1.
\]
Retain \(\bar\alpha\), \(\alpha_{p,\bar\alpha}\),
\(z_{p,\bar\alpha}\), \(a\), and \(W_p\) from
Subsection~\ref{sec:52}.
The identities
\[
 z_{p,\bar\alpha}=-\alpha_{p,\bar\alpha}^3,
 \qquad
 1-z_{p,\bar\alpha}=\alpha_{p,\bar\alpha}^2
\]
give \(u=3w\), \(v=2w\) when
\(
 \alpha_{p,\bar\alpha}=\teich(\bar\alpha)(1+pw).
\)
The finite logarithm relation yields
\(
 \li_{1,p}(a)=(a+2)w.
\)
Substitution in \eqref{eq:52-first-digit-formula} gives
\begin{equation}\label{eq:52-all-place-first-digit}
 \frac{D_p(z_{p,\bar\alpha})}{p^2}
 \equiv-\frac{W_p(a)}{1-a}\pmod p.
\end{equation}
Together with \eqref{eq:52-three-shape}, this proves
\begin{equation}\label{eq:52-all-place-generator-criterion}
 \beta^K_{5_2,p,\bar\alpha}\text{ generates }K_3(\Qp;\Zp)
 \quad\Longleftrightarrow\quad W_p(a)\ne0.
\end{equation}

A root-free form of the criterion is obtained as follows.  Set
\[
 H_p(X)=\gcd(g(X),X^p-X),
 \qquad
 H_p^{\mathrm{simp}}(X)
 =\frac{H_p(X)}{\gcd(H_p(X),g'(X))},
\]
and
\[
 \widetilde W_p(X)
 =(X+2)\li_{2,p}(X)+\frac32\li_{1,p}(X)^2.
\]
The number of simple degree-one places at which the class fails to generate
is
\begin{equation}\label{eq:52-root-free-count}
 \deg\gcd\bigl(H_p^{\mathrm{simp}}(X),\widetilde W_p(X)\bigr).
\end{equation}

\subsection{The four selected places}

For \(z=z_{1,p}\), write \(a_p=\bar z\) and determine \(u_p,v_p\) as in
Lemma~\ref{lem:52-residue-disc-reduction}.  The resulting data are
\begin{table}[ht]
\centering
\small
\begin{tabular}{crrrrr}
\toprule
$p$ & $a_p$ & $\teich(a_p)\bmod p^2$
& $\teich(1-a_p)\bmod p^2$ & $u_p$ & $v_p$\\
\midrule
$5$  & $2$ & $7$   & $24$  & $3$  & $2$\\
$7$  & $6$ & $48$  & $30$  & $2$  & $6$\\
$11$ & $8$ & $118$ & $81$  & $10$ & $3$\\
$23$ & $6$ & $466$ & $501$ & $13$ & $1$\\
\bottomrule
\end{tabular}

\medskip

\begin{tabular}{crrr}
\toprule
$p$ & $\li_{1,p}(a_p)$ & $\li_{2,p}(a_p)$
& $p^{-2}D_p(z_{1,p})\bmod p$\\
\midrule
$5$  & $4$ & $1$  & $2$\\
$7$  & $3$ & $0$  & $2$\\
$11$ & $4$ & $9$  & $1$\\
$23$ & $4$ & $19$ & $9$\\
\bottomrule
\end{tabular}
\caption{Residue-disc data for the four selected places.}
\label{tab:52-first-digit-data}
\end{table}
\FloatBarrier

Equations \eqref{eq:52-regulator-equality} and
\eqref{eq:GSWZ-regulator-isomorphism} show that both
\(\beta^K_{5_2,p}\) and \(\vartheta^K_p\) are generators.  Reducing
\eqref{eq:52-scalar-ratio} modulo \(p\) gives
\[
 c_5\equiv3\pmod5,
 \qquad
 c_7\equiv1\pmod7,
 \qquad
 c_{11}\equiv6\pmod{11},
 \qquad
 c_{23}\equiv14\pmod{23}.
\]

\subsection{Unramified scalar extension}

Let \(K_{p,d}/\Qp\) be the unramified extension of degree \(d\), let
\[
 E_p^{\mathrm{char}}
 =\Qp\bigl(\chi(a):\chi\bmod f_p,\ a\in(\Z/f_p\Z)^\times\bigr),
 \qquad
 L_{p,d}=K_{p,d}E_p^{\mathrm{char}},
\]
and extend all presentations and Iwasawa algebras to \(L_{p,d}\).  Write
\[
 \xi_{5_2,p,d}
 =c_p\bigl([\zeta_{f_p}]_{\mathrm{cyc}}
 -[\zeta_{f_p}^{-1}]_{\mathrm{cyc}}\bigr),
 \qquad
 \IwPs_{5_2,p,d}=\IwPs_{\xi_{5_2,p,d},p}.
\]
The odd Fourier formula and linearity give
\begin{equation}\label{eq:52-pseudomeasure-packet}
 \IwPs_{5_2,p,d}
 =\frac{2c_p}{\varphi(f_p)}
 \sum_{\substack{\chi\bmod f_p\\\chi(-1)=-1}}
 \IwPs_{\chi,f_p,p}.
\end{equation}
Since \(f_p\mid p-1\), Frobenius fixes the support roots.  Naturality of
the regulator and Theorem~\ref{thm:exceptional-residue} therefore give
\begin{equation}\label{eq:52-exceptional-residue}
 \Res_{s=-1}\mathcal M_{\IwPs_{5_2,p,d},-1}(s)
 =(1-p^{-1})(1-p^{-2})D_p(\beta^B_{5_2,p}).
\end{equation}


\bigskip
\noindent
\textsc{Honghuai Fang}\\
Institute for Theoretical Sciences, Westlake University,\\
No.~600 Dunyu Road, Xihu District, Hangzhou, Zhejiang 310030, China\\
\textit{Email address:}
\href{mailto:fanghonghuai@westlake.edu.cn}{\nolinkurl{fanghonghuai@westlake.edu.cn}}

\medskip
\noindent
\textsc{Zekun Chen}\\
Morningside Center of Mathematics, Chinese Academy of Sciences,\\
No.~55 Zhongguancun East Road, Haidian District, Beijing 100190, China\\
\textit{Email address:}
\href{mailto:chenzekun@amss.ac.cn}{\nolinkurl{chenzekun@amss.ac.cn}}

\end{document}